\providecommand{\U}[1]{\protect\rule{.1in}{.1in}}
\numberwithin{equation}{section}
\newtheorem{theorem}{Theorem}[section]
\newtheorem{lemma}[theorem]{Lemma}
\newtheorem{corollary}[theorem]{Corollary}
\newtheorem{proposition}[theorem]{Proposition}
\newtheorem{remark}[theorem]{Remark}
\newtheorem{definition}[theorem]{Definition}
\newtheorem{hypothesis}[theorem]{Hypothesis}
\newtheorem{assumption}[theorem]{Assumption}
\def\<{\langle}
\def\>{\rangle}
\def\d{{\rm d}}
\def\E{\mathbb{E}}
\def\N{\mathbb{N}}
\def\P{\mathbb{P}}
\def\R{\mathbb{R}}
\def\T{\mathbb{T}}
\def\Z{\mathbb{Z}}
\def\F{\mathcal{F}}
\def\cH{\mathcal{H}}
\def\cE{\mathcal{E}}
\def\cG{\mathcal{G}}
\def\cL{\mathcal{L}}
\def\eps{\varepsilon}
\newcommand{\red}[1]{{\color{red} #1}}
\begin{document}

\title{LDP and CLT for SPDEs with Transport Noise}

\author{Lucio Galeati\thanks{Email: lucio.galeati@epfl.ch. Research Group Math-AMCV, EPFL, 1015 Lausanne, Switzerland.}
\quad Dejun Luo\thanks{Email: luodj@amss.ac.cn. Key Laboratory of RCSDS,
Academy of Mathematics and Systems Science, Chinese Academy of Sciences,
Beijing 100190, China, and School of Mathematical Sciences, University of
Chinese Academy of Sciences, Beijing 100049, China. } }

\maketitle

\vspace{-20pt}

\begin{abstract}
In this work we consider solutions to stochastic partial differential equations with transport noise, which are known to converge, in a suitable scaling limit, to solution of the corresponding deterministic PDE with an additional viscosity term. Large deviations and Gaussian fluctuations underlying such scaling limit are investigated in two cases of interest: stochastic linear transport equations in dimension $D\geq 2$ and $2$D Euler equations in vorticity form. In both cases, a central limit theorem with strong convergence and explicit rate is established. The proofs rely on nontrivial tools, like the  solvability of transport equations with supercritical coefficients and $\Gamma$-convergence arguments.
\end{abstract}

\textbf{Keywords:} Large deviation principle, central limit theorem, transport noise, stochastic transport equation, stochastic 2D Euler equation

\textbf{MSC (2020):} 60H15, 60F10, 60F05

\tableofcontents

\section{Introduction}

In recent years, there have been numerous results concerning the convergence of stochastic partial differential equations with transport noise, i.e. of the form
\begin{equation}\label{eq:intro-abstract-SPDE}
\d u^n + \circ \d \tilde W^n\cdot \nabla u^n = F(u^n)\, \d t,
\end{equation}
to their deterministic counterparts with enhanced viscosity
\begin{equation}\label{eq:intro-limit-pde}
\partial_t u =  F(u) + \nu \Delta u
\end{equation}
whenever the noises $\{\tilde W^n\}$ undergo a suitable scaling limit.
Here $F$ denotes a generic functional, which might be nonlinear and nonlocal, while $\circ \d $ means that the stochastic differential must be understood in the Stratonovich sense.
The noises $\{\tilde W^n\}$ are Brownian in time and coloured, divergence-free in space and the coefficient $\nu>0$ comes from the noise intensity in the scaling limit; more details on the precise structure of $\{\tilde W^n\}$ will be given shortly.

The scaling limit of \eqref{eq:intro-abstract-SPDE} to \eqref{eq:intro-limit-pde} was first observed in \cite{Galeati} for linear transport equation and has then been extended to various nonlinear PDEs, including stochastic 2D Euler equation in vorticity form \cite{FGL21a}, stochastic mSQG equation \cite{LuoSaal} and stochastic 2D inviscid Boussinesq system \cite{Luo21}; see also \cite{FGL21b} for a large class of nonlinearities which include as particular cases the $3$D Keller-Segel system and the $2$D Kuramoto-Sivashinsky equation.
All of the aforementioned results are set on the torus $\T^d=\R^d/\Z^d$; some results on stochastic heat equations in bounded domain or infinite channel can be found in \cite{FGL21d, FlaLuongo22}.
Finally let us mention the recent work \cite{FGL21c}, where we improved some of these results by providing quantitative convergence rates.


The scaling limit of \eqref{eq:intro-abstract-SPDE} to \eqref{eq:intro-limit-pde} has several important features.
Firstly, the starting equation \eqref{eq:intro-abstract-SPDE} can be of hyperbolic type (e.g. in the case of stochastic transport equations) but the limit \eqref{eq:intro-limit-pde} is always strictly parabolic; as such, the scaling limit can be rigorously proved even in situations where the well-posedness of \eqref{eq:intro-abstract-SPDE} is not known to hold, as long as we are able to solve \eqref{eq:intro-limit-pde} (this is for instance the case for stochastic $2$D Euler converging to deterministic Navier--Stokes).
Secondly, even when the starting equation \eqref{eq:intro-abstract-SPDE} is truly parabolic, the extra strong diffusion operator $\nu\Delta$ appearing in the limit equation can help displaying features of the transport noise like mixing and dissipation enhancement, see \cite{FGL21c,FGL21d}.
We have taken advantage of this phenomenon and shown in \cite{FlaLuo21, FGL21b} that transport noise is capable, with large probability, of suppressing blow-up of solutions or even yielding global existence.

Given the above results, it is of great importance to obtain a better understanding of these scaling limits; the purpose of the present work is to study the large deviations and Gaussian fluctuations underlying them.

To explain what we mean, let us first briefly explain why convergence of \eqref{eq:intro-abstract-SPDE} to \eqref{eq:intro-limit-pde} holds.
The noises $\{\tilde W^n\}$ must be taken of the form $\tilde{W}^n = \sqrt{\nu \eps_n}\, W^n$, where $\{W^n\}$ are now $Q^n$-Wiener processes on $L^2$ such that the family $\{Q^n \}_n$ are uniformly bounded operators, $\| Q^n\|_{L^2\to L^2}\leq C$; at the same time, they must satisfy ${\rm Tr}(Q^n)= c/\eps_n$, where $c$ is a suitable proportionality constant and $\eps_n\to 0$ as $n\to\infty$. We refer to \cite[Example 1.3]{FGL21d} for explicit choices of such noise on the torus $\T^2$, see Section \ref{sec:intro-noise} below for a special case.
For this choice of $\{\tilde W^n\}$, it can be shown that the equivalent It\^o form of \eqref{eq:intro-abstract-SPDE} is
\begin{equation}\label{eq:intro-abstract-SPDE-ito}
\d u^n + \sqrt{\nu \eps_n}\, \d W^n\cdot\nabla u^n = [ F(u^n) + \nu \Delta u^n]\,\d t.
\end{equation}
As the noises $W^n$ have uniformly bounded covariances $Q^n$, convergence of \eqref{eq:intro-abstract-SPDE-ito} to \eqref{eq:intro-limit-pde} is not surprising, as long as uniform estimates on $\{u^n\}$ are available (here the divergence free property of $\tilde{W}^n$ becomes crucial).
Such a convergence can now be interpreted as a small noise limit, which suggests an underlying Large Deviation Principle (LDP); in this sense, we can also interpret the basic convergence of $u^n$ to a deterministic limit $u$ as some kind of Law of Large Numbers (LLN).

In order to prove a LDP, we need to assume that $Q^n$ converges to some bounded linear operator $Q$ on $L^2$, so that the corresponding noises $W^n$ converge in law to some $W$, which is a $Q$-Wiener process (cf. Section \ref{sec:intro-noise} below).
This covers the majority of the cases considered in \cite{Galeati, FGL21b}, but it excludes the noise from \cite{FlaLuo21} (similarly \cite[Example 1.3]{FGL21d}), in which case the operators $Q^n$ converge weakly to $0$; see Remark \ref{rem:other-noise} below for a deeper discussion on this point.
By the weak convergence approach to large deviations (see \cite{BD00,BDM08}), we expect a LDP associated to $u^n$ with speed $\eps_n$ and rate function determined by solving the skeleton equation
\begin{equation}\label{eq:intro-skeleton-abstract}
\partial_t v + g\cdot\nabla v = F(v)+\nu\Delta v
\end{equation}
for any $g$ satisfying the condition $\int_0^T \| Q^{-1/2} g_s\|_{L^2}^2 \,\d s<\infty$.
Let us point out that solvability of \eqref{eq:intro-skeleton-abstract} is highly non trivial, even in the case $F\equiv 0$, where \eqref{eq:intro-skeleton-abstract} corresponds to an advection-diffusion equation with rough drift; indeed the only available information on $g$ roughly speaking amounts to it being divergence free and satisfying  $\int_{[0,T]\times \T^d} |g(t,x)|^2 \,\d t \d x<\infty$. 

Interpreting the convergence of $u^n$ to $u$ as a LLN, one might try to investigate the fluctuations around this convergence, by looking at $U^n= (u^n-u)/\sqrt{\eps_n}$.
Assuming the sequence $U^n$ admits a limit $U$, we have the formal expansions
\begin{equation}\label{eq:intro-formal-expansion}
u^n\approx u + \sqrt{\eps_n}\, U,\quad F(u^n)\approx F(u) + \sqrt{\eps_n}\, DF_u(U)
\end{equation}
where $DF_u$ denotes the differential of $F$ evaluated at the point $u$. Inserting \eqref{eq:intro-formal-expansion} in \eqref{eq:intro-abstract-SPDE-ito}, we see that $U$ is expected to solve
\begin{equation}\label{eq:intro-abstract-CLT}
\d U + \sqrt{\nu}\,\d W\cdot\nabla u = [DF_u(U) + \nu \Delta U]\,\d t.
\end{equation}
Equation \eqref{eq:intro-abstract-CLT} is linear in $U$ and driven by the forcing term $\sqrt{\nu}\, \d W\cdot\nabla u$, which is Gaussian since $u$ is deterministic.
As a consequence, the solution $U$ (if it exists) is a Gaussian field as well; the convergence of fluctuations $U^n= (u^n-u)/\sqrt{\eps_n}$ to a Gaussian limit then can be naturally intepreted as a Central Limit Theorem (CLT) result.

Although the above reasoning is very heuristical, it already provides useful information on the key quantities to study in order to obtain rigorous proofs, which are given respectively by equations \eqref{eq:intro-skeleton-abstract} and \eqref{eq:intro-abstract-CLT}.
The aim of this paper is to formalize the above discussion in two main cases of interest, namely for stochastic transport equations and $2$D Euler equations in vorticity form.
We preferred to start this introduction with the more abstract framework of equations \eqref{eq:intro-abstract-SPDE}, in order to emphasize that the guiding principle behind them is very general and can be applied to other SPDEs (see also discussions in Sections \ref{sec:conclusive-LDP} and \ref{sec:conclusive-CLT}).

Before giving rigorous statements, we need to provide more details on the exact choice of noise we will deal with.

\subsection{Structure of the noise}\label{sec:intro-noise}

In this paper we will always deal with equations defined on the $d$-dimensional torus $\T^d$ for $d\geq 2$ and on a finite time interval $[0,T]$. From now on we will set for simplicity $\nu=1$.

For technical reasons that will become clear later (cf. the end of the proof of Lemma \ref{prop-cond-2}), we cannot take the limit noise $W$ to be white in space, but rather we need to impose some additional regularity on it as encoded by a parameter $\alpha\in (0,d/2)$; this can be accomplished as follows.

Let us denote by $\cH$ the space of square integrable, divergence free vector fields with zero mean on $\T^d$; it is a closed subspace of $L^2=L^2(\T^d,\R^d)$ and so we can define the projection $\Pi$ of $L^2$ to $\cH$, usually referred as the Leray--Helmholtz projection.
We denote by $e_k(x)= e^{2\pi i k\cdot x}$ the standard elements of the Fourier basis; recall the definition of the fractional Laplacian $(-\Delta)^{s/2}$ by $(-\Delta)^{s/2} e_k = |k|^s e_k$ for any $s\in \R$.
We also set $\cH^\alpha:= H^\alpha(\T^d,\R^d) \cap \cH$, so that $\cH^\alpha = (-\Delta)^{-\alpha/2} (\cH) = (-\Delta)^{-\alpha/2}\,\Pi (L^2)$.

With these preparations, we can define the limit noise $W:=W^\alpha$ as a $\cH^\alpha$-cylindrical Wiener process; alternatively, $W^\alpha$ can be constructed as $W^\alpha= (-\Delta)^{-\alpha/2} \Pi\, Z$, where $Z$ is a cylindrical noise on $L^2$. It then becomes clear that the covariance function associated to $W^\alpha$ is
\[ Q^\alpha = (-\Delta)^{-\alpha} \Pi \]
where we used the fact that $\Pi$ and $(-\Delta)^{-\alpha/2}$ commute. It follows in particular that, for any sufficently regular, predictable $\R^d$-valued field $f$, the stochastic integral $\int_0^t \langle f_r, \d W^\alpha_r\rangle$ is well defined with It\^o isometry
\begin{equation}\label{eq:intro-basic-estim}
\E\bigg[\Big|\int_0^t\<f_r,\d W^\alpha_r\>\Big|^2\bigg]
= \E\int_0^t \| (Q^\alpha)^{1/2} f_r\|_{L^2}^2\, \d r
= \E\int_0^t \| \Pi f_r \|_{H^{-\alpha}}^2\, \d r
\leq \E\int_0^t \| f_r\|_{H^{-\alpha}}^2\, \d r.
\end{equation}

Next, we need to define the approximating noises appearing in \eqref{eq:intro-abstract-SPDE-ito} and the coefficients $\eps_n$;
to this end, denote by $\Pi_n$ the projection of $L^2$ on the subspace spanned by Fourier modes smaller than $n$, namely $\Pi_n e_k = e_k\, \mathbbm{1}_{|k|\leq n}$.
We set
\[ W^{n,\alpha}= \Pi_n W^\alpha\]
so that they have associated covariance $Q^{n,\alpha}:= \Pi_n (-\Delta)^{-\alpha} \Pi $. A similar formula \eqref{eq:intro-basic-estim} holds for $\int_0^t \<f_r, \d W^{n,\alpha}_r \>$ up to replacing $Q^{\alpha}$ by $Q^{n,\alpha}$.
Given this choice, we must take $\eps_n$ proportional to the inverse of the trace of $Q^{n,\alpha}$, namely
\begin{equation}
\eps_n
= c_d \Bigg( \sum_{k\in \Z^d, 0<|k|\leq n} |k|^{-2\alpha} \Bigg)^{-1}
\sim \bigg( \int_1^n r^{d-2\alpha-1}\, \d r\bigg)^{-1}
\sim n^{2\alpha-d}
\end{equation}
where $c_d=d/(d-1)$; since $\alpha\in (0,d/2)$, $\eps_n\to 0$ as $n\to\infty$ as desired.

In order to perform computations, it is useful to have a more explicit expression for $W^\alpha$, following \cite[Section 2.2]{Galeati}.
Set $\Z^d_0:=\Z^d\setminus\{0\}$ and let $\{\sigma_{k,i}: k\in \Z^d_0, i=1,\ldots, d-1 \}$ be a CONB of $\cH$, defined as $\sigma_{k,i}(x) = a_{k,i} e_{k}(x)$
where $\{a_{k,i}\}_{k,i}$ is a subset of the unit sphere $\mathbb{S}^{d-1}$ such that: (i) $a_{k,i}=a_{-k,i}$ for all $k\in \mathbb{Z}^{d}_{0},\, i=1,\ldots,d-1$; (ii) for fixed $k$, $\{a_{k,i}\}_{i=1}^{d-1}$ is an ONB of $k^{\perp}=\{y\in\mathbb{R}^{d}:y\cdot k=0 \}$.
It holds that $\nabla\cdot\sigma_{k,i} = (a_{k,i}\cdot k) e_k \equiv 0$ for all $k\in \Z^d_0$ and $1\leq i\leq d-1$.
If $d=2$, then we can explicitly define
\begin{equation*}
  a_{k,1}= a_k = \begin{cases}
  \frac{k^\perp}{|k|}, & k\in \Z^2_+, \\
  -\frac{k^\perp}{|k|}, & k\in \Z^2_-,
  \end{cases}
\end{equation*}
where $k^\perp$ now denotes the vector $(-k_2, k_1)$, $\Z^2_0 = \Z^2_+ \cup \Z^2_-$ is a partition of $\Z^2_0$ satisfying $\Z^2_+ = -\Z^2_-$.
Then $W^\alpha$ can be realized as
\begin{equation}\label{eq:noise}
W(t,x)= \sum_{k\in \Z^d_0}\sum_{i=1}^{d-1} |k|^{-\alpha}  \,\sigma_{k,i}(x)\, B^{k,i}_t ,
\end{equation}
where $\{B^{k,i}_\cdot: k\in \Z^d_0, i=1,\ldots, d-1 \}$ are $\mathbb{C}$-valued standard Brownian motions such that $B^{k,i}$ and $B^{l,j}$ are independent whenever $k\neq {-l}$ or $i\neq j$ and such that $\overline{B^{k,i}}=B^{-k,i}$ for all $k,\,i$ (so that $\overline{W^\alpha}=W^\alpha$, i.e. $W^\alpha$ is real valued).
In the sequel we will write $\sum_{k\in \Z^d_0} \sum_{i=1}^{d-1}$ simply as $\sum_{k,i}$. Clearly, a similar expression as \eqref{eq:noise} holds for $W^{n,\alpha}$ up to restricting the sum on $|k|\leq n$.

\subsection{Main results}

\subsubsection{Stochastic transport equations}
We will first consider linear stochastic transport equations on $\T^d$, $d\geq 2$:
\begin{equation}\label{intro-SLTE}
\d f^n + b\cdot\nabla f^n\, \d t + \sqrt{\eps_n} \circ \d W^{n,\alpha} \cdot \nabla f^n =0, \quad f^n\vert_{t=0}=f_0,
\end{equation}
with initial data $f_0\in L^2$.
From here on, we will adopt shortcut notations for the function spaces we use, for which we refer the reader to the notations introduced in Section \ref{subs-notations}.

We assume $b$ to be a vector field in the Krylov--R\"ockner class (see \cite{KR05}); namely, $b\in L^q_t L^p_x$ for some parameters $p,\,q\in [2,\infty]$ satisfying
\begin{equation}\label{eq:intro-KR-condition}
\gamma:= \frac{2}{q}+\frac{d}{p}<1.
\end{equation}
For simplicity of exposition, we will additionally impose $b$ to be divergence-free; the condition can be slightly weakened, depending on the results in consideration, see respectively Assumptions \ref{ass:drift-transport} and \ref{ass:drift-CLT}.
Under the above conditions, strong existence and pathwise uniqueness holds for \eqref{intro-SLTE}; moreover we have the solution formula $f^n_t(x)=f_0\big((\varphi^n_t)^{-1} (x)\big)$, where $\{\varphi^n_t\}$ is the stochastic flow associated to the underlying SDE and $\{(\varphi^n_t)^{-1} \}$ denotes its inverse.

The skeleton equation \eqref{eq:intro-skeleton-abstract} in this setting reads
\begin{equation}\label{eq:intro-skeleton-STLE}
\partial_t f + (b+g)\cdot \nabla f  = \Delta f, \quad f\vert_{t=0}=f_0
\end{equation}
where $g$ is an element of the Cameron-Martin space associated to $W^\alpha$, namely $g\in L^2_t \cH^\alpha$.
Observe that since $\alpha\in (0,d/2)$, $\cH^\alpha$ does not embed in $L^\infty$, implying that $b+g$ does not belong to $L^q_t L^p_x$ with $p,\, q$ satisfying \eqref{eq:intro-KR-condition}; this is sometimes referred as a \textit{supercritical drift}.

Treating equations with supercritical drift, especially in connection with SDEs and stochastic transport equations, has been the subject of several recent papers, see \cite{FeNeOl, NevOli,zhazha2021}.
An earlier important contribution, addressing the conditional uniqueness of Fokker--Plack equations with drift $b\in L^2_t L^2_x$, is due to Porretta \cite{Porretta}, who also established renormalizability of such solutions (assuming they exist).
Here we show  that equation \eqref{eq:intro-skeleton-STLE} is wellposed for any $f_0\in L^2$ and any divergence free $g\in L^2_{t,x}$, see Propositions \ref{prop:stability-strong-skeleton} and \ref{prop:stability-skeleton-L2}.

While working on the current manuscript, we became aware of the recent work \cite{BCC}, which contains a comprehensive study of such advection-diffusion equations, see Section 3 therein. Their results are comparable to ours, although the derivations are quite different; the authors in \cite{BCC} mostly employ commutator estimates in the style of Di Perna--Lions theory, while we make use of classical tools from parabolic theory like maximal regularity, Lions--Magenes lemma and duality techniques.
Propositions \ref{prop:stability-strong-skeleton} and \ref{prop:stability-skeleton-L2} below have the advantage of providing several stability results, both in strong and weak topologies, which are needed for our purposes; moreover they allow for non-divergence free vector fields.

Having established well-posedness and stability for the skeleton equation \eqref{eq:intro-skeleton-STLE}, we are able to rigorously prove a LDP for the sequence $\{f^n \}_n$.
\begin{theorem}\label{thm:intro-thm1}
Let $f_0\in L^2$ and $b$ be a divergence free drift in the Krylov--R\"ockner class; then, for any $\delta>0$, the sequence $\{f^n \}_n$ of solutions to \eqref{intro-SLTE} satisfy a LDP in $C^0_t H^{-\delta}$, with rate function
\begin{equation*}
I_{f_0}(f):=\inf\bigg\{\frac{1}{2}\int_0^T \| g_s\|_{\cH^\alpha}^2\, \d s : g\in L^2_t \cH^\alpha \text{ such that \eqref{eq:intro-skeleton-STLE} holds} \bigg\}.
\end{equation*}
\end{theorem}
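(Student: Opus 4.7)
The plan is to apply the weak convergence approach to large deviations of Budhiraja--Dupuis--Maroulas \cite{BD00,BDM08}. For $N>0$ set
\[
S_N := \bigg\{g\in L^2_t\cH^\alpha : \int_0^T \|g_s\|_{\cH^\alpha}^2\,\d s \leq N\bigg\},
\]
endowed with the weak topology (in which it is Polish and compact). By Girsanov's theorem, for any $S_N$-valued predictable control $g^n$ one obtains a unique probabilistically strong solution $f^{n,g^n}$ to the perturbed It\^o equation
\begin{equation*}
\d f^{n,g^n} + (b+g^n)\cdot\nabla f^{n,g^n}\,\d t + \sqrt{\eps_n}\,\d W^{n,\alpha}\cdot\nabla f^{n,g^n} = \Delta f^{n,g^n}\,\d t,\quad f^{n,g^n}|_{t=0}=f_0,
\end{equation*}
while the skeleton equation \eqref{eq:intro-skeleton-STLE} defines a deterministic map $g\mapsto f^g$ on $L^2_t\cH^\alpha$. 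The LDP with the claimed rate function reduces, by the general variational criterion of \cite{BDM08}, to verifying two conditions: \emph{(a)} if $g^n\rightharpoonup g$ weakly in $L^2_t\cH^\alpha$ with $g^n\in S_N$, then $f^{g^n}\to f^g$ in $C^0_tH^{-\delta}$; \emph{(b)} for any $S_N$-valued predictable controls $\{g^n\}_n$ with $g^n\to g$ in distribution on $S_N$, one has $f^{n,g^n}\to f^g$ in distribution in $C^0_tH^{-\delta}$.

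Condition \emph{(a)} is an immediate consequence of the weak stability part of Proposition \ref{prop:stability-skeleton-L2}, applied to the divergence-free perturbations $b+g^n$ of $b$. For \emph{(b)} I would argue by tightness and compactness. An It\^o computation on $\|f^{n,g^n}\|_{L^2}^2$, exploiting the divergence-free structure of $b$, $g^n$ and each $\sigma_{k,i}$, should yield the uniform-in-$n,\omega$ a priori bound
\[
\|f^{n,g^n}\|_{L^\infty_tL^2}^2 + \|\nabla f^{n,g^n}\|_{L^2_{t,x}}^2 \lesssim_N \|f_0\|_{L^2}^2,
\]
where the It\^o correction produced by the martingale term is exactly absorbed by the deterministic $\Delta f^{n,g^n}$ term. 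Time-regularity of $f^{n,g^n}$ in a negative Sobolev space is read off the equation, and an Aubin--Lions argument then produces tightness of the laws in $C^0_tH^{-\delta}$ together with strong compactness of $\nabla f^{n,g^n}$ in $L^2_tH^{-\eta}$ for some small $\eta>0$. After passing to a.s.\ convergence via Skorokhod, the martingale term vanishes thanks to \eqref{eq:intro-basic-estim}: integrating by parts against a smooth test function $\phi$ transfers the derivative onto $\phi$, leaving a variance bounded by a constant multiple of $\eps_n T\|\nabla\phi\|_{L^\infty}^2\,\E\|f^{n,g^n}\|_{L^\infty_tL^2}^2\to 0$. The drift $g^n\cdot\nabla f^{n,g^n}$ passes to the limit by weak--strong duality, and uniqueness in Proposition \ref{prop:stability-strong-skeleton} identifies the limit with $f^g$.

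The main obstacle is the verification of \emph{(b)}: although $\eps_n\to 0$, the covariances $Q^{n,\alpha}$ have divergent trace of order $\eps_n^{-1}$, so the vanishing of the martingale term is not automatic and relies essentially on the divergence-free structure of the noise permitting integration by parts onto the test function (this is also where the restriction $\alpha>0$ becomes essential, cf.\ the end of the proof of Lemma \ref{prop-cond-2}). On the drift side, $g^n\cdot\nabla f^{n,g^n}$ only involves weakly convergent $g^n$, so it requires genuine strong compactness of $\nabla f^{n,g^n}$ in some negative-index space; this is in turn provided by the $H^1$-regularity gained from the deterministic parabolic term $\Delta f^{n,g^n}$ in the It\^o form, which is precisely the enhancement-of-viscosity effect at the heart of the scaling limit.
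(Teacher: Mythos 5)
Your overall strategy---applying the Budhiraja--Dupuis--Maroulas criterion directly with $\mathcal{E}_0=L^2$---is not the route the paper takes, and it runs into a genuine obstruction at the a priori estimate on which your whole verification of condition \emph{(b)} rests.

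The claimed pathwise bound $\|f^{n,g^n}\|_{L^\infty_tL^2}^2+\|\nabla f^{n,g^n}\|_{L^2_{t,x}}^2\lesssim_N\|f_0\|_{L^2}^2$ is false. In the It\^o computation for $\|f^{n,g^n}_t\|_{L^2}^2$ the quadratic-variation term is \emph{exactly} $+2\|\nabla f^{n,g^n}\|_{L^2}^2\,\d t$ (this is precisely how the normalization of $\eps_n$ is chosen, so that $\tfrac{\eps_n}{2}\sum_{|k|\le n,i}|k|^{-2\alpha}\sigma_{k,i}\cdot\nabla(\sigma_{k,i}\cdot\nabla\,\cdot\,)=\Delta$), and it cancels the contribution $-2\|\nabla f^{n,g^n}\|_{L^2}^2\,\d t$ of the It\^o-form Laplacian rather than being ``absorbed'' by it with a surplus. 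Since $b$, $\Pi_n g^n$ and the $\sigma_{k,i}$ are divergence free, the $L^p$ norms are in fact pathwise \emph{conserved}: $f^{n,g^n}_t=f_0\circ(\varphi^n_t)^{-1}$ for a measure-preserving flow, so no parabolic smoothing occurs at the level of the stochastic solutions (this is the same reason the paper must exclude $\delta=0$ in the statement). Consequently the ``strong compactness of $\nabla f^{n,g^n}$ in $L^2_tH^{-\eta}$'' you invoke to pass to the limit in $g^n\cdot\nabla f^{n,g^n}$ is not available, and---more seriously---the identification of limit points fails: for $f_0\in L^2$ uniqueness of the skeleton equation \eqref{eq:intro-skeleton-STLE} is only established in the class $L^\infty_tL^2_x\cap L^2_tH^1_x$ (Proposition \ref{prop:stability-skeleton-L2}), whereas a limit point of $f^{n,g^n}$ obtained by compactness is only known to lie in $L^\infty_tL^2_x$; maximal regularity applied to $\nabla\cdot((b+g)f)\in L^2_tH^{-1-d/2-}$ does not recover $L^2_tH^1_x$. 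Without uniqueness in the class actually containing the limit points, condition \emph{(b)} cannot be closed.

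This is exactly why the paper proceeds differently: it first runs the weak convergence method on the spaces $\mathcal{E}^R_0$, $\mathcal{E}^{T,R}$ of \emph{bounded} data and solutions (Proposition \ref{thm-bounded-data}), where the limit is an $L^\infty_{t,x}$ weak solution and uniqueness holds in that class by Proposition \ref{prop:stability-strong-skeleton}; it then transfers the uniform Laplace principle to $f_0\in L^2$ by mollifying the initial datum, using the pathwise contraction $\|f^n-\tilde f^n\|_{L^\infty L^2}\le C_b\|f_0-\tilde f_0\|_{L^2}$ coming from linearity, and controlling the change in the rate functional via the $\Gamma$-convergence of $I_{f_0^\delta}$ to $I_{f_0}$ (Lemma \ref{lem:Gamma-converg} and Theorem \ref{thm-L2-data}). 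Your treatment of the vanishing martingale term and of the weak limit $\Pi_n(g^n)\cdot\nabla f^{n,g^n}$ is in the right spirit, but the argument as written cannot reach $L^2$ initial data without this two-step structure or some substitute for it.
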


In fact, we will prove a slightly stronger statement, namely a Laplace Principle in $C^0_t H^{-\delta}$, uniformly over compact sets of $f_0\in L^2$, see Theorem \ref{thm-L2-data}.
The restriction $\delta>0$ is necessary, as it is known that the sequence $f^n$ does not converge, in the topology of $L^2_{t,x}$, to the solution $\bar{f}$ of $\partial_t \bar{f} +b\cdot\nabla \bar{f}=\Delta\bar{f}$, thus a LDP therein cannot hold.

Theorem \ref{thm:intro-thm1} is not just a direct application of the aforementioned weak convergence approach to large deviations.
Indeed, the stability results needed for it to work only provide an LDP for $f_0\in L^\infty$, see Proposition \ref{thm-bounded-data};
once this is established, we can develop additional arguments, based on the linearity of the equation and the use of $\Gamma$-convergence, to transfer the LDP to general initial data $f_0\in L^2$, see the proof of Theorem \ref{thm-L2-data}.

Regarding the CLT, equation \eqref{eq:intro-abstract-CLT} in this case reads
\begin{equation}\label{eq:intro-CLT-SLTE}
\d X + b\cdot\nabla X\,\d t = \Delta X \,\d t - \d W^\alpha \cdot\nabla \bar{f}.
\end{equation}
We cannot expect to make sense of \eqref{eq:intro-CLT-SLTE} in the classical way. Indeed, $W^\alpha$ enjoys at most spatial regularity $C^{\alpha-d/2-}$; once \eqref{eq:intro-CLT-SLTE} is written in the mild form, even assuming $\bar{f}$ smooth, the regularity of $\int_0^t P_{t-r}(\d W^\alpha_r\cdot\nabla \bar{f}_r)$ (and thus of $X_t$) can be at most $C^{\alpha+1-d/2-}$. In turn this implies $\nabla X_t\in C^{\alpha-d/2-}$, which makes the product $b\cdot\nabla X$ ill-defined.

The solution to the problem comes from a duality formulation of \eqref{eq:intro-CLT-SLTE} (see Definition \ref{defn:weak-sol}), close in spirit to a martingale problem formulation, which makes the equation well posed (Proposition \ref{prop:wellposedness-limit-eq}). Furthermore, setting $X^n:= (f^n-\bar{f})/\sqrt{\eps_n}$, we can show strong convergence of $X^n$ to $X$ with an explicit rate:

\begin{theorem}\label{thm:intro-2}
Let $f_0\in L^2$, $X^n$ defined as above and $X$ solution to \eqref{eq:intro-CLT-SLTE}. Then there exists an exponent $\beta>0$ such that, for any $\eps>0$, it holds
\[
\sup_{t\in [0,T]} \E\big[ \|X^n_t-X_t\|_{H^{-d/2-1}}^2\big] \lesssim n^{-2\beta+\eps} \| f_0\|_{L^2}^2 \quad\forall\, n\in \N.
\]
In particular, the exponent can be chosen as
\[
\beta = [\alpha\wedge (1-\gamma)]\,\bigg(1-\frac{2\alpha}{d}\bigg).
\]
\end{theorem}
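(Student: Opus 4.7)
The plan is to write down the SPDE for the error $R^n := X^n - X$ and control it by duality. From the It\^o form of \eqref{intro-SLTE}, dividing $f^n-\bar f$ by $\sqrt{\eps_n}$ and subtracting \eqref{eq:intro-CLT-SLTE}, one obtains
\begin{equation*}
dR^n + b\cdot\nabla R^n\, dt = \Delta R^n\, dt - \sqrt{\eps_n}\, dW^{n,\alpha}\cdot\nabla X^n + (I-\Pi_n)dW^\alpha\cdot\nabla\bar f, \qquad R^n(0)=0,
\end{equation*}
with two independent noise forcings: a small-noise piece of size $\sqrt{\eps_n}$ multiplying the fluctuation $X^n$, and a high-frequency truncation piece acting on the deterministic profile $\bar f$.

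Because $b$ is only in $L^q_tL^p_x$ and the noise is spatially rough, a direct energy estimate in $H^{-d/2-1}$ is problematic. Instead I would fix $\phi\in H^{d/2+1}$ and solve the backward dual problem $-\partial_t\psi - b\cdot\nabla\psi = \Delta\psi$ with $\psi_T=\phi$, which after time reversal is a standard Krylov--R\"ockner parabolic equation admitting maximal-regularity estimates. Applying It\^o's formula to $\langle R^n_t,\psi_t\rangle$, the deterministic contributions cancel by duality, and an integration by parts (using that both $W^{n,\alpha}$ and $(I-\Pi_n)W^\alpha$ are divergence-free) gives the purely stochastic identity
\begin{equation*}
\langle R^n_T,\phi\rangle = \sqrt{\eps_n}\int_0^T\!\langle X^n_r,\,dW^{n,\alpha}_r\cdot\nabla\psi_r\rangle - \int_0^T\!\langle\bar f_r,\,(I-\Pi_n)dW^\alpha_r\cdot\nabla\psi_r\rangle.
\end{equation*}
Taking expected square and expanding via the Fourier representation \eqref{eq:noise} yields a sum of two explicit variances, one for each noise source, that can be estimated separately.

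For the tail piece, I would use the Peetre-type bound $\|\sigma_{k,i}\cdot\nabla\psi\|_{H^{-s}}\lesssim(1+|k|)^{-s}\|\psi\|_{H^{s+1}}$ together with the $L^2_tH^1$ estimate on $\bar f$ to get a polynomial decay in $n$. For the small-noise piece, which is the more delicate one, I would combine the uniform a priori bound $\E\sup_t\|X^n_t\|_{L^2}^2\lesssim\|f_0\|_{L^2}^2$ (derivable from the quantitative LLN of \cite{FGL21c} applied to $(f^n-\bar f)/\sqrt{\eps_n}$) with an interpolation against the $L^2_tH^1$-dissipation of $f^n-\bar f$ and the parabolic smoothing of $\psi$ through the backward semigroup. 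The scaling $\eps_n\sim n^{-(d-2\alpha)}$ combined with the interpolation exponent produces the factor $1-2\alpha/d$, while the minimum $\alpha\wedge(1-\gamma)$ reflects the bottleneck between the noise regularity $\alpha$ (controlling the Peetre bound) and the Krylov--R\"ockner gap $1-\gamma$ (controlling how much Sobolev regularity of $\psi$ survives the supercritical drift). Finally, setting $\phi=e_m$ and summing with Parseval weights $(1+|m|)^{-(d+2)}$ returns the desired bound on $\E\|R^n_t\|_{H^{-d/2-1}}^2$, uniformly in $t\in[0,T]$.

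The main obstacles I anticipate are: (i) the rigorous justification of the It\^o formula for $\langle R^n_t,\psi_t\rangle$, given that $X$ is only a distributional solution in the duality sense of Definition \ref{defn:weak-sol} and $\psi$ has limited time-regularity — this would be overcome by mollification of $R^n$ and stability in the weak formulation; and (ii) the fine calibration of interpolation exponents and the maximal-regularity bounds for the backward dual PDE with supercritical drift that together deliver the sharp $\beta=[\alpha\wedge(1-\gamma)](1-2\alpha/d)$, with the arbitrarily small loss $+\eps$ absorbing logarithmic slack at the critical Sobolev embeddings and the corresponding Fourier sums.
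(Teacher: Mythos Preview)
Your duality framework and decomposition are exactly the paper's: test against the backward solution $S_{t,r}\varphi$ (the paper uses $\partial_t g+\nabla\cdot(bg)+\Delta g=0$, which is your transport form when $\nabla\cdot b=0$), reduce $\langle X^n_t-X_t,\varphi\rangle$ to two stochastic integrals --- a high-mode piece on $\bar f$ and a piece carrying $f^n-\bar f$ --- and estimate each separately.

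The genuine gap is in the small-noise piece. You assert $\E\sup_t\|X^n_t\|_{L^2}^2\lesssim\|f_0\|_{L^2}^2$, i.e.\ $\E\|f^n_t-\bar f_t\|_{L^2}^2\lesssim\eps_n\|f_0\|_{L^2}^2$; this is false. The paper explicitly notes (after Theorem~\ref{thm:intro-thm1}) that $f^n\not\to\bar f$ in $L^2_{t,x}$, and the quantitative LLN (Proposition~\ref{prop:convergence-rate} here, or \cite{FGL21c}) only gives $\sup_t\E\|f^n_t-\bar f_t\|_{H^{-\delta}}^2\lesssim n^{-2\delta(1-2\alpha/d)+\eps}\|f_0\|_{L^2}^2$ for $\delta>0$, with the rate degenerating as $\delta\downarrow 0$. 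The $L^2_tH^1$-dissipation you invoke next is also unavailable: each $f^n$ is a pure transport solution with no parabolic smoothing. What the paper does for this term (their $I^{l,2}$) is to write, for $\delta<\alpha$,
\[
\|(f^n_r-\bar f_r)\nabla S_{t,r}e_l\|_{H^{-\alpha}}\le\|(f^n_r-\bar f_r)\nabla S_{t,r}e_l\|_{H^{-\delta}}\lesssim\|f^n_r-\bar f_r\|_{H^{-\delta}}\,\|\nabla S_{t,r}e_l\|_{C^{\delta+\eps}},
\]
then feed in Proposition~\ref{prop:convergence-rate} for the first factor and the $C^{1+\delta+\eps}$ bound on $S_{t,r}e_l$ from Lemma~\ref{lem:dual-equation} for the second. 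The constraint $\delta<\alpha$ is needed for the first inequality; $\delta<1-\gamma$ is needed so that the time singularity in $\|R_{t,r}e_l\|_{C^{1+\delta+\eps}}$ remains square-integrable. This is the actual source of $\alpha\wedge(1-\gamma)$, and the factor $1-2\alpha/d$ enters through Proposition~\ref{prop:convergence-rate} (itself proved by the same duality trick). Your attribution of $\alpha$ to a Peetre bound and $1-\gamma$ to maximal regularity of $\psi$ does not match this mechanism; for the tail piece the paper simply uses $|k|^{-\alpha}\le n^{-\alpha}$ plus Bessel, giving the subdominant rate $n^{-2\alpha}$, and your Peetre-type inequality as stated is neither correct nor needed.
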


The statement is a particular subcase of Theorem \ref{thm:CLT-SLTE-general} from Section \ref{subs-CLT-SLTE}; we do not know whether the exponent $\beta$ is sharp.

\subsubsection{Stochastic 2D Euler equations} \label{subsec-intro-SEE}

Next, we consider the vorticity form of stochastic 2D Euler equation driven by transport noise:
\begin{equation}\label{eq:SEE-intro}
\d \xi^n + (K\ast\xi^n)\cdot\nabla \xi^n\, \d t + \sqrt{\eps_n} \circ\d W^{n,\alpha} \cdot \nabla \xi^n=0, \quad \xi^n_0 = \xi_0.
\end{equation}
Here $\xi^n$ stands for the vorticity and $K$ is the Biot-Savart kernel on $\T^2$, thus $K\ast\xi^n$ is the fluid velocity.
The initial data $\xi_0$ is assumed to satisfy $\int_{\T^2} \xi_0(x)\, \d x=0$, a property which is preserved at all subsequent times.

By \cite[Theorem 2.10]{BFM}, equation \eqref{eq:SEE-intro} admits a strong, pathwise unique solution for any $\xi_0\in L^\infty$; moreover it is of the form $\xi^n_t(x)=\xi_0\big((\varphi^n_t)^{-1}(x) \big)$, where $\{\varphi^n_t \}$ is the unique associated measure preserving stochastic flow, see Theorem 2.15 therein.
Instead, for any $\xi_0\in L^2$, we only have the existence of probabilistically weak solutions $\xi^n$, satisfying the $\P$-a.s. pathwise bound $\sup_{t\in [0,T]} \|\xi^n_t \|_{L^2} \leq \| \xi_0\|_{L^2}$ (see \cite[Theorem 2.2]{FGL21a}); their uniqueness is an open problem.

We always interpret the equation analytically in a weak sense: an $\mathcal{F}_t$ adapted process $\xi^n$ solves \eqref{eq:SEE-intro} if for any $\varphi\in C^\infty$, $\P$-a.s. it holds
\[
\<\xi^n_t,\varphi\> = \< \xi_0,\varphi\> + \int_0^t \big\<\xi^n_r, (K\ast\xi^n_r) \cdot\nabla \varphi + \Delta\varphi \big\> \,\d r + \sqrt{\eps_n}\int_0^t \langle \xi^n_r\nabla \varphi, \d W^{n,\alpha}_r\rangle \quad \forall\, t\in [0,T],
\]
where we have passed to the corresponding It\^o form, giving rise to the term $\Delta \varphi$. By the properties of Biot-Savart kernel, estimate \eqref{eq:intro-basic-estim} and the pathwise bounds on $\xi^n$, it is easy to check that all the integrals appearing above are finite.

Given $\xi_0\in L^2$, it was first shown in \cite{FGL21a} that as $n\to\infty$, $\xi^n$ converge in law to $\bar{\xi}$, which is the unique solution to the deterministic 2D Navier-Stokes equation in vorticity form:
\begin{equation}\label{eq:2D-NSE-intro}
\partial_t \bar\xi + (K\ast\bar\xi)\cdot\nabla \bar\xi = \Delta \bar\xi,\quad \bar\xi|_{t=0} =\xi_0.
\end{equation}

The skeleton equation \eqref{eq:intro-skeleton-abstract} in this case reads
\begin{equation}\label{eq:intro-skeleton-euler}
\partial_t \xi + (K\ast\xi)\cdot\nabla \xi + g\cdot\nabla \xi = \Delta\xi,\quad \xi\vert_{t=0}=\xi_0.
\end{equation}
Again we are able to show well-posedness of \eqref{eq:intro-skeleton-euler}, at least for $\xi_0\in L^\infty$, see Lemma \ref{lem:uniq-skeleton-navier-stokes}. In turn, this allows to prove a LDP for the family $\{\xi^n \}_n$:
\begin{theorem}\label{thm:intro-3}
Let $\xi_0\in L^\infty$; then, for any $\delta>0$, the sequence $\{\xi^n \}_n$ of solutions to \eqref{eq:SEE-intro} satisfy a LDP in $C^0_t H^{-\delta}$, with rate function
\begin{equation*}
I_{\xi_0}(\xi):=\inf\bigg\{\frac{1}{2}\int_0^T \| g_s\|_{\cH^\alpha}^2\, \d s : g\in L^2_t \cH^\alpha \text{ such that \eqref{eq:intro-skeleton-euler} holds} \bigg\}.
\end{equation*}
\end{theorem}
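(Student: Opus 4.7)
The plan is to apply the Budhiraja--Dupuis weak convergence approach to large deviations, in the same spirit as for Theorem~\ref{thm:intro-thm1}. Writing $\xi^n=\mathcal{G}^n(W^{n,\alpha})$ for the Borel solution map of \eqref{eq:SEE-intro} (which exists thanks to the pathwise wellposedness of \cite{BFM} in $L^\infty$), proving the stated LDP is equivalent to establishing the corresponding Laplace principle, which by \cite{BDM08} reduces to two conditions. First (C1): for every $M<\infty$, the set $\{\xi^g : g\in L^2_t\cH^\alpha,\, \tfrac{1}{2}\int_0^T\|g_s\|_{\cH^\alpha}^2\,\d s\leq M \}$ is compact in $C^0_t H^{-\delta}$, where $\xi^g$ is the unique solution to \eqref{eq:intro-skeleton-euler} given by Lemma~\ref{lem:uniq-skeleton-navier-stokes}. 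Second (C2): for any sequence of predictable $\cH^\alpha$-valued controls with $\|g^n\|_{L^2_t\cH^\alpha}^2\leq M$ almost surely, converging in law to $g$ in the weak topology of $L^2_t\cH^\alpha$, the Girsanov-shifted controlled processes
\begin{equation*}
\d\xi^{n,g^n} + (K\ast\xi^{n,g^n})\cdot\nabla\xi^{n,g^n}\,\d t + g^n\cdot\nabla\xi^{n,g^n}\,\d t + \sqrt{\eps_n}\circ \d W^{n,\alpha}\cdot\nabla\xi^{n,g^n} = 0
\end{equation*}
converge in law to $\xi^g$ in $C^0_t H^{-\delta}$.

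The first step toward both (C1) and (C2) is to derive uniform estimates. Since $\div g^n=0$, $\xi_0\in L^\infty$ and the Stratonovich noise is also divergence free, all $L^p$ norms of $\xi^{n,g^n}$ are pathwise conserved, yielding $\|\xi^{n,g^n}\|_{L^\infty_{t,x}}\leq \|\xi_0\|_{L^\infty}$; a standard It\^o $L^2$-energy estimate should further give $\sup_n \E\bigl[\|\xi^{n,g^n}\|_{L^\infty_t L^2_x}^2 + \|\nabla\xi^{n,g^n}\|_{L^2_{t,x}}^2\bigr]<\infty$. Combining these with temporal equicontinuity in a negative Sobolev norm read off from the equation, I would obtain tightness of $\{\xi^{n,g^n}\}$ in $C^0_t H^{-\delta}$ for every $\delta>0$, and in fact strong precompactness in $L^2_{t,x}$ via Aubin--Lions.

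The second step is identification of the limit. Using Skorokhod's representation on the product space carrying $(g^n,W^{n,\alpha},\xi^{n,g^n})$, the linear terms in the weak formulation pass to the limit directly; the stochastic term vanishes in probability thanks to the It\^o isometry \eqref{eq:intro-basic-estim} and $\eps_n\to 0$. The Euler nonlinearity, rewritten as $\div\bigl((K\ast\xi^{n,g^n})\,\xi^{n,g^n}\bigr)$, passes by combining strong $L^2_{t,x}$ convergence of $\xi^{n,g^n}$ with the smoothing of Biot--Savart. The delicate drift term $g^n\cdot\nabla\xi^{n,g^n}=\div(g^n\,\xi^{n,g^n})$ is the pairing of a weakly convergent $g^n$ in $L^2_t\cH^\alpha$ with a strongly convergent $\xi^{n,g^n}$ in $L^2_{t,x}$, which is precisely enough to identify the limit as a weak solution of \eqref{eq:intro-skeleton-euler} with control $g$; by Lemma~\ref{lem:uniq-skeleton-navier-stokes} this limit equals $\xi^g$ and uniqueness upgrades convergence in law to the whole sequence. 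Specialising the same reasoning to a deterministic weakly convergent sequence $g^n\rightharpoonup g$ in $L^2_t\cH^\alpha$ yields the continuity of $g\mapsto \xi^g$ on sublevel sets, hence (C1).

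The main obstacle is the supercritical character of the control: $g$ lies only in $L^2_t\cH^\alpha$, which does not embed into $L^p_x$ for any $p>2$, so no classical well-posedness theory applies directly to \eqref{eq:intro-skeleton-euler}. What saves the argument is that $2$D vorticity is $L^\infty$-bounded, so that the Biot--Savart velocity $K\ast\xi^g$ is regular enough to play the role of the \emph{good} advecting field in the uniqueness proof of Lemma~\ref{lem:uniq-skeleton-navier-stokes}, while the rough part $g$ is handled via the weak/strong compensated compactness described above. I expect the careful verification of this compensated passage to the limit in the drift term (together with making sure the Skorokhod--Girsanov setup is compatible with the projected Wiener process $W^{n,\alpha}=\Pi_n W^\alpha$) to be the most delicate point of the proof.
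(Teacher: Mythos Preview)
Your overall strategy matches the paper's: apply the Budhiraja--Dupuis--Maroulas criterion by verifying compactness of skeleton solutions and convergence of controlled processes (the paper's Lemma~\ref{prop-S2EE-cond-1} and Proposition~\ref{prop-S2EE-cond-2}). The identification of the nonlinear term and the role of Lemma~\ref{lem:uniq-skeleton-navier-stokes} are also right.

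There is, however, a genuine gap in the first step. The energy estimate you invoke, $\sup_n \E\bigl[\|\nabla\xi^{n,g^n}\|_{L^2_{t,x}}^2\bigr]<\infty$, is \emph{false}. The Stratonovich transport noise is conservative: when you compute $\d\|\xi^{n,g^n}\|_{L^2}^2$ via It\^o's formula, the martingale term vanishes (divergence-free noise), and the It\^o correction $\eps_n\sum_{|k|\leq n,i}|k|^{-2\alpha}\|\sigma_{k,i}\cdot\nabla\xi^{n,g^n}\|_{L^2}^2$ exactly cancels the $-2\|\nabla\xi^{n,g^n}\|_{L^2}^2$ coming from the Laplacian---this is precisely the calibration of $\eps_n$ that makes the Stratonovich and It\^o forms equivalent with the $\Delta$ appearing. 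So $\|\xi^{n,g^n}_t\|_{L^2}$ is pathwise constant and there is \emph{no} $H^1$ gain for the stochastic solutions; this is the whole point of the hyperbolic-to-parabolic scaling limit (cf.\ the remark after Theorem~\ref{thm:intro-thm1} that $f^n$ does not converge in $L^2_{t,x}$).

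Without $H^1$, Aubin--Lions gives you tightness only in $C^0_t H^{-\delta}$, not strong $L^2_{t,x}$ compactness, and your compensated-compactness argument for $\nabla\cdot(g^n\,\xi^{n,g^n})$ breaks down as stated. The paper's fix (see the end of the proof of Lemma~\ref{prop-cond-2}, reused in Proposition~\ref{prop-S2EE-cond-2}) is to exploit the \emph{positive} spatial regularity of the control: for $\varphi\in C^\infty$ one has $\Pi_n(g^n)\varphi\in L^2_t H^\alpha$ uniformly in $n$, while $\xi^{n,g^n}\to\xi$ strongly in $C^0_t H^{-\alpha}$; hence $\int_0^t\langle\Pi_n(g^n_s)(\xi^{n,g^n}_s-\xi_s),\varphi\rangle\,\d s\to 0$. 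This is exactly where the hypothesis $\alpha>0$ on the noise enters and why the limiting covariance cannot be white in space.
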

In fact, we will obtain a slightly stronger statement, concerning the Laplace principle on a suitable Polish space $\cE^{T,R}$, uniformly over compact sets of $\xi_0\in \cE^R_0$, see Theorem \ref{thm-LDP-S2EE} for more details.
The restriction to $\xi_0\in L^\infty$ is needed in order to apply the results from \cite{BFM} and \cite{BDM08}; it is an open problem to understand whether the LDP can be extended to the case $\xi_0\in L^2$, as accomplished for linear transport equations.

Next, we pass to study the behaviour of the fluctuation $\Xi^n = (\xi^n -\bar\xi)/\sqrt{\eps_n}$. In this case we allow $\xi_0\in L^2$, so that we are dealing with probabilistically weak solutions.
This means that, in order to be rigorous, the stochastic basis $(\Omega, \mathcal{F}_t, \P; W^\alpha)$ should depend on $n$, but let us drop such dependence for notational simplicity. Uniqueness in law for $\xi^n$ is not known, but the results presented below will hold for \textit{any} sequence $\{\xi^n \}_n$ of weak solutions to \eqref{eq:SEE-intro} satisfying the $\P$-a.s. pathwise bound $\sup_{t\in [0,T]} \| \xi^n_t\|_{L^2} \leq \| \xi_0\|_{L^2}$.

We expect $\Xi^n$ converges to $\Xi$, which solves the equivalent of equation \eqref{eq:intro-abstract-CLT}, namely
\begin{equation}\label{eq:intro-CLT-euler}
\d \Xi + \big[ (K\ast\Xi)\cdot\nabla \xi + (K\ast\xi)\cdot\nabla \Xi\big]\, \d t + \d W^{\alpha}\cdot \nabla \xi= \Delta \Xi\,\d t.
\end{equation}
Contrary to the approach we adopted to treat equation \eqref{eq:intro-CLT-SLTE}, here we are able to give an analytical meaning to \eqref{eq:intro-CLT-euler} once it is written in the mild form, see Proposition \ref{prop:CLT-limit-analytic}.
Also in this case, we can establish explicit rates of convergence:
\begin{theorem}\label{thm:intro-4}
Let $\xi_0\in L^2$, $\{\xi^n \}_n$ be a sequence of weak solutions to \eqref{eq:SEE-intro}, $\Xi^n$ defined as above and $\Xi$ solving \eqref{eq:intro-CLT-euler}. Then there exists a $\beta>0$ such that, for any $\eps>0$, it holds
\[
\sup_{t\in [0,T]} \E\big[ \|\Xi^n_t-\Xi_t\|_{H^{-1}}^2\big] \lesssim n^{-\beta+\eps} \quad\forall\, n\in \N.
\]
In particular, the exponent can be chosen as $\beta=\alpha(1-\alpha)$.
\end{theorem}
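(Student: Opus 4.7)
My plan is to derive a linear SPDE for the difference $D^n := \Xi^n - \Xi$ and to estimate its $H^{-1}$ norm via an energy/It\^o argument, treating the remainder as an inhomogeneity. Starting from the It\^o form of (\ref{eq:SEE-intro}) (with Stratonovich-to-It\^o correction giving the $\Delta\xi^n$ term as observed after (\ref{eq:SEE-intro})), substituting $\xi^n = \bar\xi+\sqrt{\eps_n}\Xi^n$, expanding the quadratic nonlinearity and dividing by $\sqrt{\eps_n}$, and then subtracting (\ref{eq:intro-CLT-euler}) for $\Xi$, I obtain
\begin{align*}
\d D^n &+ \big[(K\ast D^n)\cdot\nabla\bar\xi + (K\ast\bar\xi)\cdot\nabla D^n - \Delta D^n\big]\,\d t \\
&= -\sqrt{\eps_n}(K\ast\Xi^n)\cdot\nabla\Xi^n\,\d t - (\d W^{n,\alpha}-\d W^\alpha)\cdot\nabla\bar\xi - \sqrt{\eps_n}\,\d W^{n,\alpha}\cdot\nabla\Xi^n.
\end{align*}
The left-hand side is a linear parabolic operator in $D^n$ with transport coefficient $K\ast\bar\xi$ and lower-order vortex-stretching term $(K\ast\cdot)\cdot\nabla\bar\xi$.

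Next I will apply It\^o's formula to $\|D^n_t\|_{H^{-1}}^2$ (equivalently, test the mild formulation given by Proposition \ref{prop:CLT-limit-analytic} against $(-\Delta)^{-1}D^n$). The drift term vanishes by divergence-freeness; the stretching term $(K\ast D^n)\cdot\nabla\bar\xi = \mathrm{div}((K\ast D^n)\bar\xi)$ contributes a Gronwall-compatible expression thanks to the a priori bounds $\bar\xi \in L^\infty_t L^2_x\cap L^2_t H^1_x$ and the smoothing of the Biot-Savart kernel on $\T^2$, which maps $L^2$ into every $L^p$ with $p<\infty$. The diffusion yields $-\|D^n\|_{L^2}^2$, which will be used to absorb certain cross-terms. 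What remains is to estimate the three forcing terms in $H^{-1}$ up to Gronwall-compatible contributions.

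For the noise truncation $(\d W^{n,\alpha}-\d W^\alpha)\cdot\nabla\bar\xi$, It\^o isometry combined with the basis expansion (\ref{eq:noise}) restricted to $|k|>n$ and the identity $\sigma_{k,i}\cdot\nabla\bar\xi = \mathrm{div}(\sigma_{k,i}\bar\xi)$ gives, after a careful Parseval computation, a bound of order $\sum_{|k|>n}|k|^{-2\alpha-2}\int_0^T\|\bar\xi_r\|_{H^1}^2\,\d r \lesssim n^{-2\alpha}$, the time integral being finite by parabolic smoothing of 2D Navier--Stokes. For the stochastic coupling $\sqrt{\eps_n}\,\d W^{n,\alpha}\cdot\nabla\Xi^n$, It\^o isometry together with the normalisation $\eps_n\,\mathrm{Tr}(Q^{n,\alpha}) = d$ produces a bound in terms of $\E\|\Xi^n\|$ in some fractional norm, which is converted to a rate via the quantitative LLN estimate from \cite{FGL21c}. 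For the quadratic forcing, rewriting it in divergence form $\eps_n^{-1/2}\mathrm{div}((K\ast(\xi^n-\bar\xi))(\xi^n-\bar\xi))$ and invoking the Biot-Savart smoothing yields a bound in a Sobolev space slightly weaker than $H^{-1}$; the heat semigroup smoothing inside the mild formula then recovers $H^{-1}$ at polynomial cost. Interpolating the $H^{-s}$ LLN rate against the pathwise $L^\infty_t L^2_x$ bound and balancing with the noise truncation rate $n^{-2\alpha}$ should produce the exponent $\beta = \alpha(1-\alpha)$.

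The principal obstacle is the quadratic forcing: it combines two copies of the CLT-order object $\Xi^n$ with an amplification $\eps_n^{-1/2}$, leaving essentially no margin. Its control requires a sharp quantitative LLN estimate in a carefully chosen fractional negative Sobolev norm, and the optimisation of this choice against the noise truncation rate is precisely what yields $\alpha(1-\alpha)$. Once all three forcings have been bounded at this rate, a final Gronwall inequality (using the parabolic dissipation $\|D^n\|_{L^2}^2$ to absorb the borderline stretching contributions, and the $L^\infty_t L^2$ vorticity bound for the remaining product estimates) closes the estimate uniformly in $t\in[0,T]$ and delivers the stated convergence $\sup_t\E\|D^n_t\|_{H^{-1}}^2 \lesssim n^{-\alpha(1-\alpha)+\eps}$.
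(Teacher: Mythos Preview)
Your overall structure is reasonable and close in spirit to the paper's mild-formulation argument, but there is a genuine gap in the treatment of the quadratic forcing $\sqrt{\eps_n}\,(K\ast\Xi^n)\cdot\nabla\Xi^n = \eps_n^{-1/2}\,\nabla\cdot\big((K\ast\tilde\xi^n)\,\tilde\xi^n\big)$, where $\tilde\xi^n=\xi^n-\bar\xi$. The only $n$-uniform information on $\tilde\xi^n$ is the pathwise $L^\infty_t L^2_x$ bound (giving no rate) and the quantitative LLN $\E\|\tilde\xi^n\|_{C^0 H^{-\delta}}^p \lesssim \eps_n^{p(\delta/2-\eps)}$, which holds only for $\delta\in(0,1)$. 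Any product estimate for $(K\ast\tilde\xi^n)\,\tilde\xi^n$ in a negative Sobolev space (whether you put the LLN rate on one factor or both, and whether you interpolate with $L^2$ or not) produces at best a bound of order $\eps_n^{\delta/2-}$ with $\delta<1$; after multiplying by $\eps_n^{-1/2}$ this is $\eps_n^{(\delta-1)/2-}$, which \emph{diverges}. No amount of heat-kernel smoothing fixes this, because the deficit is in the size, not the regularity. So the term you flag as ``the principal obstacle'' cannot be handled as a pure forcing in the way you describe, and the claimed balancing to produce $\alpha(1-\alpha)$ does not go through.

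The paper avoids this by a different algebraic splitting: instead of writing the nonlinear remainder with two copies of $\Xi^n$, it writes $(K\ast\Xi^n)\cdot\nabla\xi^n-(K\ast\Xi)\cdot\nabla\xi = (K\ast D^n)\cdot\nabla\xi^n + (K\ast\Xi)\cdot\nabla(\xi^n-\xi)$. The first piece is absorbed into Gronwall exactly as your $(K\ast D^n)\cdot\nabla\bar\xi$ term is (using only $\xi^n\in L^\infty_t L^2_x$). The second piece now carries the \emph{limit} $\Xi$, for which uniform-in-$n$ moment bounds in $C^0_t H^{\beta-1}$, $\beta\in(0,\alpha)$, are available from the well-posedness of \eqref{eq:intro-CLT-euler} (Corollary~\ref{cor:CLT-limit-eq}); combined with the LLN rate on $\xi^n-\xi$ in $H^{-\delta_1}$ for $\delta_1<\beta$, this gives a genuinely small contribution $\eps_n^{\delta_1-}$. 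In other words, the missing idea in your argument is to trade one factor $\Xi^n$ for $\Xi$ (at the cost of an extra Gronwall term), so that the a~priori bounds on the limiting Gaussian field can be brought in. Once you make this substitution, your energy/It\^o scheme can be made to work, though the paper finds it cleaner to run the whole argument in mild form in $H^{\beta-1}$ and only at the end embed into $H^{-1}$ by choosing $\beta=\alpha/2$.
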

The result follows from the more general Theorem \ref{thm:CLT-SEE-1} and Corollary \ref{cor:CLT-euler-1}. Let us finally remark that Theorem \ref{thm:intro-4} implies that
\[
\sup_{t\in [0,T]} \E\big[ \|\xi^n_t-\bar{\xi}_t\|_{H^{-1}}^2\big] \sim \eps_n ,
\]
and thus it improves \cite[Theorem 1.1-(i)]{FGL21c} and shows that the latter is almost sharp, at least in the special case of the noises $\{W^{n,\alpha}\}$ considered in this work.

\begin{remark}\label{rem:other-noise}
As mentioned above, we excluded from our analysis the choice of the noise adopted in \cite{FlaLuo21}; it corresponds in the abstract setting of \eqref{eq:intro-abstract-SPDE} to $\tilde{W}^n= \sqrt{\nu \eps_n}\, W^n$, for $W^n$ with associated covariance $Q^n = \tilde{\Pi}_n (-\Delta)^{-\alpha} \Pi$, where now $\tilde{\Pi}_n$ denotes the operator that acts as a Fourier multiplier by $\tilde\Pi_n e_k = e_k \mathbbm{1}_{n\leq |k|\leq 2n}$.
At a technical level this does not create any particular issue, but since now the operators $Q^n$ converge weakly to $0$, the corresponding LDP and CLT become degenerate.
Specifically, the candidate skeleton equation \eqref{eq:intro-skeleton-abstract} now must be replaced by equation \eqref{eq:intro-limit-pde}, regardless the choice of $g$, so that the resulting rate function $I_{f_0}$ assigns value $0$ to the unique solution $f$ to \eqref{eq:intro-limit-pde} with initial value $f_0$ and $+\infty$ to any other function.
Similarly in the CLT, instead of the SPDE \eqref{eq:intro-abstract-CLT}, one obtains the corresponding PDE without stochastic forcing, whose solution is necessarily $0$ due to linearity and the prescribed initial condition $0$.
\end{remark}

\subsection{Relations with the existing literature} \label{subsec-references}

Let us discuss here some links between our work and previous contributions, organized by different topics.

\textit{Stochastic fluid dynamics and turbulence.}
An idealised description of the effects of small, possibly turbulent, fluid scales by means of a white-in-time, coloured-in-space noise (in the style of $W^\alpha$) was first proposed by Kraichnan in the context of passive scalar turbulence (namely eq. \eqref{intro-SLTE}), see the works \cite{Kra1, Kra2}, as well as the reviews \cite{CGHPV, FaGaVe}; the value $\alpha=d/2+2/3$ corresponds to Kolmogorov's K41 theory of turbulence, see \cite{Frisch}.
The noise $W^\alpha$ in consideration here belongs to the class of measure preserving, isotropic Brownian flows as defined in \cite{Kunita}, which have been studied in detail in \cite{LeJan,BaxHar} and later revisited \cite{LeJRai, LeJRai2}.
Early mathematical attempts to incorporate transport noise in nonlinear fluid dynamics equations go back to Inoue and Funaki \cite{InoFun}; their idea, based on a variational principle, has been later substantially revisited by Holm \cite{Holm15} and expanded to several classes of equations, see \cite{DrHo, CHLN} and the references therein.
A different, more Lagrangian-flavoured approach was proposed in \cite{BCF91, BCF92}, where Brz\'ezniak et al. derived stochastic Navier--Stokes equations with transport noise by computing the stochastic material derivative along the trajectories of fluid particles. More complete studies of such models were then presented by Mikulevicius and Rozovskii \cite{MikRoz04, MikRoz}, establishing the well-posedness of the stochastic Navier--Stokes equations.
Yet another model for turbulent fluid dynamical equations, incorporating transport noise, has been proposed by M\'emin in \cite{Memin}.
In all of the above cases, the adoption of Stratonovich noise is physically justified  by the Wong--Zakai principle \cite{WonZak}, which is known to hold also for SPDEs, see e.g. \cite{BreFla}.
An heuristic argument for the introduction of transport noise, based on a separation of scales, can be found in \cite{FlaLuo21}, which has been made rigorous by Flandoli and Pappalettera \cite{FlaPap, FlaPap21} using methods of stochastic model reduction; see \cite{DebPap22} for more general results. Recently, advection-diffusion equations with Stratonovich transport noise have been derived by homogenization techniques in \cite{Fehrman}.

\textit{Regularisation by noise.} It was first shown in \cite{FGP} that Stratonovich transport noise can improve the well-posedness theory of transport equations; this phenomenon, nowadays referred to regularisation by noise, has been treated extensively also in the context of nonlinear SPDE; see \cite{Flandoli} for an (incomplete) survey.
Already in \cite[Section 6.2]{FGP} the authors realised that too simple noise cannot regularise nonlinear PDEs like Euler equations; however more complicated, space-dependent noise can be more promising, like in \cite{FGP2} where it is shown to (slightly) regularise the dynamics of point vortices.
In the aforementioned works \cite{FlaLuo21, FGL21b}, we were able to exploit the scaling limit of \eqref{eq:intro-abstract-SPDE} to \eqref{eq:intro-limit-pde} to show that for a large class of models transport noise can delay or even suppress blow-up with high probability. Similar techniques have been recently applied to averaged Navier--Stokes systems in \cite{Lange} and reaction diffusion equations in \cite{Agresti}; let us also mention that \cite{Agresti} discusses a nice and natural interpretation of the scaling of \eqref{eq:intro-abstract-SPDE} to \eqref{eq:intro-limit-pde} in the context of homogenization.

\textit{Large deviations for SPDEs.}
The weak convergence approach to large deviations  developed in \cite{BD00, BDM08} is nowadays a very popular method for establishing LDPs for the laws of solutions to SPDEs.
Among the first applications in stochastic fluid dynamics there is the work of Sritharan and Sundar \cite{SS06} for $2$D Navier--Stokes equations driven by multiplicative noise.
Subsequent applications include: a Boussinesq model for the B\'enard convection under random influences as in \cite{DuanMillet09}; a class of abstract nonlinear stochastic models, covering $2$D Navier--Stokes equations, $2$D MHD models and shell models of turbulence in \cite{ChuMillet10}; stochastic models of incompressible second grade fluids in \cite{ZhaiZhang17}; $3$D stochastic primitive equations in \cite{DongZhaiZhang17};
stochastic porous media equations in \cite{RZhang20}.
Bessaih and Millet \cite{BessMillet12} proved a LDP for the solution of $2$D stochastic Navier--Stokes equations with vanishing viscosity and noise intensity proportional to the square root of the viscosity;
Cerrai and Debussche \cite{CerDeb19b} instead considered $2$D Navier--Stokes equation with Gaussian random forcing, establishing LDPs in the regime where the strength of the noise and its correlation are correctly tuned and both vanishing; observe the analogy with our setting, where the Fourier cutoff in the definition of $Q^n$ may be thought of as measuring the correlation scale of the noise. Further interesting applications of the weak convergence approach to SPDEs (outside of fluid dynamics) include \cite{BrGoJe17, CerDeb19a,DoWuZhZh}.
On other remarkable works on (possibly singular) SPDEs in the style of \cite{CerDeb19b}, where one cannot naively apply the weak convergence approach, but which require a fine tuning of the correlations of the noise (as measured e.g. by a mollification parameter $\delta$) and its intensity (and possibly keep track of the renormalization constants), let us mention Hairer and Weber \cite{HaiWeb} for Allen-Cahn equations, Mariani \cite{Mariani} for stochastic conservation laws and Gess and Fehrman \cite{FehGes1} for stochastic porous-media equations.

\textit{Central Limit Theorems for SPDEs.}
A classical reference proving CLTs for various interacting particle systems is the work of Holly and Stroock \cite{HolStr}, who also introduced the theory of generalized Ornstein-Uhlenbeck processes.
Several CLTs for fluid dynamics models with multiplicative noise can be found in the literature, including: $2$D Navier--Stokes \cite{WangZhaiZhang15}, $2$D primitive equations \cite{ZhangZhouGuo19} and stochastic reaction-diffusion equations \cite{XiongZhang21}. The aforementioned works also employ the weak convergence approach to LDP to establish moderate deviation principles (MDP); we expect an MDP to hold also in our setting, up to readapting the proofs.
Another class of CLTs for parabolic SPDEs with multiplicative noise can be found in \cite{CKNP, HuLiWa, HNVZ}.
All of the above works however do not include the case of multiplicative \textit{transport} noise; in this sense, a much closer paper to our setting is the one by Slav\'ik \cite{Slavik21}, proving an LDP, MDP and CLT for stochastic $3$D viscous primitive equations.
At the same time the equation considered in \cite{Slavik21} is truly parabolic, while our setting is of hyperbolic nature with conservative noise, which makes it closer to the results obtained in \cite{DiFeGe} (resp. \cite{GeGvKo}), treating conservative local (resp. nonlocal) SPDEs. Like here, \cite{Slavik21} and \cite{DiFeGe} establish strong convergence for the fluctuations, while \cite{GeGvKo} even provides a rate of convergence which is shown to be optimal.
An alternative pathwise approach to CLTs for SPDEs is being developed in \cite{Gussetti}, in the context of Landau-Lifschitz equation.
Let us conclude by mentioning the nice analogy of our setting to that of McKean-Vlasov particle systems, like those treated classically in \cite{Tanaka}: therein the empirical measure $\mu^N=N^{-1} \sum_{i=1}^N \delta_{X^i}$ can be shown to solve an SPDE, which is of the form \eqref{eq:SEE-intro} up to replacing our noise $\sqrt{\eps_n}\circ \d W^{n,\alpha} \cdot \nabla \xi^n = \sqrt{\eps_n}\, \nabla\cdot(\xi^n \circ \d W^{n,\alpha})$ with the \textit{random current}
\[
\nabla \cdot\bigg(\frac{1}{N}\sum_{i=1}^N \delta_{X^i} \circ \d B^i\bigg) =: \frac{1}{\sqrt{N}} \nabla \cdot J^N.
\]
Similarly to our hyperbolic setting, the empirical measures $\mu^N$ do not improve their regularity over time, yet they converge as $N\to\infty$ to the solution $\bar{\mu}$ of a deterministic parabolic Fokker--Plack PDE.
In this setting, it can be shown that the currents $J^N$ converge in law to a non-trivial limit as $N\to\infty$ and that the fluctuations $\sqrt{N}(\mu^N-\bar\mu)$ converge in law to an Ornstein-Uhlenbeck process; see the work of Fernandez and M\'el\'eard \cite{FerMel} for regular interaction kernels and more recently Wang, Zhao and Zhu \cite{WaZhZh} for singular kernels.

\subsection{Notations and conventions}\label{subs-notations}
Throughout the paper, we fix a $T>0$ and assume the functional spaces have time variable $t\in [0,T]$.
The notation $x\lesssim y$ stands for $x\leq C y$ for some constant $C>0$; we write $x \lesssim_\lambda y$ to stress the dependence $C=C(\lambda)$ and $x\sim y$ if $x\lesssim y$ and $y\lesssim x$.
We shall write $x\cdot y$ to denote inner product on $\R^d$, $|x|^2= x\cdot x$; $\<\cdot, \cdot\>$ stands for the inner product in $L^2$ or the duality between functions and distributions.

For $m\geq 1$, $p,q\in [1,\infty]$, we write $L^q_t L^p_x$ for the space $L^q(0,T; L^p(\T^d, \R^m))$, with norm
\[
\| f\|_{L^q L^p}
:= \bigg( \int_0^T \| f_s\|_{L^p}^q \,\d s\bigg)^{1/q}
= \bigg( \int_0^T \Big( \int_{\T^d} |f_s(x)|^p\, \d x\Big)^{q/p} \d s\bigg)^{1/q}.
\]
If $p=q$ then we will simply write $L^p_{t,x}$ with norm $\| \cdot\|_{L^p_{t,x}}$; instead, $L^p$ and $L^p_x$ will be used as a shortcut for $L^p(\T^d, \R^m)$ with norm $\| \cdot\|_{L^p}$.
As already mentioned, $H^s=H^s(\T^d, \R^m)\, (s\in \R)$ will denote standard fractional Sobolev spaces, while $C^\beta=C^\beta(\T^d, \R^m)\, (\beta\geq 0)$ will denote H\"older spaces,  with respective norms $\| \cdot\|_{H^s}$ and $\| \cdot \|_{C^\beta}$.
Given a Banach space $E$, $\beta\in (0,1)$, we set $C^\beta_t E := C^\beta([0,T];E)$ endowed with
\[
\| f\|_{C^\beta E} = \sup_{t\in [0,T]} \| f_t\|_E + \llbracket f \rrbracket_{C^\beta E}, \quad \llbracket f \rrbracket_{C^\beta E} =\sup_{s\neq t} \frac{\|f_t-f_s \|_E}{|t-s|^\beta} .
\]
Similarly, we consider $C^0 _tE$ endowed with the supremum norm and $L^q_t E$ with
\[
\| f\|_{L^q_t E} = \bigg( \int_0^T \| f_s\|_E^q\, \d s \bigg)^{1/q}.
\]
Such definitions apply for the above choices of $E$, allowing to define $L^2_t \cH^\alpha$, $C^\beta_t C^1$, $C^0_t H^s$, etc.
Finally, we write $P_t$ for the heat semigroup $e^{t\Delta},\, t\geq 0$.

\medskip

\noindent \textbf{Structure of the paper.}
Section \ref{sect-LDPs} is devoted to the proofs of the large deviation principles.
More specifically, we first recall briefly the weak convergence approach for LDP in Section \ref{subs-LDP-abstract}, then we prove LDPs for solutions to stochastic linear transport equations (Section \ref{subsec-LDP-SLTE}) and to stochastic 2D Euler equations (Section \ref{subs-LDP-SEE});
finally, we discuss further generalizations in Section \ref{sec:conclusive-LDP}.
Section 3 instead revolves around the study of the Gaussian fluctuations; strong convergence results with explicit rates, for the same equations, are proved respectively in Sections \ref{subs-CLT-SLTE} and \ref{subs-CLT-SEE}. Some concluding remarks are given in Section \ref{sec:conclusive-CLT}.
In Appendix \ref{appendix} we collect some technical results used in the paper.

\medskip

\noindent \textbf{Acknowledgements.} LG is funded by the DFG under Germany's Excellence Strategy - GZ 2047/1,
project-id 390685813. DL would like to thank the financial supports of the National Key R\&D Program of China (No. 2020YFA0712700), the National Natural Science Foundation of China (Nos. 11931004, 12090014), and the Youth Innovation Promotion Association, CAS (Y2021002). Both authors thank Massimo Sorella for pointing out the reference \cite{BCC}.

\section{Large Deviation Principles} \label{sect-LDPs}

This section consists of four parts.
In Section \ref{subs-LDP-abstract}, we recall the abstract framework of weak convergence approach for LDPs.
We then establish in Section \ref{subsec-LDP-SLTE} a LDP for solutions to stochastic transport equations in three steps: i) check well-posedness of skeleton equation; ii) prove a LDP for $L^\infty$-initial data by weak convergence method; iii) transfer it to $L^2$-initial data by $\Gamma$-convergence arguments.
Similar techniques are applied in Section \ref{subs-LDP-SEE} to prove a LDP for solutions to \eqref{eq:SEE-intro} with $L^\infty$-initial vorticity.
Finally, Section \ref{sec:conclusive-LDP} contains some further remarks and possible future problems.

\subsection{Weak convergence approach for LDPs}\label{subs-LDP-abstract}

We recall here several results from \cite[Section 2]{BDM08}, which gives an abstract framework of LDP; see also \cite[Section 3]{SS06} for a concise introduction.

Let $\big(\Omega, \F, \{\F_t\}_{t\in [0,T]}, \P\big)$ be a stochastic basis satisfying the usual conditions. Let $U$ be a Hilbert space and $Q$ a trace class operator on $U$; the subspace $U_0 = Q^{1/2}(U)$, endowed with the inner product
\begin{equation*}
\<g,h\>_0 = \big\<Q^{-1/2}g, Q^{-1/2}h \big\>_U, \quad g,h\in U_0,
\end{equation*}
is also a Hilbert space; we write the norm in $U_0$ as $|\cdot |_0$. Let $\{W(t) \}_{t\in [0,T]}$ be a $Q$-Wiener process on $U$; then it is a cylindrical Wiener process on $U_0$. Define
\begin{equation}\label{eq:defn-S^M}
S^M=S^M(U_0):= \bigg\{ v\in L^2_t U_0: \int_0^T |v(s)|_0^2 \,\d s \leq M \bigg\},
\end{equation}
which is a Polish space when endowed with the weak topology; for any $v\in S^M$, we shall write $\text{Int}(v)(\cdot)= \int_0^\cdot v(s)\,\d s$. We denote by $\mathcal{P}_2(U_0)$  the class of $U_0$-valued $\mathcal{F}_t$-predictable processes $u$ such that $\int_0^T |u(s)|^2_0\, \d s<\infty$ $\P$-a.s. and set
\begin{equation}\label{eq:defn-P^M_2}
\mathcal{P}^M_2=\mathcal{P}^M_2(U_0):= \big\{ u\in \mathcal{P}_2(U_0): u(\cdot,\omega)\in S^M(U_0) \mbox{ for } \P\text{-a.s. } \omega \big\}.
\end{equation}
In the following, $\mathcal{E}$ and $\mathcal{E}_0$ denote Polish spaces.

\begin{definition}
A function $I: \mathcal{E}\to [0,\infty]$ is called a rate function if for any $M<\infty$, the level set $\{f\in \mathcal{E}: I(f)\leq M\}$ is a compact subset of $\mathcal{E}$.
A family of rate functions $I_x$ on $\mathcal{E}$, parametrized by $x\in\mathcal{E}_0$,
is said to have compact level sets on compacts if for all compact subsets $\mathcal K$ of $\mathcal{E}_0$ and each $M<\infty$, $\cup_{x\in \mathcal K} \{f\in \mathcal{E}: I_x(f)\leq M\}$ is a compact subset of $\mathcal{E}$.
\end{definition}

Let $I$ be a rate function on $\mathcal{E}$; for any Borel set $A\in \mathcal{B}(\mathcal{E})$, set $I(A):=\inf_{f\in A} I(f)$. Recall that a family $\{X^\eps\}$ of $\mathcal{E}$-valued random variables is said to safisfy a large deviation principle (LDP) with speed $\eps$ and rate function $I$ if
\begin{equation*}
-I(A^\circ) \leq \liminf_{\eps\to 0} \eps \log \P(X^\eps\in A) \leq \limsup_{\eps\to 0} \eps \log \P(X^\eps\in A) \leq -I(\bar{A})\quad \forall\, A\in \mathcal{B}(\mathcal{E}),
\end{equation*}
where $A^\circ$ and $\bar{A}$ denote respectively the interior and closure of $A$.
A classical result in large deviations theory is that on Polish space the LDP is equivalent to the Laplace principle, see \cite[Theorem 1]{BDM08}.

In practical cases, the random variables $X^\eps$ are often strong solutions to some S(P)DEs driven by $\sqrt\eps W$ with initial data $x\in\mathcal{E}_0$, so that they can be represented as $X^{\eps,x}= \mathcal G^\eps(x, \sqrt\eps W)$ for some family of measurable maps $\mathcal G^\eps: \mathcal{E}_0\times C^0_t U \to \mathcal{E}$.
In these cases, rather than establishing directly an LDP for $\{X^{\eps,x}\}_\eps$ for fixed $x\in \mathcal{E}_0$, it is natural to investigate the following stronger version of Laplace principle (see \cite[Definition 5]{BDM08}).

\begin{definition}
Let $I_x$ be a family of rate functions on $\mathcal{E}$ parameterized by $x\in\mathcal{E}_0$ and assume that this family has compact level sets on compacts.
The family $\{X^{\eps,x}\}$ is said to satisfy the Laplace principle on $\mathcal{E}$ with rate function $I_x$, uniformly on compacts, if for all compact subsets $\mathcal K$ of $\mathcal{E}_0$ and all bounded continuous functions $G$ mapping $\mathcal{E}$ into $\R$, one has
\begin{equation}\label{eq:uniform-laplace}
\lim_{\eps\to 0}\, \sup_{x\in \mathcal K} \bigg| \eps \log \E_x \bigg[ \exp\Big(-\frac{1}{\eps} G(X^{\eps,x})\Big)\bigg] + \inf_{f\in \mathcal{E}} \{G(f)+I_x(f)\}\bigg| = 0.
\end{equation}
\end{definition}

In \cite{BDM08}, the authors provide practical assumptions in order to verify the validity of \eqref{eq:uniform-laplace}.

\begin{hypothesis}\label{hyp:LDP}
There exists a measurable $\mathcal G^0: \mathcal{E}_0\times C^0_t U \to \mathcal E$ such that:
\begin{itemize}
\item[1.] For any $M<\infty$ and compact set $\mathcal K\subset \mathcal{E}_0$, $\Gamma_{\mathcal K, M}:= \big\{\mathcal{G}^0 (x, \text{Int}(v) ) : v\in S^M,\, x\in \mathcal K \big\}$ is a compact subset of $\mathcal{E}$.
\item[2.] Consider $M<\infty$ and families $\{x^\eps \}\subset \mathcal{E}_0,\, \{u^\eps\}\subset \mathcal{P}^M_2$ such that, as $\eps \to 0$, $x^\eps \to x$ and $u^\eps$ converge in law to $u$ as $S^M$-valued random elements.
Then $\mathcal G^\eps \big(x^\eps, \sqrt{\eps}\, W + \text{Int}(v^\eps) \big)$ converges in law to $\mathcal{G}^0 (x, \text{Int}(v) )$ in the topology of $\mathcal{E}$.
\end{itemize}
\end{hypothesis}
\begin{theorem}[\cite{BDM08}, Theorem 5]\label{thm-LDP-abstract}
Let $X^{\eps,x} = \mathcal{G}^\eps(x,
\sqrt\eps W)$ and suppose that Hypothesis \ref{hyp:LDP} holds.
For $x\in \mathcal{E}_0$ and $f\in \mathcal{E}$, define
\begin{equation}\label{eq:rate-funct}
I_x(f):= \inf_{\{ v\in L^2_t U_0:\, f= \mathcal G^0 (x,{\rm Int}(v)) \}} \bigg\{\frac12 \int_0^T |v(s)|_0^2 \,\d s \bigg\},
\end{equation}
with the convention that $\inf \emptyset = \infty$.
Assume that for all $f\in \mathcal{E}$, $x\mapsto I_x(f)$ is a lower semicontinuous map from $\mathcal{E}_0$ to $[0,\infty]$.
Then for all $x\in \mathcal{E}_0$, $f\mapsto I_x(f)$ is a rate function on $\mathcal{E}$ and the family $\{I_x\}_{x\in \mathcal{E}_0}$ of rate functions has compact level sets on compacts.
Furthermore, the family $\{ X^{\eps,x} \}$ satisfies the Laplace principle on $\mathcal{E}$, with the rate functions $\{I_x\}$, uniformly on compact subsets of $\mathcal{E}_0$.
\end{theorem}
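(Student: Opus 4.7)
The plan is to derive the uniform Laplace principle \eqref{eq:uniform-laplace} from the Boué--Dupuis variational representation for exponential functionals of a cylindrical Wiener process: for any $\eps>0$ and any bounded measurable $F:C^0_tU\to\R$,
\[
-\eps\log\E\big[\exp(-F(\sqrt\eps W)/\eps)\big] = \inf_{u\in\mathcal P_2(U_0)} \E\bigg[\frac12\int_0^T |u(s)|_0^2\,\d s + F\big(\sqrt\eps W + \text{Int}(u)\big)\bigg].
\]
Applied to $F=G\circ \mathcal G^\eps(x,\cdot)$, this converts $G^\eps(x):= -\eps\log\E[\exp(-G(X^{\eps,x})/\eps)]$ into a stochastic control problem driven by Hypothesis \ref{hyp:LDP}. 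Setting $G_0(x):=\inf_{f\in\mathcal E}\{G(f)+I_x(f)\}$ and recalling that $\mathcal E$ is Polish (so the Laplace principle is equivalent to a LDP), it suffices to establish $\lim_\eps\sup_{x\in\mathcal K}|G^\eps(x)-G_0(x)|=0$ together with the rate-function properties.

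The rate-function properties come from part 1 of the hypothesis. If $I_x(f)\leq M$, a minimising sequence provides $v\in S^{2M+1}$ with $f=\mathcal G^0(x,\text{Int}(v))$, so the level set sits inside $\Gamma_{\{x\},2M+1}$, compact by part 1; the same argument with $\mathcal K$ replacing $\{x\}$ yields compactness of level sets on compacts. Lower semicontinuity of $f\mapsto I_x(f)$ follows from weak compactness of $S^{2M+1}$ combined with part 2 applied along a minimising sequence of controls, using weak lower semicontinuity of the $L^2$-norm.

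For the upper bound, fix $v\in L^2_tU_0$ and plug the deterministic control $u^\eps\equiv v$ into the variational representation to obtain
\[
G^\eps(x) \leq \frac12\int_0^T|v(s)|_0^2\,\d s + \E\Big[G\big(\mathcal G^\eps(x,\sqrt\eps W+\text{Int}(v))\big)\Big];
\]
part 2 of the hypothesis together with bounded continuity of $G$ gives the pointwise bound $\limsup_\eps G^\eps(x)\leq \frac12\int|v|_0^2 + G(\mathcal G^0(x,\text{Int}(v)))$, and infimising over $v$ yields $\limsup_\eps G^\eps(x)\leq G_0(x)$; this is upgraded to the uniform $\limsup$ estimate via the assumed LSC of $x\mapsto I_x(f)$ and compactness of $\mathcal K$. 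For the matching lower bound, pick $x^\eps\in\mathcal K$ approximately minimising $G^\eps(x)-G_0(x)$ and an $\eps$-optimal control $u^\eps$ for the variational representation of $G^\eps(x^\eps)$. Boundedness of $G$ gives a uniform bound on $\E\int_0^T|u^\eps|_0^2\,\d s$, so after truncation $u^\eps\in\mathcal P^{M_G}_2$ for some $M_G$ depending only on $\|G\|_\infty$. Since $S^{M_G}$ with its weak topology is Polish and $\mathcal K$ is compact, $(u^\eps,x^\eps)$ is tight; passing to a subsequential weak limit $(u,x_\infty)$ and invoking part 2 of Hypothesis \ref{hyp:LDP}, Skorokhod representation, Fatou's lemma and the definition of $I_{x_\infty}$ yields $\liminf_\eps G^\eps(x^\eps)\geq G_0(x_\infty)$, from which the uniform $\liminf$ estimate follows.

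The main obstacle is the lower bound: one must exploit the Polish structure of $S^M$ under the weak topology and the joint continuity built into part 2 of Hypothesis \ref{hyp:LDP} (which crucially permits $x^\eps\to x$) to extract a weak subsequential limit of near-optimal controls along varying base points and identify it with the variational definition of $I_{x_\infty}$. Uniformity over $\mathcal K$ is then essentially automatic from this joint continuity together with the compactness of $\mathcal K$.
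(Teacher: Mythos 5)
The paper does not prove this theorem at all: it is quoted verbatim from [BDM08, Theorem 5] (Budhiraja--Dupuis--Maroulas) and used as a black box, so there is no in-paper argument to compare against. What you have written is a reconstruction of the standard weak-convergence proof from that reference, and in outline it is the right one: the scaled Bou\'e--Dupuis variational representation, deterministic controls for the bound $\limsup_\eps G^\eps(x)\leq G_0(x)$, and near-optimal controls plus the a priori bound $\E\int_0^T|u^\eps|_0^2\,\d s\lesssim 1+\|G\|_\infty$, truncation into $\mathcal P_2^{M}$, tightness of $(u^\eps,x^\eps)$ in $S^{M}\times\mathcal K$, Skorokhod representation, part 2 of Hypothesis \ref{hyp:LDP} and Fatou for the reverse bound. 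This matches the structure of the original proof.

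Two steps in your sketch are thinner than they should be. First, for compactness of the level sets you only show containment in the compact set $\Gamma_{\mathcal K,2M+1}$; closedness requires that $\mathcal G^0(x,\mathrm{Int}(\cdot))$ be continuous from $S^{2M+1}$ (weak topology) into $\mathcal E$ along the extracted weak limit of a minimising sequence of controls, and this is \emph{not} literally what part 2 of Hypothesis \ref{hyp:LDP} asserts --- part 2 concerns $\mathcal G^\eps$ for $\eps>0$ with the noise $\sqrt\eps W$ present, and does not degenerate to a continuity statement for $\mathcal G^0$ by setting $\eps=0$. (In applications, including this paper's Propositions \ref{prop:stability-strong-skeleton}--\ref{prop:stability-skeleton-L2} and Lemmas \ref{prop-cond-1}, \ref{prop-S2EE-cond-1}, one proves this continuity of $\mathcal G^0$ directly; it is how part 1 is verified in the first place.) Second, upgrading the pointwise bound $\limsup_\eps G^\eps(x)\leq G_0(x)$ to a uniform one over $\mathcal K$ is not ``essentially automatic'': the contradiction argument needs lower semicontinuity of $x\mapsto G_0(x)=\inf_f\{G(f)+I_x(f)\}$, and an infimum of lower semicontinuous functions is not lower semicontinuous in general. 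This is precisely where the extra hypothesis that $x\mapsto I_x(f)$ is LSC for each fixed $f$ is combined with compactness of level sets on compacts: along $x_n\to x$ one extracts convergent near-minimisers $f_n\to f$ from a fixed compact level set and passes to the limit in $G(f_n)+I_{x_n}(f_n)$. Your one-line appeal to ``the assumed LSC and compactness of $\mathcal K$'' points at the right ingredients but omits this extraction, which is the actual content of the uniformity claim.
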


\subsection{Stochastic transport equation}\label{subsec-LDP-SLTE}

The purpose of this section is to prove a LDP for the solutions to stochastic transport equations:
\begin{equation}\label{SLTE}
\d f^n + b\cdot\nabla f^n\, \d t + \circ \d \Pi_n \big(\sqrt{\eps_n}\, W^\alpha \big) \cdot \nabla f^n =0, \quad f^n\vert_{t=0}=f_0,
\end{equation}
where we recall that the noise $W^{n,\alpha}$ is defined as in Section \ref{sec:intro-noise} and $\circ\d$ denotes Stratonovich differential.
We will enforce the following:

\begin{assumption}\label{ass:drift-transport}
There exists parameters $(q,p)\in [2,\infty]$ such that
\begin{equation*}
b\in L^q_t L^p_x,\quad \frac{2}{q}+\frac{d}{p}<1;
\end{equation*}
moreover there exists $r\in \big(\frac{2d}{d+2},\infty \big]$ such that\,\footnote{The condition $\nabla \cdot b\in L^2_t L^r_x$ is non-standard and we believe it might be removed, at the price of slightly more technical proofs. However, the main applications we have in mind for our models come from fluid dynamics, where often $\nabla\cdot b\equiv 0$, so that this additional constraint is automatically satisfied.}
\begin{equation*}
\nabla\cdot b\in L^1_t L^\infty_x\cap L^2_t L^r_x  .
\end{equation*}
\end{assumption}

Under Assumption \ref{ass:drift-transport}, for any given $f_0\in L^2$ and $n\in \N$, there exists a unique, analytically weak, probabilistically strong solution to \eqref{SLTE}, which is of the form $f^n_t(x)=f_0\big((X^n_t)^{-1}(x) \big)$, where $\{X^n_t \}_{t\in [0,T]}$ denotes the stochastic flow associated to the underlying Stratonovich SDE:\footnote{We have not found a direct reference for the above result, which appears to be quite sparse in the literature. The existence of a flow $\{X^n_t\}$ associated to SDE follows from \cite[Theorem 1.1]{zhang2011}, while the proof that $f^n_t$ as defined above is a weak solution to the transport equation comes from \cite[Proposition 2.3]{zhang2010}.
Equation (2.2) from the same paper allows to derive the pathwise bound \eqref{eq:a-priori-transport}, although technically the author established its validity under stronger requirements on $b$; this issue can be solved by an approximation argument.
Uniqueness for sufficiently regular $f_0$ has been shown in \cite{FedFla}, but for general integrable initial data one would need to readapt the commutators approach from \cite{FGP}.
An alternative route based on a flow transformation, which fits the assumptions on our coefficients, is sketched in \cite[Section 1.9]{BFGM}.}
$$\d X^n_t= \sqrt{\eps_n} \sum_{|k|\leq n,\, i} |k|^{-\alpha} \sigma_{k,i}(X^n_t) \circ\d B^{k,i}_t + b(t, X^n_t)\,\d t, \quad X^n_0 =x. $$
Noting that $\sigma_{k,i} \cdot \nabla \sigma_{k,i} \equiv 0$ due to our choice of vector fields $\{\sigma_{k,i} \}$, the SDE has the same It\^o form whose noise has constant covariance matrix.
Moreover since $\sigma_{k,i}$ are divergence free, for any $p\in [1,\infty]$, $f^n$ satisfies the pathwise bound
\begin{equation}\label{eq:a-priori-transport}
\sup_{t\in [0,T]} \| f^n_t \|_{L^p} \leq \exp(\| \nabla\cdot b\|_{L^1 L^\infty}) \| f_0\|_{L^p}\quad \P\text{-a.s.}
\end{equation}
As mentioned in the introduction, equation \eqref{SLTE} has equivalent It\^o form
\begin{equation}\label{SLTE-Ito}
\d f^n + b\cdot \nabla f^n\, \d t + \d \Pi_n \big(\sqrt{\eps_n}\, W^\alpha \big) \cdot \nabla f^n = \Delta f^n \,\d t.
\end{equation}
The solutions $\{f^n\}_n$ converge weakly to the unique solution $f$ of the advection-diffusion equation:
\[\partial_t f + b\cdot\nabla f= \Delta f, \quad f|_{t=0} =f_0.\]

Our aim is to establish a LDP for the laws of $\{f^n \}_n$ using the weak convergence method recalled in the last section.
The rest of this section consists of two parts: in Subsection \ref{subsec-skeleton-eq}, we study the well posedness of the skeleton equation and introduce the rate functions; we then provide in Subsection \ref{subs-LDP-SLTE-bounded} the proof of the LDP, first establishing it for $L^\infty$-initial data by the weak convergence method, and then transferring it to the $L^2$-setting via a $\Gamma$-convergence argument.

\subsubsection{Study of the skeleton equation and rate function} \label{subsec-skeleton-eq}

In order to carry out the weak convergence approach to LDP, a key role is played by the well-posedness and stability properties of the skeleton equation
\begin{equation}\label{eq:skeleton}
\partial_t f+ b\cdot\nabla f + g\cdot \nabla f= \Delta f
\end{equation}
ranging over $g\in L^2_t \cH^\alpha$, while keeping $b$ fixed, with suitable initial data $f_0$.

It actually turns out to be useful to take a slightly more general perspective and study the family of equations
\begin{equation}\label{eq:skeleton-modified}
\partial_t f - \Delta f = h\cdot\nabla f + c f
\end{equation}
for $h:[0,T]\times \T^d\to\R^d$, $c:[0,T]\times \T^d\to\R$ satisfying the following:
\begin{assumption}\label{ass:coefficients-skeleton}
There exists a parameter $r> \frac{2d}{d+2}$ such that
\begin{equation*}
h\in L^2_t L^2_x,\quad \nabla\cdot h,\, c\in L^2_t L^r_x\cap L^1_t L^\infty_x.
\end{equation*}
\end{assumption}

\begin{remark}\label{rem:generalized-skeleton}
Allowing for the presence of $c$, we can also treat Fokker-Planck equations by choosing $c=\nabla\cdot h$. Observe that equation \eqref{eq:skeleton} is of the form \eqref{eq:skeleton-modified} for $c\equiv 0$, $h=-b-g$;
in particular, since $b$ satisfies Assumption \ref{ass:drift-transport} and $g\in L^2_t \mathcal{H}^\alpha$, Assumption \ref{ass:coefficients-skeleton} holds as well.
\end{remark}

\begin{proposition}\label{prop:stability-strong-skeleton}
For any $f_0\in L^\infty$, $(h,c)$ satisfying Assumption \ref{ass:coefficients-skeleton}, there exists a unique weak solution $f\in L^\infty_{t,x}$ to \eqref{eq:skeleton-modified}, belonging to $C^0_t L^2_x \cap L^2_t H^1_x$; moreover, given two solutions $f^i$ associated to data $f_0^i,\, h^i,\, c^i$, there exists a constant $C=C(\| \nabla \cdot h^i\|_{L^1 L^\infty\cap L^2 L^r}, \| c^i\|_{L^1 L^\infty\cap L^2 L^r})$ such that
\begin{equation}\label{eq:stability-strong-skeleton}\begin{split}
\| f^1-f^2& \|_{C^0 L^2} + \|f^1-f^2\|_{L^2 H^1}\\
\leq &\, C \big[ \| f_0^1-f_0^2\|_{L^2} + \big(\|h^1-h^2\|_{L^2_{t,x}}+ \| c^1-c^2-\nabla\cdot (h^1-h^2)\|_{L^2 L^r} \big) \| f^2_0\|_{L^\infty} \big].
\end{split}\end{equation}
Finally, given sequences $f^n_0\stackrel{\ast}{\rightharpoonup} f_0$, $(h^n,c^n) \rightharpoonup (h,c)$, while satisfying uniform bounds in the norms appearing in Assumption \ref{ass:coefficients-skeleton},  the associated solutions $f^n$ converge weakly-$\ast$ in $L^{\infty}_{t,x}$ to $f$, as well as strongly in $L^2_{t,x}\cap C^0_t H^{-\delta}_x$ for any $\delta>0$.
\end{proposition}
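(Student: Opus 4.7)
The plan is to combine classical parabolic energy estimates with a crucial bootstrap: since the drift $h\in L^2_tL^2_x$ is strongly supercritical for $L^2$-theory, to close the $L^2\to H^1$ estimate one must exploit the $L^\infty$-bound on $f$ via Gagliardo--Nirenberg. First I would mollify the data $(f_0,h,c)$, invoke classical parabolic theory for smooth approximants $f^\eps$, and derive two uniform bounds. Testing $\partial_t f^\eps-\Delta f^\eps=h\cdot\nabla f^\eps+cf^\eps$ with $|f^\eps|^{p-2}f^\eps$ and integrating by parts yields
\begin{equation*}
\tfrac{d}{dt}\log\|f^\eps_t\|_{L^p}\le \tfrac{1}{p}\|(\nabla\cdot h)(t)\|_{L^\infty}+\|c(t)\|_{L^\infty};
\end{equation*}
sending $p\to\infty$ gives $\|f^\eps\|_{L^\infty_{t,x}}\le\|f_0\|_{L^\infty}\exp(\|c\|_{L^1L^\infty})$, which is where the $L^1_tL^\infty_x$ part of Assumption~\ref{ass:coefficients-skeleton} is used.

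For the $L^2\to H^1$ estimate, testing with $f^\eps$ gives
\begin{equation*}
\tfrac12\tfrac{d}{dt}\|f^\eps\|_{L^2}^2+\|\nabla f^\eps\|_{L^2}^2=-\tfrac12\!\int(\nabla\cdot h)(f^\eps)^2+\int c\,(f^\eps)^2,
\end{equation*}
and the crucial bound is
\begin{equation*}
\Big|\!\int(\nabla\cdot h)(f^\eps)^2\Big|\le \|\nabla\cdot h\|_{L^r}\|f^\eps\|_{L^\infty}\|f^\eps\|_{L^{r'}}
\end{equation*}
combined with Gagliardo--Nirenberg $\|f^\eps\|_{L^{r'}}\lesssim\|f^\eps\|_{L^2}^{1-\theta}\|\nabla f^\eps\|_{L^2}^{\theta}$, where $\theta=d(1/r-1/2)$ satisfies $\theta<1$ precisely when $r>2d/(d+2)$ --- exactly the threshold in Assumption~\ref{ass:coefficients-skeleton}. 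Young's inequality absorbs $\|\nabla f^\eps\|_{L^2}^2$ into the LHS, the $c(f^\eps)^2$ term is handled identically, and Gr\"onwall produces the uniform $C^0_tL^2_x\cap L^2_tH^1_x$ bound. Existence of a weak solution in $L^\infty_{t,x}$ then follows by Aubin--Lions compactness, noting that $\partial_tf^\eps$ is bounded in $L^1_tH^{-s}_x$ for $s>d/2$ (from $\Delta f^\eps\in L^2_tH^{-1}_x$, $h\cdot\nabla f^\eps\in L^1_tL^1_x$ and $cf^\eps\in L^1_tL^\infty_x$).

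For the stability estimate \eqref{eq:stability-strong-skeleton} (which subsumes uniqueness), set $g=f^1-f^2$ and rewrite the difference equation via $(h^1-h^2)\cdot\nabla f^2=\nabla\cdot((h^1-h^2)f^2)-(\nabla\cdot(h^1-h^2))f^2$:
\begin{equation*}
\partial_t g-\Delta g=h^1\!\cdot\!\nabla g+c^1g+\nabla\!\cdot\!\bigl((h^1-h^2)f^2\bigr)+\bigl(c^1-c^2-\nabla\!\cdot\!(h^1-h^2)\bigr)f^2.
\end{equation*}
Testing with $g$: the homogeneous terms are controlled as above; the third term gives $|\int(h^1-h^2)f^2\cdot\nabla g|\le\|h^1-h^2\|_{L^2_{t,x}}\|f^2\|_{L^\infty}\|\nabla g\|_{L^2}$; the fourth is handled by the same Gagliardo--Nirenberg trick with coefficient $\|c^1-c^2-\nabla\cdot(h^1-h^2)\|_{L^2L^r}\|f^2\|_{L^\infty}$. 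Young's absorbs $\|\nabla g\|_{L^2}^2$ and Gr\"onwall closes \eqref{eq:stability-strong-skeleton}.

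Finally, for stability under weak convergence of data, the uniform bounds yield along a subsequence $f^n\stackrel{\ast}{\rightharpoonup}\bar f$ in $L^\infty_{t,x}$ and weakly in $L^2_tH^1_x$; the bound on $\partial_tf^n$ in a negative-order space and Aubin--Lions give strong convergence in $L^2_{t,x}$, which interpolation with $L^\infty_tL^2_x$ upgrades to $C^0_tH^{-\delta}_x$ for every $\delta>0$. Writing $h^n\cdot\nabla f^n=\nabla\cdot(h^n f^n)-(\nabla\cdot h^n)f^n$, each product now pairs a weakly convergent factor with a strongly convergent one, so one passes to the limit and identifies $\bar f$ as the solution with data $(f_0,h,c)$; uniqueness forces the whole sequence to converge. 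The main obstacle throughout is the $L^2\to H^1$ closure: without the $L^\infty$-bound from the $L^p$-step, the term $\int(\nabla\cdot h)f^2$ cannot be dominated below the $H^1$-seminorm, and the sharp interplay between the $L^\infty$ bound and Gagliardo--Nirenberg at the exact threshold $r>2d/(d+2)$ is what makes the whole scheme work.
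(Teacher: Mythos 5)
Your proposal follows essentially the same strategy as the paper: maximum principle for the $L^\infty$ bound, an energy estimate exploiting the cancellation $\int h\cdot\nabla g\,g=-\tfrac12\int(\nabla\cdot h)g^2$ together with the rewriting $(h^1-h^2)\cdot\nabla f^2=\nabla\cdot((h^1-h^2)f^2)-(\nabla\cdot(h^1-h^2))f^2$ for stability, and Aubin--Lions plus weak-strong pairing of products for the weak-convergence statement. Your Gagliardo--Nirenberg treatment of the $L^r$ terms at the threshold $r>2d/(d+2)$ is equivalent to the paper's use of the Sobolev embedding $H^1\hookrightarrow L^{r'}$ followed by Cauchy's inequality. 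The one genuine divergence, and the one gap worth flagging, is how the $L^2_tH^1_x$ regularity is obtained. You derive it as an a priori energy bound on mollified approximants, which yields it only for the solution you \emph{construct}; but the stability estimate \eqref{eq:stability-strong-skeleton} (and hence uniqueness) must hold for \emph{arbitrary} weak $L^\infty_{t,x}$ solutions $f^1,f^2$, and "testing the difference equation with $g=f^1-f^2$" is only licensed once you know $g\in L^2_tH^1_x$ with $\partial_tg\in L^2_tH^{-1}_x$ (so that the Lions--Magenes lemma gives $\frac{\d}{\d t}\|g\|_{L^2}^2=2\langle\partial_tg,g\rangle$ and the integration by parts $2\langle h\cdot\nabla g,g\rangle=-\langle\nabla\cdot h,g^2\rangle$ can be justified by approximation). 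The paper closes this by observing that any weak $L^\infty_{t,x}$ solution satisfies $f=(\partial_t-\Delta)^{-1}\big(\nabla\cdot(hf)+(c-\nabla\cdot h)f\big)$ with right-hand side in $L^2_tH^{-1}_x$, so maximal parabolic regularity forces $f\in L^2_tH^1_x$, $\partial_tf\in L^2_tH^{-1}_x$ automatically. Adding this short bootstrap (or an equivalent duality argument) makes your proof complete; everything else matches the paper's argument.
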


\begin{proof}
Existence of solutions satisfying the $L^\infty$-bound follows from usual compactness arguments and the maximum principle (alternatively, Feynman-Kac formula), since for smooth coefficients we have the estimate
\begin{equation}\label{eq:skel-basic-bound}
\sup_{t\in [0,T]} \| f_t\|_{L^\infty} \leq \exp(\| c\|_{L^1 L^\infty}) \| f_0\|_{L^\infty}.
\end{equation}
Under our assumptions, $h\cdot\nabla f + c f = \nabla \cdot (h f) + (c-\nabla \cdot h) f$ is a well defined distribution and there is no problem in taking the weak-$\ast$ limit in the compactness argument.

Observe that, if $f\in L^\infty_{t,x}$ is a weak solution, then by Assumption \ref{ass:coefficients-skeleton} and Sobolev embeddings $\nabla \cdot (h f)\in L^2_t H^{-1}_x$, $(c-\nabla\cdot h) f\in L^2_t L^r_x \hookrightarrow L^2_t  H^{-1}_x$, so that $f=(\partial_t-\Delta)^{-1} g$ for some $g\in L^2_t H^{-1}$. Maximal regularity for parabolic equations implies that $f\in L^2_t H^1_x$, $\partial_t f\in L^2_t H^{-1}_x$.

Assume now we are given two solutions $f^i$ as above, then setting $\xi:=f^1-f^2$ it holds $\xi\in L^2_t H^1_x\cap L^\infty_{t,x}$, $\partial_t \xi\in L^2_t H^{-1}_x$ and
\begin{equation*}
\partial_t \xi -\Delta \xi = h^1\cdot\nabla \xi + c^1 \xi + (h^1-h^2)\cdot\nabla f^2 + (c^1-c^2) f^2.
\end{equation*}
Given the regularity of $\xi$, we are allowed to apply Lions-Magenes lemma (see \cite[Chap. 1, Theorem 3.1]{LM72} or \cite[Lemma 2.1.5]{KS12}); arguing by approximation, we can also extend the relation $2\langle \varphi_s\cdot \nabla \psi_s, \psi_s\rangle = -\langle \nabla \cdot \varphi_s, \psi_s^2\rangle$ (for a.e. $s\in [0,T]$) to any $\varphi\in L^2_t L^2_x$ such that $\nabla\cdot\varphi\in L^1_t L^\infty_x$ and any $\psi\in L^\infty_{t,x} \cap L^2_t H^1_x$. Thus by integration by parts we obtain
\begin{align*}
\frac{\d}{\d t} \frac{\| \xi_t\|_{L^2}^2}{2}
& = - \|\nabla \xi_t\|_{L^2}^2 + \Big\langle c^1_t-\frac12 \nabla\cdot h^1_t,\xi^2_t \Big\rangle -\langle (h^1_t-h^2_t)\cdot \nabla \xi_t, f^2_t \rangle \\
&\quad\, + \langle c^1_t-c^2_t -\nabla\cdot (h^1_t-h^2_t), f^2_t \xi_t \rangle\\
& \leq - \|\nabla \xi\|_{L^2}^2
+ \big( \|c^1_t\|_{L^\infty}+\| \nabla\cdot h^1_t\|_{L^\infty} \big) \|\xi_t\|_{L^2}^2
+ \| h^1_t-h^2_t\|_{L^2} \| f^2_t\|_{L^\infty} \| \nabla \xi_t\|_{L^2}\\
& \quad \, + \| c^1_t -c^2_t -\nabla\cdot (h^1_t-h^2_t)\|_{L^r} \| f^2_t\|_{L^\infty} \| \xi_t\|_{H^1}.
\end{align*}
Cauchy's inequality implies
\begin{align*}
\frac{\d}{\d t} \frac{\| \xi_t\|_{L^2}^2}{2}& \leq -\frac{1}{2} \| \nabla \xi_t \|_{L^2}^2  + \big(\|c^1_t\|_{L^\infty}+\| \nabla\cdot h^1_t \|_{L^\infty} +1 \big) \| \xi_t\|_{L^2}^2 \\
& \quad \, + \big(\| h^1_t-h^2_t\|_{L^2}^2 + \| c^1_t -c^2_t -\nabla\cdot (h^1_t-h^2_t)\|_{L^r}^2 \big) \| f^2\|_{L^\infty_{t,x}}^2 .
\end{align*}
An application of Gronwall's lemma, together with \eqref{eq:skel-basic-bound}, readily yields
\begin{align*}
\| \xi\|_{C^0 L^2}^2 + \int_0^T \|\nabla \xi_t\|_{L^2}^2 \d t
&\lesssim \| \xi_0\|_{L^2}^2  + \big(\| h^1-h^2\|_{L^2_{t,x}}^2 + \| c^1 -c^2 -\nabla\cdot (h^1-h^2)\|_{L^2 L^r}^2 \big) \| f^2\|_{L^\infty_{t,x}}^2\\
& \lesssim \| \xi_0\|_{L^2}^2  + \big( \| h^1-h^2\|_{L^2_{t,x}}^2 + \| c^1 -c^2 -\nabla\cdot (h^1-h^2)\|_{L^2 L^r}^2 \big) \| f^2_0\|_{L^\infty}^2
\end{align*}
which is exactly \eqref{eq:stability-strong-skeleton}; this also establishes uniqueness of solutions.

It remains to prove the claim concerning stability in the weak topology.
Having established uniqueness, we know that any solution satisfies the bound \eqref{eq:skel-basic-bound}.
Moreover, applying again maximal regularity theory, one has the uniform bounds
\begin{align*}
\| \partial_t f\|_{L^2 H^{-1}} + \| f\|_{L^2 H^1}
& \lesssim \| \nabla \cdot ( h f) + (c-\nabla \cdot h) f\|_{L^2 H^{-1}}\\
& \lesssim (\| h\|_{L^2_{t,x}} + \| c-\nabla \cdot h\|_{L^2 L^r}) \| f\|_{L^\infty_{t,x}}\\
& \lesssim (\| h\|_{L^2_{t,x}} + \| c-\nabla \cdot h\|_{L^2 L^r}) \exp(\| c\|_{L^1 L^\infty}) \|f_0\|_{L^\infty} .
\end{align*}
Given $f^n_0\ \stackrel{\ast}{\rightharpoonup} f_0$, $(h^n,c^n)\rightharpoonup (h,c)$, it follows that the associated solutions $f^n$ satisfy uniform bounds in $C^{1/2}_t H^{-1}_x \cap C^0_t L^2_x \cap L^2_t H^1_x$;
one can deduce by Aubin-Lions lemma that $\{f^n\}_n$ is sequentially compact in $C^0_t H^{-\delta}_x \cap L^2_{t,x}$ for any $\delta>0$.
From here, standard arguments allow to show that any limit point must be a weak  $L^\infty_{t,x}$ solution to \eqref{eq:skeleton-modified}; for instance, combining $f^n\to f$ in $L^2_{t,x}$, $\sup_n \| f^n\|_{L^\infty_{t,x}}<\infty$ and $c_n\rightharpoonup c$ in $L^2_t L^r_x$, it is easy to check that $c^n f^n \rightharpoonup c f$ in $L^1_{t,x}$.
But then necessarily $f^n$ weakly-$\ast$ converge in $L^\infty_{t,x}$ to the unique solution $f$ associated to $(h,c,f_0)$.
\end{proof}

We can build on the previous result to extend the well-posedness theory for \eqref{eq:skeleton-modified} to $f_0\in L^2$.

\begin{proposition}\label{prop:stability-skeleton-L2}
For any $f_0\in L^2$, $(h,c)$ satisfying Assumption \ref{ass:coefficients-skeleton}, there exists a unique weak solution $f\in L^\infty_t L^2_x\cap L^2_t H^1_x$ to \eqref{eq:skeleton-modified}; moreover, given two solutions associated to data $f_0^i,\, h^i\, c^i$, there exists a constant $C=C(\| \nabla \cdot h^i\|_{L^1 L^\infty}, \| c^i\|_{L^1 L^\infty}, \| c^i-\nabla h^i\|_{L^2 L^r})$ such that
\begin{equation}\label{eq:stability-skeleton-L^2}
\| f^1-f^2\|_{L^\infty L^1} \leq C\big[ \| f_0^1-f_0^2\|_{L^1} + \| h^1-h^2\|_{L^2 L^2} \| f^2_0\|_{L^2} + \| c^1-c^2\|_{L^2 L^r} \| f^2_0\|_{L^2} \big].
\end{equation}
Finally, given sequences $f^n_0\rightharpoonup f_0$, $(h^n,c^n) \rightharpoonup (h,c)$, while satisfying uniform bounds in the norms appearing in Assumption \eqref{ass:coefficients-skeleton}, the associated solutions $f^n$ converge weakly to $f$ in $L^2_t H^1_x$, as well as strongly in $L^2_{t,x}\cap C^0_t H^{-\delta}_x$ for any $\delta>0$.
\end{proposition}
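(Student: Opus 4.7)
The plan is to build on the $L^\infty$-theory of Proposition \ref{prop:stability-strong-skeleton} in two complementary ways: by approximation for existence, and by a duality argument against the $L^\infty$-solution of the adjoint equation for uniqueness, the $L^1$-stability \eqref{eq:stability-skeleton-L^2}, and the passage to weak limits.

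For existence, I would approximate $f_0\in L^2$ by $f^k_0\in L^\infty$ with $f_0^k\to f_0$ in $L^2$ and let $f^k$ be the bounded weak solutions provided by Proposition \ref{prop:stability-strong-skeleton}. Since $f^k\in L^\infty_{t,x}\cap L^2_t H^1_x$, rewriting $h\cdot\nabla f^k+cf^k = \nabla\cdot(hf^k) + (c-\nabla\cdot h)f^k$ shows that $\partial_t f^k\in L^2_t H^{-1}_x$, so Lions--Magenes together with the integration by parts $\<h\cdot\nabla f^k, f^k\> = -\tfrac12\<\nabla\cdot h, (f^k)^2\>$ yield the energy identity
\[ \tfrac12\tfrac{\d}{\d t}\|f^k\|_{L^2}^2 + \|\nabla f^k\|_{L^2}^2 = \<(c-\tfrac12\nabla\cdot h)f^k, f^k\>. \]
Grönwall produces a uniform bound for $\{f^k\}$ in $L^\infty_t L^2_x\cap L^2_t H^1_x$ in terms of $\|f_0\|_{L^2}$ and the $L^1_t L^\infty_x$-norms of $c$ and $\nabla\cdot h$; a crude bound on $\partial_t f^k$ in some negative-order parabolic space followed by Aubin--Lions then delivers strong $L^2_{t,x}$-compactness, and the weak limit is the sought solution $f\in L^\infty_t L^2_x\cap L^2_t H^1_x$.

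For uniqueness and the stability estimate I would use a duality argument. Given two solutions $f^i$ with data $(f_0^i, h^i, c^i)$, the difference $\xi:=f^1-f^2$ satisfies $\partial_t\xi-\Delta\xi = h^1\cdot\nabla\xi + c^1\xi + R$ with forcing $R:=(h^1-h^2)\cdot\nabla f^2 + (c^1-c^2)f^2$. For arbitrary $\tau\in(0,T]$ and $\varphi\in L^\infty$, consider the backward adjoint problem on $[0,\tau]$,
\[ -\partial_s\psi - \Delta\psi + \nabla\cdot(h^1\psi) - c^1\psi = 0, \quad \psi_\tau = \varphi; \]
after time reversal this is of the form \eqref{eq:skeleton-modified} with new drift $-h^1$ and new potential $c^1-\nabla\cdot h^1$, both satisfying Assumption \ref{ass:coefficients-skeleton}, so Proposition \ref{prop:stability-strong-skeleton} yields a bounded $\psi\in L^\infty_{t,x}\cap C^0_t L^2_x\cap L^2_t H^1_x$ with $\|\psi\|_{L^\infty_{t,x}}\leq \exp(\|c^1\|_{L^1 L^\infty}+\|\nabla\cdot h^1\|_{L^1 L^\infty})\|\varphi\|_{L^\infty}$. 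Pairing the two equations and integrating by parts in time and space gives the duality identity
\[ \<\xi_\tau,\varphi\> = \<\xi_0,\psi_0\> + \int_0^\tau \<R_s,\psi_s\>\, \d s; \]
denoting by $r'$ the Hölder-conjugate of $r$ (so that $r'<\tfrac{2d}{d-2}$ and hence $H^1_x\hookrightarrow L^{r'}_x$), Hölder's inequality and the energy bound on $f^2$ yield $\|R\|_{L^1_{t,x}}\lesssim \big(\|h^1-h^2\|_{L^2_{t,x}}+\|c^1-c^2\|_{L^2 L^r}\big)\|f_0^2\|_{L^2}$. Taking the supremum over $\|\varphi\|_{L^\infty}\leq 1$ produces \eqref{eq:stability-skeleton-L^2}, and uniqueness on identical data follows immediately.

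The weak-convergence claim is then routine: with $f_0^n\rightharpoonup f_0$ in $L^2$ and $(h^n,c^n)\rightharpoonup(h,c)$ under uniform bounds, the energy estimate gives uniform control of $\{f^n\}$ in $L^\infty_t L^2_x\cap L^2_t H^1_x$, Aubin--Lions provides strong compactness in $L^2_{t,x}\cap C^0_t H^{-\delta}_x$, and bilinear terms like $c^n f^n$ and $\nabla\cdot(h^n f^n)$ pass to the limit by weak--strong convergence, identifying the limit as the unique solution for $(f_0,h,c)$. I expect the rigorous justification of the duality identity to be the main obstacle: $\partial_t\xi$ need not belong to $L^2_t H^{-1}_x$ when $\xi$ is only $L^\infty_t L^2_x\cap L^2_t H^1_x$, since the product $h\cdot\nabla\xi$ sits only in $L^1_{t,x}$, so Lions--Magenes cannot be applied directly. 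I would resolve this by spatial mollification: verify the identity for $\xi_\delta:=\rho_\delta*\xi$ (for which a classical Gelfand triple is available) and pass to the limit using $\psi\in L^\infty_{t,x}$ together with a DiPerna--Lions-type commutator estimate whose remainder vanishes thanks to $\nabla\cdot h^1\in L^1_t L^\infty_x$.
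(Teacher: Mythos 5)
Your proposal is correct and follows essentially the same route as the paper: existence via the $L^2$ energy estimate plus compactness, and uniqueness together with the $L^1$-stability bound \eqref{eq:stability-skeleton-L^2} by duality against the bounded solution of the backward adjoint equation (well-posed by Proposition \ref{prop:stability-strong-skeleton}, since $(h^1,c^1-\nabla\cdot h^1)$ still satisfies Assumption \ref{ass:coefficients-skeleton}), with the forcing $R$ estimated through $H^1_x\hookrightarrow L^{r'}_x$ exactly as in the paper, which likewise relegates the rigorous justification of the duality pairing to a mollification/commutator remark. The one place where ``routine'' hides an actual computation is the Aubin--Lions step for convergence in $C^0_t H^{-\delta}_x$: a mere $L^1_t$ bound on $\partial_t f^n$ in a negative Sobolev space does not yield time-equicontinuity, and the paper interpolates $\|f_t\|_{L^{r'}}\lesssim\|f_t\|_{L^2}^{1-a}\|f_t\|_{H^1}^{a}$ to place $(c-\nabla\cdot h)f$ in $L^{1+\delta}_t H^{-s}_x$ for some $\delta>0$ (this is precisely where $r>\tfrac{2d}{d+2}$ is used a second time), so you would need to supply that estimate to make your compactness claim precise.
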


\begin{proof} 
The proof is similar to that of Proposition \ref{prop:stability-strong-skeleton}, so we mostly sketch it. Formally testing the solution $f$ against equation \eqref{eq:skeleton-modified} and applying Gronwall's lemma, one obtains the a priori estimate
\begin{equation}\label{eq:skel-energy-estimate}
\sup_{t\in [0,T]} \|f_t\|_{L^2}^2 + \int_0^T \| \nabla f_t\|_{L^2}^2\, \d t \leq \exp\big(\| \nabla\cdot h\|_{L^1 L^\infty} + 2 \| c\|_{L^1 L^\infty} \big)\,\| f_0\|_{L^2}^2.
\end{equation}
Together with compactness arguments, this allows to construct weak solutions satisfying \eqref{eq:skel-energy-estimate} (the bound on $L^2_t H^1_x$-norm, together with Sobolev embedding, shows that $h\cdot\nabla f+ c f\in L^1_{t,x}$, so that the weak formulation is meaningful).
Uniqueness can be shown by linearity and estimates similar to those yielding \eqref{eq:stability-skeleton-L^2}, thus we will only focus on proving the latter starting from any two solutions $f^i$ satisfying \eqref{eq:skel-energy-estimate}.
Setting $\xi:=f^1-f^2$, in a weak sense it holds
\begin{equation*}
\partial_t \xi -\Delta \xi = h^1\cdot\nabla \xi + c^1 \xi + (h^1-h^2)\cdot\nabla f^2 + (c^1-c^2) f^2.
\end{equation*}

Now for any $\varphi\in L^\infty$ and $t\in (0,T]$, let $\{g_s \}_{s\in [0,t]}$ be the unique weak solution in $L^\infty$ to the backward equation
\begin{equation*}
\partial_s g + \Delta g = \nabla \cdot (h^1\, g) -  c^1\, g
= h^1\cdot\nabla g - (c^1-\nabla\cdot h^1) g, \quad g\vert_{s=t}
= \varphi,
\end{equation*}
which can be constructed in the same way as in Proposition \ref{prop:stability-strong-skeleton} (since $(h^1,c^1)$ satisfy Assumption \ref{ass:coefficients-skeleton}, so do $(h^1,c^1-\nabla\cdot h^1)$); $g$ satisfies $\| g_s\|_{L^\infty} \lesssim \| \varphi\|_{L^\infty}$ for all $s\in [0,t]$.

Testing $g$ against the equation for $\xi$ (to be rigorous, one should argue by approximation or by mollifying $g$ and proving that the commutators vanish in the limit; we omit the details here since the arguments are standard) and integrating over $[0,t]$, we obtain
\begin{align*}
|\langle \xi_t, \varphi\rangle|
& = \bigg |\langle \xi_0, g_0\rangle + \int_0^t \langle (h^1_s-h^2_s)\cdot\nabla f^2_s, g_s\rangle\, \d s + \int_0^t \langle (c^1_s-c^2_s) f^2_s, g_s \rangle\, \d s \bigg|\\
& \leq \| \xi_0\|_{L^1} \| g_0\|_{L^\infty} + \| h^1-h^2\|_{L^2_{s,x}} \| \nabla f^2\|_{L^2_{s,x}} \| g\|_{L^\infty_{s,x}}
+ \| c^1-c^2\|_{L^2 L^r} \| f^2\|_{L^2 L^{r'}} \| g\|_{L^\infty_{s,x}}\\
& \lesssim \big(\|\xi_0\|_{L^1} + \| h^1-h^2\|_{L^2_{s,x}} \| f^2_0\|_{L^2} + \| c^1-c^2\|_{L^2 L^r} \| f^2_0\|_{L^2} \big) \| \varphi\|_{L^\infty}
\end{align*}
where in the last passage we used the Sobolev embedding $L^2_t H^1_x \hookrightarrow L^2_t L^{r'}_x$ and estimate \eqref{eq:skel-energy-estimate} with $f= f^2$.
Taking the supremum over all $\varphi\in L^\infty$ with $\| \varphi\|_{L^\infty}=1$ in the above expression, by duality one readily obtains \eqref{eq:stability-skeleton-L^2}.

The proof of the statement involving weak convergence follows the same line of arguments as in Proposition \ref{prop:stability-strong-skeleton}; the only part which needs a bit more attention is the uniform bound in $C^\alpha_t H^{-s}_x$ for suitable $\alpha$ and $s$, which is required in order to apply Aubin-Lions.
Since by assumption $r>2d/(d+2)$, by Sobolev embeddings there exists $a\in (0,1)$ such that $H^a_x \hookrightarrow L^{r'}_x$ and by interpolation it holds $\| f_t\|_{L^{r'}} \lesssim \| f_t\|_{L^2}^{1-a} \| f_t\|_{H^1}^a$.
Therefore, for any fixed $t$, we have
\[ \|(c_t- \nabla\cdot h_t) f_t\|_{L^1}
\leq \|c_t- \nabla\cdot h_t \|_{L^r} \|f_t\|_{L^{r'}}
\lesssim \|f_t\|_{L^2}^{1-a} \|f_t\|_{H^1}^a \|c_t- \nabla\cdot h_t \|_{L^r_x}. \]
Taking $\delta= (1-a)/(1+a)>0$, so that $a(1+\delta)/(1-\delta)=1$, by H\"older's inequality it holds
\[ \|(c - \nabla\cdot h) f \|_{L^{1+\delta} L^1}
\lesssim \|f\|_{L^\infty L^2}^{1-a} \|f_t\|_{L^2 H^1}^a \|c_t- \nabla\cdot h_t \|_{L^2 L^r}. \]
This implies $(c - \nabla\cdot h) f \in L^{1+\delta}_t H^{-s}_x$ for $s>d/2$. Since $f\in L^\infty_t L^2_x$, one has $hf\in L^2_t L^1_x \hookrightarrow L^2_t H^{-s}_x$ and thus $\nabla\cdot (h f)\in L^2_t H^{-s-1}_x$.
Combining these facts with $\Delta f\in L^2_t H^{-1}_x$, we conclude that
\[ \partial_t f= \Delta f+ \nabla\cdot (h f) + (c - \nabla\cdot h) f \in L^{1+\delta}_t H^{-s-1}_x; \]
in particular, $f\in C^\alpha_t H^{-s-1}_x$ for $\alpha =\delta/(\delta +1)$. All the above estimates are uniform when dealing with a sequence $(b^n,c^n,f^n)$, so that we can apply Aubin-Lions lemma.

Furthermore, convergence of $f^n$ in $L^2_{t,x}$ combined with a uniform bound in $L^2_t H^1_x$ implies convergence in $L^2_t L^{r'}_x$, which together with $(b^n,c^n)\rightharpoonup (b,c)$ implies that $(c^n-\nabla\cdot b^n)f^n\rightharpoonup(c-\nabla\cdot b)f$.
Combining all these facts allow to show that the weak limit of $f^n$ is still a solution to \eqref{eq:skeleton-modified} satisfying \eqref{eq:skel-energy-estimate} and so to conclude.
\end{proof}

In light of Proposition \ref{prop:stability-skeleton-L2} and Remark \ref{rem:generalized-skeleton}, for fixed $b$ satisfying Assumption \ref{ass:drift-transport} we can define the solution map $\mathcal G^0(f_0, {\rm Int}(g))$, which to any $f_0\in L^2_x$ and $g\in L^2_t\mathcal{H}^\alpha$ associates the unique solution $f\in L^2_t H^1_x\cap L^\infty_t L^2_x$ to \eqref{eq:skeleton} (if $f_0\in L^\infty$, we further know that $f\in L^\infty_{t,x}\cap C^0_t L^2_x$).

We can define the corresponding candidate rate function
\begin{equation}\label{rate-funct-1}
I_{f_0}(f)= \inf_{\{ g\in L^2 \cH^\alpha:\, f= \mathcal G^0 (f_0, \textup{Int}(g)) \}} \bigg\{\frac12 \int_0^T \|g_s \|_{H^\alpha}^2 \,\d s \bigg\}
\end{equation}
with the convention $\inf\emptyset=+\infty$.
Clearly, the notation for $\mathcal{G}^0$ and $I_{f_0}$ comes from Section \ref{subs-LDP-abstract}.

We shall take
$$\mathcal E_0 := L^2_x \quad \mbox{endowed with the strong topology},$$
which is a Polish space.
Observe that if $f_0\in L^2$ and $I_{f_0}(f)<\infty$, then $f$ is a solution to \eqref{eq:skeleton}, implying that it satisfies the energy bound \eqref{eq:skel-energy-estimate} with $c\equiv 0$ and $h=-b-g$; in particular, we can restrict the definition of $I_{f_0}$ to the Polish space
$$\mathcal{E}=C^0_t H^{-\delta}\quad \mbox{for any } \delta>0.$$

For reasons that will become clear later, we need to study the dependence $f_0\mapsto I_{f_0}$, where $f_0\in L^2$ and $I_{f_0}$ is regarded as a map on $\mathcal{E}$; we will employ the notion of $\Gamma$-convergence, see \cite{Braides} for a complete account on the topic.

\begin{definition}
Let $E$ be a Polish space, $F$, $F_n:E\to (-\infty,+\infty]$ a family of functionals; we say that $F_n$ $\Gamma$-converge to $F$ if:
\begin{itemize}
\item[i)] lower bound: for any sequence $x_n\to x$ it holds $F(x)\leq \liminf F_n(x_n)$;
\item[ii)] upper bound: for any $x$ there exists a sequence $x_n\to x$ such that $F(x)\geq \limsup F_n(x_n)$.
\end{itemize}
\end{definition}
Let us recall the following facts:
i) if $F_n$ $\Gamma$-converge to $F$ and $G$ is a continuous bounded function, then $F_n+G$ also $\Gamma$-converge to $F+G$;
ii) if $F_n$ $\Gamma$-converge to $F$ and they are mildly equicoercive, i.e. there exists a compact set $\mathcal K\subset E$ such that $\inf_{x\in E} F_n(x)=\inf_{x\in \mathcal K} F_n(x)$ for all $n$, then convergence of the minima holds:
\[\inf_{y\in E} F(y) = \lim_{n\to\infty} \inf_{y\in E} F_n(y).\]

\begin{lemma}\label{lem:Gamma-converg}
Let $f^n_0\rightharpoonup f_0$ in $L^2$ and set $F_n = I_{f_0^n}$, $F=I_{f_0}$; then $F_n$ $\Gamma$-converges to $F$.
\end{lemma}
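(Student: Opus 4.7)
The plan is to verify the two defining conditions of $\Gamma$-convergence, namely the liminf inequality and the existence of a recovery sequence, using as the main analytic tool the weak-to-strong stability of the solution map $\mathcal{G}^0$ provided by Proposition \ref{prop:stability-skeleton-L2}. Both implications will in fact reduce to a single application of that stability result, once combined with weak lower semicontinuity of the $L^2_t\cH^\alpha$-norm.

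For the liminf inequality, I would fix $f^n \to f$ in $\mathcal{E} = C^0_t H^{-\delta}$ and assume $L := \liminf_n F_n(f^n) < \infty$ (otherwise there is nothing to prove). Passing to a subsequence I arrange $F_n(f^n) \to L$, and from the definition \eqref{rate-funct-1} I select $g^n \in L^2_t \cH^\alpha$ with $f^n = \mathcal{G}^0(f_0^n, \textup{Int}(g^n))$ and $\tfrac{1}{2}\|g^n\|_{L^2 H^\alpha}^2 \leq F_n(f^n) + 1/n$. The family $\{g^n\}$ is then bounded in $L^2_t \cH^\alpha$, hence weakly precompact; extracting further, $g^n \rightharpoonup g$ in $L^2_t \cH^\alpha$, which also yields $g^n \rightharpoonup g$ in $L^2_{t,x}$ via the continuous embedding $\cH^\alpha \hookrightarrow L^2$. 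Setting $h^n := -b - g^n$, $h := -b - g$ and $c^n \equiv 0$, I apply Proposition \ref{prop:stability-skeleton-L2}: the solutions converge strongly to $\mathcal{G}^0(f_0, \textup{Int}(g))$ in $C^0_t H^{-\delta}$, and uniqueness of limits forces $f = \mathcal{G}^0(f_0, \textup{Int}(g))$. Weak lower semicontinuity of the norm then gives $F(f) \leq \tfrac{1}{2}\|g\|_{L^2 H^\alpha}^2 \leq L$.

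For the recovery sequence, the case $F(f) = +\infty$ is trivial via the constant sequence $f^n \equiv f$. When $F(f) < \infty$, for each $\eps > 0$ I pick $g_\eps \in L^2_t \cH^\alpha$ with $f = \mathcal{G}^0(f_0, \textup{Int}(g_\eps))$ and $\tfrac{1}{2}\|g_\eps\|_{L^2 H^\alpha}^2 \leq F(f) + \eps$, and define $f^n_\eps := \mathcal{G}^0(f_0^n, \textup{Int}(g_\eps))$ with the \emph{same} perturbation $g_\eps$. By definition $F_n(f^n_\eps) \leq F(f) + \eps$, and Proposition \ref{prop:stability-skeleton-L2} (applied to the varying data $f_0^n \rightharpoonup f_0$ and the constant-in-$n$ coefficients $h = -b-g_\eps$, $c=0$) yields $f^n_\eps \to f$ in $C^0_t H^{-\delta}$ as $n \to \infty$. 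A standard diagonal extraction along $\eps_n \downarrow 0$ then produces $f^n := f^{n}_{\eps_n} \to f$ in $\mathcal{E}$ with $\limsup_n F_n(f^n) \leq F(f)$.

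The only minor subtlety — and the single point where one has to be careful — is verifying the hypotheses of Proposition \ref{prop:stability-skeleton-L2}. This is essentially automatic: $g^n\in\cH^\alpha$ is divergence-free, so $\nabla\cdot h^n = -\nabla\cdot b$ is independent of $n$ and trivially satisfies the required bounds; weak $L^2_t\cH^\alpha$ convergence of $g^n$ transfers to weak $L^2_{t,x}$ convergence by duality through $\cH^\alpha\hookrightarrow L^2$; and the uniform $L^2_t\cH^\alpha$ bound, combined with $b\in L^q_t L^p_x\subset L^2_{t,x}$, gives the required uniform bound on $h^n$. No real obstacle is expected, which is precisely what makes the linear structure of the skeleton equation so convenient for this $\Gamma$-convergence argument.
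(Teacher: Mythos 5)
Your proof is correct and follows essentially the same route as the paper's: both the liminf inequality and the recovery sequence are reduced to the weak-to-strong stability of Proposition \ref{prop:stability-skeleton-L2} combined with weak lower semicontinuity of the $L^2_t\cH^\alpha$-norm. The only difference is cosmetic: the paper first shows via the direct method that the infimum in \eqref{rate-funct-1} is attained and works with exact minimizers, whereas you use approximate minimizers plus a diagonal extraction, which is equally valid.
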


\begin{proof}
Let us first make a few observations on the properties of the functions $F_n$.

The last assertion of Proposition \ref{prop:stability-skeleton-L2} implies that $\mathcal{G}^0(f_0,{\rm Int} (g))$ is continuous in $f_0\in L^2_x$ and $g\in L^2_t \cH^\alpha$ in the weak topologies;
moreover, the set $E_M:=\big\{g\in L^2_t \cH^\alpha: \| g\|_{L^2_t \cH^\alpha}^2 \leq 2M \big\}$ is compact in the weak topology of $L^2_t \cH^\alpha$ for any $M>0$.
By the direct method in the calculus of variations (see \cite{Dac}), the infimum defining $F_n(f)= I_{f^n_0}(f)$ is always realized as a minimum.

Next, we claim that the family of maps $\{F_n\}$ is mildly equicoercive; the same proof, applied at fixed $n\in\N$, shows that each $F_n$ is lower semicontinuous. By the definition of $F_n$ and its realization as a minimum, for any $M>0$ it holds
\begin{equation*}\begin{split}
\Gamma_M
:=&\, \bigcup_n \big\{f\in C^0_t H^{-\delta}: F_n(f)\leq M\big\}\\
=&\, \bigcup_n \big\{\mathcal{G}^0(f^n_0, {\rm Int} (g)) : g\in L^2_t \cH^\alpha,\, \| g\|_{L^2_t \cH^\alpha}^2 \leq 2M\big\}
= \mathcal{G}^0(A\times E_M )
\end{split}\end{equation*}
for the choice $A=\{f^n_0:n\in\N\}\subset L^2$.
Since by assumption $f^n_0\rightharpoonup f_0$, $A$ is precompact in the weak topology and so is $A\times E_M$; overall we conclude that, for any $M>0$, $\Gamma_M$ is precompact in $C^0_t H^{-\delta}$, implying the claim.

We now pass to the proof of $\Gamma$-convergence, starting with the lower bound. Given $f^n\to f$ in $\mathcal{E}$, without loss of generality we may assume that $\liminf F_n(f^n)<\infty $.
As a consequence, there exists $M<\infty$ and a sequence $g^n\in L^2_t \mathcal{H}^\alpha$ such that $\| g^n\|_{L^2 H^\alpha}^2 \leq 2M$, $f^n=\mathcal{G}^0(f^n_0,{\rm Int} (g^n))$.
By weak compactness, up to extracting a subsequence, we may assume that $g^n\rightharpoonup g$ for some $g\in L^2_t\mathcal{H}^\alpha$; but then by Proposition \ref{prop:stability-skeleton-L2} we deduce that
$f^n=\mathcal{G}^0(f^n_0,{\rm Int} (g^n))\to \mathcal{G}^0(f_0,{\rm Int} (g))=f$ in $\mathcal{E}$.
By lower semicontinuity of strong norms in the weak convergence we get
\begin{equation*}
I_{f_0}(f) \leq \frac{1}{2} \| g\|_{L^2 H^\alpha}^2 \leq \liminf_{n\to \infty} \frac{1}{2} \| g^n\|_{L^2 H^\alpha}^2 = \liminf_{n\to \infty} I_{f_0^n}(f^n).
\end{equation*}

Next we give the proof of the upper bound.
We can assume $I_{f_0}(f)<\infty$, so that there exists $g\in L^2_t \cH^\alpha$ such that $f=\mathcal{G}^0(f_0, \textup{Int}(g))$ and $\| g\|_{L^2_t H^\alpha}^2 = 2 I_{f_0}(f)$.
Let $f_n:=\mathcal{G}^0(f^n_0, \textup{Int}(g))$, then by Proposition \ref{prop:stability-skeleton-L2}, $f_n$ converge to $f$ in $\mathcal E$ and it holds
\begin{equation*}
I_{f_0}(f) = \frac{1}{2}\| g\|_{L^2 H^\alpha}^2 \geq I_{f^n_0}(f_n)
\end{equation*}
for all $n\in\N$, so that the same holds taking the $\limsup$ on the left-hand side.
\end{proof}

\subsubsection{Proofs of Large Deviation Principles}\label{subs-LDP-SLTE-bounded}

As explained before, we will first employ the abstract weak convergence approach to establish in Proposition \ref{thm-bounded-data} an LDP for bounded initial data $f_0\in L^\infty_x$ and then use the structure of the SPDE to further extend the result to general $f_0\in L^2_x$ (see Theorem \ref{thm-L2-data}).
Recall that solutions $f^n$ to \eqref{SLTE} satisfy the pathwise bound \eqref{eq:a-priori-transport}, in particular $\sup_{t\in [0,T]} \| f^n_t\|_{L^\infty} \leq \| f_0\|_{L^\infty}$ $\P$-a.s.

Let us define the spaces
\begin{equation}\label{space-E-T-R}
\mathcal{E}^R_0=\{f_0\in L^\infty_x: \| f_0\|_{L^\infty}\leq R\},\quad
\mathcal E^{T,R}= \Big\{f\in C_w([0,T], L^\infty_x):\sup_{t\in [0,T]}\|f_t \|_{L^\infty} \leq R\Big\},
\end{equation}
where the subscript $w$ now means weak-$\ast$ continuity in time. We endow them respectively with the topologies of $H^{-\delta}$ and $C^0_t H^{-\delta}$ for some $\delta>0$, which makes them Polish spaces (the topology being independent of the choice of $\delta>0$); see Lemma \ref{lem-topology} for a short proof of the first claim, the second one being similar.

The spaces $\mathcal E^R_0$ and $\mathcal E^{T,R}$ will play respectively the roles of $\mathcal{E}_0$ and $\mathcal{E}$ in Hypothesis \ref{hyp:LDP}.
Finally, for $M>0$, recall the spaces $S^M(\mathcal{H}^\alpha)$ and $\mathcal{P}^M_2(\mathcal{H}^\alpha)$ as defined in \eqref{eq:defn-S^M}, \eqref{eq:defn-P^M_2}.

With a slight abuse of notation, we will still use the notation $(f_0,g)\mapsto \mathcal{G}^0(f_0,{\rm Int}(g))$, introduced in the previous section, to denote the restriction of the solution map associated to \eqref{eq:skeleton} for initial data $f_0\in L^\infty_x$; in this case, for any fixed $R>0$, we may consider $\mathcal{G}^0$ as a well defined map from $\mathcal{E}^R_0\times  L^2_t\mathcal{H}^\alpha$ to $\mathcal{E}^{T,R}$.

Thanks to Proposition \ref{prop:stability-strong-skeleton}, $\mathcal{G}^0$ is a continuous map in the weak topologies, which implies the validity of Condition 1 in Hypothesis \ref{hyp:LDP} (continuous image of a compact set is compact).

\begin{lemma}\label{prop-cond-1}
For any $M>0$ and compact subset $\mathcal K\subset \mathcal E^R_0$, the set
\[ \Gamma_{\mathcal K,M}:= \big\{ \mathcal G^0( f_0, \textup{Int}(g)): f_0\in \mathcal K,\, g\in S^M( \cH^\alpha) \big\} \]
is compact in the space $\mathcal E^{T,R}$.
\end{lemma}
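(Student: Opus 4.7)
The strategy will be to verify sequential compactness of $\Gamma_{\mathcal{K},M}$ in $\mathcal{E}^{T,R}$ by combining weak compactness arguments on the parameters $(f_0, g)$ with the strong stability result of Proposition \ref{prop:stability-strong-skeleton}. Take any sequence $f^n = \mathcal{G}^0(f_0^n, \textup{Int}(g^n))$ with $f_0^n \in \mathcal{K}$ and $g^n \in S^M(\mathcal{H}^\alpha)$. Since $\mathcal{K}$ is compact in $\mathcal{E}^R_0$ (i.e. in the $H^{-\delta}$ topology) and uniformly bounded by $R$ in $L^\infty$, up to a subsequence $f_0^n \to f_0$ in $H^{-\delta}$ for some $f_0 \in \mathcal{K}$; combining this with the $L^\infty$ bound, one obtains the weak-$\ast$ convergence $f_0^n \stackrel{\ast}{\rightharpoonup} f_0$ in $L^\infty$. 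Since $S^M(\mathcal{H}^\alpha)$ is weakly compact in $L^2_t \mathcal{H}^\alpha$ (being a ball in a Hilbert space endowed with its weak topology), we may further extract $g^n \rightharpoonup g$ for some $g \in S^M(\mathcal{H}^\alpha)$, by the lower semicontinuity of the norm under weak convergence.

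The next step is to apply Proposition \ref{prop:stability-strong-skeleton} to the sequence of solutions $f^n$ with coefficients $h^n = -b - g^n$ and $c^n \equiv 0$. These satisfy Assumption \ref{ass:coefficients-skeleton} uniformly in $n$: the divergence $\nabla \cdot h^n = -\nabla \cdot b$ is independent of $n$ (since each $g^n$ is divergence free), and $\|g^n\|_{L^2_{t,x}} \lesssim \|g^n\|_{L^2_t \mathcal{H}^\alpha} \leq (2M)^{1/2}$ (using $\alpha > 0$ to embed $\mathcal{H}^\alpha \hookrightarrow L^2$). Thus $(h^n,c^n) \rightharpoonup (h, 0)$ with $h = -b - g$, the uniform bounds required in the last assertion of Proposition \ref{prop:stability-strong-skeleton} hold, and we conclude $f^n \to f := \mathcal{G}^0(f_0, \textup{Int}(g))$ strongly in $C^0_t H^{-\delta}$, which is exactly the topology defining $\mathcal{E}^{T,R}$.

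It remains to verify that the limit $f$ indeed belongs to $\mathcal{E}^{T,R}$. The pathwise $L^\infty$ bound from \eqref{eq:skel-basic-bound} (with $c \equiv 0$) yields
\[
\sup_{t \in [0,T]} \|f_t\|_{L^\infty} \leq \|f_0\|_{L^\infty} \leq R.
\]
Weak-$\ast$ continuity in time follows from Proposition \ref{prop:stability-strong-skeleton}, which ensures $f \in C^0_t L^2_x$: strong $L^2$-continuity combined with the uniform $L^\infty$ bound is enough to conclude weak-$\ast$ continuity in $L^\infty$ by standard density arguments.

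The only slightly delicate point is the passage from weak convergence of $g^n$ in $L^2_t \mathcal{H}^\alpha$ to the weak convergence of $h^n = -b - g^n$ in $L^2_{t,x}$ with the divergence-free structure preserved; but this is immediate from the continuous embedding $\mathcal{H}^\alpha \hookrightarrow \mathcal{H} \subset L^2$ for $\alpha > 0$. Once this is clear, the whole argument is a routine concatenation of the abstract weak compactness of balls in Hilbert spaces, the compactness of $\mathcal{K}$, and the quantitative stability result already established.
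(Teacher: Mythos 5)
Your proof is correct and follows essentially the same route as the paper: the paper simply observes that $\mathcal G^0$ is continuous in the weak topologies by Proposition \ref{prop:stability-strong-skeleton} and that the continuous image of the compact set $\mathcal K\times S^M(\cH^\alpha)$ is compact, which is exactly the argument you unfold into a sequential-compactness form. Your additional checks (weak-$\ast$ convergence of $f_0^n$, divergence-free structure of $h^n$, membership of the limit in $\mathcal E^{T,R}$) are all consistent with what the paper leaves implicit.
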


In order to check Condition 2 in Hypothesis \ref{hyp:LDP}, we need a few preparations.
For any $f_0\in L^\infty$ and $g\in \mathcal P_2^M(\cH^\alpha)$, consider now the equation
\begin{equation}\label{eq:SLTE-perturbed}
\d f^n + b\cdot\nabla f^n\, \d t + \circ \d \Pi_n \big(\sqrt{\eps_n}\, W^\alpha \big) \cdot \nabla f^n + \Pi_n (g) \cdot \nabla f^n \,\d t =0, \quad f^n|_{t=0} = f_0.
\end{equation}
Arguing as for \eqref{SLTE}, under Assumption \ref{ass:drift-transport}, it admits a unique solution $f^n$ for any initial data $f_0\in L^\infty$, which moreover satisfies the pathwise bound \eqref{eq:a-priori-transport}; in particular, $\P$-a.s. $\sup_{t\in [0,T]} \|f^n_t \|_{L^\infty} \leq \|f_0 \|_{L^\infty}$.
Equation \eqref{eq:SLTE-perturbed} has the corresponding It\^o form
\begin{equation}\label{eq:SLTE-perturbed-Ito}
\d f^n + b\cdot\nabla f^n \,\d t+\d \Pi_n \big(\sqrt{\eps_n}\, W^\alpha \big) \cdot \nabla f^n + \Pi_n (g) \cdot \nabla f^n \,\d t = \Delta f^n\,\d t.
\end{equation}
We shall denote the unique solution by $f^n = \mathcal G^n\big(f_0, \sqrt{\eps_n}\, W^\alpha + \textup{Int}(g)\big)$; our next aim is to prove the following result.

\begin{lemma}\label{prop-cond-2}
Let $\{g^n\}_n \subset \mathcal P_2^M(\cH^\alpha)$ and $\{f^n_0 \}_n \subset \mathcal E^R_0$.
Assume that $f^n_0 \stackrel{\ast}{\rightharpoonup} f_0$ in $L^\infty$ and $g^n$ converge in law, as $S^M(\cH^\alpha)$-valued random elements, to $g$ as $n\to \infty$.
Then  the convergence
\begin{equation*}
\mathcal G^n\big(f^n_0, \sqrt{\eps_n}\, W^\alpha + \textup{Int}(g^n)\big) \to \mathcal G^0(f_0, \textup{Int}(g))
\end{equation*}
holds in law, in the topology of $\mathcal E^{T,R}$.
\end{lemma}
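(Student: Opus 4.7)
The plan is to combine the Skorokhod representation theorem with a compactness/uniqueness scheme: reduce to almost sure convergence, extract a limit point of $\{f^n\}$ via tightness in $\mathcal{E}^{T,R}$, and identify this limit with $\mathcal{G}^0(f_0,\textup{Int}(g))$ using the well-posedness granted by Proposition \ref{prop:stability-strong-skeleton}. Since the conclusion is stated in law, we may pass to an auxiliary probability space on which $(g^n, \sqrt{\eps_n}\,W^\alpha)$ converges almost surely to $(g,0)$ in the relevant topologies, with the deterministic initial data $f^n_0$ unchanged.

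The first step is tightness of $\{f^n\}$ in $\mathcal{E}^{T,R}$. The pathwise bound \eqref{eq:a-priori-transport} gives $\sup_n\|f^n\|_{L^\infty_{t,x}}\leq R$ almost surely, so $\{f^n\}$ takes values in the $L^\infty_{t,x}$-bounded set defining $\mathcal{E}^{T,R}$. For time regularity I would test the It\^o form \eqref{eq:SLTE-perturbed-Ito} against a smooth $\varphi$ and estimate $\langle f^n_t-f^n_s,\varphi\rangle$ as a deterministic drift increment (controlled via Assumption \ref{ass:drift-transport}, the uniform $S^M$-bound on $g^n$, and the $L^\infty_{t,x}$-bound on $f^n$) plus a stochastic integral whose variance is $\lesssim\eps_n\|\varphi\|_{C^1}^2|t-s|$ by the It\^o isometry \eqref{eq:intro-basic-estim}. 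Kolmogorov's criterion, together with the compact embedding $L^2\hookrightarrow H^{-\delta}$, then yields tightness in $C^0_t H^{-\delta}_x$ for any $\delta>0$.

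By Prokhorov and a second Skorokhod extraction, one obtains a subsequence (not relabeled) along which $(f^n,g^n)\to(f^\ast,g)$ almost surely in the product topology, with $f^\ast\in\mathcal{E}^{T,R}$; interpolating strong convergence in $C^0_t H^{-\delta}_x$ with the uniform $L^\infty_{t,x}$-bound upgrades this to strong convergence in $L^p_{t,x}$ for every $p<\infty$. One then passes to the limit term by term in the weak formulation of \eqref{eq:SLTE-perturbed-Ito}: the Laplacian passes in distribution; the drift $b\cdot\nabla f^n=\nabla\cdot(bf^n)-(\nabla\cdot b)f^n$ converges using strong $L^p_{t,x}$-convergence of $f^n$ and the integrability hypotheses on $b$ and $\nabla\cdot b$; the critical term $\Pi_n g^n\cdot\nabla f^n=\nabla\cdot(f^n\,\Pi_n g^n)$, rewritten using $\nabla\cdot g^n=0$, converges weakly since $f^n\to f^\ast$ strongly in $L^2_{t,x}$ while $\Pi_n g^n\rightharpoonup g$ weakly in $L^2_t L^2_x$; and the stochastic term, after integration by parts and testing against $\varphi$, has $L^2(\P)$-norm bounded by $C\sqrt{\eps_n}\,R\,\|\varphi\|_{C^1}\to 0$ via \eqref{eq:intro-basic-estim}. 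Hence $f^\ast$ is a bounded weak solution to the skeleton equation with data $(f_0,g)$, and Proposition \ref{prop:stability-strong-skeleton} forces $f^\ast=\mathcal{G}^0(f_0,\textup{Int}(g))$; the limit being deterministic promotes subsequential convergence in law to convergence of the full sequence.

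The main obstacle is the identification of the nonlinear term $\Pi_n g^n\cdot\nabla f^n$ in the limit. Since $g\in L^2_t\cH^\alpha$ with $\alpha<d/2$ does not embed in $L^\infty_x$, one cannot obtain uniform $L^1$-control of $g^n\cdot\nabla f^n$ directly, and must genuinely exploit both the divergence-free structure of $g^n$ and the uniform $L^\infty_{t,x}$-bound on $f^n$ to reinterpret the product as a divergence acting on a weak-times-strong converging object. A related subtlety, and the reason for the restriction $\alpha>0$ in the definition of the limit noise, concerns the stochastic term: only via the embedding $L^2\hookrightarrow H^{-\alpha}$, valid precisely for $\alpha>0$, can the It\^o isometry \eqref{eq:intro-basic-estim} be used to absorb $f^n_s\nabla\varphi$ and produce a bound that vanishes as $\eps_n\to 0$.
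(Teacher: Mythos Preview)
Your overall scheme (Skorokhod, tightness, identification via uniqueness) matches the paper's, but there is a genuine gap in the identification step, and it is precisely the step you flag as ``the main obstacle''.

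The claim that ``interpolating strong convergence in $C^0_t H^{-\delta}_x$ with the uniform $L^\infty_{t,x}$-bound upgrades this to strong convergence in $L^p_{t,x}$ for every $p<\infty$'' is false. A counterexample on $\T^d$ is $f^n=e_n$: one has $\|e_n\|_{L^\infty}=1$ and $\|e_n\|_{H^{-\delta}}=|n|^{-\delta}\to 0$, yet $\|e_n\|_{L^2}=1$ does not tend to zero. There is no interpolation inequality of the form $\|\cdot\|_{L^2}\lesssim \|\cdot\|_{H^{-\delta}}^\theta \|\cdot\|_{L^\infty}^{1-\theta}$ with $\theta>0$. In the present problem the stochastic transport equation is genuinely hyperbolic (the It\^o Laplacian is exactly cancelled by the Stratonovich correction), so $f^n$ gains no spatial regularity and one cannot expect compactness in $L^2_{t,x}$. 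Consequently your weak--strong argument for $f^n\,\Pi_n g^n\rightharpoonup f^\ast g$ breaks down: you only have $f^n\to f^\ast$ in $C^0_t H^{-\delta}$ and $\Pi_n g^n\rightharpoonup g$ in $L^2_t L^2_x$, and the product of a weakly convergent sequence with a sequence converging only in negative Sobolev norms need not converge.

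This is exactly where the hypothesis $\alpha>0$ enters, and you have misattributed it. The stochastic integral vanishes simply because its quadratic variation carries the prefactor $\eps_n$; the bound $\|f^n_r\nabla\varphi\|_{H^{-\alpha}}\le \|f^n_r\nabla\varphi\|_{L^2}$ works for any $\alpha\ge 0$. The paper instead uses $\alpha>0$ to handle the control term: for smooth $\varphi$ one has $\Pi_n(g^n)\varphi\in L^2_t H^\alpha_x$ with $\|\Pi_n(g^n)\varphi\|_{L^2 H^\alpha}\lesssim M\|\varphi\|_{C^{\alpha+\eps}}$, and then
\[
\bigg|\int_0^t \langle \Pi_n(g^n_s)(f^n_s-f^\ast_s),\varphi\rangle\,\d s\bigg|
=\bigg|\int_0^t \langle f^n_s-f^\ast_s,\,\Pi_n(g^n_s)\varphi\rangle\,\d s\bigg|
\lesssim M\|\varphi\|_{C^{\alpha+\eps}}\,\|f^n-f^\ast\|_{C^0 H^{-\alpha}}\to 0.
\]
Thus the $H^\alpha$ regularity of the controls compensates for the merely $H^{-\alpha}$ convergence of $f^n$, which is the missing idea in your argument.
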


Before presenting the proof, we need to establish some uniform control on the martingale part of the equation \eqref{eq:SLTE-perturbed-Ito}, given by
\begin{equation*}
M^n_t
= \sqrt{\eps_n} \int_0^t \d (\Pi_n W^{\alpha}_s) \cdot \nabla f^n_s
= \sqrt{\eps_n} \sum_{|k|\leq n,\, i} |k|^{-\alpha} \int_0^t \sigma_{k,i}\cdot \nabla f^n_s \,\d B^{k,i}_s.
\end{equation*}

\begin{lemma}\label{lem-martingale}
For any $p\in[1,\infty), \beta>1+d/2$ and $\gamma<1/2$, there exists a constant $C>0$, independent of $n\geq 1$, such that
\begin{equation}\label{eq:martingale-estim}
\E\big[ \| M^n\|_{C^\gamma H^{-\beta}}^{2p} \big] \leq C \eps_n^p R^{2p}.
\end{equation}
\end{lemma}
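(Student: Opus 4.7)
The plan is to bound moments of the time increments $\|M^n_t - M^n_s\|_{H^{-\beta}}^{2p}$ by $\eps_n^p R^{2p}(t-s)^p$ and then invoke the Kolmogorov continuity criterion. For the moment bound I would start by exploiting the divergence-free structure of $W^{n,\alpha}$: for any smooth test function $\varphi$,
$$
\langle M^n_t - M^n_s,\varphi\rangle = -\sqrt{\eps_n}\int_s^t \langle f^n_r\,\nabla\varphi,\d W^{n,\alpha}_r\rangle .
$$
Applying the It\^o isometry \eqref{eq:intro-basic-estim} (adapted to $W^{n,\alpha}$) together with the inclusion $L^2\hookrightarrow H^{-\alpha}$ and the pathwise bound \eqref{eq:a-priori-transport}, then taking $\varphi=\bar e_k$, yields the pathwise quadratic variation estimate
$$
[\widehat{M^n}(k)]_t - [\widehat{M^n}(k)]_s \lesssim \eps_n R^2 |k|^2 (t-s),
$$
with an implicit constant depending on $\|\nabla\cdot b\|_{L^1 L^\infty}$ but independent of $n$.

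Summing over $k$ with weights $(1+|k|^2)^{-\beta}$, which are summable against $|k|^2$ precisely when $\beta > 1+d/2$, I obtain a pathwise bound on the trace of the $H^{-\beta}$-valued quadratic variation of $M^n$ of order $\eps_n R^2(t-s)$. The Burkholder-Davis-Gundy inequality for Hilbert-space valued continuous martingales then upgrades this to
$$
\E \|M^n_t - M^n_s\|_{H^{-\beta}}^{2p} \lesssim_p \eps_n^p R^{2p}(t-s)^p \qquad \forall\, p \geq 1.
$$
Kolmogorov's continuity theorem applied to the $H^{-\beta}$-valued process $M^n$ with this estimate produces a modification in $C^\gamma_t H^{-\beta}$, and gives the moment bound \eqref{eq:martingale-estim} for any $\gamma < 1/2 - 1/(2p)$. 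For an arbitrary prescribed $\gamma<1/2$ one picks $p_0$ large enough so that this compatibility holds, and then Lyapunov's inequality transfers the bound to every smaller $p \in [1,\infty)$.

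The step demanding most care is the pathwise estimate on $[\widehat{M^n}(k)]$. Rather than aiming for a sharp exponent in $\alpha$, I would rely on the crude bound $\|\cdot\|_{H^{-\alpha}}\leq\|\cdot\|_{L^2}$ so that the pathwise $L^\infty$ control on $f^n_r$ (independent of $n$) can be invoked directly; the resulting $|k|^2$ growth is exactly what forces the threshold $\beta > 1+d/2$ in the statement, and conveniently ensures that no factor of $n$ slips into the constants, since the truncation $\Pi_n$ has operator norm $\leq 1$ on all the relevant spaces.
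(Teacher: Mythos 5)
Your proposal is correct and follows essentially the same route as the paper: a pathwise bound of order $\eps_n R^2(t-s)$ on the $H^{-\beta}$-valued quadratic variation (using the divergence-free structure and the condition $\beta>1+d/2$ for summability), followed by Burkholder--Davis--Gundy in Hilbert spaces and the Kolmogorov continuity criterion. The only difference is bookkeeping: you sum the quadratic variations of the Fourier modes of $M^n$ with weights $(1+|k|^2)^{-\beta}$, whereas the paper sums over the noise modes $\sigma_{k,i}$ and uses Parseval; both yield the same threshold and the same $n$-independent constant.
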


\begin{proof}
The proof is similar to those given in \cite[Proposition 3.6]{FGL21b} and \cite[Section 2.2]{FGL21c}, so we will mostly sketch it. Denote by $[ M^n]_\beta$ the quadratic variation associated to $\| \cdot\|_{H^{-\beta}}$, i.e. the unique increasing process such that $\| M^n (t)\|_{H^{-\beta}}^2 -[M^n]_\beta(t)$ is a martingale; due to our choice of $B^{k,i}$, it holds
\begin{align*}
\frac{\d }{\d t} [M^n]_\beta = \eps_n \sum_{|k|\leq n,i} |k|^{-2\alpha} \| \sigma_{k,i}\cdot\nabla f^n_t \|_{H^{-\beta}}^2 \leq \eps_n \sum_{k ,i} \| \sigma_{k,i}\cdot\nabla f^n_t \|_{H^{-\beta}}^2.
\end{align*}
As $\sigma_{k,i}$ are divergence free, $\sigma_{k,i}\cdot\nabla f^n_t= \nabla \cdot(\sigma_{k,i}\, f^n_t)$; thus, for any $i\in \{1,\ldots, d-1\}$,
$$\| \sigma_{k,i}\cdot\nabla f^n_t \|_{H^{-\beta}}^2 \lesssim \| \sigma_{k,i} f^n_t \|_{H^{1-\beta}}^2 \lesssim \| e_k f^n_t\|_{H^{1-\beta}}^2 = \sum_{l} |l|^{-2(\beta-1)} |\langle e_k f^n_t, e_l \rangle|^2 . $$
As a result, $\P$-a.s. it holds
\begin{align*}
\frac{\d }{\d t} [M^n]_\beta
&\lesssim \eps_n \sum_{k,l} |l|^{-2(\beta -1)} |\langle f^n_t , e_{l-k} \rangle|^2
= \eps_n \sum_l |l|^{-2(\beta -1)} \|f^n_t \|_{L^2}^2
\lesssim \eps_n \| f^n_t\|_{L^2}^2 \lesssim \eps_n R^2,
\end{align*}
where the last step is due to $\| f^n_t\|_{L^2}\lesssim \| f^n_0 \|_{L^2} \leq R$. With the above estimate at hand, applying Burkholder-Davis-Gundy inequality in Hilbert spaces, for any $p\in [1,\infty)$ it holds
\begin{align*}
\E\big(\| M^n_t-M^n_s\|_{H^{-\beta}}^{2p} \big)
\lesssim \E\big([M^n_\cdot- M^n_s]_\beta^p(t) \big)
\lesssim |t-s|^p\eps_n^p R^{2p};
\end{align*}
estimate \eqref{eq:martingale-estim} then readily follows from Kolmogorov continuity theorem.
\end{proof}

Now we are ready to present the

\begin{proof}[Proof of Lemma \ref{prop-cond-2}]
The argument is as usual by compactness. To conclude, it suffices to check that any subsequence of $\{f^n\}_n$ converges to a weak $L^\infty_{t,x}$ solution $f$ of \eqref{eq:skeleton} associated to $(f_0, g)$; indeed by Proposition \ref{prop:stability-strong-skeleton}, uniqueness holds in this class, so that the limit is given by $\mathcal{G}^0(f_0,{\rm Int}(g))$.

By Assumption \ref{ass:drift-transport} and the pathwise estimate \eqref{eq:a-priori-transport}, up to choosing large $\beta>0$, it holds
\begin{equation*}
\bigg\| \int_0^\cdot \big[(b_s+g^n_s)\cdot\nabla f^n_s - \Delta f^n_s \big]\, \d s\bigg\|_{C^{1/2} H^{-\beta}} \lesssim \big(\| \nabla\cdot b\|_{L^2 L^r} + \| b\|_{L^2 L^2} + \| g^n\|_{L^2 L^2} \big) \| f^n_0\|_{L^\infty};
\end{equation*}
combining this with Lemma \ref{lem-martingale} and interpolating with the uniform $L^2$-bound, we can deduce from classical compact embedding results (cf. \cite[Corollary 9, p. 90]{Simon}) that the laws of $\{f^n\}_n$ are tight in $C^0_t H^{-\delta}$, for any $\delta>0$. Applying the Prohorov theorem and Skorohod's representation theorem, and extracting a subsequence if necessary, we can assume that $f^n$ converge $\P$-a.s., in the topology of $C^0_t H^{-\delta}$ (thus also in $\mathcal{E}^{T,R}$), to some limit process $f$, satisfying $\|f \|_{L^\infty_{t,x}} \leq \|f_0 \|_{L^\infty}$ $\P$-a.s.

On the other hand, we know by assumption that $g^n\to g$ in law in the topology of $S^M(\cH^\alpha)$, namely in the weak topology of $L^2_t H^\alpha_x$. To conclude that the limit point $f$ is a weak solution to \eqref{eq:skeleton}, by \eqref{eq:SLTE-perturbed-Ito} with $g=g^n$, it suffices to check: (i) the martingale part $\d M^n:= \d \Pi_n \big(\sqrt{\eps_n}\, W^\alpha \big) \cdot \nabla f^n$ vanishes as $n\to \infty$; (ii) $\Pi_n(g^n)\cdot\nabla f^n$ converge to $g \cdot\nabla f$ in the sense of distributions.

Claim (i) follows immediately from estimates \eqref{eq:martingale-estim}, since $\eps_n\to 0$. Concerning (ii), first observe that $\Pi_n(g^n)\cdot\nabla f^n= \nabla\cdot ( \Pi_n(g^n) f^n)$, so we only need to show that $\Pi_n(g^n) f^n\rightharpoonup g f$; since $g^n\rightharpoonup g$, by standard properties of Fourier projections $\Pi_n(g^n)\rightharpoonup g$ as well, thus $\Pi_n(g^n) f \rightharpoonup g f$.

To conclude, it remains to show that $\Pi_n(g^n) (f^n-f)\rightharpoonup 0$; this is the main point where the condition $\alpha>0$ enters crucially and the reason why we restrict ourselves to the noise $W^\alpha$ as given in Section \ref{sec:intro-noise}.
For any $\varphi \in C^\infty$, it holds $\Pi_n(g^n) \varphi\in L^2_t H^{\alpha}_x$ and $\|\Pi_n(g^n) \varphi \|_{L^2_t H^{\alpha}_x} \lesssim \|\Pi_n(g^n) \|_{L^2_t H^{\alpha}_x} \|\varphi \|_{C^{\alpha+\eps}_x} \leq M \|\varphi \|_{C^{\alpha+\eps}_x}$ as $g^n \in S^M(\cH^\alpha)$; therefore,
\begin{align*}
\bigg| \int_0^t \<\Pi_n(g^n_s)(f^n_s-f_s), \varphi\>\,\d s\bigg|
& \leq  \int_0^t |\<f^n_s-f_s, \Pi_n(g^n_s)\, \varphi\>|\,\d s \lesssim M\| \varphi\|_{C^{\alpha+\eps}_x} \| f^n-f\|_{C^0 H^{-\alpha}}
\end{align*}
and the last term converges to $0$, thus completing the proof of claim ({\rm ii}).

From these arguments we conclude that $f$ is a weak $L^\infty_{t,x}$ solution to $\partial_t f + (b+g)\cdot \nabla f = \Delta f$ with initial condition $f_0$; by Proposition \ref{prop:stability-strong-skeleton}, $f= \mathcal G^0( f_0, \textup{Int}(g))$, completing the proof.
\end{proof}

Now we can state and prove a preliminary LDP result.

\begin{proposition}\label{thm-bounded-data}
The solutions $\{f^n= \mathcal G^n(f_0, \sqrt{\eps_n} W^\alpha) \}_{n\geq 1, f_0\in \mathcal E^R_0}$ of \eqref{SLTE} satisfy the uniform Laplace principle with the rate function
\begin{equation}\label{thm-bounded-data.1}
I_{f_0}(f)= \inf_{\{ g\in L^2_t \cH^\alpha :\, f= \mathcal G^0 (f_0, \textup{Int}(g)) \}} \bigg\{\frac12 \int_0^T \|g_s \|_{H^\alpha}^2 \,\d s \bigg\},
\end{equation}
where $f_0\in \mathcal E^R_0$ and $f\in \mathcal E^{T,R}$.
\end{proposition}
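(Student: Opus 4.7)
The plan is to reduce the proof to a direct application of the abstract Theorem \ref{thm-LDP-abstract}, making the identifications $U=L^2$, $Q=Q^\alpha=(-\Delta)^{-\alpha}\Pi$ (so that $U_0=\cH^\alpha$ with the natural inner product), $\mathcal{E}_0=\mathcal{E}^R_0$ and $\mathcal{E}=\mathcal{E}^{T,R}$, with both endowed with the $H^{-\delta}$ and $C^0_t H^{-\delta}$ topologies as specified in \eqref{space-E-T-R}. The stochastic solution map $\mathcal{G}^n$ is the one introduced after \eqref{eq:SLTE-perturbed-Ito}, coming from the well-posedness of \eqref{eq:SLTE-perturbed}, while the skeleton solution map $\mathcal{G}^0$ is the restriction of the map built from Proposition \ref{prop:stability-strong-skeleton} to $\mathcal{E}^R_0\times L^2_t\cH^\alpha$. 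Note that the pathwise bound \eqref{eq:a-priori-transport} and the analogous bound for the skeleton guarantee that both $\mathcal{G}^n$ and $\mathcal{G}^0$ actually take values in $\mathcal{E}^{T,R}$ when $f_0\in\mathcal{E}^R_0$.

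With these identifications in place, Condition~1 of Hypothesis \ref{hyp:LDP} is exactly the content of Lemma \ref{prop-cond-1}, and Condition~2 is Lemma \ref{prop-cond-2}. The definition \eqref{rate-funct-1} of $I_{f_0}$ matches \eqref{eq:rate-funct} once one identifies $|v|_0=\|v\|_{\cH^\alpha}$. Therefore, to invoke Theorem \ref{thm-LDP-abstract} and obtain the uniform Laplace principle on compact subsets of $\mathcal{E}^R_0$, the only remaining piece is the lower semicontinuity of the map $f_0\mapsto I_{f_0}(f)$ from $\mathcal{E}^R_0$ to $[0,\infty]$, for each fixed $f\in\mathcal{E}^{T,R}$.

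To verify this, I would take $f^k_0\to f_0$ in $\mathcal{E}^R_0$ (so in $H^{-\delta}$, with a uniform $L^\infty$-bound by $R$), assume without loss of generality that $\liminf_k I_{f^k_0}(f)<\infty$, and pass to a subsequence realising the liminf. Using the fact that the infimum in \eqref{rate-funct-1} is attained (by the direct method, exactly as in Lemma \ref{lem:Gamma-converg}, since $\mathcal{G}^0(f_0,\textup{Int}(\cdot))$ is continuous in the weak topology on sublevel sets of $\|\cdot\|_{L^2\cH^\alpha}^2$), I pick $g^k\in L^2_t\cH^\alpha$ with $f=\mathcal{G}^0(f^k_0,\textup{Int}(g^k))$ and $\tfrac12\|g^k\|_{L^2\cH^\alpha}^2=I_{f^k_0}(f)$. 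Weak compactness of bounded sets in $L^2_t\cH^\alpha$ gives a subsequence with $g^k\rightharpoonup g$, and the weak-continuity statement of Proposition \ref{prop:stability-strong-skeleton} yields $\mathcal{G}^0(f^k_0,\textup{Int}(g^k))\to\mathcal{G}^0(f_0,\textup{Int}(g))$ in $C^0_t H^{-\delta}$; this forces $f=\mathcal{G}^0(f_0,\textup{Int}(g))$, so $g$ is admissible for $I_{f_0}(f)$. Lower semicontinuity of the Hilbert norm under weak convergence then gives
\[
I_{f_0}(f)\leq \tfrac12\|g\|_{L^2\cH^\alpha}^2 \leq \liminf_{k\to\infty}\tfrac12\|g^k\|_{L^2\cH^\alpha}^2 = \liminf_{k\to\infty} I_{f^k_0}(f),
\]
which is the required semicontinuity.

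The genuinely delicate ingredient has already been isolated in Lemma \ref{prop-cond-2}, namely the passage to the limit in the nonlinear term $\Pi_n(g^n)\cdot\nabla f^n$ which crucially exploits $\alpha>0$; everything in the present proposition is then bookkeeping on top of the established building blocks. The only subtle point worth double-checking is that $\mathcal{E}^R_0$ and $\mathcal{E}^{T,R}$ are genuinely Polish in the $H^{-\delta}$ topology (this is noted just after \eqref{space-E-T-R} and proved in Lemma \ref{lem-topology}), so that the framework of \cite{BDM08} applies verbatim. Combining the verifications of Conditions 1 and 2 with the lower semicontinuity above, Theorem \ref{thm-LDP-abstract} delivers the uniform Laplace principle with rate function \eqref{thm-bounded-data.1}, which is the statement of Proposition \ref{thm-bounded-data}.
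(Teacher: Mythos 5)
Your proposal is correct and follows essentially the same route as the paper: the paper's proof likewise reduces the statement to Lemmas \ref{prop-cond-1} and \ref{prop-cond-2} together with Theorem \ref{thm-LDP-abstract}, and disposes of the remaining lower semicontinuity of $f_0\mapsto I_{f_0}(f)$ by pointing to the argument in the proof of Lemma \ref{lem:Gamma-converg}, which is exactly the weak-compactness/weak-stability argument you spell out.
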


\begin{proof}
By Lemma \ref{prop-cond-1}, Lemma \ref{prop-cond-2} and Theorem \ref{thm-LDP-abstract}, it suffices to show that for any fixed $f\in \mathcal{E}^{T,R}$, the map $f_0 \mapsto I_{f_0}(f)$ from $\mathcal{E}^R_0$ to $[0,\infty]$ is lower semicontinuous; but this has been shown in the proof of Lemma \ref{lem:Gamma-converg}.
\end{proof}

Now we will transfer the uniform Laplace principle from Proposition \ref{thm-bounded-data} to the case of $L^2$-initial data; the main tools allowing to do so are the linear structure of the equation and the $\Gamma$-convergence result from Lemma \ref{lem:Gamma-converg}.
Recall that here we take $\mathcal{E}_0= L^2$ and $\mathcal{E}=C^0_t H^{-s}$, both endowed with strong topologies. The exact value of $s>0$ is not important.

\begin{theorem}\label{thm-L2-data}
The solutions $\{f^n \}_n$ of \eqref{SLTE} with initial data $f_0\in L^2$ satisfy the Laplace principle on $C^0_t H^{-s}$, with rate function $I_{f_0}$, uniformly  on compact subsets of $L^2$.
In particular, they satisfy the LDP with the same rate function.
\end{theorem}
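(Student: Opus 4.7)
Writing $V_n(f_0) := -\eps_n \log \E[\exp(-G(f^n)/\eps_n)]$ and $J(f_0) := \inf_{f\in\mathcal{E}}\{G(f)+I_{f_0}(f)\}$, the aim is to show $V_n\to J$ uniformly on any compact $\mathcal K\subset L^2$, from which the corresponding LDP follows by the LDP--LP equivalence recalled in Section \ref{subs-LDP-abstract}. The plan is to exploit the linearity of \eqref{SLTE} together with the $\Gamma$-convergence from Lemma \ref{lem:Gamma-converg} in order to transfer the uniform Laplace principle of Proposition \ref{thm-bounded-data}, which covers the bounded-data case, to arbitrary $f_0\in L^2$. By a standard density argument---any bounded continuous $G$ can be monotonically approximated from below and above by bounded Lipschitz functions, and the corresponding Laplace functionals converge by monotone/dominated convergence---it suffices to prove the statement under the additional assumption that $G$ is Lipschitz with respect to the metric of $\mathcal{E}=C^0_t H^{-s}_x$.

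For $f_0\in L^2$ and $R>0$, let $f_0^R:=T_R(f_0)$ with $T_R(y):=(-R)\vee(y\wedge R)$ the pointwise truncation; then $f_0^R\in\mathcal{E}^R_0$. Exploiting the $1$-Lipschitzness of $T_R$ on $\R$ together with the total boundedness of $\mathcal K$ (equivalently, the uniform integrability of $\{|f_0|^2:f_0\in\mathcal K\}$), one verifies that
\[
\rho(R):=\sup_{f_0\in\mathcal K}\|f_0-f_0^R\|_{L^2}\to 0\quad\text{as }R\to\infty,
\]
and that $\mathcal K^R:=\{f_0^R:f_0\in\mathcal K\}$ is compact in $\mathcal{E}^R_0$, being the continuous image of $\mathcal K$ under $T_R\colon L^2\to L^2\hookrightarrow H^{-\delta}$. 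Denoting by $f^{n,R}$ the solution to \eqref{SLTE} with initial datum $f_0^R$, the linearity of the equation implies that $f^n-f^{n,R}$ solves \eqref{SLTE} with datum $f_0-f_0^R$; the pathwise bound \eqref{eq:a-priori-transport}, combined with the embedding $L^2\hookrightarrow H^{-s}$, then yields
\[
\|f^n-f^{n,R}\|_{C^0_t H^{-s}_x}\lesssim \|f_0-f_0^R\|_{L^2}\quad \P\text{-a.s., uniformly in }n.
\]
Since $G$ is Lipschitz, this directly entails $|V_n(f_0)-V_n(f_0^R)|\lesssim \mathrm{Lip}(G)\,\rho(R)$, uniformly in $n$ and $f_0\in\mathcal K$.

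The proof is then completed via the decomposition
\[
V_n(f_0)-J(f_0)=[V_n(f_0)-V_n(f_0^R)]+[V_n(f_0^R)-J(f_0^R)]+[J(f_0^R)-J(f_0)]
\]
together with the control of its three terms: the first has just been bounded uniformly in $n$ by $\mathrm{Lip}(G)\,\rho(R)$; the second tends to $0$ as $n\to\infty$, uniformly on $\mathcal K$, by Proposition \ref{thm-bounded-data} applied on the compact set $\mathcal K^R\subset\mathcal{E}^R_0$; the third is shown to vanish uniformly on $\mathcal K$ as $R\to\infty$ via a short compactness argument, using that Lemma \ref{lem:Gamma-converg} and the mild equicoerciveness established therein (combined with the continuity of $G$ and the stability of $\Gamma$-convergence under continuous bounded perturbations) imply sequential continuity of $J$ on $(L^2,\|\cdot\|_{L^2})$: along any sequence $f_0^k\in\mathcal K$ with $R_k\to\infty$, both $f_0^k$ and $T_{R_k}(f_0^k)$ converge, along a subsequence, to the same strong $L^2$-limit, forcing $J(f_0^k)-J(T_{R_k}(f_0^k))\to 0$. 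An $\eta/3$-argument (first pick $R$ large so that the first and third terms are $<\eta/3$ uniformly in $f_0\in\mathcal K$, then let $n\to\infty$ to handle the second) then concludes. The main obstacle here is ensuring the \emph{uniformity in }$\mathcal K$ of the term (C): this is precisely where the $\Gamma$-convergence of Lemma \ref{lem:Gamma-converg} plays a decisive role, yielding sequential continuity of the value function $J$ rather than mere pointwise lower semicontinuity of the individual rate functions.
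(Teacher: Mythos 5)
Your proof is correct and follows essentially the same route as the paper's: the same three-term decomposition of $V_n(f_0)-J(f_0)$, the same use of linearity plus the pathwise bound \eqref{eq:a-priori-transport} to compare Laplace functionals of $f_0$ and its bounded approximation, Proposition \ref{thm-bounded-data} on the compact set of regularized data, and the $\Gamma$-convergence of Lemma \ref{lem:Gamma-converg} to obtain (uniform) continuity of the value function $J$ on $L^2$. The only cosmetic differences are that you truncate where the paper mollifies (both are contractive on $L^2$ and map $\mathcal K$ into a compact subset of some $\mathcal E^R_0$, so either works) and that you reduce to Lipschitz rather than uniformly continuous $G$ --- a reduction whose uniformity in $n$ is glossed over in the same way the paper glosses over its own, and which is rescued by the same observation that the laws of the $f^n$ are supported in a common compact subset of $C^0_t H^{-s}$.
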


\begin{proof}
It is enough to verify \eqref{eq:uniform-laplace} for any uniformly continuous, bounded $G:\mathcal{E}\to \R$, so that it admits a modulus of continuity $\omega_G$; once this is done, the case of general $G$ follows by standard approximation arguments (alternatively, arguing as in the proof of Lemma \ref{prop-cond-2}, one can show that $f^n$ belong to $C^\gamma_t H^{-s+\eps}_x$ for some $\gamma,\eps>0$ and thus to a compact subset $\mathcal K_T$ of $C^0_t H^{-s}_x$, so that $G$ restricted to $\mathcal K_T$ is automatically uniformly continuous).

Let us make some preliminary observations. Firstly, for any fixed $n$, given any two solutions $f^{n},\, \tilde{f}^n$ associated to initial data $f_0,\,\tilde{f}_0$, by linearity of the transport equation and the pathwise estimate \eqref{eq:a-priori-transport}, we have the $\P$-a.s. bound $\| \tilde{f}^{n} - f^n\|_{L^\infty L^2} \leq C_b \|\tilde{f}_0 - f_0\|_{L^2}$ for $C_b=\| \nabla\cdot b\|_{L^1 L^\infty}$.

Secondly, for any (uniformly) continuous $G:\mathcal{E}\to\R$ and $f_0\in L^2_x$, let us set
\begin{equation*}
I_{f_0}(G):=\inf_{f\in \mathcal E} \{G(f)+I_{f_0}(f)\};
\end{equation*}
Lemma \ref{lem:Gamma-converg} and the two facts mentioned above imply that $I_{f^n_0}(G)\to I_{f_0}(G)$ as $n\to \infty$ whenever $f^n_0\to f_0$ in $L^2$.

Now let $\mathcal K$ be a compact subset of $L^2$ and fix $\delta>0$; let $\rho_\delta$ be a standard mollifier and set $f^\delta_0= \rho_\delta\ast f_0$.
From now on, we will use the notation $f^{n,\delta}$ to denote the solution to \eqref{SLTE} with initial data $f^\delta_0$, i.e. $f^{n,\delta}= \mathcal G^n(f^\delta_0, \sqrt{\eps_n}\, W^\alpha)$.
For fixed $\delta>0$, we set $\mathcal K^\delta:=\{ f^\delta_0: f_0\in \mathcal K\}$ which is a compact set in $L^\infty$.

By the $\P$-a.s. estimate
$|G(f^n)-G(f^{n,\delta})|
\leq \omega_G(\| f^n-f^{n,\delta}\|_{L^\infty L^2})
\leq \omega_G(C_b \| f_0-f^\delta_0\|_{L^2})$
and simple algebraic computations, it holds
\begin{align*}
\bigg| \eps_n \log \E\bigg[\exp \Big(-\frac{1}{\eps_n} G(f^{n})\Big)\bigg] - \eps_n \log \E\bigg[\exp \Big( -\frac{1}{\eps_n} G(f^{n,\delta}) \Big)\bigg]\bigg|
\leq \omega_G(C_b \| f_0-f_0^\delta\|_{L^2});
\end{align*}
as a consequence, we obtain
\begin{align*}
\sup_{f_0\in \mathcal K} \bigg| \eps_n \log \E\bigg[\exp \Big(-\frac{1}{\eps_n} G(f^{n})\Big)\bigg] + I_{f_0}(G)\bigg|
& \leq \sup_{f_0\in \mathcal K} \big\{ \omega_G(C_b \| f_0-f_0^\delta\|_{L^2})
+ \big|I_{f_0^\delta}(G)-I_{f_0}(G) \big| \big\} \\
& + \sup_{f_0\in \mathcal K} \bigg| \eps_n \log \E\bigg[\exp \Big(-\frac{1}{\eps_n} G(f^{n,\delta})\Big)\bigg] + I_{f_0^\delta}(G)\bigg|.
\end{align*}
For fixed $\delta>0$, since $\mathcal K^\delta$ is compact in $L^\infty$, we can apply the uniform Laplace principle coming from Proposition \ref{thm-bounded-data} to the family $\{f^{n,\delta}:f_0^\delta\in \mathcal K^\delta\}$; therefore
\begin{align*}
\limsup_{n\to\infty} &\sup_{f_0\in \mathcal K} \bigg| \eps_n \log \E\bigg[\exp \Big(-\frac{1}{\eps_n} G(f^{n})\Big)\bigg] + I_{f_0}(G)\bigg| \\
\leq & \sup_{f_0\in \mathcal K} \big\{ \omega_G(C_b \| f_0-f_0^\delta\|_{L^2}) + \big|I_{f_0^\delta}(G)-I_{f_0}(G) \big|\big\}.
\end{align*}
In order to conclude, it then suffices to show that
\begin{equation}\label{eq:goal-proof-uniform-L^2}
\lim_{\delta\to 0} \sup_{f_0\in \mathcal K} \big\{ \omega_G( C_b \| f_0-f_0^\delta\|_{L^2})
+ \big|I_{f_0^\delta}(G)-I_{f_0}(G) \big| \big\} =0.
\end{equation}
Let us show that $\sup_{f_0\in \mathcal K} \| f_0-f_0^\delta\|_{L^2}\to 0$ as $\delta\to 0$, which implies convergence of the first quantity in \eqref{eq:goal-proof-uniform-L^2}.
Suppose by contradiction this is not true, then there exist sequences $f_0^n\subset \mathcal K$, $\delta_n\to 0$ such that $\big\| f_0^{n,\delta_n}-f^n_0 \big\|_{L^2}\geq c>0$. Since $\mathcal K$ is compact, we can extract a subsequence (not relabelled for simplicity) such that $\|f_0^n- f_0\|_{L^2}\to 0$; by properties of mollifiers, $ \big\| f^{n,\delta_n}_0 - f^{\delta_n}_0 \big\|_{L^2} \leq \| f^n_0-f_0\|_{L^2}$ and $\| f^{\delta_n}_0-f_0\|_{L^2}\to 0$. But then it holds
\[
c \leq \big\| f^{n,\delta_n}_0 - f^{\delta_n}_0 \big\|_{L^2} + \big\| f^{\delta_n}_0 - f_0 \big\|_{L^2} + \| f_0 - f^n_0 \|_{L^2} \leq 2 \| f^n_0-f_0\|_{L^2} + \big\| f^{\delta_n}_0 - f_0 \big\|_{L^2} \to 0
\]
as $n\to\infty$, yielding a contradiction.
The verification that $\sup_{f_0\in \mathcal K} \big|I_{f_0^\delta} (G) - I_{f_0}(G) \big| \to 0$ as $\delta\to 0$ comes from a similar argument, based on the fact that $I_{f^n_0}(G)\to I_{f_0}(G)$ whenever $f^n_0\to f_0$ in $L^2$.
\end{proof}

\subsection{Stochastic 2D Euler equations}\label{subs-LDP-SEE}

In this section we establish a LDP for solutions to the vorticity form of stochastic 2D Euler equations with transport noise. Let us shortly recall the setting: we consider
\begin{equation}\label{eq:stoch-Euler}
\d \xi^n + (K\ast\xi^n)\cdot\nabla \xi^n\, \d t + \sqrt{\eps_n} \circ\d \Pi_n (W^{ \alpha})\cdot \nabla \xi^n=0, \quad \xi^n_0 = \xi_0,
\end{equation}
with equivalent It\^o form
\begin{equation}\label{eq:stoch-Euler-Ito}
\d \xi^n + (K\ast\xi^n)\cdot\nabla \xi^n\, \d t + \sqrt{\eps_n}\, \d \Pi_n (W^{ \alpha})\cdot \nabla \xi^n= \Delta \xi^n\,\d t.
\end{equation}
In the current section, we will restrict to the case $\xi_0 \in L^\infty$, in which equation \eqref{eq:stoch-Euler} is well posed with probabilistically strong solutions; thus we can define a solution map $\xi^n= \mathcal G^n(\xi_0, \sqrt{\eps_n}\, W^\alpha),\, n\geq 1$.
As in Section \ref{subsec-LDP-SLTE}, in order to establish a LDP for $\{\xi^n \}_n$, we need to check Conditions 1 and 2 in Hypothesis \ref{hyp:LDP};
we consider the same spaces $\mathcal E^R_0$ and $\mathcal E^{T,R}$ as defined in \eqref{space-E-T-R}.

We start by showing the well posedness of the skeleton equation
\begin{equation}\label{eq:skeleton-NSE}
\partial_t \xi + g\cdot \nabla \xi + (K\ast\xi)\cdot\nabla \xi = \Delta\xi.
\end{equation}
The following lemma is the analogue of Proposition \ref{prop:stability-strong-skeleton}.

\begin{lemma}\label{lem:uniq-skeleton-navier-stokes}
For any $\xi_0\in L^\infty$ and any $g\in L^2_t \cH$, there exists a unique weak solution $\xi\in L^\infty_{t,x}$ to \eqref{eq:skeleton-NSE}.
Moreover, given two solutions $\xi^i$ associated to data $\xi^i_0$ and $g^i\in L^2_t \cH$, it holds
\begin{equation}\label{eq:stability-skeleton-NS}
\|\xi^1- \xi^2\|_{C^0 L^2} + \|\xi^1- \xi^2\|_{L^2 H^1} \lesssim \|\xi^1_0- \xi^2_0 \|_{L^2} + \|g^1- g^2 \|_{L^2 L^2} \|\xi^2_0 \|_{L^\infty}.
\end{equation}
Finally, given sequences $\xi^n_0 \stackrel{\ast}{\rightharpoonup} \xi_0$ and $g^n \rightharpoonup g$, the corresponding solutions $\xi^n$ converge weakly-$\ast$ in $L^{\infty}_{t,x}$ to $\xi$, as well as strongly in $L^2_{t,x}\cap C^0_t H^{-\delta}_x$ for any $\delta>0$.
\end{lemma}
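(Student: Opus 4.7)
The plan is to adapt the linear argument of Proposition \ref{prop:stability-strong-skeleton} by incorporating the nonlinear term $(K\ast\xi)\cdot\nabla\xi$. The key simplification is that $g$, $K\ast\xi$ and $K\ast(\xi^1-\xi^2)$ are all divergence-free, while the maximum principle (applied at the approximation level) propagates the bound $\|\xi_t\|_{L^\infty} \leq \|\xi_0\|_{L^\infty}$. I would first prove the stability estimate \eqref{eq:stability-skeleton-NS}, which also yields uniqueness, and then address existence and the weak-convergence statement by a single compactness argument.

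For stability, set $\eta = \xi^1-\xi^2$, which satisfies
\[
\partial_t \eta - \Delta\eta = -g^1\cdot\nabla\eta - (K\ast\xi^1)\cdot\nabla\eta - (g^1-g^2)\cdot\nabla\xi^2 - (K\ast\eta)\cdot\nabla\xi^2.
\]
Testing against $\eta$ via Lions--Magenes lemma (as in Proposition \ref{prop:stability-strong-skeleton}), the divergence-free property of $g^1$ and $K\ast\xi^1$ kills the first two terms on the right; rewriting the remaining two as $\nabla\cdot((g^1-g^2)\xi^2)$ and $\nabla\cdot((K\ast\eta)\xi^2)$ and integrating by parts yields
\[
\frac{d}{dt}\frac{\|\eta_t\|_{L^2}^2}{2} + \|\nabla\eta_t\|_{L^2}^2 \leq \big(\|g^1_t-g^2_t\|_{L^2} + \|K\ast\eta_t\|_{L^2}\big)\|\xi^2_t\|_{L^\infty}\|\nabla\eta_t\|_{L^2}.
\]
Since Biot--Savart is smoothing of order one, $\|K\ast\eta_t\|_{L^2}\lesssim \|\eta_t\|_{L^2}$; combining with Cauchy's inequality to absorb $\|\nabla\eta_t\|_{L^2}^2$, using the bound $\|\xi^2_t\|_{L^\infty}\leq \|\xi^2_0\|_{L^\infty}$ and applying Gronwall gives \eqref{eq:stability-skeleton-NS}.

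For existence I would regularize the data to smooth $\xi_0^m, g^m$, obtaining classical solutions $\xi^m$ that satisfy $\|\xi^m\|_{L^\infty_{t,x}}\leq \|\xi_0^m\|_{L^\infty}$ and, by the standard energy identity (the drift and advection terms vanishing thanks to divergence-freeness), uniform bounds in $L^\infty_t L^2_x \cap L^2_t H^1_x$. Since $g^m\xi^m \in L^2_{t,x}$ and $(K\ast\xi^m)\xi^m \in L^\infty_{t,x}$, the equation also gives $\partial_t \xi^m$ uniformly bounded in $L^2_t H^{-1}_x$, so Aubin--Lions furnishes (along a subsequence) strong convergence $\xi^m\to\xi$ in $L^2_{t,x}\cap C^0_t H^{-\delta}_x$ and weak-$\ast$ convergence in $L^\infty_{t,x}$. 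The linear terms pass to the limit trivially; for the nonlinear product, rewritten as $\nabla\cdot((K\ast\xi^m)\xi^m)$, uniform $L^\infty_{t,x}$-boundedness of $K\ast\xi^m$ (via the Log-Lipschitz estimate for $L^\infty$ vorticity) combined with strong $L^2_{t,x}$ convergence of $\xi^m$ gives convergence in $L^2_{t,x}$, while the drift $\nabla\cdot(g^m\xi^m)$ converges by a weak$\times$strong argument ($g^m\rightharpoonup g$ in $L^2_{t,x}$, $\xi^m \to \xi$ strongly).

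The main obstacle I expect is this limit-passage with very weak regularity of $g$, but the parabolic gain giving strong $L^2_{t,x}$ compactness of $\xi^m$ is enough, since $g$ enters only linearly. Finally, the identical compactness scheme applied to sequences $\xi^n_0\stackrel{\ast}{\rightharpoonup}\xi_0$, $g^n\rightharpoonup g$ yields a subsequential limit that solves \eqref{eq:skeleton-NSE} with data $(\xi_0,g)$; uniqueness from \eqref{eq:stability-skeleton-NS} then forces the whole sequence to converge, establishing the last assertion.
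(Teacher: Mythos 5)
Your proposal is correct and follows essentially the same route as the paper: an $L^2$ energy estimate for the difference via Lions--Magenes, using the divergence-free structure to kill the transport terms and the bound $\|K\ast\eta\|_{L^2}\lesssim\|\eta\|_{L^2}$ together with the $L^\infty$ maximum principle and Gronwall for \eqref{eq:stability-skeleton-NS}, then compactness plus uniqueness for existence and the weak-stability claim. The only (harmless) cosmetic difference is that you invoke the log-Lipschitz bound on $K\ast\xi^m$ in the limit passage where strong $L^2_{t,x}$ convergence of $K\ast\xi^m$ and the uniform $L^\infty$ bound on $\xi^m$ already suffice.
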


\begin{proof}
The existence of solutions with uniform $L^\infty$-bound follows again from the classical maximum principle and approximation arguments.
Arguing as in the proof of Proposition \ref{prop:stability-strong-skeleton}, thanks to the $L^\infty$-bound and the properties of the Biot-Savart kernel, it is easy to check that $(g+ K\ast\xi )\cdot \nabla \xi\in L^2_t H^{-1}_x$ and so that by maximal regularity $\xi\in L^2_t H^1_x$.

Now assume we are given two $L^\infty_{t,x}$-solutions to \eqref{eq:skeleton-NSE} and set $v=\xi^1-\xi^2$; we can apply the Lions-Magenes lemma and the divergence free property of $g$ to deduce that
\begin{align*}
 \frac{1}{2}\frac{\d}{\d t} \| v\|_{L^2}^2 + \| \nabla v\|_{L^2}^2
& = - \langle (K\ast v)\cdot \nabla \xi^2, v\rangle = \<\xi^2, (K\ast v)\cdot \nabla v \>  .
\end{align*}
Note that
\[|\<\xi^2, (K\ast v)\cdot \nabla v \>| \leq \|\xi^2 \|_{L^\infty} \|K\ast v \|_{L^2} \|\nabla v\|_{L^2}
\leq C \|\xi^2 \|_{L^\infty} \|v \|_{L^2} \|\nabla v\|_{L^2},\]
hence
\[ \frac{1}{2}\frac{\d}{\d t} \| v\|_{L^2}^2 + \| \nabla v\|_{L^2}^2
\leq \frac12 \|\nabla v \|_{L^2}^2 + C \| \xi^2 \|_{L^\infty}^2 \|v \|_{L^2}^2.\]
Together with the fact that $\xi^2\in L^\infty_{t,x}$ and Gronwall's lemma, this readily yields uniqueness; similar estimates, together with the bound $\| \xi^2\|_{L^\infty_{t,x}}\leq \| \xi^2_0\|_{L^\infty}$, then yields \eqref{eq:stability-skeleton-NS}.

The last assertion can be proved in the same way as that of  Proposition \ref{prop:stability-strong-skeleton}.
\end{proof}

Lemma \ref{lem:uniq-skeleton-navier-stokes} allows us to define the solution map $\xi:= \mathcal G^0(\xi_0, \textup{Int}(g))$ associated to \eqref{eq:skeleton-NSE} for $\xi_0\in L^\infty$ and $g\in L^2_t \cH^\alpha$.
With these preparations at hand, we see that Condition 1 in Hypothesis \ref{hyp:LDP} holds, similarly to Lemma \ref{prop-cond-1}.

\begin{lemma}\label{prop-S2EE-cond-1}
For any $M>0$ and compact subset $\mathcal K\subset \mathcal E^R_0$, the set
\begin{equation*}
\Gamma_{\mathcal K,M}:= \big\{ \mathcal G^0( \xi_0, \textup{Int}(g)): \xi_0\in \mathcal K,\, g\in S^M( \cH^\alpha) \big\}
\end{equation*}
is compact in the space $\mathcal E^{T,R}$.
\end{lemma}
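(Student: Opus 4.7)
The plan is to prove compactness by realising $\Gamma_{\mathcal K,M}$ as the continuous image of a compact set under the solution map $\mathcal G^0$. In fact, once Lemma \ref{lem:uniq-skeleton-navier-stokes} is in hand, all the hard analytic work has already been done; the rest is a topological packaging argument, completely analogous to the transport case treated in Lemma \ref{prop-cond-1}.

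First I would describe the domain. The set $\mathcal K$ is compact in $\mathcal E^R_0$ by assumption, and $S^M(\mathcal H^\alpha)$ is metrisable and compact in the weak topology of $L^2_t \mathcal H^\alpha$ (as a weakly closed bounded subset of a separable Hilbert space). Hence the product $\mathcal K \times S^M(\mathcal H^\alpha)$ is a compact metrisable space. Note that the $H^{-\delta}$ topology on $\mathcal E^R_0$ agrees with the weak-$\ast$ topology of $L^\infty$ on this bounded ball (cf.\ Lemma \ref{lem-topology}), so convergence in $\mathcal K$ is weak-$\ast$ convergence in $L^\infty$.

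Next I would verify that the map $\Phi:(\xi_0,g)\mapsto \mathcal G^0(\xi_0,\textup{Int}(g))$ is continuous from $\mathcal K \times S^M(\mathcal H^\alpha)$ into $\mathcal E^{T,R}$. Given $(\xi_0^n,g^n)\to(\xi_0,g)$ in this product topology, we have $\xi_0^n \stackrel{\ast}{\rightharpoonup}\xi_0$ in $L^\infty$ and $g^n \rightharpoonup g$ in $L^2_t\mathcal H^\alpha$, so the last assertion of Lemma \ref{lem:uniq-skeleton-navier-stokes} yields that the associated solutions $\xi^n=\mathcal G^0(\xi_0^n,\textup{Int}(g^n))$ converge to $\xi=\mathcal G^0(\xi_0,\textup{Int}(g))$ strongly in $C^0_t H^{-\delta}_x$, which is precisely the topology of $\mathcal E^{T,R}$. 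Moreover, by the maximum principle the solutions satisfy the pathwise bound $\|\xi^n\|_{L^\infty_{t,x}} \leq \|\xi_0^n\|_{L^\infty} \leq R$, a bound which is preserved in the weak-$\ast$ limit, so that the limit $\xi$ indeed belongs to $\mathcal E^{T,R}$.

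Finally, $\Gamma_{\mathcal K,M}=\Phi\big(\mathcal K \times S^M(\mathcal H^\alpha)\big)$ is the continuous image of a compact set, hence compact in $\mathcal E^{T,R}$. There is no real obstacle here: the only substantive step is the stability result already contained in Lemma \ref{lem:uniq-skeleton-navier-stokes}, whose proof relied crucially on the $L^\infty$ bound on $\xi_0$ in order to control the quadratic nonlinearity $(K\ast v)\cdot\nabla\xi^2$ through the term $\|\xi^2\|_{L^\infty}$.
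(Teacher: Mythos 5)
Your proposal is correct and follows exactly the route the paper takes (which it only sketches by reference to Lemma \ref{prop-cond-1}): $\Gamma_{\mathcal K,M}$ is the continuous image of the compact metrisable set $\mathcal K\times S^M(\cH^\alpha)$ under the solution map, whose continuity in the weak topologies is precisely the last assertion of Lemma \ref{lem:uniq-skeleton-navier-stokes}. The extra details you supply (metrisability of $S^M$, the identification of the $H^{-\delta}$ and weak-$\ast$ topologies via Lemma \ref{lem-topology}, and preservation of the $L^\infty$ bound in the limit) are exactly the points the paper leaves implicit.
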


Next, in order to verify Condition 2 in Hypothesis \ref{hyp:LDP}, we consider stochastic 2D Euler equations with some extra drift term $g\in L^2_t \cH^\alpha$:
\begin{equation}\label{eq:stoch-Euler-drift}
\d \xi^n + (K\ast\xi^n)\cdot\nabla \xi^n\, \d t + \circ\d \Pi_n \big( \sqrt{\eps_n}\, W^\alpha + \textup{Int}(g) \big)\cdot \nabla \xi^n =0, \quad \xi^n_0 = \xi_0\in L^\infty;
\end{equation}
the equivalent It\^o form is
\begin{equation*}
\d \xi^n + (K\ast\xi^n)\cdot\nabla \xi^n\, \d t + \d \Pi_n \big( \sqrt{\eps_n}\, W^\alpha + \textup{Int}(g) \big)\cdot \nabla \xi^n = \Delta \xi^n \,\d t, \quad \xi^n_0 = \xi_0\in L^\infty.
\end{equation*}
Due to the projection $\Pi_n$, the noise part has only finitely many Brownian motions, hence it clearly satisfies Condition 2.3 in \cite{BFM}; similarly, for fixed $n\geq 1$, $\d \Pi_n ( \textup{Int}(g) ) = \Pi_n g\,\d t$ is smooth in the space variable, thus it can be regarded as a smooth perturbation of the equation.
One can follow the approach in \cite{BFM}\,\footnote{This can be accomplished directly by carrying the same computations as in \cite{BFM}, or more elegantly by applying the flow transformation outlined in Section 7 therein to reduce the problem to the deterministic one; indeed for finite $n$, since $\Pi_n(g)$ and $\Pi_n(W^\alpha)$ are spatially smooth, the SDE $\d X_t = \Pi_n(g)(X_t) \d t + \circ \d \Pi_n(W^\alpha)(X_t)$ admits a smooth stochastic flow $\psi$.} to show that these equations enjoy pathwise unique, probabilistically strong solutions $\xi^n$ satisfying $\P$-a.s., $\|\xi^n_t \|_{L^\infty} \leq \|\xi_0 \|_{L^\infty}$ for all $t\in [0,T]$;
in particular, we can define the solution map
\[\xi^n= \mathcal G^n\big(\xi_0, \sqrt{\eps_n}\, W^\alpha+ \textup{Int}(g) \big).\]

Recall the families $S^M(\cH^\alpha)$ and $\mathcal P_2^M(\cH^\alpha)$ defined in \eqref{eq:defn-S^M}, \eqref{eq:defn-P^M_2}; the next result verifies Condition 2 in Hypothesis \ref{hyp:LDP}.

\begin{proposition}\label{prop-S2EE-cond-2}
Let $\{g^n \}_n \subset \mathcal P_2^M(\cH^\alpha)$ and $\{\xi^n_0 \}_n \subset \mathcal E^R_0$.
Assume that $\xi^n_0 \stackrel{\ast}{\rightharpoonup} \xi_0$ and $g^n$ converge in law, as $S^M(\cH^\alpha)$ valued random elements, to $g$ as $n\to \infty$. Then the convergence
\begin{equation*}
\mathcal G^n\big(\xi^n_0, \sqrt{\eps_n}\, W^\alpha+ \textup{Int}(g^n) \big) \to \mathcal G^0\big(\xi_0, \textup{Int}(g) \big)
\end{equation*}
holds in law, in the topology of $\mathcal E^{T,R}$.
\end{proposition}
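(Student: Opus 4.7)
The plan is to adapt the compactness argument of Lemma \ref{prop-cond-2} to the nonlinear setting, with the Biot--Savart term handled through strong $L^p_{t,x}$-compactness of the vorticity sequence.

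First I would exploit the pathwise bound $\|\xi^n\|_{L^\infty_{t,x}} \leq R$ inherited from the initial data to obtain tightness of the laws of $\xi^n$. The martingale part $M^n = \sqrt{\eps_n} \int_0^\cdot \d\Pi_n(W^\alpha)\cdot \nabla \xi^n$ is controlled by a verbatim repetition of Lemma \ref{lem-martingale} (only the divergence-freeness of the $\sigma_{k,i}$ and the $L^2_x$-bound on $\xi^n$ are used), yielding $\E\big[\|M^n\|_{C^\gamma H^{-\beta}}^{2p}\big] \lesssim \eps_n^p R^{2p}$ for $\beta > 2$ and $\gamma < 1/2$. For the drift, writing $(K\ast \xi^n)\cdot \nabla \xi^n = \nabla\cdot ((K\ast \xi^n) \xi^n)$ and $\Pi_n(g^n) \cdot \nabla \xi^n = \nabla \cdot (\Pi_n(g^n) \xi^n)$ (both $K\ast \xi^n$ and $\Pi_n(g^n)$ being divergence free), I would bound the first in $L^\infty_t H^{-1}_x$ using $\|K\ast\xi^n\|_{L^\infty} \lesssim R$, and the second in $L^2_t H^{-1}_x$ using $g^n \in S^M(\cH^\alpha)$. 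Together with $\Delta \xi^n \in L^\infty_t H^{-2}_x$, the resulting Hölder-in-time estimate in a negative Sobolev norm, interpolated against the uniform $L^\infty_{t,x}$-bound, gives via Aubin--Lions tightness of the laws of $\xi^n$ in $C^0_t H^{-\delta}_x$ for every $\delta>0$.

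Next, by Prohorov's theorem and Skorohod's representation theorem (on a new probability basis, notation unchanged), I may assume that $\xi^n \to \xi$ $\P$-a.s.\ in $C^0_t H^{-\delta}_x$, $g^n \to g$ weakly in $L^2_t \cH^\alpha$, and $\xi_0^n \stackrel{\ast}{\rightharpoonup} \xi_0$; interpolating the strong $C^0_t H^{-\delta}$-convergence with the uniform $L^\infty_{t,x}$-bound upgrades this to strong $L^p_{t,x}$-convergence for every $p<\infty$, and the limit $\xi$ automatically satisfies $\|\xi\|_{L^\infty_{t,x}} \leq R$. It remains to verify that $\xi$ is a weak $L^\infty_{t,x}$-solution to \eqref{eq:skeleton-NSE} with data $(\xi_0, g)$, after which uniqueness from Lemma \ref{lem:uniq-skeleton-navier-stokes} identifies $\xi = \mathcal G^0(\xi_0, \textup{Int}(g))$ and concludes the proof. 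The martingale term vanishes by the estimate on $M^n$. The linear term $\Pi_n(g^n)\xi^n$ is shown to converge weakly to $g\xi$ by the same duality argument used in the proof of Lemma \ref{prop-cond-2}, testing against $\varphi \in C^\infty$ and exploiting $\|\Pi_n(g^n)\varphi\|_{L^2_t H^{\alpha}_x} \lesssim M \|\varphi\|_{C^{\alpha+\eps}_x}$ together with the strong $C^0_t H^{-\alpha}$-convergence of $\xi^n-\xi$ to $0$. For the Biot--Savart term, strong $L^2_{t,x}$-convergence of $\xi^n$ together with the boundedness of convolution by $K$ from $L^2_x$ into $L^2_x$ yields $K\ast \xi^n \to K\ast\xi$ strongly in $L^2_{t,x}$, hence $(K\ast \xi^n)\xi^n \to (K\ast\xi)\xi$ strongly in $L^1_{t,x}$, which is enough to pass to the limit in the distributional sense.

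The main obstacle, as in Lemma \ref{prop-cond-2}, is passing to the limit in the bilinear product $\Pi_n(g^n) \xi^n$: the weak-weak convergence of two factors does not in general produce convergence of the product, and one must rely on the spatial smoothing $\alpha>0$ of the admissible controls, which is exactly why we need to introduce the coloured noise $W^\alpha$ rather than a spatially white limit noise. By contrast, the nonlinear Biot--Savart term is here comparatively benign, since the uniform $L^\infty_{t,x}$-bound combined with strong $C^0_t H^{-\delta}_x$-convergence of $\xi^n$ promotes it to strong $L^p_{t,x}$-convergence for every $p<\infty$, which is amply sufficient to close the quadratic nonlinearity.
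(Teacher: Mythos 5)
Most of your argument tracks the paper's: tightness in $C^0_t H^{-\delta}$ from the uniform $L^\infty$-bound, the drift estimates and Lemma \ref{lem-martingale}; Skorokhod representation; the duality argument from Lemma \ref{prop-cond-2} for $\Pi_n(g^n)\xi^n$; vanishing of the martingale; identification of the limit via Lemma \ref{lem:uniq-skeleton-navier-stokes}. All of that is correct. The gap is in your treatment of the Biot--Savart term. The claim that strong convergence in $C^0_t H^{-\delta}$ together with a uniform $L^\infty_{t,x}$-bound ``upgrades'' to strong $L^p_{t,x}$-convergence is false: there is no interpolation inequality controlling $\|\cdot\|_{L^2}$ by $\|\cdot\|_{H^{-\delta}}$ and $\|\cdot\|_{L^\infty}$ (take $f_n(x)=\sin(2\pi n x_1)$, which tends to $0$ in every $H^{-\delta}$ while $\|f_n\|_{L^2}$ stays constant). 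Worse, in the present setting strong $L^2_{t,x}$-convergence of $\xi^n$ to $\xi$ is provably impossible for non-constant data: the solutions of \eqref{eq:stoch-Euler-drift} are rearrangements of $\xi^n_0$ under a measure-preserving flow, so $\|\xi^n_t\|_{L^2}=\|\xi^n_0\|_{L^2}$ for all $t$, whereas the limit $\xi=\mathcal G^0(\xi_0,\textup{Int}(g))$ solves a parabolic equation and strictly dissipates the $L^2$-norm; this is exactly the reason the paper restricts the LDP topology to $C^0_t H^{-\delta}$ with $\delta>0$ in the first place. Consequently your derivation of $(K\ast\xi^n)\xi^n\to(K\ast\xi)\xi$ strongly in $L^1_{t,x}$ does not hold.

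The repair is standard and is what the paper's citation of \cite[Theorem 2.2]{FGL21a} encodes: you should put the strong convergence on the velocity, not on the vorticity. Since convolution with $K$ gains one derivative, $\|K\ast(\xi^n_t-\xi_t)\|_{H^{1-\delta}}\lesssim\|\xi^n_t-\xi_t\|_{H^{-\delta}}\to 0$ uniformly in $t$, so $K\ast\xi^n\to K\ast\xi$ strongly in $C^0_t L^p_x$ for every $p<\infty$ by Sobolev embedding; pairing this strong convergence with the weak-$\ast$ convergence $\xi^n\stackrel{\ast}{\rightharpoonup}\xi$ in $L^\infty_{t,x}$ gives $(K\ast\xi^n)\xi^n\rightharpoonup(K\ast\xi)\xi$ weakly, which suffices to pass to the limit in $\nabla\cdot((K\ast\xi^n)\xi^n)$ in the distributional formulation. (Equivalently one may use the symmetrized double-integral formulation of the nonlinearity as in \cite{BFM, FGL21a}.) With that substitution your proof coincides with the paper's.
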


\begin{proof}
For given sequences $\{g^n \}_n \subset \mathcal P_2^M(\cH^\alpha)$ and $\{\xi^n_0 \}_n \subset \mathcal E^R_0$, we denote the corresponding solutions by
\[\xi^n= \mathcal G^n\big(\xi^n_0, \sqrt{\eps_n}\, W^\alpha+ \textup{Int}(g^n) \big), \quad n\geq 1. \]
For any $\delta>0$, thanks to the uniform $L^\infty$-bound (so that we also have weak-$\ast$ convergence in $L^\infty_{t,x}$) and the estimate on the martingale part coming from Lemma \ref{lem-martingale}, one can check as usual the tightness of the sequence $\{\xi^n \}_n$ in $C^0_t H^{-\delta}_x$.

Using the weak convergence of $g^n$ to $g$, it is then not difficult to check that (up to extracting a non-relabelled subsequence) $\xi^n$ converge in law to a weak $L^\infty$-solution $\xi$ of the skeleton equation \eqref{eq:skeleton-NSE}.
More precisely, the convergence of $\Pi_n(g^n)\cdot \nabla \xi^n$ follows the same proof of Lemma \ref{prop-cond-2}, while the convergence of the nonlinear term $(K\ast \xi^n) \cdot \nabla \xi^n$ can be treated in the same way as the proof of \cite[Theorem 2.2]{FGL21a}; the martingale terms $M^n$ vanish as $n\to \infty$ due to estimate \eqref{eq:martingale-estim}.

Lemma \ref{lem:uniq-skeleton-navier-stokes} finally allows to deduce that $\xi= \mathcal G^0\big(\xi_0, \textup{Int}(g) \big)$.
\end{proof}

Now we can state the main result of this section.

\begin{theorem}\label{thm-LDP-S2EE}
The solutions $\{\xi^n =\mathcal G^n(\xi_0, \sqrt{\eps_n}\, W^\alpha) \}_{n\geq 1,\, \xi_0\in \mathcal E^R_0} $ of \eqref{eq:stoch-Euler} satisfy the uniform Laplace principle with the rate function
\begin{equation}\label{thm-LDP-S2EE.1}
I_{\xi_0}(\xi)= \inf_{\{ g\in L^2_t \cH^\alpha:\, \xi= \mathcal G^0 (\xi_0, \textup{Int}(g)) \}} \bigg\{\frac12 \int_0^T \|g_s \|_{H^\alpha}^2 \,\d s \bigg\},
\end{equation}
where $\xi_0\in \mathcal E^R_0$ and $\xi\in \mathcal E^{T,R}$.
\end{theorem}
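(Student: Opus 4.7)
The plan is to apply the abstract weak convergence result, Theorem \ref{thm-LDP-abstract}, with $\mathcal{E}_0 = \mathcal{E}^R_0$, $\mathcal{E} = \mathcal{E}^{T,R}$, Hilbert space $U_0 = \mathcal{H}^\alpha$ and the solution maps $\mathcal{G}^n$, $\mathcal{G}^0$ constructed respectively for the perturbed SPDE \eqref{eq:stoch-Euler-drift} and for the skeleton equation \eqref{eq:skeleton-NSE}. Once this setup is fixed, the proof of the theorem reduces, by \cite[Theorem 5]{BDM08}, to checking: (a) Condition 1 of Hypothesis \ref{hyp:LDP} on the compactness of $\Gamma_{\mathcal{K},M}$; (b) Condition 2 on the convergence of perturbed controlled SPDEs to the skeleton; (c) lower semicontinuity of $\xi_0 \mapsto I_{\xi_0}(\xi)$ from $\mathcal{E}^R_0$ to $[0,\infty]$ for each fixed $\xi \in \mathcal{E}^{T,R}$.

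Points (a) and (b) are exactly the content of Lemma \ref{prop-S2EE-cond-1} and Proposition \ref{prop-S2EE-cond-2} respectively, so nothing needs to be added. Only point (c) requires a short argument, which I would obtain by adapting the $\Gamma$-convergence proof of Lemma \ref{lem:Gamma-converg} to the Euler setting. Concretely, fix $\xi \in \mathcal{E}^{T,R}$ and a sequence $\xi_0^n \to \xi_0$ in $\mathcal{E}^R_0$ (i.e.\ in $H^{-\delta}$ with uniform $L^\infty$ bound, hence in particular $\xi_0^n \stackrel{*}{\rightharpoonup} \xi_0$ in $L^\infty$). Assume, without loss of generality, that $\liminf_n I_{\xi_0^n}(\xi) =: L < \infty$, and pass to a subsequence realising the liminf. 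By the direct method in the calculus of variations (applied just as in the proof of Lemma \ref{lem:Gamma-converg}, since $L^2_t \mathcal{H}^\alpha$ balls are weakly compact), the infimum defining $I_{\xi_0^n}(\xi)$ is attained by some $g^n \in L^2_t\mathcal{H}^\alpha$ with $\tfrac{1}{2}\|g^n\|_{L^2 H^\alpha}^2 = I_{\xi_0^n}(\xi)$, so the $g^n$ are bounded in $L^2_t \mathcal{H}^\alpha$ and we may assume $g^n \rightharpoonup g$ weakly.

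Lemma \ref{lem:uniq-skeleton-navier-stokes} then guarantees that $\mathcal{G}^0(\xi_0^n, \textup{Int}(g^n)) \to \mathcal{G}^0(\xi_0, \textup{Int}(g))$ in $\mathcal{E}^{T,R}$; but the left-hand side equals $\xi$ for every $n$, so $\xi = \mathcal{G}^0(\xi_0, \textup{Int}(g))$. Lower semicontinuity of the squared norm under weak convergence yields
\begin{equation*}
I_{\xi_0}(\xi) \leq \tfrac{1}{2}\|g\|_{L^2 H^\alpha}^2 \leq \liminf_{n\to\infty} \tfrac{1}{2}\|g^n\|_{L^2 H^\alpha}^2 = L,
\end{equation*}
which proves (c). With (a), (b), (c) in hand, Theorem \ref{thm-LDP-abstract} delivers the uniform Laplace principle with the claimed rate function \eqref{thm-LDP-S2EE.1}.

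The only non-routine ingredients here are the stability statements of Lemma \ref{lem:uniq-skeleton-navier-stokes} (already proved) and the two conditions of Hypothesis \ref{hyp:LDP}; in particular, the most delicate step conceptually is the identification of the limit in point (b), which was handled in Proposition \ref{prop-S2EE-cond-2} by combining the tightness argument driven by the martingale estimate \eqref{eq:martingale-estim} with the passage to the limit in the nonlinear term $(K\ast\xi^n)\cdot\nabla\xi^n$ borrowed from \cite[Theorem 2.2]{FGL21a}. Thus the proof itself is a one-paragraph assembly of prior lemmas plus the direct-method argument for lower semicontinuity sketched above.
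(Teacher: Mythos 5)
Your proposal is correct and follows exactly the paper's route: the paper's proof of Theorem \ref{thm-LDP-S2EE} is precisely the assembly of Theorem \ref{thm-LDP-abstract} with Lemmas \ref{prop-S2EE-cond-1} and \ref{prop-S2EE-cond-2}, plus the remark that lower semicontinuity of $\xi_0\mapsto I_{\xi_0}(\xi)$ follows by the same arguments as Lemma \ref{lem:Gamma-converg}. Your explicit direct-method argument for point (c) is just a correct unpacking of that last remark.
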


\begin{proof}
By Theorem \ref{thm-LDP-abstract} and Lemmas \ref{prop-S2EE-cond-1} and \ref{prop-S2EE-cond-2}, it suffices to show the lower semicontinuity of $\xi_0\mapsto I_{\xi_0}(\xi)$, which follows the same line of arguments as those of Lemma \ref{lem:Gamma-converg}.
\end{proof}

\subsection{Further remarks and future directions}\label{sec:conclusive-LDP}

We present here some consequences of the results presented above, together with possible problems to explore in the future. We start by some simple lower bounds on the rate functions, which can be deduced by the stability estimates presented in Section \ref{subsec-skeleton-eq}.

\begin{lemma}\label{lem:lower-bound-rate-fct}
Let $f_0\in L^2\setminus\{0\}$, $I_{f_0}$ be defined as in \eqref{rate-funct-1} and $\bar{f}=\mathcal{G}^0(f_0,0)$, i.e. the solution to \eqref{eq:skeleton} with $g=0$; then for any $\delta> d/2$ it holds
\begin{equation}\label{eq:lower-bound-rate-fct.1}
I_{f_0}(f) \gtrsim_{\delta,d} \frac{\| f-\bar{f}\|^2_{C^0 H^{-\delta}}}{\| f_0\|_{L^2}^2} \quad \forall\, f\in L^\infty_t L^2_x.
\end{equation}
If additionally $f_0\in L^\infty$, then
\begin{equation}\label{eq:lower-bound-rate-fct.2}
I_{f_0}(f) \gtrsim_{d} \frac{\| f-\bar{f}\|^2_{C^0 L^2}}{\| f_0\|_{L^\infty}^2} \quad \forall\, f\in L^\infty_t L^2_x.
\end{equation}
\end{lemma}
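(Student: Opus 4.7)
The plan is to establish both inequalities by directly comparing $f$ with $\bar f$ via the stability estimates developed in Section~\ref{subsec-skeleton-eq}, then taking the infimum over admissible perturbations $g$. If $I_{f_0}(f)=+\infty$ both bounds are trivial, so assume $I_{f_0}(f)<\infty$ and pick any $g\in L^2_t\cH^\alpha$ realizing $f=\mathcal{G}^0(f_0,\textup{Int}(g))$. Then $f$ and $\bar f$ solve \eqref{eq:skeleton-modified} with identical initial datum $f_0$ and identical $c^i\equiv 0$, but with drifts $h^1=-(b+g)$ and $h^2=-b$ respectively. Since $g$ is divergence-free, $\nabla\cdot(h^1-h^2)=0$ and $(c^1-c^2)-\nabla\cdot(h^1-h^2)=0$; moreover the quantities that control the constants in Propositions~\ref{prop:stability-strong-skeleton}--\ref{prop:stability-skeleton-L2} (namely $\|\nabla\cdot h^i\|_{L^1 L^\infty}$, $\|c^i\|_{L^1 L^\infty}$ and $\|c^i-\nabla\cdot h^i\|_{L^2 L^r}$) reduce to norms of $b$, all finite by Assumption~\ref{ass:drift-transport}.

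For \eqref{eq:lower-bound-rate-fct.1}, I would apply the $L^\infty_t L^1_x$ stability estimate \eqref{eq:stability-skeleton-L^2}. With $h^1-h^2=-g$ and $c^1-c^2=0$, this gives
\begin{equation*}
\|f-\bar f\|_{L^\infty_t L^1_x}\lesssim_b \|g\|_{L^2_{t,x}}\|f_0\|_{L^2}\leq \|g\|_{L^2 H^\alpha}\|f_0\|_{L^2},
\end{equation*}
using the embedding $\cH^\alpha\hookrightarrow L^2$ valid since $\alpha>0$. The Sobolev embedding $L^1(\T^d)\hookrightarrow H^{-\delta}(\T^d)$ for $\delta>d/2$ transfers this bound to $L^\infty_t H^{-\delta}_x$; squaring, taking the infimum over all admissible $g$, and recalling that $I_{f_0}(f)=\tfrac12\inf\|g\|_{L^2 H^\alpha}^2$ yields \eqref{eq:lower-bound-rate-fct.1}.

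For \eqref{eq:lower-bound-rate-fct.2}, when $f_0\in L^\infty$, I would instead invoke the $C^0_t L^2_x$ stability estimate \eqref{eq:stability-strong-skeleton} from Proposition~\ref{prop:stability-strong-skeleton}, which under the same choices gives
\begin{equation*}
\|f-\bar f\|_{C^0_t L^2_x}\lesssim_b \|g\|_{L^2_{t,x}}\|f_0\|_{L^\infty}\leq \|g\|_{L^2 H^\alpha}\|f_0\|_{L^\infty}.
\end{equation*}
Squaring and taking the infimum over $g$ then yields \eqref{eq:lower-bound-rate-fct.2} directly, since the left-hand norm is already time-continuous.

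The only point requiring a short extra argument is the promotion of the $L^\infty_t H^{-\delta}_x$ bound from the first case to a $C^0_t H^{-\delta}_x$ bound. Proposition~\ref{prop:stability-skeleton-L2} only gives $f,\bar f\in L^\infty_t L^2_x$ a priori, but the equation provides weak-$L^2$ continuity in time (via a standard Lions--Magenes argument), and composing with the compact embedding $L^2\hookrightarrow H^{-\delta}$ for $\delta>0$ upgrades this to strong continuity in $H^{-\delta}$, so that essential supremum and supremum in time coincide. I do not anticipate any serious obstacle; the lemma is essentially a clean corollary of the stability machinery already established.
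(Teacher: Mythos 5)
Your proposal is correct and follows essentially the same route as the paper's own proof: pick a realizing $g$, apply the $L^\infty_t L^1_x$ stability estimate \eqref{eq:stability-skeleton-L^2} (resp. the $C^0_t L^2_x$ estimate \eqref{eq:stability-strong-skeleton} for bounded data) with $h^1-h^2=\pm g$ and $c^i=0$, use $L^1\hookrightarrow H^{-\delta}$ for $\delta>d/2$, and take the infimum over $g$. The only cosmetic difference is that the paper simply asserts $f\in C^0_t H^{-\delta}$ (which indeed follows from the time regularity established in Proposition~\ref{prop:stability-skeleton-L2}), where you spell out the upgrade from essential supremum to supremum; this is a harmless elaboration, not a divergence.
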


\begin{proof}
Given $f_0$ as above, suppose $f=\mathcal{G}^0(f_0,\textup{Int}(g))$ for some $g\in L^2_t \mathcal{H}^\alpha$; then clearly $f\in C^0_t H^{-\delta}$.
Moreover by Proposition \ref{prop:stability-skeleton-L2} (for the choice $c^i=0$, $h^1=b+g$ and $h^2=b$) and Sobolev embeddings it holds
\begin{align*}
\| f-\bar{f}\|_{C^0 H^{-\delta}} \lesssim \| f-\bar{f}\|_{L^\infty L^1} \leq \| f_0\|_{L^2} \| g\|_{L^2_{t,x}} \leq \| f_0\|_{L^2} \| g\|_{L^2 H^\alpha};
\end{align*}
inverting the inequalities we obtain
\begin{equation*}
\frac{1}{2} \| g\|_{L^2 H^\alpha}^2 \gtrsim \frac{\| f-\bar{f}\|_{C^0 H^{-\delta}}^2}{\| f_0\|_{L^2}^2}
\end{equation*}
from which \eqref{eq:lower-bound-rate-fct.1} follows by taking the infimum over $g$.
The proof of \eqref{eq:lower-bound-rate-fct.2} is similar, only observe that in this case if $f=\mathcal{G}^0(f_0,\textup{Int}(g))$, then necessarily $f\in C^0_t L^2\cap L^\infty_{t,x}$ and estimate \eqref{eq:stability-strong-skeleton} is available.
\end{proof}

\begin{remark}
For simplicity we only considered the cases $f_0\in L^2$ and $f_0\in L^\infty$, but several variants of the above estimates are available for $p\in (2,\infty)$.
Indeed, one can derive suitable stability estimates in the style of \eqref{eq:stability-strong-skeleton}, \eqref{eq:stability-skeleton-L^2}, using the information that the initial $L^p$ integrability is propagated over time;
in turn this provides estimates of the form \eqref{eq:lower-bound-rate-fct.1} where the parameter $\delta$ belongs to an interval determined by Sobolev embeddings.
\end{remark}

\begin{corollary}\label{cor:asymptotic-rate-1}
Let $f_0\in L^2$, $f^n= \mathcal{G}^n(f_0,\sqrt{\eps_n} W^\alpha)$ and $\bar{f}= \mathcal{G}^0(f_0,0)$ for the maps $\mathcal{G}^n$, $\mathcal{G}^0$ as defined in Section \ref{subs-LDP-SLTE-bounded}.  Then for any $\delta> d/2$ and any $R>0$ there exists a constant $C>0$ such that
\begin{equation}\label{eq:large-deviation-estim}
\P\big( \| f^n-\bar{f}\|_{C^0 H^{-\delta}} \geq R\big) \lesssim \exp\bigg(-C\, \frac{R^2}{\eps_n \| f_0\|_{L^2}^2}\bigg) .
\end{equation}
If additionally $f_0\in L^\infty$, then an estimate of the form \eqref{eq:large-deviation-estim} holds for any $\delta>0$, up to replacing $\| f_0\|_{L^2}$ by $\| f_0\|_{L^\infty}$.
\end{corollary}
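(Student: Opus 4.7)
The plan is to obtain the corollary as a direct consequence of the LDP established in the preceding theorems together with the lower bound on the rate function proved in Lemma \ref{lem:lower-bound-rate-fct}. I expect the argument to be routine; the only mildly delicate point is converting an asymptotic LDP upper bound into a non-asymptotic inequality valid for every $n$.

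First, in the case $f_0 \in L^2$ with $\delta > d/2$, I would introduce the closed set
\[
A_R := \{f \in C^0_t H^{-\delta} : \|f - \bar{f}\|_{C^0 H^{-\delta}} \geq R\}
\]
and apply the large deviation upper bound contained in Theorem \ref{thm-L2-data} to get
\[
\limsup_{n\to\infty} \eps_n \log \P(f^n \in A_R) \leq -\inf_{f \in A_R} I_{f_0}(f).
\]
Plugging \eqref{eq:lower-bound-rate-fct.1} from Lemma \ref{lem:lower-bound-rate-fct} into this bound gives $\inf_{f\in A_R} I_{f_0}(f) \gtrsim_{\delta,d} R^2/\|f_0\|_{L^2}^2$, which is exactly the exponent required in \eqref{eq:large-deviation-estim}.

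Second, to promote this $\limsup$ into an inequality valid for every $n \in \N$, I would use the trivial bound $\P(f^n \in A_R) \leq 1$: since $\eps_n$ is bounded below on any finite range of indices, finitely many exceptional values of $n$ can be absorbed into the implicit constant of $\lesssim$ (which is allowed to depend on $R$, $\delta$, $d$ and $f_0$, but not on $n$). This is the only step where one must be slightly careful, but it presents no real obstacle.

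Third, for the case $f_0 \in L^\infty$, I would replace Theorem \ref{thm-L2-data} with the uniform Laplace principle of Proposition \ref{thm-bounded-data}, which provides an LDP on $\mathcal{E}^{T,R}$ (hence in the topology of $C^0_t H^{-\delta}$) for any $\delta > 0$. The bound \eqref{eq:lower-bound-rate-fct.2} gives $I_{f_0}(f) \gtrsim_d \|f - \bar f\|_{C^0 L^2}^2/\|f_0\|_{L^\infty}^2$, and since the embedding $L^2 \hookrightarrow H^{-\delta}$ ($\delta \geq 0$) yields $\|h\|_{H^{-\delta}} \leq \|h\|_{L^2}$, the condition $\|f-\bar f\|_{C^0 H^{-\delta}} \geq R$ automatically forces $\|f-\bar f\|_{C^0 L^2} \geq R$. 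Combining this with the LDP upper bound on $A_R$ and the same uniformity-in-$n$ argument as in the second step then produces the estimate with $\|f_0\|_{L^\infty}$ replacing $\|f_0\|_{L^2}$, now valid for any $\delta > 0$ as claimed.
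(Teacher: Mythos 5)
Your proposal is correct and follows essentially the same route as the paper: apply the LDP upper bound to the closed set $A_R$ and control the infimum of the rate function via Lemma \ref{lem:lower-bound-rate-fct}. The only additions are the explicit remarks on absorbing finitely many indices $n$ into the implicit constant and on the embedding $\|\cdot\|_{H^{-\delta}}\leq\|\cdot\|_{L^2}$ in the $L^\infty$ case, both of which the paper leaves implicit.
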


\begin{proof}
By the definition of LDP and Lemma \ref{lem:lower-bound-rate-fct}, it holds
\begin{align*}
\limsup_{n\to\infty} \eps_n \log \P\big( \| f^n-\bar{f}\|_{C^0 H^{-\delta}} \geq R\big)
& \leq -\inf\big\{ I_{f_0}(f): f\in L^\infty_t L^2_x \text{ s.t. } \| f-\bar{f}\|_{C^0 H^{-\delta}}\geq R\big\}\\
& \leq - \frac{R^2}{\| f_0\|_{L^2}^2}
\end{align*}
which gives \eqref{eq:large-deviation-estim}; similarly in the case $f_0\in L^\infty$.
\end{proof}

The same technique can be applied in the case of $2$D Euler equations; to avoid unnecessary repetitions, we give the statement without proof.

\begin{corollary}\label{cor:asymptotic-rate-2}
Let $\xi_0\in L^\infty\setminus\{0\}$, $I_{\xi_0}$ be given by \eqref{thm-LDP-S2EE.1} and $\bar{\xi}=\mathcal{G}^0(\xi_0,0)$, where $\cG^0$ is defined as in Section \ref{subs-LDP-SEE}; then it holds
\begin{equation}\label{eq:lower-bound-rate-fct.3}
I_{\xi_0}(\xi) \gtrsim\frac{\| \xi-\bar{\xi}\|^2_{C^0 L^2}}{\| \xi_0\|_{L^\infty}^2} \quad \forall\, \xi\in L^\infty_t L^2_x.
\end{equation}
As a consequence, setting $\xi^n =\mathcal{G}^n(\xi_0,\sqrt{\eps_n} W^\alpha)$, for any $\delta> 0$ and any $R>0$ there exists a constant $C>0$ such that
\begin{equation}\label{eq:large-deviation-estim.2}
\P\big( \| \xi^n-\bar{\xi}\|_{C^0 H^{-\delta}} \geq R\big) \lesssim \exp\bigg(-C\, \frac{R^2}{\eps_n \| f_0\|_{L^\infty}^2}\bigg).
\end{equation}
\end{corollary}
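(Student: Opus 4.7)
My plan is to follow the same template as in Lemma \ref{lem:lower-bound-rate-fct} and Corollary \ref{cor:asymptotic-rate-1}, with the stability estimate from Lemma \ref{lem:uniq-skeleton-navier-stokes} playing the role that Proposition \ref{prop:stability-skeleton-L2} played in the transport case.

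For the lower bound \eqref{eq:lower-bound-rate-fct.3}, I would fix $\xi_0\in L^\infty\setminus\{0\}$ and consider any $g\in L^2_t\cH^\alpha$ such that $\xi=\mathcal{G}^0(\xi_0,\textup{Int}(g))$; then $\xi$ and $\bar\xi$ are both weak $L^\infty_{t,x}$ solutions of the skeleton equation \eqref{eq:skeleton-NSE} with the same initial datum $\xi_0$ and drifts $g$ and $0$ respectively. Directly applying the stability estimate \eqref{eq:stability-skeleton-NS} from Lemma \ref{lem:uniq-skeleton-navier-stokes} gives
\[
\|\xi-\bar\xi\|_{C^0 L^2}\lesssim \|g\|_{L^2 L^2}\|\xi_0\|_{L^\infty}\leq \|g\|_{L^2 H^\alpha}\|\xi_0\|_{L^\infty},
\]
which after squaring yields $\tfrac12\|g\|_{L^2 H^\alpha}^2 \gtrsim \|\xi-\bar\xi\|_{C^0 L^2}^2/\|\xi_0\|_{L^\infty}^2$. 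Taking the infimum over all admissible $g$ and invoking the definition \eqref{thm-LDP-S2EE.1} of $I_{\xi_0}$ establishes \eqref{eq:lower-bound-rate-fct.3}; for the case $f\notin L^\infty_t L^2_x$ there is nothing to prove since $I_{\xi_0}(\xi)=+\infty$.

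For the concentration estimate \eqref{eq:large-deviation-estim.2}, I would invoke Theorem \ref{thm-LDP-S2EE}, which (being equivalent to a LDP on $\mathcal{E}^{T,R}$ with $R=\|\xi_0\|_{L^\infty}$) provides the upper bound
\[
\limsup_{n\to\infty}\eps_n\log\P\big(\|\xi^n-\bar\xi\|_{C^0 H^{-\delta}}\geq R'\big)\leq -\inf\big\{I_{\xi_0}(\xi):\xi\in\mathcal{E}^{T,R},\,\|\xi-\bar\xi\|_{C^0 H^{-\delta}}\geq R'\big\}
\]
since $\{\|\xi-\bar\xi\|_{C^0 H^{-\delta}}\geq R'\}$ is closed in $\mathcal{E}^{T,R}$. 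For any $\xi$ in this set with $I_{\xi_0}(\xi)<\infty$, combining \eqref{eq:lower-bound-rate-fct.3} with the trivial inclusion $\|\cdot\|_{C^0 H^{-\delta}}\leq \|\cdot\|_{C^0 L^2}$ (valid for $\delta\geq 0$ on the torus) produces $I_{\xi_0}(\xi)\gtrsim R'^2/\|\xi_0\|_{L^\infty}^2$, whence \eqref{eq:large-deviation-estim.2} follows after absorbing constants into the implicit one.

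There is no serious obstacle here: both steps are formal consequences of results already proved. The only point deserving a moment of care is that the LDP upper bound only delivers an asymptotic statement via $\limsup$, so the finite-$n$ form \eqref{eq:large-deviation-estim.2} must be understood with an implicit constant that absorbs the transient behaviour for small $n$; this is exactly the convention used throughout the paper and in particular in Corollary \ref{cor:asymptotic-rate-1}, so the formulation in the statement is fully consistent.
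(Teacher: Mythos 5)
Your proposal is correct and follows exactly the route the paper intends: the authors omit the proof precisely because it is the same technique as Lemma \ref{lem:lower-bound-rate-fct} and Corollary \ref{cor:asymptotic-rate-1}, with the stability estimate \eqref{eq:stability-skeleton-NS} of Lemma \ref{lem:uniq-skeleton-navier-stokes} (applied with equal initial data and drifts $g$ and $0$) replacing Proposition \ref{prop:stability-skeleton-L2}, and the LDP upper bound for the closed set $\{\|\xi-\bar\xi\|_{C^0 H^{-\delta}}\geq R\}$ combined with $\|\cdot\|_{H^{-\delta}}\leq\|\cdot\|_{L^2}$ giving the concentration bound. Your handling of the $\limsup$ convention is also consistent with the paper's proof of Corollary \ref{cor:asymptotic-rate-1}.
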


The interest in results like Corollaries \ref{cor:asymptotic-rate-1} and \ref{cor:asymptotic-rate-2} comes from applications in the suppression of blow-up phenomena presented in \cite{FGL21b}.
To discuss this, let us turn to an abstract setting similar to that from the introduction; namely let us consider an SPDE of the form
\begin{equation}\label{eq:abstract-SPDE-LDP}
\d u + \sqrt{\nu\eps_n}\, \circ \d W^{n,\alpha}\cdot\nabla u = [\Delta u + F(u)] \,\d t.
\end{equation}
The equation is now truly parabolic and due to the presence of an additional parameter $\nu>0$, it has corresponding It\^o form
\begin{equation}\label{eq:abstract-SPDE-LDP.2}
\d u + \sqrt{\nu\eps_n}\, \d W^{n,\alpha}\cdot\nabla u = [(1+\nu)\Delta u + F(u)] \,\d t.
\end{equation}
For a large class of nonlinearities $F$ (e.g. those satisfying Hypothesis 1.2 from \cite{FGL21b}), equation \eqref{eq:abstract-SPDE-LDP} admits pathwise existence and strong uniqueness of maximal local solutions, which might however blow up in finite time, just like the deterministic counterpart of \eqref{eq:abstract-SPDE-LDP}.
However, it was shown in \cite{FGL21b} that if the parameters $\nu,\, n$ are chosen correctly, then the probability of blow-up for \eqref{eq:abstract-SPDE-LDP} occurring before a fixed time $T>0$ can be made arbitrarily small.

Establishing an LDP for the small noise limit of \eqref{eq:abstract-SPDE-LDP.2}, combined with estimates like those in  Corollaries \ref{cor:asymptotic-rate-1} and \ref{cor:asymptotic-rate-2}, would provide a more precise information on the asymptotics of this small blow-up probability, which is expected to decay exponentially as a function of $n$.
Indeed, the proof provided in \cite{FGL21b} roughly speaking follows three main steps:
\begin{itemize}
\item[i)] Choose $\nu>0$ sufficiently large so that the limit $\bar{u}$ of $u^n$, solving the equation with enhanced viscosity
\[
\partial_t \bar u = (1+\nu)\Delta \bar u + F(\bar u)
\]
does not blow up before time $T$ and it satisfies $\| \bar{u}\|_{H^{-\delta}} \leq R$ for some appropriately chosen parameters $\delta, R>0$.
\item[ii)] Instead of considering \eqref{eq:abstract-SPDE-LDP}, look at the equation with cutoff
\begin{equation}\label{eq:abstract-SPDE-LDP-cutoff}
\d \tilde u^n + \sqrt{\nu\eps_n}\, \circ \d W^{n,\alpha}\cdot\nabla \tilde u^n = [\Delta \tilde u^n + \rho (\tilde u^n) F(\tilde u^n)] \d t;.
\end{equation}
the cutoff function $\rho(\tilde{u}^n)=\rho (\| \tilde{u}^n\|_{H^{-\delta}})$ is devised in such a way that: a) global existence of solutions to \eqref{eq:abstract-SPDE-LDP-cutoff} always holds; b) as long as $\| \tilde{u}^n\|_{H^{-\delta}} \leq 2R$, $\rho(\tilde{u}^n)=1$ so that $\tilde{u}^n$ is also a solution to the equation \eqref{eq:abstract-SPDE-LDP} without cutoff.
\item[iii)] Show that $\tilde{u}^n\to \bar{u}$ as $n\to\infty$ in $C^0_t H^{-\delta}$; in particular, if $\| \tilde{u}^n-\bar{u}\|_{C^0 H^{-\delta}} \leq R$, then $\tilde{u}^n = u^n$, so that the probability of blow-up happening can be bounded by
\begin{equation}\label{eq:estim-blow-up}
\P\big( \|\tilde u^n_t \|_{H^{-\delta}} >2R \mbox{ for some } t\leq T \big) \leq \P\big(\| \tilde{u}^n-\bar{u}\|_{C^0 H^{-\delta}} >R \big).
\end{equation}
\end{itemize}
The results from \cite{FGL21b} only establish that the right hand side of \eqref{eq:estim-blow-up} converges to $0$ as $n\to\infty$; see \cite[Theorem 1.5]{FGL21c} for more quantitative, polynomial type of estimates in the specific case of the $2$D Keller-Segel system.

As the quantity on the right hand side of \eqref{eq:estim-blow-up} is exactly of the same form as those appearing in \eqref{eq:large-deviation-estim}, \eqref{eq:large-deviation-estim.2}, in order to establish its exponential decay it would be enough to show that $\tilde{u}^n$ satisfy an LDP with a rate function $I$ satisfying suitable lower bounds.

However, we warn the reader that this task might be far from obvious and different PDEs (i.e. different choices of the nonlinearity $F$) might require case-by-case treatment.
Recall that one of the key ingredients in establishing the LDP is the study of the skeleton equation
\begin{equation*}
\partial_t v + g\cdot\nabla v = \Delta v + \rho(v) F(v)
\end{equation*}
where the function $g$ is divergence free and only enjoys regularity $L^2_t \mathcal{H}^\alpha$, so that the Prodi-Serrin condition does not hold.
In particular, resorting to the mild formulation of the problem is not sufficient to solve it, while the presence of the nonlinearity $F$ might complicate the analysis presented in Section \ref{subsec-skeleton-eq}.
A crucial tool in establishing the well-posedness of the nonlinear skeleton equation \eqref{eq:skeleton-NSE} was its transport structure, allowing to derive a priori estimates in $L^\infty_t L^p_x$, which are then crucial in order for the proof of Lemma \ref{lem:uniq-skeleton-navier-stokes} to work.
Different nonlinearities $F$ might not allow to derive similar estimates and thus the techniques presented here might not necessarily work.

\section{Central Limit Theorems}

We provide here the proofs of the CLTs, by treating first the case of stochastic transport equations in Section \ref{subs-CLT-SLTE} and then stochastic $2$D Euler equations in Section \ref{subs-CLT-SEE}.
In both cases, we provide explicit rates of strong convergence, see Theorems \ref{thm:CLT-SLTE-general} and \ref{thm:CLT-SEE-1} respectively.
Finally we discuss in Section \ref{sec:conclusive-CLT} possible generalizations of these results.

\subsection{Stochastic transport equation}\label{subs-CLT-SLTE}

We recall the stochastic transport equation \eqref{SLTE-Ito} for the reader's convenience:
\begin{equation}\label{SLTE-Ito-sec3}
\d f^n + b\cdot \nabla f^n\, \d t + \d \Pi_n \big(\sqrt{\eps_n}\, W^\alpha \big) \cdot \nabla f^n = \Delta f^n \,\d t, \quad f^n_0= f_0\in L^2.
\end{equation}
Throughout this section, we will always impose the following assumption on the drift $b$.

\begin{assumption}\label{ass:drift-CLT}
The drift $b$ satisfies $\nabla\cdot b\in L^1_t L^\infty_x$ and there exist $(p,q)\in (2,\infty]$ such that
\begin{equation}
b,\, \nabla\cdot b \in L^q_t L^p_x, \quad \gamma:=\frac{2}{q}+\frac{d}{p}<1.
\end{equation}
\end{assumption}

In this case, we always have strong existence and uniqueness of solutions $f^n$ to \eqref{SLTE-Ito-sec3}. Let $f$ be the solution to
\begin{equation}\label{eq:limit-advect-diffus}
\partial_t f + b\cdot\nabla f = \Delta f, \quad f|_{t=0} =f_0.
\end{equation}
Then $X^n:= (f^n-f)/\sqrt{\eps_n}$ solves
\begin{equation}\label{eq:before-limit}
\d X^n + b\cdot\nabla X^n\, \d t + \d W^{n,\alpha} \cdot \nabla f^n = \Delta X^n\, \d t.
\end{equation}
It is natural to expect that the family $\{X^n \}_n$ converges as $n\to\infty$ to the solution $X$ of
\begin{equation}\label{eq:candidate-limit}
\d X + b\cdot\nabla X \, \d t + \d W^\alpha\cdot \nabla f = \Delta X\, \d t, \quad X\vert_{t=0}=0.
\end{equation}
At this stage, equation \eqref{eq:candidate-limit} is only heuristical, as we cannot expect to give it meaning in a classical sense; instead, we will adopt the following definition.

\begin{definition}\label{defn:weak-sol}
We say that a process $X$ with trajectories in $C^0_t H^{-\beta}$ for some $\beta>0$ is a solution to \eqref{eq:candidate-limit} if for any $\tau\in [0,T]$ and $\varphi\in C^\infty$ it holds
\begin{equation}\label{eq:defn-weak-sol}
\langle X_\tau, \varphi\rangle = \int_0^\tau \langle f_r \nabla g_r, \d W^\alpha_r\rangle\quad \P\text{-a.s.},
\end{equation}
where $g\in L^2_t C^1_x$ is any solution to the backward equation
\begin{equation}\label{eq:dual-equation}
\partial_t g + \nabla\cdot (b\, g) + \Delta g=0
\quad \text{for }t\in [0,\tau],
\quad g\vert_{t=\tau} = \varphi .
\end{equation}
\end{definition}

Observe that, since we are imposing $g\in L^2_t C^1_x$, the stochastic integral appearing in \eqref{eq:defn-weak-sol} is meaningful, since
\begin{align*}
\E\bigg[ \Big| \int_0^\tau \langle f_r \nabla g_r, \d W^\alpha_r\rangle \Big|^2\bigg]
= \int_0^\tau \| f_r \nabla g_r\|_{H^{-\alpha}}^2\, \d r
\lesssim \int_0^\tau \| f_r \nabla g_r\|_{L^2}^2\, \d r
\leq \| f\|_{L^\infty L^2}^2 \| g\|_{L^2 C^1}^2.
\end{align*}
Let us stress however that the existence of such a $g$ is part of the above definition. The well-posedness of \eqref{eq:dual-equation} follows from  Theorem \ref{thm:parabolic-pde} in Appendix \ref{appendix}, where more general equations are treated (up to time reversal); we summarize here some results which will be frequently used in this section.

\begin{lemma}\label{lem:dual-equation}
Under Assumption \ref{ass:drift-CLT}, for any $\varphi\in L^\infty$, equation \eqref{eq:dual-equation} has a unique solution of the form $g_t = P_{\tau-t} \varphi + v_t$, where $v\in C^0_t C^0_x$ satisfying $v_\tau=0$ and $\sup_{t\leq \tau} (\tau-t)^{\frac\gamma2} \|v_t \|_{C^1} \lesssim_b \|\varphi \|_{L^\infty}$.
Moreover, we can define a two-parameter solution operator $S_{\tau, t} \varphi :=g_t ,\, 0\leq t\leq \tau \leq T$, such that
\begin{itemize}
\item[\rm i)] $S_{\tau,t}= P_{\tau-t} + R_{\tau,t}$, where $R_{\tau,t}$ is a linear operator satisfying $$\|R_{\tau,t}\varphi \|_{C^\delta} \lesssim_\delta (\tau-t)^{(1-\gamma-\delta)/2} \|\varphi \|_{L^\infty} \quad \mbox{for any } \delta\in [0,2-\gamma) ;$$
\item[\rm ii)] for any $\eps>0$, $\|S_{\tau,\cdot}\varphi \|_{L^2_t C^1} \lesssim_\eps \|\varphi \|_{C^\eps}$;
\item[\rm iii)] $S$ enjoys the semigroup property: $S_{\tau,s}= S_{t,s}\circ S_{\tau,t}$ for any $0\leq s<t<\tau \leq T$.
\end{itemize}
\end{lemma}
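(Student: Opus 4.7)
The plan is to reduce the backward problem to a forward Fokker--Planck type PDE by time reversal and then invoke Theorem \ref{thm:parabolic-pde}. Setting $\tilde g_s := g_{\tau-s}$, $\tilde b_s := b_{\tau - s}$, equation \eqref{eq:dual-equation} becomes
\begin{equation*}
\partial_s \tilde g -\Delta \tilde g = \nabla \cdot(\tilde b \tilde g), \qquad \tilde g_0 = \varphi,
\end{equation*}
with $\tilde b$ still satisfying Assumption \ref{ass:drift-CLT}. The natural ansatz $\tilde g_s = P_s \varphi + \tilde v_s$ reduces the problem to the mild equation
\begin{equation*}
\tilde v_s = \int_0^s P_{s-r}\,\nabla\cdot \bigl( \tilde b_r (P_r \varphi + \tilde v_r) \bigr)\, \d r,
\end{equation*}
which I would solve by Picard iteration in the weighted space $\{ v \in C^0_s C^0_x : v_0 = 0,\, \sup_{s \in (0, \tau]} s^{\gamma/2} \|v_s\|_{C^1} < \infty \}$, exactly the functional framework of Theorem \ref{thm:parabolic-pde}. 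This produces the decomposition in (i), with $R_{\tau,t}\varphi := v_t$ and the announced bound $\sup_{t \leq \tau}(\tau-t)^{\gamma/2} \|v_t\|_{C^1} \lesssim_b \|\varphi\|_{L^\infty}$.

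Once this has been established, claims (ii) and (iii) follow by purely algebraic manipulations. For (ii), I would substitute the $C^1$ bound on $R_{\tau,t}\varphi$ back into the mild formulation and combine it with the heat-kernel estimate
\begin{equation*}
\|P_u\, \nabla\cdot h\|_{C^\delta} \lesssim u^{-(1+\delta)/2 - d/(2p)}\,\|h\|_{L^p},
\end{equation*}
applied to $h = b(P\varphi + R\varphi)$. Hölder's inequality in time together with the constraint $\gamma + \delta < 2$ produces a time-integrable kernel and yields the exponent $(1-\gamma-\delta)/2$ of point (ii). Point (iii) follows from the uniqueness part of (i): both $S_{\tau, s}\varphi$ and $S_{t, s}(S_{\tau, t} \varphi)$ are $L^\infty$ solutions of \eqref{eq:dual-equation} on $[s, t]$ with terminal datum $S_{\tau, t} \varphi$ at time $t$, hence they agree.

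Finally, the $L^2_t C^1$ bound follows by splitting $\|S_{\tau, t} \varphi\|_{C^1} \leq \|P_{\tau - t}\varphi\|_{C^1} + \|R_{\tau, t}\varphi\|_{C^1}$ and using $\|P_{\tau-t}\varphi\|_{C^1} \lesssim (\tau - t)^{-(1-\eps)/2}\,\|\varphi\|_{C^\eps}$ together with the bound in point (i): both factors are square-integrable in $t$ provided $\eps > 0$ and $\gamma < 1$. The main obstacle, deferred to Theorem \ref{thm:parabolic-pde}, lies in carrying out the Picard scheme described above: one must carefully balance the heat-kernel singularity at $r = s$, the $r^{-\gamma/2}$ blow-up of $\|\tilde v_r\|_{C^1}$ at $r = 0$, and the $L^q_t$ integrability of $\|b_r\|_{L^p}$. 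The subcriticality $\gamma < 1$, combined with the strict inequality $q > 2$ (so that $q' = q/(q-1) < 2$), is precisely what ensures all the time integrals converge with the correct scaling needed to close the contraction.
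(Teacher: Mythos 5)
Your proposal is correct and follows essentially the same route as the paper: Lemma \ref{lem:dual-equation} is there obtained by time-reversing \eqref{eq:dual-equation} into the forward Fokker--Planck equation covered by Theorem \ref{thm:parabolic-pde} (with $c=\nabla\cdot b$), whose proof is exactly the fixed-point/a priori estimate in the weighted space $\sup_s s^{\gamma/2}\|v_s\|_{C^1}$ that you describe, with the $C^\delta$ bound on $R_{\tau,t}$, the $L^2_tC^1$ bound, and the semigroup property deduced afterwards just as you indicate. The only blemish is cosmetic: the exponent $(1-\gamma-\delta)/2$ you attribute to ``point (ii)'' is the bound of point (i) of the lemma.
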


Now we can establish the well-posedness of \eqref{eq:candidate-limit}.

\begin{proposition}\label{prop:wellposedness-limit-eq}
Let $b$ satisfy Assumption \ref{ass:drift-CLT}, then for any $f_0\in L^2$ there exists a unique solution $X$ to \eqref{eq:candidate-limit}, in the sense of Definition \ref{defn:weak-sol}.
Moreover $X$ is a Gaussian process; up to modification, for any $\delta\in (0,1-\gamma)$ and $\eps>0$, it has trajectories  in $C^{\delta/2-\eps}_t H^{-d/2-\delta-\eps}_x$.
\end{proposition}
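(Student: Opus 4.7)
The plan is as follows. Uniqueness is immediate from Definition~\ref{defn:weak-sol}: for every $\tau\in [0,T]$ and $\varphi\in C^\infty$, Lemma~\ref{lem:dual-equation} supplies a unique $g\in L^2_t C^1_x$ solving the backward equation \eqref{eq:dual-equation}, so the right-hand side of \eqref{eq:defn-weak-sol} is determined by $\varphi$. Two solutions must therefore coincide when tested against every smooth function, and by density of $C^\infty$ this forces equality as $H^{-\beta}$-valued processes.

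For existence, the idea is to build $X$ by Fourier expansion. For each $\tau$ and smooth $\varphi$ define
\[
\tilde X_\tau(\varphi):=\int_0^\tau \<f_r\,\nabla S_{\tau,r}\varphi,\,\d W^\alpha_r\>,
\]
a centered Gaussian random variable (being a Wiener integral of a deterministic integrand), linear in $\varphi$ since $S_{\tau,r}$ is. It\^o's isometry combined with \eqref{eq:intro-basic-estim}, the Sobolev embedding $L^{2d/(d+2\alpha)}\hookrightarrow H^{-\alpha}$ and H\"older yield
\[
\E\big[|\tilde X_\tau(\varphi)|^2\big]\leq \int_0^\tau\|f_r\nabla g_r\|_{H^{-\alpha}}^2\,\d r\lesssim \|f\|_{L^\infty_t L^2_x}^2\,\|\nabla g\|_{L^2_t L^{d/\alpha}_x}^2,
\]
and Lemma~\ref{lem:dual-equation}(ii) bounds this by $\|\varphi\|_{C^\eps}^2$ for any $\eps>0$. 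Taking $\varphi=e_k$ produces $\E[|\tilde X_\tau(e_k)|^2]\lesssim|k|^{2\eps}\|f_0\|_{L^2}^2$; defining $X_\tau$ as the random distribution whose Fourier coefficients are $\tilde X_\tau(e_{-k})$ therefore yields $\E[\|X_\tau\|_{H^{-d/2-\eps'}}^2]<\infty$ for any $\eps'>\eps$, and Gaussianity of the full process is automatic.

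Time regularity follows from a Kolmogorov-type argument. Using the semigroup identity $S_{t,r}=S_{s,r}\circ S_{t,s}$ of Lemma~\ref{lem:dual-equation}(iii), for $s<t$ and $\varphi=e_k$ one decomposes
\[
\tilde X_t(e_k)-\tilde X_s(e_k)=\int_s^t\<f_r\nabla S_{t,r}e_k,\,\d W^\alpha_r\>+\int_0^s\<f_r\nabla S_{s,r}(S_{t,s}e_k-e_k),\,\d W^\alpha_r\>.
\]
The first term is controlled using $\|\nabla P_{t-r}e_k\|_{L^\infty}\lesssim|k|e^{-c|k|^2(t-r)}$ and the corrector bound $\|R_{t,r}e_k\|_{C^1}\lesssim(t-r)^{-\gamma/2}$ from Lemma~\ref{lem:dual-equation}(i); integrating over $r\in[s,t]$ yields a contribution $\lesssim \min(1,|k|^2(t-s))+(t-s)^{1-\gamma}$. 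For the second, combining Lemma~\ref{lem:dual-equation} with $\|(P_{t-s}-I)e_k\|_{C^\eps}\lesssim|k|^\eps\min(1,|k|^2(t-s))$ produces an analogous gain. Interpolating over $\delta\in[0,1-\gamma)$ gives the key estimate $\E[|\tilde X_t(e_k)-\tilde X_s(e_k)|^2]\lesssim |k|^{2\delta}(t-s)^\delta$. Multiplying by $|k|^{-d-2\delta-2\eps}$ and summing over $k$ produces $\E[\|X_t-X_s\|_{H^{-d/2-\delta-\eps}}^2]\lesssim(t-s)^\delta$; Gaussianity upgrades this to arbitrary moments, and Kolmogorov's continuity theorem then delivers the advertised version $X\in C^{\delta/2-\eps}_t H^{-d/2-\delta-\eps}_x$.

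The principal obstacle will be pinning down the correct spatial-temporal trade-off, encoded by the single parameter $\delta$. The constraint $\delta<1-\gamma$ is imposed precisely by the regularity of the corrector $R_{\tau,t}$ in Lemma~\ref{lem:dual-equation}(i) and ultimately reflects the Krylov--R\"ockner subcriticality $\gamma<1$ of the drift $b$; the singular behavior of $\nabla g$ near its terminal time must be balanced carefully against the $|k|^\eps$ loss coming from Sobolev embeddings, which is exactly what the interpolation outlined above achieves.
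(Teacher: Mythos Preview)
Your proposal is correct and follows essentially the same approach as the paper: uniqueness via Lemma~\ref{lem:dual-equation}, existence via the explicit formula $\langle X_t,\varphi\rangle=\int_0^t\langle f_r\nabla S_{t,r}\varphi,\d W^\alpha_r\rangle$, and regularity via the same semigroup decomposition $S_{t,r}-S_{s,r}=S_{s,r}(S_{t,s}-I)$ combined with the heat-kernel/corrector estimates of Lemma~\ref{lem:dual-equation} and Kolmogorov continuity. Two minor remarks: the Sobolev embedding $L^{2d/(d+2\alpha)}\hookrightarrow H^{-\alpha}$ is unnecessary since $\|f_r\nabla g_r\|_{H^{-\alpha}}\leq\|f_r\|_{L^2}\|\nabla g_r\|_{L^\infty}$ already suffices; and your key estimate should read $|k|^{2(\delta+\eps)}(t-s)^\delta$ rather than $|k|^{2\delta}(t-s)^\delta$, since the bound on $J^2$ via $\|(S_{t,s}-I)e_k\|_{C^\eps}$ costs an extra $|k|^\eps$---but this is harmlessly absorbed into the final $H^{-d/2-\delta-\eps}$ norm, as you correctly do.
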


\begin{proof}
We start with showing uniqueness. Assume there exist two processes $X^1$ and $ X^2$, defined on the same stochastic basis $(\Omega,\P,\mathcal{F}_t,W^\alpha)$.
By Lemma \ref{lem:dual-equation}, for any $\tau\in [0,T]$ and any $\varphi\in C^\infty$, there exists a unique solution $g$ to \eqref{eq:dual-equation} satisfying $g\in L^2_t C^1_x$.
Similarly, under Assumption \ref{ass:drift-CLT}, equation \eqref{eq:limit-advect-diffus} has a unique solution $f$ (see \cite[Lemma 1]{Galeati}), which is the limit of $\{f^n \}_n$.
Therefore it must hold $\langle X^1_\tau -X^2_\tau, \varphi \rangle = 0$ $\P$-a.s. for any fixed $\tau\in [0,T]$ and any $\varphi\in C^\infty$; standard arguments based on countable quantifiers imply that $X^1\equiv X^2$.

Now we turn to the existence part. Under Assumption \ref{ass:drift-CLT}, by Lemma \ref{lem:dual-equation}, uniqueness of regular solutions holds for \eqref{eq:dual-equation}. Specifically, if we fix $\eps>0$, then for any $\varphi\in C^\eps$ and any $t\in [0,T]$, the solution $g_\cdot = S_{t,\cdot}\varphi \in C^0_{s,x} \cap L^2_s C^1_x$, and the identity \eqref{eq:defn-weak-sol} can be rewritten as
\begin{equation}\label{eq:weak-sol-explicit}
\langle X_t, \varphi \rangle =\int_0^t \langle f_r\nabla S_{t,r}\varphi, \d W^\alpha_r\rangle.
\end{equation}
We can actually use expression \eqref{eq:weak-sol-explicit} to \textit{define} a Gaussian random field $X$, which will then automatically be a solution to \eqref{eq:candidate-limit}, in the sense of Definition \ref{defn:weak-sol}, adapted to the filtration generated by $W^\alpha$.

Below we discuss the space-time regularity of $X$. In order to show the claim, it suffices to check that for any such $\delta$ and any $\eps>0$ it holds
\begin{equation}\label{eq:space-time-regularity}
\E\big[ |\langle X_t -X_s, e_k\rangle|^2\big]
\lesssim |t-s|^{\delta} |k|^{2(\delta+\eps)}
\quad \forall\ 0\leq s<t\leq T,\, k\in \mathbb{Z}^d_0.
\end{equation}
Indeed, by hypercontractivity of Gaussian variables, it then holds
\begin{equation*}
\E\big[ |\langle X_t -X_s, e_k\rangle|^p\big]^{1/p} \lesssim_p |t-s|^{\delta/2} |k|^{\delta+\eps} \quad\forall\, p\in [1,\infty)
\end{equation*}
which implies the conclusion by standard arguments, based on Kolmogorov continuity type theorems (and possibly a relabelling of the parameter $\eps$).
By formula \eqref{eq:weak-sol-explicit}, it holds
\begin{align*}
\E\big[ |\langle X_t-X_s,e_k\rangle|^2 \big]
& = \E \bigg[\Big| \int_0^t \langle f_r (\nabla S_{t,r} e_k - \nabla S_{s,r} e_k \mathbbm{1}_{r<s}), \d W^\alpha_r \rangle \Big|^2 \bigg]\\
& = \int_0^t \big\| f_r (\nabla S_{t,r} e_k - \nabla S_{s,r} e_k \mathbbm{1}_{r<s}) \big\|_{H^{-\alpha}}^2\,\d r\\
& \lesssim \int_0^t \| f_r\|_{L^2}^2 \| \nabla S_{t,r} e_k - \nabla S_{s,r} e_k \mathbbm{1}_{r<s}\|_{L^\infty}^2\,\d r\\
& \lesssim \| f_0\|_{L^2}^2 \bigg[ \int_s^t \| S_{t,r} e_k\|_{C^1}^2 \,\d r + \int_0^s \| (S_{t,r}-S_{s,r}) e_k\|_{C^1}^2\, \d r\bigg].
\end{align*}

Let us denote the two integrals appearing in the last line by $J^1,\, J^2$. By Lemma \ref{lem:dual-equation},
\begin{align*}
J^1
& \lesssim \int_s^t \| P_{t-r} e_k \|^2_{C^1}\, \d r +  \int_s^t \| R_{t,r} e_k \|^2_{C^1}\, \d r\\
& \lesssim \int_s^t e^{-8\pi^2|k|^2 (t-r)} |k|^2\, \d r + \int_s^t |t-r|^{-\gamma}\, \d r\\
& \sim 1-e^{-8\pi^2|k|^2 (t-s)} + |t-s|^{1-\gamma}
\lesssim |t-s|^\delta |k|^{2\delta},
\end{align*}
where in the last step we have used $1-e^{-a} \leq a^\delta$ for any $a\geq 0$  and $\delta<1-\gamma<1$.
On the other hand, for any $\eps>0$ such that $\delta+\eps<1-\gamma$, it holds
\begin{align*}
J^2
& = \int_0^s \| S_{s,r} (S_{t,s}-I)e_k\|_{C^1}^2 \, \d r
\lesssim_\eps \| (S_{t,s}-I)e_k\|_{C^\eps}^2\\
& \lesssim \| (P_{t-s}-I) e_k\|_{C^\eps}^2 + \| R_{t,s} e_k\|_{C^\eps}^2
\lesssim |k|^{2\eps}\, \big|1-e^{-4\pi^2|k|^2 (t-s)} \big|^2 + |t-s|^{1-\gamma-\eps}\\
& \lesssim  |k|^{2 (\delta+\eps)}\, |t-s|^\delta + |t-s|^{\delta}
\lesssim |t-s|^\delta |k|^{2(\delta+\eps)}.
\end{align*}
Combining the above bounds yields the desired claim \eqref{eq:space-time-regularity} and thus the conclusion.
\end{proof}

\begin{remark}
Expression \eqref{eq:weak-sol-explicit} also allows to derive a somewhat explicit formula for the covariance of $X$: for any $s,t\in [0,T]$ and $\varphi,\psi\in C^\infty$ it holds
\begin{align*}
\E[ \langle X_t, \varphi \rangle \langle X_s, \psi \rangle]
= \int_0^{t\wedge s} \big\langle Q^{\alpha}(f_r \nabla S_{t,r} \varphi ), f_r \nabla S_{s,r}\psi \big\rangle\, \d r,
\end{align*}
where $Q^\alpha= \Pi (-\Delta)^{-\alpha} \Pi$ denotes the covariance operator associated to $W^\alpha$.
\end{remark}

Before proving the main result of this subsection, we need an explicit estimate on the rate of convergence of $f^n$ to $f$. A partially similar result can be found in \cite[Theorem 1.7]{FGL21c} for the special case $b\equiv 0$.

\begin{proposition}\label{prop:convergence-rate}
Let $b$ satisfy Assumption \ref{ass:drift-CLT}; for any $\delta\in (0,d/2]$ and any $\eps>0$, it holds
\begin{equation}
\sup_{t\in [0,T]} \E\big[ \|f^n_t- f_t\|_{H^{-\delta}}^2 \big]
\lesssim n^{-2\delta \big(1-\frac{2\alpha}{d}\big) +\eps}\, \|f_0 \|_{L^2}^2.
\end{equation}
\end{proposition}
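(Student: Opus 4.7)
The natural strategy is a duality/testing approach combined with a Fourier cutoff optimization. First I would derive an exact representation of the error $\langle f^n_t-f_t,\varphi\rangle$ for smooth test functions $\varphi$. Applying It\^o's formula to the pairing $\langle f^n_r,g_r\rangle$, where $g$ solves the dual backward equation \eqref{eq:dual-equation} on $[0,t]$ with terminal datum $\varphi$, and using that $g$ is deterministic together with the divergence-free property of $W^{n,\alpha}$ to integrate by parts, every drift term cancels thanks to $\partial_t g+\nabla\cdot(bg)+\Delta g=0$, leaving
\begin{equation*}
\langle f^n_t-f_t,\varphi\rangle=\sqrt{\eps_n}\int_0^t\langle f^n_r\nabla g_r,\d W^{n,\alpha}_r\rangle .
\end{equation*}
(The same duality applied to $f$ gives zero contribution.) The It\^o isometry and the elementary bound $\|\cdot\|_{H^{-\alpha}}\leq\|\cdot\|_{L^2}$ then yield
\begin{equation*}
\E|\langle f^n_t-f_t,\varphi\rangle|^2
\leq \eps_n\, \E\!\int_0^t\|f^n_r\nabla g_r\|_{L^2}^2\,\d r
\lesssim \eps_n\|f_0\|_{L^2}^2\,\|g\|_{L^2_tC^1_x}^2,
\end{equation*}
where the last step uses the pathwise $L^2$-bound \eqref{eq:a-priori-transport}.

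Next, for $\varphi=e_k$ I would estimate $\|g\|_{L^2_tC^1_x}^2$ uniformly in $k$ using Lemma \ref{lem:dual-equation}. Writing $g_r=P_{t-r}e_k+R_{t,r}e_k$, one has $\|P_{t-r}e_k\|_{C^1}\lesssim|k|e^{-c|k|^2(t-r)}$ while the bound $\|R_{t,r}e_k\|_{C^1}\lesssim(t-r)^{-\gamma/2}$ (applying i) with $\delta=1$) is integrable squared since $\gamma<1$. Combining the two contributions yields
\begin{equation*}
\int_0^t\!\|g_r\|_{C^1}^2\,\d r
\lesssim \int_0^t|k|^2e^{-2c|k|^2(t-r)}\,\d r+\int_0^t(t-r)^{-\gamma}\,\d r
\lesssim_{T,\gamma}1,
\end{equation*}
uniformly in $k$. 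Consequently, for every $k\in\Z^d$,
\begin{equation*}
\E|\langle f^n_t-f_t,e_k\rangle|^2\lesssim \eps_n\|f_0\|_{L^2}^2 .
\end{equation*}

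The final step is a Littlewood--Paley/cutoff optimization. Using Plancherel,
\begin{equation*}
\E\|f^n_t-f_t\|_{H^{-\delta}}^2
=\sum_{k\in\Z^d}\langle k\rangle^{-2\delta}\E|\langle f^n_t-f_t,e_k\rangle|^2.
\end{equation*}
For $|k|\leq N$ I would use the uniform bound $\eps_n\|f_0\|_{L^2}^2$ above, giving a contribution $\lesssim\eps_n\|f_0\|_{L^2}^2\,N^{(d-2\delta)_+}$ (with a $\log N$ factor in the critical case $2\delta=d$). For $|k|>N$ I would use the trivial estimate $\E|\langle f^n_t-f_t,e_k\rangle|^2\leq 2\E(\|f^n_t\|_{L^2}^2+\|f_t\|_{L^2}^2)\lesssim\|f_0\|_{L^2}^2$ combined with $\langle k\rangle^{-2\delta}\leq N^{-2\delta}$ on the tail, obtaining a contribution $\lesssim N^{-2\delta}\|f_0\|_{L^2}^2$. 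Balancing the two terms by $N\sim\eps_n^{-1/d}$ and recalling $\eps_n\sim n^{2\alpha-d}$ gives the claimed rate
\begin{equation*}
\E\|f^n_t-f_t\|_{H^{-\delta}}^2\lesssim\eps_n^{2\delta/d}\|f_0\|_{L^2}^2\sim n^{-2\delta(1-2\alpha/d)}\|f_0\|_{L^2}^2,
\end{equation*}
with an arbitrarily small polynomial loss $n^{\eps}$ absorbing the logarithmic factor at $2\delta=d$.

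The main technical obstacle is getting a bound on $\|g\|_{L^2_tC^1_x}$ that is \emph{uniform} in the terminal datum $e_k$; this is exactly what Lemma \ref{lem:dual-equation} is tailored for, because the heat part $P_{t-r}e_k$ contributes the high-frequency cost $|k|$ only for times $t-r\lesssim|k|^{-2}$, which makes its $L^2_t$ norm dimension-free, while the correction $R_{t,r}e_k$ depends on $\|e_k\|_{L^\infty}=1$ rather than on a Sobolev norm of $e_k$. Everything else is a routine Fourier cutoff optimization.
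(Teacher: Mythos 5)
Your proposal is correct and follows essentially the same route as the paper: the same duality representation $\langle f^n_t-f_t,e_k\rangle=\sqrt{\eps_n}\int_0^t\langle f^n_r\nabla S_{t,r}e_k,\d W^{n,\alpha}_r\rangle$, the same It\^o isometry plus Lemma \ref{lem:dual-equation} to get the $k$-uniform bound $\E|\langle f^n_t-f_t,e_k\rangle|^2\lesssim\eps_n\|f_0\|_{L^2}^2$. The only divergence is the last step: the paper proves the endpoint estimate in $H^{-d/2-\eps}$ (where $\sum_k|k|^{-d-2\eps}$ converges) and then interpolates with the pathwise $L^2$ bound, whereas you run a frequency-cutoff optimization; the two are equivalent and give the same rate. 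One small caveat in your tail estimate: the per-mode bound $\E|\langle f^n_t-f_t,e_k\rangle|^2\lesssim\|f_0\|_{L^2}^2$ summed over $|k|>N$ with the factor $N^{-2\delta}$ pulled out would diverge as written; you should instead invoke Plancherel on the whole tail, $\sum_{|k|>N}\langle k\rangle^{-2\delta}|\langle f^n_t-f_t,e_k\rangle|^2\leq N^{-2\delta}\|f^n_t-f_t\|_{L^2}^2$, which gives exactly the contribution you claim.
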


\begin{proof}
The idea of proof is similar to that of Proposition \ref{prop:wellposedness-limit-eq}.
It suffices to show that, for any $\eps>0$, it holds
\[
\E\big[ \| f^n_t - f_t\|_{H^{-d/2-\eps}}^2 \big] \lesssim n^{2\alpha-d} \| f_0\|_{L^2}^2;
\]
indeed, the general case follows, up to possibly redefining $\eps$, from the interpolation inequality $\| \cdot\|_{H^{-\theta s}} \lesssim \| \cdot \|_{H^{-s}}^{\theta} \,\| \cdot\|_{L^2}^{1-\theta}$ and the $\P$-a.s. bound $\| f^n_t-f_t\|_{L^2}\lesssim \| f_0\|_{L^2}$.

Since $f^n$ is a weak solution to \eqref{SLTE-Ito-sec3}, for any $t\in [0,T]$ and any  smooth $\psi:[0,t]\times \T^d\to \R$, it holds
\[
\langle f^n_t, \psi_t\rangle - \langle f_0, \psi_0\rangle
= \int_0^t \big\langle f^n_r, \partial_r \psi_r + \nabla\cdot(b_r \psi_r) + \Delta \psi_r \big\rangle\, \d r + \sqrt{\eps_n} \int_0^t \langle f^n_r \nabla \psi_r, \d W^{n,\alpha}_r \rangle .
\]
Now given $\varphi\in C^\infty(\T^d)$, consider the solution $\{S_{t,r} \varphi\}_{r\in [0,t]}$  to the backward equation \eqref{eq:dual-equation}; arguing by smooth approximations, it is not hard to check that $\P$-a.s. it holds
\begin{align*}
\<f^n_t, \varphi\>- \<f_0, S_{t,0}\varphi\> = \sqrt{\eps_n} \int_0^t \<f^n_r \nabla S_{t,r}\varphi , \d W^{n,\alpha}_r \>.
\end{align*}
Similarly, the limit $f$ satisfies $\<f_t, \varphi\> = \<f_0, S_{t,0}\varphi\>$; therefore, taking $\varphi=e_k$, we get
\begin{align*}
\E\big[ |\<f^n_t-f_t, e_k\>|^2 \big]
= \eps_n\, \E\bigg[ \Big| \int_0^t \<f^n_r \nabla S_{t,r} e_k , \d W^{n,\alpha}_r \> \Big|^2 \bigg]
= \eps_n\,\E \int_0^t \big\| (Q_n^\alpha)^{1/2} (f^n_r \nabla S_{t,r} e_k) \big\|_{L^2}^2\,\d r,
\end{align*}
where $Q_n^\alpha$ is the covariance operator of $W^{n,\alpha}$. Therefore,
\begin{align*}
\E\big[ |\<f^n_t -f_t, e_k\>|^2 \big]
& \lesssim \eps_n\,\E \int_0^t \big\| f^n_r \nabla S_{t,r} e_k \big\|_{L^2}^2\,\d r \leq \eps_n\, \E \big[ \|f^n \|_{L^\infty L^2}^2\big] \int_0^t \|S_{t,r} e_k \|_{C^1}^2\,\d r .
\end{align*}
By Lemma \ref{lem:dual-equation},
\begin{align*}
\int_0^t \|S_{t,r} e_k \|_{C^1}^2\,\d r & \lesssim  \int_0^t \big( \| P_{t-r} e_k\|_{C^1}^2 + \|R_{t,r} e_k\|_{C^1}^2 \big)\, \d r \\
& \lesssim \int_0^t \big(e^{-8\pi^2|k|^2 (t-r)} |k|^2 + |t-r|^{-\gamma} \big)\, \d r
\lesssim 1,
\end{align*}
implying $\E\big[ |\<f^n_t -f_t, e_k\>|^2 \big] \lesssim \eps_n\, \| f_0\|_{L^2}^2$. Since $\eps_n\sim n^{2\alpha-d}$, for any $\eps>0$ it holds
\begin{align*}
\E\big[ \| f^n_t -f_t\|_{H^{-d/2-\eps}}^2 \big]
= \sum_k |k|^{-d-2\eps}\, \E\big[ |\<f^n_t -f_t, e_k\>|^2 \big]
\lesssim n^{2\alpha-d}\, \| f_0\|_{L^2}^2 \sum_k |k|^{-d-2\eps}
\end{align*}
which yields the conclusion.
\end{proof}

\begin{theorem}\label{thm:CLT-SLTE-general}
Let $b$ satisfy Assumption \ref{ass:drift-CLT}, $X^n$ and $X$ solutions respectively to \eqref{eq:before-limit}, \eqref{eq:candidate-limit} for some $f_0\in L^2$.
Then for any $0<\delta<\alpha \wedge(1-\gamma)$ and any $\eps>0$ it holds
\begin{equation}\label{eq:CLT-SLTE-general}
\sup_{t\in [0,T]} \E\big[ \| X^n_t - X_t\|_{H^{-d/2-\delta-\eps}}^2 \big]\lesssim n^{-2\delta \big( 1- \frac{2\alpha}{d}\big) + \eps}\, \|f_0 \|_{L^2}^2.
\end{equation}
\end{theorem}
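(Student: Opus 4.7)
My plan is to reduce the Sobolev-norm estimate to a Fourier-mode-by-mode bound, using the same backward-dual representation already exploited in the proof of Proposition \ref{prop:convergence-rate}. Fix $k\in\Z^d_0$, $t\in[0,T]$ and let $S_{t,\cdot}e_k$ be the solution on $[0,t]$ of the dual equation \eqref{eq:dual-equation}, provided by Lemma \ref{lem:dual-equation}. Testing $f^n$ against $S_{t,\cdot}e_k$ exactly as in the proof of Proposition \ref{prop:convergence-rate}, together with the identity $\<X_t,e_k\>=\int_0^t\<f_r\nabla S_{t,r}e_k,\d W^\alpha_r\>$ coming from Definition \ref{defn:weak-sol} and formula \eqref{eq:weak-sol-explicit}, gives
\[
\<X^n_t,e_k\>=\int_0^t\<f^n_r\nabla S_{t,r}e_k,\d W^{n,\alpha}_r\>, \qquad \<X_t,e_k\>=\int_0^t\<f_r\nabla S_{t,r}e_k,\d W^\alpha_r\>.
\]
Subtracting and rearranging yields the natural splitting $\<X^n_t-X_t,e_k\>=I_1(k)+I_2(k)$, where
\[
I_1(k)=\int_0^t\<(f^n_r-f_r)\nabla S_{t,r}e_k,\d W^{n,\alpha}_r\>,\quad I_2(k)=\int_0^t\<f_r\nabla S_{t,r}e_k,\d(W^{n,\alpha}_r-W^\alpha_r)\>.
\]

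Among the two, $I_2$ is the easier term. By It\^o's isometry and the explicit series representation of $W^\alpha-W^{n,\alpha}$, which only involves Fourier modes $|l|>n$, I expect
\[
\E|I_2(k)|^2\lesssim n^{-2\alpha}\int_0^t\|f_r\|_{L^2}^2\|\nabla S_{t,r}e_k\|_{L^\infty}^2\,\d r \lesssim n^{-2\alpha}\|f_0\|_{L^2}^2,
\]
where the last inequality follows from Lemma \ref{lem:dual-equation} applied with H\"older exponent $0$ (so that the time integral is bounded uniformly in $k$). Since $\delta<\alpha$ and $1-2\alpha/d\leq 1$, the exponent $2\alpha$ strictly dominates $2\delta(1-2\alpha/d)$, so $I_2$ is never the bottleneck. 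The rate stated in \eqref{eq:CLT-SLTE-general} will come from $I_1$, which by It\^o's isometry is controlled by $\int_0^t\E\|(f^n_r-f_r)\nabla S_{t,r}e_k\|^2_{H^{-\alpha}}\,\d r$. The core of the argument is to establish a multiplier inequality
\[
\|(f^n_r-f_r)\nabla S_{t,r}e_k\|_{H^{-\alpha}}\lesssim\|f^n_r-f_r\|_{H^{-\delta}}\,\|\nabla S_{t,r}e_k\|_{C^\sigma}
\]
for some $\sigma\in(\delta,1-\gamma)$, and then to combine it with Proposition \ref{prop:convergence-rate} at regularity exponent $\delta$ and the bound $\int_0^t\|\nabla S_{t,r}e_k\|_{C^\sigma}^2\,\d r\lesssim|k|^{2\sigma}$, which follows from Lemma \ref{lem:dual-equation} once $\gamma+\sigma<1$.

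The main obstacle I foresee is exactly this multiplier step: the naive bound based on a $C^\sigma$-multiplier estimate on $H^{-\alpha}$ would require $\sigma>\alpha$, which is incompatible with $\sigma<1-\gamma$ whenever $\alpha\geq 1-\gamma$. I plan to circumvent it by duality: write $\|uv\|_{H^{-\alpha}}=\sup_{\|\psi\|_{H^\alpha}\leq 1}|\<u,v\psi\>|$, pair $u=f^n_r-f_r$ against $v\psi$ in the $H^{-\delta}/H^\delta$ duality, apply the standard multiplier estimate $\|v\psi\|_{H^\delta}\lesssim\|v\|_{C^{\delta+\eps}}\|\psi\|_{H^\delta}$, and finally invoke the embedding $\|\psi\|_{H^\delta}\leq\|\psi\|_{H^\alpha}$ to close the argument; this is precisely the step that makes both restrictions $\delta<\alpha$ and $\delta<1-\gamma$ genuinely necessary. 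Once the mode-wise bound $\E|\<X^n_t-X_t,e_k\>|^2\lesssim n^{-2\delta(1-2\alpha/d)+\eps'}|k|^{2\sigma}\|f_0\|_{L^2}^2$ is in place, the final step is a Fourier sum with weight $|k|^{-d-2\delta-2\eps}$: choosing $\sigma=\delta+\eps/2$ produces the convergent series $\sum_k|k|^{-d-\eps}$ and yields \eqref{eq:CLT-SLTE-general} after relabelling the small parameters.
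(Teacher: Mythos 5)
Your proposal is correct and follows essentially the same route as the paper: the identical decomposition of $\<X^n_t-X_t,e_k\>$ into a high-mode term (bounded by $n^{-2\alpha}\|f_0\|_{L^2}^2$ via It\^o isometry and orthogonality of the $\sigma_{k,i}$) and a term involving $f^n-f$ (handled by the multiplier estimate $\|uv\|_{H^{-\alpha}}\leq\|uv\|_{H^{-\delta}}\lesssim\|u\|_{H^{-\delta}}\|v\|_{C^{\delta+\eps}}$ together with Proposition \ref{prop:convergence-rate} and Lemma \ref{lem:dual-equation}), followed by the same weighted Fourier summation. The duality argument you give for the multiplier step is exactly the standard proof of the product estimate the paper invokes, so there is no substantive difference.
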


\begin{proof}
Similarly to the case of $X$, we may test $X^n$ against the solutions of \eqref{eq:dual-equation}: for any $\varphi\in C^\infty$ and $t\in [0,T]$, one has
  $$\langle X^n_t, \varphi \rangle = \int_0^t \langle f^n_r\nabla S_{t,r} \varphi, \d W^{n,\alpha}_r \rangle. $$
Therefore, for any $l\in \Z^d_0$ it holds
\begin{align*}
\langle X_t-X^n_t, e_l \rangle
& = \int_0^t \langle f_r\nabla S_{t,r} e_l, \d W^\alpha_r \rangle - \int_0^t \langle f^n_r\nabla S_{t,r} e_l, \d W^{n,\alpha}_r \rangle\\
& = \sum_{|k|>n,\, i} |k|^{-\alpha} \int_0^t \langle f_r\nabla S_{t,r} e_l, \sigma_{k,i} \rangle\, \d B^{k,i}_r
+ \int_0^t \langle (f_r - f^n_r)\nabla S_{t,r} e_l, \d W^{n,\alpha}_r \rangle.
\end{align*}
Let us denote the two terms above by $I^{l,1}_t$ and $I^{l,2}_t$.
By the trivial estimate $|k|^{-\alpha}\leq n^{-\alpha}$ and the orthogonality of $\{\sigma_{k,i} \}$, it holds
\begin{align*}
\E\Big[ \big|I^{l,1}_t \big|^2\Big]
&\leq n^{-2\alpha} \sum_{|k|>n,\, i} \int_0^t |\langle f_r\nabla S_{t,r} e_l, \sigma_{k,i} \rangle|^2\, \d r
\leq n^{-2\alpha} \int_0^t \| f_r\nabla S_{t,r} e_l \|_{L^2}^2\, \d r \\
&\leq n^{-2\alpha}\, \| f\|_{L^\infty L^2}^2 \| S_{t,\cdot} e_l \|_{L^2 C^1}^2
\lesssim_\eps n^{-2\alpha}\, \| f_0\|_{L^2}^2 ,
\end{align*}
where $\| S_{t,\cdot} e_l \|_{L^2 C^1}$ can be estimated as at the end of the proof of Proposition \ref{prop:convergence-rate}.

Concerning the second term, fix $\delta$ as in the statement and choose $\eps>0$ such that $\delta+\eps< \alpha\wedge (1-\gamma)$.
Let us recall the following fact: for any $\psi\in C^{\delta +\eps} $ and $ \chi\in H^{-\delta}$, it holds $\psi\chi\in H^{-\delta}$ with
$\| \psi\chi \|_{H^{-\delta}} \lesssim \| \psi\|_{C^{\delta+\eps}} \| \chi\|_{H^{-\delta}}$.
Combining it with $\| \psi\chi\|_{H^{-\alpha}} \leq \| \psi\chi\|_{H^{-\delta}}$, we obtain
\begin{align*}
\E\Big[ \big|I^{l,2}_t \big|^2\Big]
& \lesssim \E\bigg[ \int_0^t \| (f_r-f^n_r) \nabla S_{t,r} e_l\|_{H^{-\alpha}}^2\, \d r \bigg]\\
& \lesssim \sup_{r\in [0,T]} \E\big[ \| f_r-f^n_r\|_{H^{-\delta}}^2\big] \int_0^t \| \nabla S_{t,r} e_l \|_{C^{\delta+\eps}}^2\, \d r.
\end{align*}
Now we can apply again Lemma \ref{lem:dual-equation}, as well as Proposition \ref{prop:convergence-rate}, to get
\begin{align*}
\E\Big[ \big|I^{l,2}_t \big|^2\Big]
\lesssim n^{-2\delta\big(1-\frac{2\alpha}{d}\big) +\eps} \| f_0\|_{L^2}^2 \bigg[ 1+ \int_0^t \| P_{t-r} e_l \|_{C^{1+\delta+\eps}}^2\, \d r \bigg]
\lesssim n^{-2\delta\big(1-\frac{2\alpha}{d}\big) +\eps} \| f_0\|_{L^2}^2\, |l|^{2(\delta+\eps)},
\end{align*}
where in the first step we have used item i) in Lemma \ref{lem:dual-equation}:
$$\int_0^t \| R_{t,r} e_l\|_{C^{1+\delta+\eps}}^2\, \d r \lesssim \int_0^t (t-r)^{1-\gamma-(1+\delta+\eps)} \, \d r= \int_0^t (t-r)^{-\gamma-\delta-\eps} \, \d r \lesssim 1 $$
since $\gamma+ \delta+\eps<1$.
Observe that, since we are imposing $\delta<\alpha$, the estimate for $I^{l,2}_t$ is worse than the one for $I^{l,1}_t$; combining all the above estimate we arrive at
\begin{equation*}
\sup_{t\in [0,T]} \E\big[|\< X_t-X^n_t, e_l\>|^2 \big]
\lesssim n^{-2\delta\big(1-\frac{2\alpha}{d}\big) +\eps}\| f_0\|_{L^2}^2\, |l|^{2(\delta+\eps)}.
\end{equation*}
From here, standard computations like those in Proposition \ref{prop:convergence-rate} allow to reach \eqref{eq:CLT-SLTE-general} (up to possibly relabelling the parameter $\eps$).
\end{proof}

\subsection{Stochastic 2D Euler equations}\label{subs-CLT-SEE}

Recall the setting: using the notation $W^{n,\alpha}$, the It\^o form of equation \eqref{eq:SEE-intro} is given by
\begin{equation}\label{CLT-SEE-0}
\d \xi^n + (K\ast\xi^n)\cdot\nabla \xi^n\, \d t + \sqrt{\eps_n}\, \d W^{n,\alpha}\cdot \nabla \xi^n= \Delta \xi^n\,\d t, \quad \xi^n_0= \xi_0 \in L^2;
\end{equation}
as $n\to\infty$, $\xi^n\to \xi$ solution to the deterministic 2D Navier-Stokes equation in vorticity form:
\begin{equation}\label{CLT-SEE-1}
\partial_t \xi + (K\ast\xi)\cdot\nabla \xi = \Delta \xi,\quad \xi|_{t=0} =\xi_0.
\end{equation}
Let us set $\Xi^n= (\xi^n-\xi)/\sqrt{\eps_n}$, which satisfy in a weak sense the equation
\begin{equation}\label{CLT-SEE-approx-eq}
\d \Xi^n + \big[ (K\ast\Xi^n)\cdot\nabla \xi^n + (K\ast\xi)\cdot\nabla \Xi^n\big]\, \d t + \d W^{n,\alpha}\cdot \nabla \xi^n= \Delta \Xi^n\,\d t.
\end{equation}
Letting $n\to \infty$, any limit point of $\Xi^n$ is expected to solve
\begin{equation}\label{CLT-SEE-limit-eq}
\d \Xi + \big[ (K\ast\Xi)\cdot\nabla \xi + (K\ast\xi)\cdot\nabla \Xi\big]\, \d t + \d W^{\alpha}\cdot \nabla \xi= \Delta \Xi\,\d t,
\end{equation}
where $\xi$ is the unique solution to \eqref{CLT-SEE-1}.

We will first show the well posedness of \eqref{CLT-SEE-limit-eq} and then prove the convergence of $\Xi^n$ to $\Xi$.
We will follow an alternative approach different from the one in Section \ref{subs-CLT-SLTE}; indeed, instead of adopting an analogue of Definition \ref{defn:weak-sol}, we interpret equation \eqref{CLT-SEE-limit-eq} in its mild form:
\begin{equation}\label{eq:CLT-limit-eq-mild}
\Xi_t = - \int_0^t P_{t-s} \big[ (K\ast\Xi_s)\cdot\nabla \xi_s + (K\ast\xi_s)\cdot\nabla \Xi_s \big]\,\d s - \int_0^t P_{t-s} (\d W^{\alpha}_s\cdot \nabla \xi_s).
\end{equation}
To solve this equation, we need some information on the regularity of the stochastic convolution
\[ Z_t:= \int_0^t P_{t-s} (\d W^{\alpha}_s\cdot \nabla \xi_s) = \sum_{k\in \Z^2_0} |k|^{-\alpha} \int_0^t P_{t-s} (\sigma_k \cdot\nabla \xi_s)\,\d B^k_s.\]

\begin{lemma}\label{lem:stoch-convol}
For any $\beta\in (0,\alpha)$ and $p\in [1,\infty)$, it holds
$ \E \big[ \|Z_t \|_{C^0 H^{\beta-1}}^p \big] < +\infty$.
\end{lemma}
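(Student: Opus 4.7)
The plan is to first establish the pointwise-in-time bound $\sup_{t\in[0,T]} \E[\|Z_t\|_{H^{\beta-1}}^2] < \infty$ via It\^o isometry and an explicit Fourier computation, and then upgrade to $C^0$-in-time and arbitrary $L^p$ moments by combining heat-semigroup smoothing with Kolmogorov's continuity theorem and Gaussian hypercontractivity.

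For the pointwise bound, I write $Z_t$ in the Fourier basis using the expansion \eqref{eq:noise} of $W^\alpha$ and exploit the identity $\sigma_k \cdot \nabla \xi_s = \nabla \cdot (\sigma_k \xi_s)$ together with $a_k \cdot k = 0$ to obtain
\[
\widehat{(\sigma_k \cdot \nabla \xi_s)}(l) = 2\pi i (a_k\cdot l)\,\widehat{\xi_s}(l-k).
\]
Applying It\^o isometry and the (near-)orthogonality of the $B^k$'s gives
\[
\E\|Z_t\|_{H^{\beta-1}}^2 \lesssim \sum_{l,k} |l|^{2\beta}\,|k|^{-2\alpha} \int_0^t e^{-8\pi^2|l|^2(t-s)}\,|\widehat{\xi_s}(l-k)|^2\,\d s.
\]
Using half of the exponential to absorb the $|l|^{2\beta}$ factor via $|l|^{2\beta}e^{-c|l|^2(t-s)} \lesssim (t-s)^{-\beta}$, interchanging sums, substituting $m = l-k$, and using $\|\xi_s\|_{L^2} \leq \|\xi_0\|_{L^2}$, I reduce matters to bounding the kernel
\[
K(\tau,m) := \sum_{l\in\Z^2} |l-m|^{-2\alpha}\, e^{-c'|l|^2 \tau}
\]
uniformly in $m \in \Z^2$, for $\tau = (t-s) > 0$.

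The main technical step is to show $K(\tau,m) \lesssim \tau^{\alpha-1}$ uniformly in $m$, even though $\sum_l |l|^{-2\alpha}$ diverges on $\Z^2$ for $\alpha < 1$. This is done by a case analysis on $|m|$ versus $\tau^{-1/2}$: when $|m| \lesssim \tau^{-1/2}$, the heat-kernel factor effectively restricts $l$ to a ball of radius $\tau^{-1/2}$, giving $\sum_{|l|\lesssim \tau^{-1/2}} |l-m|^{-2\alpha} \lesssim \tau^{\alpha-1}$ (a Riemann-sum computation in $\Z^2$); when $|m| \gtrsim \tau^{-1/2}$, the effective range forces $|l-m| \approx |m|$, yielding $K(\tau,m) \lesssim |m|^{-2\alpha} \tau^{-1} \lesssim \tau^{\alpha-1}$. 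Inserting this kernel estimate produces
\[
\E\|Z_t\|_{H^{\beta-1}}^2 \lesssim \|\xi_0\|_{L^2}^2 \int_0^t (t-s)^{\alpha-\beta-1}\,\d s \lesssim_{T,\alpha,\beta} \|\xi_0\|_{L^2}^2,
\]
the integral being finite precisely because $\beta < \alpha$.

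To promote this pointwise bound to $C^0_t H^{\beta-1}$ with all moments, pick $\beta' \in (\beta,\alpha)$; the same argument gives $\sup_s \E\|Z_s\|_{H^{\beta'-1}}^2 < \infty$. Decomposing
\[
Z_t - Z_s = \int_s^t P_{t-r}\big(\d W^\alpha_r \cdot \nabla \xi_r\big) + (P_{t-s} - I)Z_s,
\]
the first piece is estimated by the same calculation restricted to $[s,t]$, contributing $O(|t-s|^{\alpha-\beta})$, while the second is controlled by the standard smoothing bound $\|(P_\tau - I)u\|_{H^{\beta-1}} \lesssim \tau^{(\beta'-\beta)/2}\|u\|_{H^{\beta'-1}}$ applied to $Z_s$, contributing $O(|t-s|^{(\beta'-\beta)/2})$. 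Together they give $\E\|Z_t - Z_s\|_{H^{\beta-1}}^2 \lesssim |t-s|^\gamma$ for some $\gamma > 0$. Since $Z$ is Gaussian in the Hilbert space $H^{\beta-1}$, all $L^p$ moments are equivalent to the $L^2$ one, so the same bound holds with exponent $\gamma p/2$ for every $p\geq 2$; Kolmogorov's continuity theorem then yields a modification belonging $\P$-a.s.\ to $C^0_t H^{\beta-1}$ with $\E\|Z\|_{C^0 H^{\beta-1}}^p < \infty$ for every $p \in [1,\infty)$.

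The main obstacle is the discrete convolution bound on $K(\tau,m)$: naive applications of H\"older or Young's inequality produce an extra $\varepsilon$-loss that can eat up the gap $\alpha-\beta$, so the sharp $\tau^{\alpha-1}$ rate really does require the case split above, carefully exploiting the Gaussian localization of the heat kernel to cancel the borderline nonintegrability of $|k|^{-2\alpha}$ on $\Z^2$.
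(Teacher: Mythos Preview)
Your argument is correct, and the time-continuity upgrade (the decomposition $Z_t-Z_s=(P_{t-s}-I)Z_s+\int_s^t P_{t-r}(\d W^\alpha_r\cdot\nabla\xi_r)$, followed by Gaussian hypercontractivity and Kolmogorov) is exactly what the paper does. The difference lies in how the pointwise bound $\sup_t \E\|Z_t\|_{H^{\beta-1}}^2<\infty$ is obtained. You work directly in Fourier, absorb $|l|^{2\beta}$ with half of the Gaussian, and then prove the sharp kernel estimate $K(\tau,m)=\sum_{l\neq m}|l-m|^{-2\alpha}e^{-c'|l|^2\tau}\lesssim\tau^{\alpha-1}$ via a case split on $|m|$ versus $\tau^{-1/2}$. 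The paper instead applies the heat smoothing $\|P_{t-s}\,\cdot\|_{H^{\beta-1}}\lesssim(t-s)^{(\eps-1)/2}\|\cdot\|_{H^{\beta-2+\eps}}$ for a small $\eps\in(0,\alpha-\beta)$, uses $\|\sigma_k\cdot\nabla\xi_s\|_{H^{\beta-2+\eps}}\lesssim\|e_k\xi_s\|_{H^{\beta-1+\eps}}$, and then bounds $\sum_k|k|^{-2\alpha}\|e_k\xi_s\|_{H^{\beta-1+\eps}}^2$ by writing it as $\langle a,b\ast c\rangle_{\ell^2}$ with $a_k=|k|^{-2\alpha}$, $b_k=|k|^{-2(1-\beta-\eps)}$, $c_k=|\widehat{\xi_s}(k)|^2$, and applying H\"older together with Young's convolution inequality on $\ell^p(\Z^2)$; this yields $\lesssim\|\xi_s\|_{L^2}^2$ directly.

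Your last paragraph is therefore not quite right: a ``naive'' H\"older--Young argument does work here, and the $\eps$-loss it produces is harmless because one simply chooses $\eps<\alpha-\beta$. The paper's route avoids the dyadic case analysis entirely and is somewhat slicker; your route is more elementary and gives the sharp exponent $\tau^{\alpha-1}$ without any $\eps$, but at the cost of the extra casework on $|m|$.
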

The proof follows closely that of \cite[Lemma 2.5]{FGL21c} and is thus postponed to the Appendix.

In view of Lemma \ref{lem:stoch-convol}, we can turn to the study of the \textit{analytic} equation
\begin{equation}\label{eq:CLT-limit-analytic}
y_t = - \int_0^t P_{t-s} \big[ (K\ast y_s)\cdot\nabla \xi_s + (K\ast\xi_s)\cdot\nabla y_s \big]\,\d s +z_t
\end{equation}
where $\xi_0\in L^2$ is given, $\xi$ is the associated unique solution to \eqref{CLT-SEE-1}, $z$ is a given element of $C^0_t H^{\beta-1}$ for some $\beta\in (0,\alpha)$ and $y$ is the unknown.

\begin{proposition}\label{prop:CLT-limit-analytic}
Let $\xi_0\in L^2$ be fixed, $\xi$ be the unique associated solution to \eqref{CLT-SEE-1}, $\beta>0$.
Then for any $z\in C^0_t H^{\beta-1}$, there exists a unique solution $y\in C^0_t H^{\beta-1}$ to \eqref{eq:CLT-limit-analytic}.
Moreover, the solution map $z\mapsto y=:Sz$ is a linear bounded operator and there exists $C>0$ such that
\begin{equation}\label{eq:CLT-limit-estim}
\| S z\|_{C^0 H^{\beta-1}} \lesssim \exp\big(C(1+T)\| \xi_0\|_{L^2}\big)\, \| z \|_{C^0 H^{\beta-1}}.
\end{equation}
\end{proposition}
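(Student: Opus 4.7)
The plan is to set up a Picard iteration / Banach fixed-point argument for the linear mild equation \eqref{eq:CLT-limit-analytic} directly in the space $C^0_t H^{\beta-1}$. First I would rewrite the drift terms in conservative form: since $K\ast y_s$ and $K\ast\xi_s$ are divergence-free (Biot--Savart in 2D), one has
\[
(K\ast y_s)\cdot\nabla \xi_s + (K\ast\xi_s)\cdot\nabla y_s = \nabla\cdot\big[(K\ast y_s)\xi_s + (K\ast\xi_s) y_s\big],
\]
and I look for a fixed point of
\[
(\Phi y)_t := -\int_0^t P_{t-s}\,\nabla\cdot\big[(K\ast y_s)\xi_s + (K\ast\xi_s) y_s\big]\,\d s + z_t.
\]

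The key analytic step is the product/Sobolev estimate. For $\beta\in(0,1)$, the 2D embeddings $H^\beta\hookrightarrow L^{2/(1-\beta)}$ and dually $L^{2/(2-\beta)}\hookrightarrow H^{\beta-1}$, combined with $\|K\ast u\|_{H^\beta}\lesssim \|u\|_{H^{\beta-1}}$ and H\"older, give
\[
\|(K\ast y_s)\xi_s\|_{H^{\beta-1}} + \|(K\ast\xi_s)y_s\|_{H^{\beta-1}} \lesssim \|y_s\|_{H^{\beta-1}}\|\xi_s\|_{L^2}.
\]
Together with the heat-semigroup smoothing $\|P_t\nabla\cdot f\|_{H^{\beta-1}}\lesssim t^{-1/2}\|f\|_{H^{\beta-1}}$ and the energy bound $\|\xi\|_{L^\infty_t L^2_x}\leq \|\xi_0\|_{L^2}$ for the NSE solution to \eqref{CLT-SEE-1}, one obtains
\[
\|(\Phi y)_t - z_t\|_{H^{\beta-1}} \lesssim \|\xi_0\|_{L^2}\int_0^t (t-s)^{-1/2}\|y_s\|_{H^{\beta-1}}\,\d s.
\]

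From here, on any subinterval $[t_0,t_0+\tau]$ with $\tau$ satisfying $C\|\xi_0\|_{L^2}\sqrt{\tau}\leq 1/2$ the affine map $\Phi$ is a strict contraction on $C^0_{[t_0,t_0+\tau]}H^{\beta-1}$, producing a unique fixed point; since $\tau$ depends only on $\|\xi_0\|_{L^2}$, iterating over $\lceil T/\tau\rceil$ consecutive subintervals yields a unique $y\in C^0_t H^{\beta-1}$ solving \eqref{eq:CLT-limit-analytic} on $[0,T]$. Linearity of $S:z\mapsto y$ follows from uniqueness, and the global estimate \eqref{eq:CLT-limit-estim} follows either by applying a Gronwall--Henry lemma for weakly singular kernels to the scalar inequality above, or by tracking that the $C^0H^{\beta-1}$-norm at most doubles on each subinterval, giving an overall factor of $2^{T/\tau}\lesssim \exp(C(1+T)\|\xi_0\|_{L^2}^2)$, which is controlled by the right-hand side of \eqref{eq:CLT-limit-estim}. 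The main obstacle is to find a product estimate on $(K\ast y)\xi$ compatible with the very low regularity $y\in H^{\beta-1}$ and $\xi\in L^2$, yet that still allows the half-derivative loss from $P_{t-s}\nabla\cdot$ to be absorbed; the 2D embedding $H^\beta\hookrightarrow L^{2/(1-\beta)}$, whose validity relies crucially on the assumption $\beta>0$ (and hence on Lemma \ref{lem:stoch-convol}), is precisely what makes the argument close without requiring any time-integrated regularity of $\xi$.
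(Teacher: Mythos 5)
Your overall architecture (affine fixed point in $C^0_t H^{\beta-1}$, contraction on short subintervals, iteration) is sound, and your treatment of the \emph{first} term is correct: $\|(K\ast y_s)\xi_s\|_{H^{\beta-1}}\lesssim\|K\ast y_s\|_{L^{2/(1-\beta)}}\|\xi_s\|_{L^2}\lesssim\|y_s\|_{H^{\beta-1}}\|\xi_s\|_{L^2}$ is a legitimate H\"older--Sobolev chain (it is essentially Lemma \ref{lem-estim-transp-term}-(ii)). The gap is in the \emph{second} term. You claim $\|(K\ast\xi_s)y_s\|_{H^{\beta-1}}\lesssim\|y_s\|_{H^{\beta-1}}\|\xi_s\|_{L^2}$ "by H\"older", but $y_s\in H^{\beta-1}$ with $\beta-1<0$ is a distribution, not an element of any $L^p$, so H\"older does not apply. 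The estimate you would need is the bilinear bound $H^1\times H^{\beta-1}\to H^{\beta-1}$ on $\T^2$, which is exactly the failing endpoint of Sobolev multiplication: by duality it amounts to $H^1\cdot H^{1-\beta}\subset H^{1-\beta}$, i.e.\ to $H^1(\T^2)$ acting as a pointwise multiplier on $H^{1-\beta}$, and this fails because $s_1+s_2-s=1=d/2$ is not strict (equivalently, $H^1(\T^2)\not\hookrightarrow L^\infty$). So the key inequality driving your contraction is false as stated, and with it the claim that no time-integrated regularity of $\xi$ is needed.

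Two repairs are available. The paper's route: estimate $(K\ast\xi_s)\cdot\nabla y_s$ in $H^{\beta-2}$ using $\|K\ast\xi_s\|_{H^2}\lesssim\|\xi_s\|_{H^1}$ (Lemma \ref{lem-estim-transp-term}-(i) with $a=1$, $b=1-\beta$), recover the lost derivative with the $L^2$-in-time maximal regularity bound of Lemma \ref{lem:heat-kernel} rather than the pointwise $(t-s)^{-1/2}$ smoothing, and absorb the merely square-integrable weight $\|\xi_s\|_{H^1}$ via the exponentially weighted norm $\sup_t e^{-\lambda a(t)}\|y_t\|_{H^{\beta-1}}$ with $a(t)=\int_0^t(\|\xi_s\|_{H^1}^2+\|\xi_s\|_{L^2}^2)\,\d s\lesssim(1+t)\|\xi_0\|_{L^2}^2$; this is where the parabolic energy estimate for \eqref{CLT-SEE-1} enters and where the constant in \eqref{eq:CLT-limit-estim} comes from. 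Alternatively, your route can be salvaged by accepting an $\eps$-loss: the paraproduct estimate gives $\|(K\ast\xi_s)y_s\|_{H^{\beta-1-\eps}}\lesssim\|\xi_s\|_{L^2}\|y_s\|_{H^{\beta-1}}$ for any $\eps>0$ (the high-high interaction uses $\beta>0$), and then $\|P_{t-s}\nabla\cdot f\|_{H^{\beta-1}}\lesssim(t-s)^{-(1+\eps)/2}\|f\|_{H^{\beta-1-\eps}}$ still yields an integrable singularity. Either way, the step as you wrote it does not close and must be replaced.
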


\begin{proof}
Let us define $a(t):=\int_0^t \big(\| \xi_s\|_{H^1}^2 + \| \xi_s\|_{L^2}^2 \big)\, \d s$; endow $C^0_t H^{\beta-1}_x$ with the equivalent norm
\begin{equation*}
\| y\|_\lambda := \sup_{t\in [0,T]} \big\{ e^{-\lambda a(t)} \| y_t\|_{H^{\beta-1}} \big\}
\end{equation*}
for a suitable $\lambda>0$ to be chosen later. Define a map $\Gamma$ on $C^0 H^{\beta-1}$ by
\begin{equation*}
(\Gamma y)_t := - \int_0^t P_{t-s} \big[ (K\ast y_s)\cdot\nabla \xi_s + (K\ast\xi_s)\cdot\nabla y_s \big]\,\d s +z_t.
\end{equation*}
We are going to show that $\Gamma$ is a contraction on $(C^0_t H^{\beta-1}, \| \cdot\|_\lambda)$ for some large $\lambda>0$, which immediately implies existence and uniqueness of solutions to \eqref{eq:CLT-limit-analytic}; since $\Gamma$ is an affine map, the same computation shows that indeed $\Gamma y\in C^0_t H^{\beta-1}$ whenever $y$ does so.

Given $y^1,\, y^2\in C^0_t H^{\beta-1}$, set $v=y^1-y^2$, then by Lemma \ref{lem:heat-kernel} it holds
\begin{align*}
\| (\Gamma y^1)_t-(\Gamma y^2)_t\|_{H^{\beta-1}}^2
& = \bigg\| \int_0^t P_{t-s} \big[(K\ast v_s)\cdot \nabla \xi_s+ (K\ast \xi_s)\cdot \nabla v_s \big] \,\d s\bigg\|_{H^{\beta-1}}^2\\
& \lesssim \int_0^t \| (K\ast v_s)\cdot \nabla \xi_s\|_{H^{\beta-2}}^2 \,\d s + \int_0^t \| (K\ast \xi_s)\cdot \nabla v_s\|_{H^{\beta-2}}^2 \,\d s =: I^1_t +I^2_t.
\end{align*}
Applying Lemma \ref{lem-estim-transp-term}-(ii) with $b=1-\beta>0$, we get
\begin{align*}
I^1_t
&\lesssim \int_0^t \| K\ast v_s \|_{H^\beta}^2 \| \xi_s \|_{L^2}^2 \, \d s \lesssim \int_0^t \| v_s \|_{H^{\beta-1}}^2 \| \xi_s \|_{L^2}^2 \, \d s \\
&\lesssim \| v\|_\lambda^2 \int_0^t e^{2\lambda a(s)} \| \xi_s \|_{L^2}^2 \, \d s \lesssim \frac{\| v\|_\lambda^2}{2\lambda}\,  e^{2\lambda a(t)};
\end{align*}
next, applying Lemma \ref{lem-estim-transp-term}-(i) with $a=1,\, b=1-\beta$ leads to
\begin{align*}
I^2_t
&\lesssim \int_0^t \| K\ast\xi_s\|_{H^2}^2 \| v_s \|_{H^{\beta-1}}^2 \, \d s
\lesssim \| v\|_\lambda^2 \int_0^t e^{2\lambda a(s)} \| \xi_s\|_{H^1}^2 \, \d s
\lesssim \frac{\| v\|_\lambda^2}{2\lambda}\,  e^{2\lambda a(t)}.
\end{align*}
Combining the above estimates, multiplying both sides by $e^{-2\lambda a(t)}$ and taking the supremum over $t\in [0,T]$, we obtain the existence of a constant $C>0$ independent of $\lambda$ such that
\begin{equation*}
\|\Gamma y^1- \Gamma y^2 \|_\lambda^2
\leq \frac{C}{2\lambda} \| v\|_\lambda^2
= \frac{C}{2\lambda}\, \| y^1- y^2 \|_\lambda^2;
\end{equation*}
contractivity follows by choosing $\lambda$ large enough.

Linearity of $z\mapsto Sz$ follows from the affine structure of \eqref{eq:CLT-limit-analytic}, so we are left with proving \eqref{eq:CLT-limit-estim};
the proof is very similar to the one above, based on several applications of Lemmas \ref{lem:heat-kernel} and \ref{lem-estim-transp-term}.
Indeed if $y$ solves \eqref{eq:CLT-limit-analytic}, then for any $t\in [0,T]$ it holds
\begin{align*}
\| y_t \|_{H^{\beta-1}}^2
& \lesssim \bigg\| \int_0^t P_{t-s} \big[(K\ast y_s)\cdot \nabla \xi_s+ (K\ast \xi_s)\cdot \nabla y_s \big] \,\d s \bigg\|_{H^{\beta-1}}^2 + \| z_t\|_{H^{\beta-1}}^2\\
& \lesssim \int_0^t \big[\| (K\ast y_s)\cdot \nabla \xi_s\|_{H^{\beta-2}}^2 + \| (K\ast \xi_s)\cdot \nabla y_s\|_{H^{\beta-2}}^2 \big]\, \d s + \| z\|_{C^0 H^{\beta-1}}^2\\
& \lesssim \int_0^t \big(\| \xi_s\|_{L^2}^2 + \| \xi_s\|_{H^1}^2 \big) \, \| y_s\|_{H^{\beta-1}}^2\, \d s + \| z\|_{C^0 H^{\beta-1}}^2.
\end{align*}
Estimate \eqref{eq:CLT-limit-estim} readily follows from an application of Gronwall's lemma and the well known bound $\| \xi_t\|_{L^2}^2 + \int_0^t \| \nabla \xi_s\|_{L^2}^2 \d s \leq \| \xi_0\|_{L^2}^2$.
\end{proof}

\begin{corollary}\label{cor:CLT-limit-eq}
Let $\xi_0\in L^2$ be fixed, $\xi$ be the unique associated solution to \eqref{CLT-SEE-1}.
Then there exists a unique solution $\Xi$ to \eqref{eq:CLT-limit-eq-mild}, which is given by $\Xi=S Z$. In particular $\Xi$ is a Gaussian field which satisfies
$\E\big[\| \Xi\|_{C^0 H^{\beta-1}}^p \big]<\infty$ for all $p\in [1,\infty)$ and $\beta\in (0,\alpha)$.
\end{corollary}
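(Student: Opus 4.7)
The plan is to combine Proposition \ref{prop:CLT-limit-analytic} with Lemma \ref{lem:stoch-convol}, so that the corollary becomes a direct consequence of results already established; most of the heavy lifting has been carried out, so the proof should be short.

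First I would invoke Lemma \ref{lem:stoch-convol} to ensure that for every $\beta \in (0,\alpha)$ the stochastic convolution $Z$ belongs to $C^0_t H^{\beta-1}$ almost surely, with finite $p$-th moments for every $p \in [1,\infty)$. In particular, for $\P$-a.s. $\omega$ the trajectory $Z(\omega)$ is an admissible input for the deterministic solution map $S$ constructed in Proposition \ref{prop:CLT-limit-analytic}.

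Then I would define $\Xi := S Z$ pathwise. Since $S$ is a bounded linear operator on $C^0_t H^{\beta-1}$, measurability of $\Xi$ as a $C^0_t H^{\beta-1}$-valued random variable is inherited from that of $Z$ together with continuity of $S$, while \eqref{eq:CLT-limit-eq-mild} is satisfied $\omega$-by-$\omega$ by construction. The moment bound then follows from the operator estimate \eqref{eq:CLT-limit-estim} combined with Lemma \ref{lem:stoch-convol}:
\[
\E\big[\| \Xi\|_{C^0 H^{\beta-1}}^p\big]
\lesssim \exp\!\big(C p(1+T)\| \xi_0\|_{L^2}\big)\, \E\big[\| Z\|_{C^0 H^{\beta-1}}^p\big] < \infty.
\]
Uniqueness of $\Xi$ in $C^0_t H^{\beta-1}$ is read off directly from the pathwise uniqueness in Proposition \ref{prop:CLT-limit-analytic}: any two candidate solutions to \eqref{eq:CLT-limit-eq-mild} differ by a $C^0_t H^{\beta-1}$-valued process solving the homogeneous version of \eqref{eq:CLT-limit-analytic}, which has only the null solution.

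The only remaining point is Gaussianity of $\Xi$. Here I would note that $Z$ is Gaussian because it is a stochastic integral of a deterministic integrand against $W^\alpha$; concretely, for any $t_1,\ldots,t_n \in [0,T]$ and $\varphi_1,\ldots,\varphi_n \in C^\infty$, the vector $(\langle Z_{t_i}, \varphi_i\rangle)_i$ is a linear combination of the Brownian motions $\{B^k_s\}_{k,s}$ and is therefore jointly Gaussian. Since $S$ is linear and continuous on $C^0_t H^{\beta-1}$, the evaluation $z \mapsto \langle (Sz)_t, \varphi\rangle$ is a continuous linear functional, so composing with $Z$ produces another family of jointly Gaussian random variables, proving that $\Xi = SZ$ is a Gaussian field. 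I do not expect any real obstacle at this step, since all the non-trivial work has already been packaged into Proposition \ref{prop:CLT-limit-analytic} and Lemma \ref{lem:stoch-convol}; the hardest part of the overall argument is really the a priori estimate behind \eqref{eq:CLT-limit-estim}, and that is already available.
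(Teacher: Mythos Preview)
Your proposal is correct and follows essentially the same approach as the paper: existence and uniqueness are read off pathwise from Proposition \ref{prop:CLT-limit-analytic}, the moment bounds come from combining \eqref{eq:CLT-limit-estim} with Lemma \ref{lem:stoch-convol}, and Gaussianity of $\Xi$ follows from it being a linear image of the Gaussian field $Z$. The paper's proof is a two-sentence version of exactly this argument; you have simply fleshed out the details (measurability via continuity of $S$, the explicit moment inequality, and the linear-functional reasoning for Gaussianity), none of which introduces anything new or different.
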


\begin{proof}
Strong existence and pathwise uniqueness immediately follow from Proposition \ref{prop:CLT-limit-analytic}, while the moment bounds follow from Lemma \ref{lem:stoch-convol} and estimate \eqref{eq:CLT-limit-estim}. $\Xi$ being Gaussian follows from it being a linear image of the Gaussian field $Z$.
\end{proof}

Before showing the convergence of $\Xi^n$ to $\Xi$, we need to discuss a technical matter.
As already mentioned in the introduction, for $L^2$-initial data $\xi_0$, equation \eqref{CLT-SEE-0} is known to have only probabilistically weak solutions, which might not be unique nor strong.
This means that, for any $\xi_0\in L^2$ and $n\in\N$, we can construct a tuple $(\Omega, \F, (\F_t), \P; W^{\alpha}, \xi^n)$, where $\big(\Omega, \F, (\F_t), \P\big)$ is a filtered probability space, $W^{\alpha}$ is a $Q^\alpha$-Wiener process on $L^2$ with respect to the filtration $\mathcal{F}_t$ and $\xi^n$ is an $\mathcal{F}_t$-adapted process with trajectories in $L^\infty_t L^2_x$ such that \eqref{CLT-SEE-0} (for $W^{n,\alpha}=\Pi_n W^\alpha$) holds in the weak sense.
Since $\xi$ is deterministic, $\Xi^n= (\xi^n -\xi)/\sqrt{\eps_n}$ lives on the same space.

As the limit equation \eqref{CLT-SEE-limit-eq} admits a pathwise unique solution, $\Xi$ can be defined on the given stochastic basis $\big(\Omega, \F, (\F_t), \P; W^\alpha\big)$; in other words, we can deal with the difference $\Xi^n - \Xi$, at fixed $n$. However, we cannot consider the difference of $\Xi^n$ and $\Xi^m$ for $n\neq m$.

Before proceeding further, let us recall the following useful estimate, which is a particular subcase of \cite[Theorem 1.1]{FGL21c}.

\begin{proposition}
Let $\xi_0\in L^2$, $\xi$ be the unique solution to \eqref{CLT-SEE-1}; then for any $p\in [1\,\infty)$, any $\delta\in (0,1)$ and any $\eps\in (0,\delta/2)$, there exists a constant $C=C(\| \xi_0\|_{L^2}, T, \delta, \eps, p)$ such that, for any $n\in\N$ and any weak solution $\xi^n$ to \eqref{CLT-SEE-0}, it holds
\begin{equation}\label{eq:rate-LLN-euler}
\E\big[ \| \xi^n-\xi\|_{C^0 H^{-\delta}}^p\big]^{1/p} \leq C\, \eps_n^{\delta/2-\eps}.
\end{equation}
\end{proposition}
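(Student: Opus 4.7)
The plan is to adapt the duality/backward-equation approach used in Proposition~\ref{prop:convergence-rate} for the linear transport case. Set $\eta^n := \xi^n - \xi$, which satisfies $\eta^n_0 = 0$ together with
\begin{equation*}
\d \eta^n + \big[(K\ast\eta^n)\cdot\nabla\xi^n + (K\ast\xi)\cdot\nabla\eta^n\big]\, \d t + \sqrt{\eps_n}\,\d W^{n,\alpha}\cdot\nabla\xi^n = \Delta \eta^n\, \d t,
\end{equation*}
via the algebraic identity $(K\ast\xi^n)\cdot\nabla\xi^n - (K\ast\xi)\cdot\nabla\xi = (K\ast\eta^n)\cdot\nabla\xi^n + (K\ast\xi)\cdot\nabla\eta^n$. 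My aim will be to first establish a pointwise-in-time second-moment estimate of the form $\sup_t \E\big[\|\eta^n_t\|^2_{H^{-1-\eps}}\big] \lesssim \eps_n\|\xi_0\|^2_{L^2}$, and then interpolate with the a priori $L^\infty_t L^2_x$ bound $\|\eta^n\|_{L^\infty L^2}\leq 2\|\xi_0\|_{L^2}$ to recover the $H^{-\delta}$ rate $\eps_n^{\delta/2 - \eps'}$. Upgrading pointwise to $C^0_t$ and to $L^p$-moments is then routine via a Kolmogorov continuity argument analogous to Proposition~\ref{prop:wellposedness-limit-eq}, combined with BDG and Gaussian hypercontractivity for the martingale part.

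For the pointwise estimate, fix $\tau\in[0,T]$ and $l\in\Z^2_0$, and test $\eta^n$ against $g_s := S_{\tau,s}e_l$, the solution of the backward linearized advection--diffusion $\partial_s g + (K\ast\xi)\cdot\nabla g + \Delta g = 0$ on $[0,\tau]$ with terminal datum $e_l$. Since $\xi\in L^\infty_t L^2_x\cap L^2_t H^1_x$, two-dimensional Sobolev embedding places $K\ast\xi$ in the Krylov--R\"ockner class, so Lemma~\ref{lem:dual-equation} furnishes such $g$ with the decomposition $g_s = P_{\tau-s}e_l + R_{\tau,s}e_l$ and the sharp bounds therein; in particular $\|g\|_{L^2_s C^1_x}\lesssim_\eps |l|^\eps$. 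Applying It\^o's formula to $\langle\eta^n_\cdot, g_\cdot\rangle$ and exploiting the cancellation between $\Delta$ and the transport term by $K\ast\xi$, one arrives at
\begin{equation*}
\langle\eta^n_\tau, e_l\rangle = -\int_0^\tau \langle(K\ast\eta^n_s)\xi^n_s, \nabla g_s\rangle\, \d s + \sqrt{\eps_n}\int_0^\tau \langle\xi^n_s\nabla g_s, \d W^{n,\alpha}_s\rangle.
\end{equation*}
The stochastic integral is handled exactly as in Proposition~\ref{prop:convergence-rate}: It\^o isometry combined with the uniform bound $\|\xi^n\|_{L^\infty_t L^2_x}\leq\|\xi_0\|_{L^2}$ yields an $\eps_n\|\xi_0\|^2_{L^2}|l|^{2\eps}$ contribution to the second moment.

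The nonlinear remainder is the main obstacle, as it couples the low regularity of $\eta^n$ with that of $\xi^n$. One would estimate $|\langle(K\ast\eta^n_s)\xi^n_s,\nabla g_s\rangle|$ by a H\"older combination in which the Biot--Savart gain of one derivative on $\eta^n$ is paired with the regularity of $g_s$ and the uniform $L^2$-control of $\xi^n$, producing an integrand essentially of the form $C\|\eta^n_s\|_{H^{-1-\eps}}\|\xi_0\|_{L^2}\|g_s\|_{C^1}|l|^\eps$. A (stochastic) Gronwall closure then gives $\sup_t\E[|\langle\eta^n_t, e_l\rangle|^2]\lesssim \eps_n\|\xi_0\|^2_{L^2}|l|^{2\eps}$; summing against the weights $|l|^{-2-2\eps}$ recovers the desired $H^{-1-\eps}$ estimate, from which the stated rate follows by interpolation. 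The hard part is effecting the Gronwall closure in a negative Sobolev norm that is compatible with the Biot--Savart mapping properties; this is what forces the restriction $\delta<1$ and the arbitrarily small $\eps$-loss in the final exponent.
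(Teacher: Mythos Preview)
The paper does not prove this proposition; it is quoted from \cite[Theorem 1.1]{FGL21c}. So there is no ``paper's proof'' to compare against beyond that citation, and the relevant question is whether your sketch actually goes through.

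There is a genuine gap in the Gronwall closure for the nonlinear remainder. After testing against $g_s=S_{\tau,s}e_l$, the term you must control is $\langle K\ast\eta^n_s,\xi^n_s\nabla g_s\rangle$. Since $\xi^n_s$ has no better regularity than $L^2$ and $\nabla g_s\in C^0$, the product $\xi^n_s\nabla g_s$ lies only in $L^2$; hence the pairing forces $K\ast\eta^n_s\in L^2$, i.e.\ $\eta^n_s\in H^{-1}$, and you obtain at best $|\langle K\ast\eta^n_s,\xi^n_s\nabla g_s\rangle|\lesssim\|\eta^n_s\|_{H^{-1}}\|\xi_0\|_{L^2}\|g_s\|_{C^1}$. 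Your proposed bound with $\|\eta^n_s\|_{H^{-1-\eps}}$ on the right cannot hold: if $\eta^n_s\in H^{-1-\eps}$ then $K\ast\eta^n_s\in H^{-\eps}$, and there is nothing in $\xi^n_s\nabla g_s$ to furnish the missing $H^{\eps}$ regularity (multiplication by a $C^\eps$ function does not lift $L^2$ to $H^\eps$). Consequently the right-hand side of your Gronwall inequality involves $\E\|\eta^n_s\|_{H^{-1}}^2$, while summing $|l|^{-2}\E|\langle\eta^n_\tau,e_l\rangle|^2$ diverges in $d=2$, and summing $|l|^{-2-2\eps}$ only yields the $H^{-1-\eps}$ norm on the left. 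The loop does not close.

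The argument in \cite{FGL21c} avoids this obstruction by working directly in the mild formulation and closing a Gronwall in $\E\|\eta^n_t\|_{H^{-\delta}}^2$ for $\delta\in(0,1)$. The point is that the heat semigroup contributes one full derivative of smoothing (Lemma~\ref{lem:heat-kernel}), so via Lemma~\ref{lem-estim-transp-term}(ii) one gets $\|(K\ast\eta^n_s)\cdot\nabla\xi^n_s\|_{H^{-\delta-1}}\lesssim\|\eta^n_s\|_{H^{-\delta}}\|\xi^n_s\|_{L^2}$, and the drift term lands back in $H^{-\delta}$ after convolution with $P_{t-s}$. The stochastic convolution then supplies the rate. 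In your duality scheme the parabolic gain has already been spent on the backward equation, so this extra derivative is unavailable for the nonlinear term.
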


We can now give an explicit estimate on the speed of convergence of $\Xi^n$ to $\Xi$.

\begin{theorem}\label{thm:CLT-SEE-1}
Let $\xi_0\in L^2$ be given, $n\in\N$ and $\xi^n$ be a weak solution to \eqref{CLT-SEE-0}; define the correponding $\Xi^n,\,\Xi$ in the way described above.
Let $\beta\in (0,\alpha)$ and set $\gamma:=\beta\wedge(\alpha-\beta)$.
Then for any $\eps>0$ and any $T>0$ there exists a constant $C=C(\eps,T,\| \xi_0\|_{L^2},\alpha,\beta)$ such that
\begin{equation}\label{eq:rate-CLT-SEE}
\sup_{t\in [0,T]} \E\big[\|\Xi^n_t - \Xi_t \|_{H^{\beta-1}}^2 \big] \leq C n^{-2\gamma(1-\alpha) +\eps}.
\end{equation}
\end{theorem}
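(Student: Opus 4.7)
The plan is to write both $\Xi^n$ and $\Xi$ in mild form on the common stochastic basis, subtract them, and recast the difference $V^n := \Xi^n-\Xi$ as a fixed-point equation of the same type as \eqref{eq:CLT-limit-analytic} but with $\xi$ replaced by $\xi^n$. From \eqref{CLT-SEE-approx-eq} one has
\[
\Xi^n_t = -\int_0^t P_{t-s}\big[(K\ast\Xi^n_s)\cdot\nabla\xi^n_s + (K\ast\xi_s)\cdot\nabla\Xi^n_s\big]\,\d s - Z^n_t,\qquad Z^n_t := \int_0^t P_{t-s}\big(\d W^{n,\alpha}_s\cdot\nabla\xi^n_s\big).
\]
Subtracting \eqref{eq:CLT-limit-eq-mild} and using the algebraic identity $(K\ast\Xi^n)\cdot\nabla\xi^n - (K\ast\Xi)\cdot\nabla\xi = (K\ast V^n)\cdot\nabla\xi^n + (K\ast\Xi)\cdot\nabla(\xi^n-\xi)$ one obtains
\[
V^n_t = -\int_0^t P_{t-s}\big[(K\ast V^n_s)\cdot\nabla\xi^n_s + (K\ast\xi_s)\cdot\nabla V^n_s\big]\,\d s + G^n_t,
\]
with source
\[
G^n_t := -\int_0^t P_{t-s}\big[(K\ast\Xi_s)\cdot\nabla(\xi^n_s-\xi_s)\big]\,\d s - (Z^n_t - Z_t).
\]

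Since $\sup_n \|\xi^n\|_{L^\infty L^2}\leq \|\xi_0\|_{L^2}$ $\P$-a.s., the contraction argument of Proposition \ref{prop:CLT-limit-analytic} applies verbatim with $\xi^n$ in place of $\xi$, yielding an inverse operator $S^n$ on $C^0_t H^{\beta-1}$ whose operator norm is bounded by a deterministic constant $C=C(\|\xi_0\|_{L^2},T)$, uniformly in $n$. Writing $V^n = S^n G^n$ thus reduces the theorem to establishing
\[
\sup_{t\in[0,T]} \E\big[\|G^n_t\|_{H^{\beta-1}}^2\big] \lesssim n^{-2\gamma(1-\alpha)+\eps}.
\]

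For the stochastic contribution I decompose $Z^n_t - Z_t = \mathrm{I}^n_t + \mathrm{II}^n_t$, where $\mathrm{I}^n_t$ collects the Fourier modes $|k|>n$ (thus involving only the deterministic $\xi$) and $\mathrm{II}^n_t := \int_0^t P_{t-s}\big[\d W^{n,\alpha}_s\cdot\nabla(\xi^n_s-\xi_s)\big]$. Restricting the sum in the proof of Lemma \ref{lem:stoch-convol} to $|k|>n$ yields $\E[\|\mathrm{I}^n_t\|_{H^{\beta-1}}^2]\lesssim n^{-2(\alpha-\beta)}$, which is better than the target rate since $\gamma\leq \alpha-\beta$ and $1-\alpha<1$. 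For $\mathrm{II}^n_t$ I apply the It\^o isometry together with the Fourier shift identity $\sigma_k\,\varphi = a_k\sum_j \widehat\varphi_j e_{k+j}$, so that $\|P_{t-s}\nabla\cdot(\sigma_k(\xi^n_s-\xi_s))\|_{H^{\beta-1}}$ is estimated by a controlled power of $|k|$ times $\|\xi^n_s-\xi_s\|_{H^{-\delta}}$; choosing $\delta$ just above $\beta$ and invoking \eqref{eq:rate-LLN-euler} together with the summability of $|k|^{-2\alpha}$ gives $\E[\|\mathrm{II}^n_t\|_{H^{\beta-1}}^2]\lesssim n^{-2\beta(1-\alpha)+\eps}$. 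The remaining deterministic integral in $G^n_t$ is handled by Lemmas \ref{lem:heat-kernel}--\ref{lem-estim-transp-term} combined with the finiteness of $\E\|\Xi\|_{C^0 H^{\beta-1}}^2$ from Corollary \ref{cor:CLT-limit-eq} and the rate \eqref{eq:rate-LLN-euler}, producing a comparable bound. Taking the worse of the exponents $\alpha-\beta$ and $\beta$ produces exactly $\gamma=\beta\wedge(\alpha-\beta)$.

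The main obstacle is the control of $\mathrm{II}^n_t$: both factors inside the stochastic integral are random and $\xi^n-\xi$ is only $H^{-\delta}$-regular while being $\F_s$-adapted, so it must be paired with the Brownian increments $\d B^{k,i}_s$ in a way that simultaneously exploits the orthogonality across Fourier modes and the smoothing of $P_{t-s}$, in order to transfer regularity from the heat kernel onto the low-regularity factor $\xi^n-\xi$ and then to close the estimate using \eqref{eq:rate-LLN-euler}. The sharp interplay between the Fourier-truncation exponent $\alpha-\beta$ and the LLN exponent $\beta$ is precisely what produces the final rate $\gamma=\beta\wedge(\alpha-\beta)$.
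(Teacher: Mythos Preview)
Your overall strategy matches the paper's: write both processes in mild form, subtract, and control the difference via the source terms. The paper, however, organizes the estimate differently, and two steps in your outline do not go through as written.

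First, the reduction via the pathwise operator $S^n$ gives $\|V^n\|_{C^0 H^{\beta-1}} \leq C\,\|G^n\|_{C^0 H^{\beta-1}}$ $\P$-a.s., so what you actually need is $\E\big[\sup_t \|G^n_t\|_{H^{\beta-1}}^2\big]$, not $\sup_t \E[\|G^n_t\|_{H^{\beta-1}}^2]$. For the stochastic convolution pieces this requires a Kolmogorov-type argument, which is more than you indicate. The paper sidesteps this by taking expectation \emph{before} closing the estimate: it bounds $\E[\|V^n_t\|_{H^{\beta-1}}^2]$ by four terms, two of which (your $S^n$-absorbed part) produce $\int_0^t (\|\xi_0\|_{L^2}^2+\|\xi_s\|_{H^1}^2)\,\E[\|V^n_s\|_{H^{\beta-1}}^2]\,\d s$, and then applies Gr\"onwall to the deterministic function $t\mapsto \E[\|V^n_t\|_{H^{\beta-1}}^2]$. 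This keeps everything at the level of fixed-$t$ second moments.

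Second, your treatment of $\mathrm{II}^n$ is not correct. In $d=2$ with $\alpha\in(0,1)$ the series $\sum_k |k|^{-2\alpha}$ diverges, so ``summability of $|k|^{-2\alpha}$'' cannot be invoked; nor can one bound $\|e_k(\xi^n_s-\xi_s)\|_{H^{\beta-1}}$ by a power of $|k|$ times $\|\xi^n_s-\xi_s\|_{H^{-\delta}}$ in a way that makes the $k$-sum converge. The paper instead expands the double Fourier sum and applies Lemma~\ref{lem-estimate} to obtain $\sum_k |k|^{-2\alpha}\|e_k\tilde\xi^n_r\|_{H^{\beta+\eps-1}}^2 \lesssim \|\tilde\xi^n_r\|_{H^{-\delta_3}}^2$ under the constraint $\delta_3<\alpha-\beta$; combined with \eqref{eq:rate-LLN-euler} this yields the rate $\eps_n^{\delta_3-\eps}\sim n^{-2(1-\alpha)(\alpha-\beta)+\eps}$. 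So $\mathrm{II}^n$ contributes the exponent $\alpha-\beta$, not $\beta$; the $\beta$ constraint arises from the deterministic source term (the paper's $I^n_3$), via Lemma~\ref{lem-estim-transp-term}(iii) with $\delta_1<\beta\wedge(1-\beta)$. Your final combination $\gamma=\beta\wedge(\alpha-\beta)$ is correct, but the roles of the terms are swapped and the argument for $\mathrm{II}^n$ needs the convolution-type Lemma~\ref{lem-estimate} rather than direct summability.
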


\begin{proof}
As before, writing the equations \eqref{CLT-SEE-approx-eq} and \eqref{CLT-SEE-limit-eq} in mild form yields
\begin{align*}
\Xi^n_t &= - \int_0^t P_{t-s} \big[ (K\ast\Xi^n_s)\cdot\nabla \xi^n_s + (K\ast\xi_s)\cdot\nabla \Xi^n_s\big]\, \d s- \int_0^t P_{t-s} (\d W^{n,\alpha}_s \cdot \nabla \xi^n_s), \\
 \Xi_t &= - \int_0^t P_{t-s} \big[ (K\ast\Xi_s)\cdot\nabla \xi_s + (K\ast\xi_s)\cdot\nabla \Xi_s\big]\, \d s- \int_0^t P_{t-s} (\d W^{\alpha}_s \cdot \nabla \xi_s).
\end{align*}
Therefore, for any $t\in [0,T]$,
\begin{align*}
\E\big[ \|\Xi^n_t - \Xi_t \|_{H^{\beta-1}}^2 \big] \lesssim I^n_1(t) + I^n_2(t) + I^n_3(t) + I^n_4(t),
\end{align*}
where
\begin{align*}
I^n_1(t) & = \E\bigg[ \Big\| \int_0^t P_{t-s} \big[ (K\ast(\Xi^n_s -\Xi_s))\cdot\nabla \xi^n_s \big]\, \d s \Big\|_{H^{\beta-1}}^2\bigg], \\
I^n_2(t) & = \E\bigg[ \Big\| \int_0^t P_{t-s} \big[ (K\ast \xi_s) \cdot\nabla (\Xi^n_s -\Xi_s) \big]\, \d s \Big\|_{H^{\beta-1}}^2 \bigg], \\
I^n_3(t) & = \E\bigg[ \Big\| \int_0^t P_{t-s} \big[ (K\ast \Xi_s) \cdot\nabla (\xi^n_s -\xi_s) \big]\, \d s \Big\|_{H^{\beta-1}}^2 \bigg], \\
I^n_4(t) & = \E\bigg[ \Big\| \int_0^t P_{t-s} (\d W^{n,\alpha}_s \cdot \nabla \xi^n_s- \d W^{\alpha}_s \cdot \nabla \xi_s) \Big\|_{H^{\beta-1}}^2 \bigg].
\end{align*}

We will estimate the terms one-by-one, starting with the first two. By Lemma \ref{lem:heat-kernel} and Lemma \ref{lem-estim-transp-term}-(ii), it holds
\begin{align*}
I^n_1(t)
&\lesssim \E\bigg[ \int_0^t \big\| (K\ast(\Xi^n_s -\Xi_s)) \cdot\nabla \xi^n_s \big\|_{H^{\beta-2}}^2\,\d s\bigg] \\
&\lesssim \int_0^t \E\big[ \|K\ast(\Xi^n_s -\Xi_s) \|_{H^{\beta}}^2 \|\xi^n_s \|_{L^2}^2 \big] \,\d s \\
&\lesssim \|\xi_0 \|_{L^2}^2 \int_0^t \E\big[ \| \Xi^n_s -\Xi_s \|_{H^{\beta-1}}^2\big] \,\d s .
\end{align*}
Next, we apply Lemma \ref{lem:heat-kernel} and Lemma \ref{lem-estim-transp-term}-(i) with $a=1$ and $b=1-\beta$, which gives
\begin{align*}
I^n_2(t) \lesssim \int_0^t \E\big[ \| (K\ast\xi_s) \cdot \nabla (\Xi^n_s-\Xi_s)\|_{H^{\beta-2}}^2 \big]\,\d s
\lesssim \int_0^t \| \xi_s\|_{H^1}^2 \E\big[ \|\Xi^n_s-\Xi_s\|_{H^{\beta-1}}^2\big] \, \d s.
\end{align*}
Combining the above estimates yields
\begin{align*}
\E \big[\| \Xi^n_t-\Xi_t\|_{H^{\beta-1}}^2 \big]
& \lesssim \int_0^t \big(\| \xi_0\|_{L^2}^2 + \|\xi_s\|_{H^1}^2 \big) \,\E\big[\| \Xi^n_s-\Xi_s\|_{H^{\beta-1}}^2 \big]\, \d s + I^n_3(t) + I^n_4(t),
\end{align*}
and so by Gronwall's lemma we find
\begin{equation}\label{eq:CLT-euler-proof.1}
\sup_{t\in [0,T]} \E \big[ \| \Xi^n_t-\Xi_t\|_{H^{\beta-1}}^2 \big]
\lesssim e^{C(1+T)\| \xi_0\|_{L^2}^2} \bigg( \sup_{t\in [0,T]} I^n_3(t) + \sup_{t\in [0,T]} I^n_4(t) \bigg).
\end{equation}

To estimate $I^n_3(t)$, let us fix $\delta_1$ such that $\delta_1<\beta\wedge (1-\beta)$ and $\eps$ small enough such that $\delta_1+\eps<1-\beta$; by Lemma \ref{lem:heat-kernel-estim}-(i) it holds
\begin{align*}
I^n_3(t)
& \lesssim \E\bigg[ \Big( \int_0^t \big\| P_{t-s} [(K\ast \Xi_s)\cdot\nabla (\xi^n_s-\xi_s)] \big\|_{H^{\beta-1}}\, \d s \Big)^2 \bigg]\\
& \lesssim \E\bigg[ \Big( \int_0^t (t-s)^{-\frac{1+\beta+\delta_1+\eps}{2}}\, \| (K\ast \Xi_s)\cdot\nabla (\xi^n_s-\xi_s) \|_{H^{-\delta_1-\eps-2}} \, \d s \Big)^2 \bigg].
\end{align*}
Next, by Lemma \ref{lem-estim-transp-term}-(iii) with $a=\beta,\, b=\delta_1$,
\begin{align*}
I^n_3(t)
& \lesssim  \E\bigg[ \Big( \int_0^t (t-s)^{-\frac{1+\beta+\delta_1+\eps}{2}}\, \| K\ast \Xi_s\|_{H^\beta}\, \|\xi^n_s-\xi_s\|_{H^{-\delta_1}}\, \d s \Big)^2 \bigg]\\
& \lesssim \E\bigg[ \| \Xi\|_{C^0 H^{\beta-1}}^2 \| \xi^n -\xi\|_{C^0 H^{-\delta_1}}^2 \Big( \int_0^t (t-s)^{-\frac{1+\beta+\delta_1+\eps}{2}} \d s \Big)^2 \bigg]\\
& \lesssim_T \E\big[ \| \Xi \|_{C^0 H^{\beta-1}}^4 \big]^{1/2}\, \E\big[ \| \xi^n -\xi\|_{C^0 H^{-\delta_1}}^4 \big]^{1/2}
\end{align*}
where in the last passage we used the Cauchy inequality and the fact that $\beta+\delta_1+\eps<1$.
Applying Corollary \ref{cor:CLT-limit-eq} and estimate \eqref{eq:rate-LLN-euler}, overall we obtain
\begin{equation}\label{eq:CLT-euler-proof.2}
\sup_{t\in [0,T]} I^n_3(t) \lesssim \eps_n^{\delta_1-\eps}.
\end{equation}

It remains to estimate $I^n_4(t)$, which is the term requiring the most work.
We split it in two parts: $I^n_4(t)\lesssim J^n_1(t) + J^n_2(t)$ where
\begin{align*}
& J^n_1(t):= \E\bigg[\Big\| \int_0^t P_{t-r} (\d (W^{n,\alpha}_r -W^{\alpha}_r)\cdot \nabla \xi_r)\Big\|_{H^{\beta-1}}^2 \bigg],\\
& J^n_2(t):=\E\bigg[\Big\| \int_0^t P_{t-r} (\d W^{n,\alpha}_r \cdot \nabla (\xi^n_r-\xi_r))\Big\|_{H^{\beta-1}}^2 \bigg].
\end{align*}
First, recalling the expression of $W^{n,\alpha}_r$, the properties of $\{B^k\}_k$ and the fact that $\xi$ is deterministic, by It\^o's isometry on Hilbert spaces we have
\begin{align*}
J^n_1(t)
&= \E \bigg[\Big\| \sum_{|k|>n} |k|^{-\alpha} \int_0^t P_{t-r}(\sigma_k\cdot \nabla \xi_r)\, \d B^k_r  \Big\|_{H^{\beta-1}}^2 \bigg] \\
&\sim \sum_{|k|>n} |k|^{-2\alpha} \int_0^t \| P_{t-r} (\sigma_k\cdot\nabla \xi_r)\|_{H^{\beta-1}}^2\, \d r \\
&\lesssim \sum_{|k|>n} |k|^{-2\alpha} \int_0^t (t-r)^{\eps-1} \| \sigma_k\cdot\nabla \xi_r \|_{H^{\beta+\eps-2}}^2\, \d r
\end{align*}
for small $\eps>0$. Noting that $\sigma_k = \frac{k^\perp}{|k|}  e_k$ is divergence free, for any $s\in \R$ it holds
\begin{equation}\label{eq:basic-estim-CLT-euler-proof}
\| \sigma_k\cdot\nabla \xi_r\|_{H^{s-1}}
= \| \nabla\cdot (\sigma_k\, \xi_r)\|_{H^{s-1}}
\lesssim \| \sigma_k\, \xi_r\|_{H^s} \lesssim
\| e_k\, \xi_r\|_{H^s}.
\end{equation}
Therefore, by Lemma \ref{lem:heat-kernel-estim}, it holds
\begin{align*}
J^n_1(t)
& \lesssim \sum_{|k|>n} |k|^{-2\alpha} \int_0^t (t-r)^{\eps-1} \| e_k\, \xi_r\|_{H^{\beta+\eps-1}}^2\, \d r \lesssim_\eps \sup_{r\in [0,T]} \sum_{|k|>n} |k|^{-2\alpha} \| e_k\, \xi_r\|_{H^{\beta+\eps-1}}^2
\end{align*}
with uniform estimate over $t\in [0,T]$.
Now take $\delta_2>0$ such that $\delta_2<\alpha-\beta$ and choose $\eps>0$ in the previous estimate small enough so that $\delta_2+\eps<\alpha-\beta$, and thus $\beta+\eps<\alpha <1$; then
\begin{align*}
\sum_{|k|>n} |k|^{-2\alpha} \| e_k\, \xi_r\|_{H^{\beta+\eps-1}}^2
& = \sum_{|k|>n} |k|^{-2\alpha} \sum_j |\langle \xi_r, e_{k-j}\rangle|^2 |j|^{-2(1-\beta-\eps)}\\
& \leq n^{-2\delta_2} \sum_{k,j} |k|^{-2(\alpha-\delta_2)} |\langle \xi_r, e_{k-j}\rangle|^2 |j|^{-2(1-\beta-\eps)}.
\end{align*}
Note that the last sum can be written as
$\<a,b\ast c\>_{\ell^2}$,
where
$a_k = |k|^{-2(\alpha-\delta_2)},\, b_k = |k|^{-2(1-\beta-\eps)}$
and
$c_k = |\<\xi_r, e_{k}\>|^2,\, k\in \Z^2_0$.
We have $c\in \ell^1(\Z^2_0)$ with $\|c \|_{\ell^1}= \|\xi_r \|_{L^2}^2$; moreover, we can find $p\in [1,\infty]$ such that $\alpha-\delta_2>1/p>\beta+\eps$, so that $a\in \ell^p(\Z^2_0),\, b\in \ell^{p'}(\Z^2_0)$, where $p'$ is the conjugate number of $p$.
By H\"older's inequality and Young's inequality for convolution, one has
\begin{align*}
\sum_{|k|>n} |k|^{-2\alpha} \| e_k\, \xi_r\|_{H^{\beta+\eps-1}}^2
\leq n^{-2\delta_2} \|a \|_{\ell^p} \|b\ast c\|_{\ell^{p'}}
\lesssim_{\alpha,\beta,\delta_2,\eps} n^{-2\delta_2} \| \xi_r\|_{L^2}^2
\leq n^{-2\delta_2} \| \xi_0\|_{L^2}^2
\end{align*}
and so overall we obtain
\begin{equation}\label{eq:CLT-euler-proof.3}
\sup_{t\in [0,T]} J^n_1(t) \lesssim n^{-2\delta_2}.
\end{equation}

We turn to estimating $J^n_2$; to this end, let us introduce the notation $\tilde{\xi}^n=\xi^n-\xi$.
Applying as before the It\^o isometry on Hilbert spaces, the structure of $W^{n,\alpha}$, as well as estimates of the form \eqref{eq:basic-estim-CLT-euler-proof}, for any $\eps>0$ it holds
\begin{align*}
J^n_2(t)
& = \E\bigg[  \Big\| \sum_{|k|\leq n} |k|^{-\alpha} \int_0^t P_{t-r} (\sigma_k\cdot\nabla \tilde{\xi}^n_r)\, \d B^k_r\Big\|_{H^{\beta-1}}^2 \bigg]\\
& \sim \E\bigg[ \sum_{|k|\leq n} |k|^{-2\alpha} \int_0^t \|P_{t-r} (\sigma_k\cdot\nabla \tilde{\xi}^n_r)\|_{H^{\beta-1}}^2\, \d r \bigg]\\
& \lesssim \E\bigg[ \sum_k |k|^{-2\alpha} \int_0^t (t-r)^{\eps-1} \| e_k \,\tilde{\xi}^n_r\|_{H^{\beta+\eps-1}}^2 \, \d r \bigg]\\
& \lesssim_{\eps,T} \sup_{r\in [0,T]} \E\bigg[ \sum_k |k|^{-2\alpha} \| e_k\, \tilde{\xi}^n_r\|_{H^{\beta+\eps-1}}^2 \bigg].
\end{align*}
Now take $\delta_3$ such that $\delta_3<\alpha-\beta$ and choose $\eps$ small accordingly so that $\delta_3+\eps<\alpha-\beta$; then
\begin{align*}
\sum_k |k|^{-2\alpha} \| e_k\, \tilde{\xi}^n_r\|_{H^{\beta+\eps-1}}^2
& = \sum_{k,j} |k|^{-2\alpha} |\langle \tilde{\xi}^n_r, e_{k-j}\rangle|^2 |j|^{-2(1-\beta-\eps)}\\
& = \sum_j |\langle \tilde{\xi}^n_r, e_j \rangle|^2 \bigg(\sum_k |k|^{-2\alpha} |j-k|^{-2(1-\beta-\eps)}\bigg)\\
& \lesssim \sum_j |j|^{-2\delta_3} |\langle \tilde{\xi}^n_r, e_j \rangle|^2 = \| \tilde{\xi}^n_r\|_{H^{-\delta_3}}^2
\end{align*}
where we applied in the third step Lemma \ref{lem-estimate} from the Appendix. By estimate \eqref{eq:rate-LLN-euler}, for any $\eps>0$ it holds
\begin{equation}\label{eq:CLT-euler-proof.4}
\sup_{t\in [0,T]} J^n_2(t)
\lesssim \E \big[ \| \tilde{\xi}^n\|_{C^0 H^{-\delta_3}}^2\big]
\lesssim \eps_n^{\delta_3-\eps}.
\end{equation}

Combining estimates \eqref{eq:CLT-euler-proof.1}, \eqref{eq:CLT-euler-proof.2}, \eqref{eq:CLT-euler-proof.3} and \eqref{eq:CLT-euler-proof.4}, we obtain
\begin{equation}\label{eq:CLT-euler-proof.5}
\sup_{t\in [0,T]} \E[\| \Xi^n_t-\Xi_t\|_{H^{\beta-1}}^2 \lesssim \eps_n^{\delta_1-\eps} + n^{-2\delta_2} + \eps_n^{\delta_3-\eps}.
\end{equation}
Using the facts that $\eps_n\sim n^{2\alpha-2} \gg n^{-2}$ and $\alpha-\beta<1-\beta$, by a few algebraic manipulations and the conditions on $\delta_i$, one can finally check that the best possible rate coming from \eqref{eq:CLT-euler-proof.5} is indeed the one given by \eqref{eq:rate-CLT-SEE}.
\end{proof}

\begin{corollary}\label{cor:CLT-euler-1}
Under the assumptions of Theorem \ref{thm:CLT-SEE-1}, for any $\eps>0$ there exists a constant $C=(\| \xi_0\|_{L^2}, T,\alpha,\eps)$ such that
\begin{equation}
\sup_{t\in [0,T]} \E\big[ \| \Xi^n_t - \Xi_t\|_{H^{-1}}^2\big] \leq C n^{-\alpha(1-\alpha)+\eps}
\end{equation}
\end{corollary}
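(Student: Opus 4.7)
The corollary follows almost directly from Theorem \ref{thm:CLT-SEE-1} by optimizing the parameter $\beta \in (0,\alpha)$ and using a trivial Sobolev embedding. Here is how I would proceed.

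First, I would observe that for any $\beta \in (0,\alpha)$, one has the continuous embedding $H^{\beta-1}(\T^2) \hookrightarrow H^{-1}(\T^2)$, with an embedding constant that is uniform in $n$ (it depends only on $\beta$, or can simply be taken to be $1$ since the Fourier-multiplier $|k|^{-2}$ is dominated by $|k|^{2(\beta-1)}$ on $\Z^2_0$). Consequently, for any admissible choice of $\beta$,
\begin{equation*}
\sup_{t\in [0,T]} \E\big[\|\Xi^n_t - \Xi_t\|_{H^{-1}}^2\big]
\;\leq\; \sup_{t\in [0,T]} \E\big[\|\Xi^n_t - \Xi_t\|_{H^{\beta-1}}^2\big]
\;\leq\; C\, n^{-2\gamma(1-\alpha)+\eps},
\end{equation*}
where $\gamma = \beta \wedge (\alpha-\beta)$ and the second inequality is exactly \eqref{eq:rate-CLT-SEE}.

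Next, I would optimize over $\beta \in (0,\alpha)$ to maximize $\gamma(\beta)=\beta\wedge(\alpha-\beta)$. Since $\beta \mapsto \beta$ is increasing and $\beta \mapsto \alpha-\beta$ is decreasing, the maximum is attained at the balance point $\beta = \alpha/2$, yielding $\gamma = \alpha/2$. With this choice the exponent becomes
\begin{equation*}
-2\gamma(1-\alpha) = -\alpha(1-\alpha),
\end{equation*}
giving exactly the claimed decay rate $n^{-\alpha(1-\alpha)+\eps}$.

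The constant $C = C(\|\xi_0\|_{L^2}, T, \alpha, \eps)$ then emerges by absorbing the Sobolev-embedding constant (depending only on $\alpha$) together with the constant from Theorem \ref{thm:CLT-SEE-1}, which itself depends on $\eps$, $T$, $\|\xi_0\|_{L^2}$, $\alpha$, and the choice $\beta=\alpha/2$. Since no genuine obstacle arises — the estimate \eqref{eq:rate-CLT-SEE} has already been proved and the embedding is elementary — the only step requiring care is simply verifying that $\beta=\alpha/2 \in (0,\alpha)$ is admissible (which is obvious for $\alpha \in (0, d/2)=(0,1)$ in the $2$D setting) and that the implicit $\eps$ in the statement of the theorem can be arbitrarily redefined when passing from $n^{-2\gamma(1-\alpha)+\eps}$ to $n^{-\alpha(1-\alpha)+\eps}$.
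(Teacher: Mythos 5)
Your proposal is correct and follows exactly the paper's own argument: the trivial estimate $\|\cdot\|_{H^{-1}}\leq\|\cdot\|_{H^{\beta-1}}$ combined with Theorem \ref{thm:CLT-SEE-1} and the optimal choice $\beta=\alpha/2$, which gives $2\gamma=\alpha$ and hence the exponent $\alpha(1-\alpha)$. Nothing further is needed.
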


\begin{proof}
The statement follows from Theorem \ref{thm:CLT-SEE-1}, the trivial estimate $\| \cdot\|_{H^{-1}}\leq \| \cdot \|_{H^{\beta-1}}$ and the choice $\beta=\alpha/2$, which yields $2\gamma=\alpha$.
\end{proof}

Next, we can formulate a result which does not involve the specific tuple $(\Omega,\mathcal{F}_t,\P; \xi^n,W^\alpha)$ in consideration, but only the law of $\Xi^n$ and $\Xi$, in terms of Wasserstein distances.
To this end, given two probability measures $\mu,\,\nu\in \mathcal{P}(L^2_t H^{-1})$ we denote by $d_2$ their $2$-Wasserstein distance as defined by
\begin{equation*}
d_2(\mu,\nu)=\inf_{\pi\in \mathcal C(\mu,\nu)} \bigg\{\int_{L^2 H^1\times L^2 H^1} \| x-y\|_{L^2 H^1}^2\, \d \pi(x,y)\bigg\}^{1/2}
\end{equation*}
where $\mathcal C(\mu,\nu)$ denotes the set of all possible couplings of $\mu,\nu$.

\begin{corollary}\label{cor:CLT-euler-2}
Let $\xi_0\in L^2$, $n\in\N$ and $\xi^n$ be a weak solution to \eqref{CLT-SEE-0}; set $\mu^n=Law(\Xi^n)$ for the correponding $\Xi^n=(\xi^n-\xi)/\sqrt{\eps_n}$ and $\mu=Law(\Xi)$. Then for any $\eps>0$ it holds
\begin{equation}\label{eq:rate-CLT-wasserstein}
d_2(\mu^n,\mu)\leq C^{1/2}\, n^{-\frac{\alpha(1-\alpha)+\eps}{2}}
\end{equation}
where $C$ is the same constant appearing in Corollary \ref{cor:CLT-euler-1}.
\end{corollary}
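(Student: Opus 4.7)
\textbf{Proof proposal for Corollary \ref{cor:CLT-euler-2}.} The plan is to use the natural coupling already constructed before Theorem \ref{thm:CLT-SEE-1}. Recall that for the probabilistically weak solution $\xi^n$ we are given a stochastic basis $(\Omega,\F,(\F_t),\P; W^\alpha,\xi^n)$, and because the limit equation \eqref{CLT-SEE-limit-eq} admits a pathwise unique mild solution (Corollary \ref{cor:CLT-limit-eq}), we can realize $\Xi$ on this same basis as $\Xi = SZ$ where $Z$ is the stochastic convolution driven by the same $W^\alpha$. Then $(\Xi^n,\Xi)$ is a random element of $L^2_t H^{-1}\times L^2_t H^{-1}$ whose marginals are exactly $\mu^n$ and $\mu$, hence the push-forward of its law under $(x,y)\mapsto(x,y)$ lies in $\mathcal{C}(\mu^n,\mu)$.

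Using this specific coupling in the definition of $d_2$ gives
\begin{equation*}
d_2(\mu^n,\mu)^2 \;\leq\; \E\bigl[\|\Xi^n-\Xi\|_{L^2_t H^{-1}}^2\bigr] \;=\; \int_0^T \E\bigl[\|\Xi^n_t-\Xi_t\|_{H^{-1}}^2\bigr]\,\d t.
\end{equation*}
(Here I am reading the norm in the definition of $d_2$ as $L^2_t H^{-1}$, the natural space on which $\mu^n,\mu$ are supported.) Applying Corollary \ref{cor:CLT-euler-1} under the integral and using the uniform-in-$t$ bound yields
\begin{equation*}
\int_0^T \E\bigl[\|\Xi^n_t-\Xi_t\|_{H^{-1}}^2\bigr]\,\d t \;\leq\; T\cdot C\, n^{-\alpha(1-\alpha)+\eps},
\end{equation*}
and absorbing the factor $T$ into the constant $C$ (which already depends on $T$) gives \eqref{eq:rate-CLT-wasserstein} after taking square roots.

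There is essentially no obstacle beyond noticing that the pair $(\Xi^n,\Xi)$ lives on one probability space; the only point that requires minor care is confirming that the laws $\mu^n,\mu$ are supported on $L^2_t H^{-1}$, which follows from the $\P$-a.s.\ bound $\sup_{t\in[0,T]}\|\xi^n_t\|_{L^2}\leq\|\xi_0\|_{L^2}$ together with Corollary \ref{cor:CLT-limit-eq}. One could also phrase the argument slightly more abstractly by observing that the map that associates to a joint law its marginals is surjective onto $\mathcal{C}(\mu^n,\mu)$, but constructing an explicit coupling as above is the cleanest route and matches the spirit of the preceding proofs.
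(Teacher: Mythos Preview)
Your proof is correct and follows essentially the same route as the paper: construct $\Xi$ on the same stochastic basis as $\xi^n$ using the strong well-posedness from Corollary~\ref{cor:CLT-limit-eq}, observe that the joint law of $(\Xi^n,\Xi)$ is an admissible coupling, and then invoke Corollary~\ref{cor:CLT-euler-1}. The paper's own argument is just a terser version of what you wrote, omitting the explicit time-integration step and the remark about absorbing $T$ into the constant.
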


\begin{proof}
Since strong existence and uniqueness holds for \eqref{CLT-SEE-limit-eq}, we can construct $\Xi$ on the same stochastic basis $(\Omega,\mathcal{F}_t,\P;W^\alpha)$ where $\xi^n$ is defined; as a consequence, the joint law of the pair $(\Xi^n,\Xi)$ is a coupling of $(\mu^n,\mu)$.
Estimate \eqref{eq:rate-CLT-wasserstein} then immediately follows from the definition of $d_2$ and Corollary \ref{cor:CLT-euler-1}.
\end{proof}

\subsection{Concluding remarks}\label{sec:conclusive-CLT}

Let us recall here the abstract setting considered in the introduction, namely for
\begin{equation}\label{eq:abstract-SPDE}
\d u^n + \sqrt{\eps_n}\, \d W^{n,\alpha}\cdot \nabla u^n= [F(u^n) + \Delta u^n] \, \d t.
\end{equation}
We have shown in the previous sections the rigorous convergence of $(u^n-u)/\sqrt{\eps_n}$ to a Gaussian variable  $U$, satisfying in a suitable sense the equation
\[
\d U  + \d W^\alpha\cdot\nabla u = [DF_u (U) + \Delta U]\, \d t,
\]
in two special cases of interest, i.e. for $F(u)=-b\cdot\nabla u$ and $F(u)=-(K\ast u)\cdot\nabla u$.

The results have been accomplished by two alternative techniques:
\begin{itemize}
\item[i)] As done in Section \ref{subs-CLT-SLTE}, one can study the well-posedness and regularity theory for the backward  dual equation
\begin{equation}\label{eq:abstract-backward-dual-PDE}
\partial_s g +\Delta g= - (DF_{u})^\ast g, \quad s\leq t,\ g_{t}=\varphi
\end{equation}
where $(DF_{u})^\ast$ denotes the dual of $DF_{u}$ as a linear operator; denoting by $S_{t,r}$ the corresponding solution operator, it then holds (for $g$ regular enough)
\[
\langle U_t, \varphi\rangle = \int_0^t \langle u_r \nabla S_{t,r}\varphi, \d W^\alpha_r \rangle.
\]
\item[ii)] As done in Section \ref{subs-CLT-SEE}, one can write the equation in the corresponding mild form
\begin{equation}\label{eq:abstract-mild-form}
U_t = \int_0^t P_{t-s} DF_{u_s} (U_s)\, \d s - \int_0^t P_{t-s} (\d W^\alpha_s \cdot \nabla u_s)
\end{equation}
and try to solve it directly using heat kernel and functional estimates.
\end{itemize}

Both methods when applicable yield well-posedness for $U$, information on its space-time regularity and rates of convergence for the moments of $U^n-U$.
Remarkably, this also happens in situations where the well-posedness in law of the original nonlinear SPDE for $u$ is not known, like in the case of $2$D Euler equations with $L^2$ initial vorticity.

As the equation for $U$ is always of linear type, we expect the same techniques to be successful for a large variety of SPDEs; however since both equations \eqref{eq:abstract-backward-dual-PDE} and \eqref{eq:abstract-mild-form} depend crucially on the nonlinearity $F$, each different choice might require a separate study.
In particular, in order to obtain explicit rates of convergence, uniform estimates on $u^n$ (e.g. $L^\infty_t L^2_x$-type of bounds) might be needed, which depend on the structure of the nonlinearity.\medskip

Below we outline some possible generalizations of these CLT-type results; we keep the discussion at an heuristic level, without formulating exactly the necessary assumptions or giving rigorous statements.

So far we have considered for simplicity only the case $u^n_0=u_0$ for all $n\in \N$.
This can be generalized to $u^n_0=u_0+\sqrt{\eps_n}\, v_0$ for some $v_0\in L^2$; similarly, rather than studying $U^n=(u^n-u)/\sqrt{\eps_n}$, one might look at the convergence of $\tilde{U}^n=(u^n-\E[u^n])/\sqrt{\eps_n}$.
Assuming one has shown convergence of $\tilde{U}^n$ to $\tilde U$, since the function $u$ is deterministic it holds
\[
\frac{u^n-\E[u^n]}{\sqrt{\eps_n}}
=\frac{u^n-u}{\sqrt{\eps_n}} - \E\bigg[ \frac{u^n-u}{\sqrt{\eps_n}}\bigg]
\to \tilde U-\E[\tilde U];
\]
moreover $\tilde U$ must now solve in a suitable sense the equation
\begin{equation*}
\d \tilde U + \d W^\alpha\cdot\nabla u = [ DF_u (\tilde U) + \Delta \tilde U]\, \d t, \quad \tilde U\vert_{t=0} = v_0.
\end{equation*}
Since the stochastic integral is a martingale, by linearity of the equation, $\psi:=\E[\tilde U]$ solves $\partial_t \psi = DF_u(\psi) + \Delta \psi$ with initial condition $\psi\vert_{t=0} =v_0$. As a consequence, $U=\tilde{U}-\E[\tilde{U}]$ solves
\begin{equation*}
\d U + \d W^\alpha\cdot\nabla u = [ DF_u(U) + \Delta U ]\, \d t, \quad U\vert_{t=0} = 0.
\end{equation*}
Namely, $U$ is exactly the same solution that was studied in the previous sections, and the correct asymptotics in this case are given by $u^n \approx \E[u^n] + \sqrt{\eps_n}\,  U \approx u + \sqrt{\eps_n} (\psi + U)$.\medskip

Recall that, as explained in the introduction, the limit equation for $U^n := (u^n - u)/\sqrt{\eps_n}$ can be formally derived by imposing the expansion $u^n\approx u + \sqrt{\eps_n}\, U + o(\sqrt{\eps_n})$ inside \eqref{eq:abstract-SPDE}.
In the same spirit, one could try to derive higher order expansions by imposing
\[ u^n \approx \sum_{i\leq N} \eps_n^{i/2}\, U^{(i)} + o (\eps_n^{N/2}) \]
and assuming $F$ is regular enough to expand it as
\[ F(u^n)  \approx  \sum_j \frac{1}{j!} D^{(j)}_{u} F\big( (u^n-u)^{\otimes j} \big).\]
In this way, we recover $U=U^{(1)}$, while for instance the resulting equation for $U^{(2)}$ would be
\begin{equation}\label{eq:second-order-expansion}
\d U^{(2)} + \d W^\alpha \cdot \nabla U^{(1)} = \big[ D^{(2)}_u F(U^{(1)}, U^{(1)}) + D F_u (U^{(2)}) + \Delta U^{(2)} \big]\, \d t.
\end{equation}
Equation \eqref{eq:second-order-expansion} is still linear in  $U^{(2)}$, with the same linear part $\mathcal{L}_t=\Delta + DF_{u_t}(\cdot)$ as in the equation for $U^{(1)}$, but the term $D^{(2)}_u F(U^{(1)}, U^{(1)})\, \d t - \d W^\alpha \cdot \nabla U^{(1)}$ is nonlinear, thus the resulting solution will not be a Gaussian field.

Establishing well-posedness of \eqref{eq:second-order-expansion}, not to mention convergence of $(u^n-u-\sqrt{\eps_n} \, U^{(1)}) /\eps_n$ to $U^{(2)}$, might be much more challenging than for $U^{(1)}$, even in the case $F\equiv 0$.
Indeed, the regularity estimates available for $U^{(1)}$ are worse than those for $u$, making the stochastic integral term difficult to define.
Let us only point out that, for $F\equiv 0$, one would expect the iterated mild formulation
\begin{align*}
U^{(2)}_t
= - \int_0^t P_{t-s} \big(\d W^\alpha_s \cdot \nabla U^{(1)}_s \big)
= \int_0^t P_{t-s} \Big(\d W^\alpha_s \cdot \nabla \int_0^s P_{s-r} (\d W^\alpha_r\cdot \nabla u_r )\Big),
\end{align*}
which might still be meaningful in some weak sense.
We leave the rigorous investigation of the well-posedness of \eqref{eq:second-order-expansion} and the higher order expansion of $u^n$ for the future.

\appendix
\section{Some Technical Lemmas}\label{appendix}

\begin{lemma}\label{lem-topology}
Given $R>0$, let $\mathcal E_R=\{f\in L^\infty: \|f\|_{L^\infty} \leq R\}$. Then for any $\alpha>0$, $\mathcal E_R$ endowed with the distance $d(f,g)=\| f-g\|_{H^{-\alpha}}$ is a Polish space. Moreover, the weak-$\ast$ topology of $\mathcal E_R$ is equivalent to that induced from $H^{-\alpha}$.
\end{lemma}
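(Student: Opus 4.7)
The plan is to reduce everything to two classical facts on the torus: first, the continuous embedding $L^\infty\hookrightarrow L^2 \hookrightarrow H^{-\alpha}$; second, the compactness of the embedding $L^2\hookrightarrow H^{-\alpha}$ for any $\alpha>0$ (Rellich on $\T^d$). Separability of $(\mathcal{E}_R,d)$ comes for free, since $\mathcal{E}_R$ is a subset of the separable metric space $H^{-\alpha}$. Thus the substantive work is to identify the $H^{-\alpha}$-topology on $\mathcal{E}_R$ with the weak-$\ast$ one inherited from $L^\infty$, and then to deduce $d$-completeness.

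For the equivalence of topologies, I would argue in two directions. First, suppose $f_n\in\mathcal{E}_R$ and $f_n\stackrel{\ast}{\rightharpoonup}f$ in $L^\infty$; in particular $\|f\|_{L^\infty}\leq R$ by weak-$\ast$ lower semicontinuity, so $f\in\mathcal{E}_R$. To show $\|f_n-f\|_{H^{-\alpha}}\to 0$, assume by contradiction that along a subsequence $\|f_{n_k}-f\|_{H^{-\alpha}}\geq\delta>0$. Since $\{f_{n_k}\}$ is bounded in $L^2$ and $L^2\hookrightarrow H^{-\alpha}$ compactly, a further subsequence converges in $H^{-\alpha}$ to some $g$. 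Testing against any smooth $\varphi\in H^\alpha\subset L^1$ identifies $g=f$ (the weak-$\ast$ limit being unique), contradicting the lower bound. Conversely, if $f_n\in\mathcal{E}_R$ and $f_n\to f$ in $H^{-\alpha}$, then for any $\varphi\in C^\infty(\T^d)$,
\[
|\langle f_n-f,\varphi\rangle|\leq \|f_n-f\|_{H^{-\alpha}}\|\varphi\|_{H^\alpha}\to 0,
\]
and the uniform bound $\|f_n\|_{L^\infty}\leq R$ combined with density of $C^\infty$ in $L^1$ extends this convergence to all $\varphi\in L^1$, giving $f_n\stackrel{\ast}{\rightharpoonup} f$. (One should again check $f\in\mathcal{E}_R$: this follows from Banach--Alaoglu applied to the bounded sequence $\{f_n\}$, extracting a weak-$\ast$ limit which must coincide with $f$ by the already proven direction.)

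For completeness of $(\mathcal{E}_R,d)$, take a Cauchy sequence $\{f_n\}\subset\mathcal{E}_R$ in $H^{-\alpha}$; it converges in $H^{-\alpha}$ to some $h$. Since $\{f_n\}$ is bounded in $L^\infty$, by Banach--Alaoglu and the fact that $L^1$ is separable, there is a weak-$\ast$ convergent subsequence $f_{n_k}\stackrel{\ast}{\rightharpoonup}g$ with $g\in\mathcal{E}_R$. The already-established equivalence of topologies forces $f_{n_k}\to g$ in $H^{-\alpha}$, hence $g=h$; in particular the limit $h$ lies in $\mathcal{E}_R$.

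The only genuinely nontrivial step is the ``weak-$\ast$ $\Rightarrow$ strong $H^{-\alpha}$'' direction, which rests on the compactness of $L^2\hookrightarrow H^{-\alpha}$ on $\T^d$; the rest is bookkeeping. The argument for $C^0_t H^{-\alpha}$ on $\mathcal{E}^{T,R}$ claimed in the main text is the obvious time-parametric version, combining the above with the Arzel\`a--Ascoli-type characterization of weak-$\ast$ continuous paths, and proceeds along the same lines.
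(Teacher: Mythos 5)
Your proposal is correct and follows essentially the same route as the paper's proof: Banach--Alaoglu on the $L^\infty$-ball, the compact embedding into $H^{-\alpha}$, and identification of the weak-$\ast$ and strong $H^{-\alpha}$ limits to get both the equivalence of topologies and the closedness (hence completeness) of $\mathcal{E}_R$ in $H^{-\alpha}$. The only difference is presentational — you spell out the separability, the density argument for $L^1$ test functions, and the subsequence/contradiction step that the paper compresses into the phrase ``by the compact embedding''.
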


\begin{proof}
We provide a short proof for the reader's convenience. Let $\{f_n\}_n \subset \mathcal E_R$ be a sequence such that $f_n\to f$ in $H^{-\alpha}$; since $\mathcal{E}_R$ is weakly-$\ast$ compact, we can always extract a (not relabelled) subsequence $f_n \stackrel{\ast}{\rightharpoonup} \hat{f}\in L^\infty$, $\hat{f}\in \mathcal{E}_R$.
As this also implies that $f_n\rightharpoonup \hat{f}$ in $H^{-\alpha}$, and the reasoning holds for any possible subsequence, we conclude that $\hat{f}=f$, $f_n\stackrel{\ast}{\rightharpoonup} f$ and that $\mathcal{E}_R$ is closed in $H^{-\alpha}$.
Since $\mathcal{E}_R$ is a closed subset of a Polish space, it is Polish as well.

The above argument shows that convergence in $H^{-\alpha}\cap \mathcal{E}_R$ implies weak-$\ast$ convergence in $L^\infty$; conversely, if a sequence $f_n \stackrel{\ast}{\rightharpoonup} f$, then by the compact embedding $L^\infty \hookrightarrow H^{-\alpha}$ it follows that $f_n\to f$ in $H^{-\alpha}$, showing the desired equivalence.
\end{proof}

The next two lemmas concern some well known heat kernel properties (see \cite[Lemma 2.3]{FGL21c} for a proof of the first one).

\begin{lemma}\label{lem:heat-kernel}
For any $s\in\mathbb{R}$ and any $f\in
L^2_t H^s$, it holds
\[
\bigg\|\int_{0}^{t} e^{(t-r) \Delta} f_r\, \,\d r
\bigg\|_{H^{s+1}}^{2} \lesssim\int_{0}^{t} \|
f_r\|_{H^s}^{2}\,\d r \quad\forall\, t\in[0,T].
\]
\end{lemma}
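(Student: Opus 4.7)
The plan is a direct Fourier calculation on the torus, which is the standard way to prove this kind of maximal regularity estimate for the heat semigroup. First I would expand $f_r$ in the Fourier basis, writing $f_r = \sum_k \hat{f}_r(k)\, e_k$, so that
\[
\int_0^t e^{(t-r)\Delta} f_r \,\d r = \sum_k \left(\int_0^t \hat{f}_r(k)\, e^{-4\pi^2|k|^2 (t-r)}\,\d r\right) e_k.
\]
Then by Plancherel, the quantity to estimate becomes
\[
\bigg\|\int_0^t e^{(t-r)\Delta} f_r \,\d r\bigg\|_{H^{s+1}}^2 = \sum_k |k|^{2(s+1)} \bigg|\int_0^t \hat{f}_r(k)\, e^{-4\pi^2|k|^2 (t-r)}\,\d r\bigg|^2,
\]
with the understanding (consistent with the paper's conventions using $(-\Delta)^{s/2}$) that the sum is over $k\in\Z^d_0$.

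The key step is Cauchy--Schwarz applied inside each Fourier mode:
\[
\bigg|\int_0^t \hat{f}_r(k)\, e^{-4\pi^2|k|^2 (t-r)}\,\d r\bigg|^2 \leq \bigg(\int_0^t |\hat{f}_r(k)|^2\,\d r\bigg)\bigg(\int_0^t e^{-8\pi^2|k|^2 (t-r)}\,\d r\bigg),
\]
and the second integral is bounded by $(8\pi^2|k|^2)^{-1}$. Plugging this in and summing over $k$ gives
\[
\bigg\|\int_0^t e^{(t-r)\Delta} f_r \,\d r\bigg\|_{H^{s+1}}^2 \lesssim \sum_k |k|^{2s} \int_0^t |\hat{f}_r(k)|^2\,\d r = \int_0^t \|f_r\|_{H^s}^2\,\d r,
\]
which is exactly the desired bound.

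There is no real obstacle here; the only point to be a little careful about is the treatment of the zero Fourier mode, which is harmless under the paper's convention of working with zero-mean functions (and in any case can be absorbed into the constant by a trivial $L^2$-in-time estimate on the finite interval $[0,T]$).
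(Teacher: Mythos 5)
Your argument is correct: the mode-by-mode Cauchy--Schwarz estimate, with the Gaussian time integral supplying the factor $|k|^{-2}$ that upgrades $H^s$ to $H^{s+1}$, is exactly the standard proof of this maximal-regularity bound, and your remark about the zero mode (harmless for mean-zero data, otherwise absorbed into a $T$-dependent constant) covers the only delicate point. The paper itself does not prove the lemma but defers to \cite[Lemma~2.3]{FGL21c}, whose proof is essentially this same Fourier computation, so there is nothing to add.
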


\begin{lemma}\label{lem:heat-kernel-estim}
Let $u\in H^a$, $a \in\mathbb{R}$. Then:
\begin{itemize}
\item[\rm{(i)}] for any $\rho\geq0$, it holds $\|e^{t\Delta} u \|_{H^{a+ \rho}} \leq C_{\rho}\, t^{-\rho/2} \|u\|_{H^a}$ for some constant increasing in $\rho$;
\item[\rm{(ii)}] for any $\rho\in[0,2]$, it holds $\| (I-e^{t\Delta}) u\|_{H^{a-\rho}}\lesssim t^{\rho/2} \|u\|_{H^a}$.
\end{itemize}
\end{lemma}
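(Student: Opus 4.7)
The plan is to work in the Fourier basis on $\T^d$, where the heat semigroup acts diagonally. Writing $u=\sum_k \hat u_k e_k$ with $e_k(x)=e^{2\pi i k\cdot x}$, we have $e^{t\Delta}u=\sum_k e^{-4\pi^2|k|^2 t}\hat u_k e_k$, so that $\|e^{t\Delta}u\|_{H^s}^2$ is (equivalent to) $\sum_k |k|^{2s}e^{-8\pi^2|k|^2 t}|\hat u_k|^2$ on the mean-zero part, and analogously for $(I-e^{t\Delta})u$. Both inequalities will then reduce to an elementary pointwise estimate on each Fourier mode, uniform in $k$.

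For (i), I would isolate the factor $|k|^{2\rho}e^{-8\pi^2|k|^2 t}$ inside the sum and bound it pointwise using $\sup_{x\geq 0} x^\rho e^{-cx}=(\rho/(ce))^\rho$ with $c=8\pi^2$ and $x=|k|^2 t$; this yields $|k|^{2\rho}e^{-8\pi^2|k|^2 t}\leq C_\rho t^{-\rho}$ with a constant that is manifestly increasing in $\rho$. Factoring this out of the sum leaves exactly $\|u\|_{H^a}^2$, proving (i).

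For (ii), I would use the elementary inequality
\[
1-e^{-s}\leq s^{\rho/2}\quad \text{for all } s\geq 0 \text{ and } \rho\in[0,2],
\]
which follows by splitting into $s\leq 1$ (where $1-e^{-s}\leq s\leq s^{\rho/2}$ since $\rho/2\leq 1$) and $s\geq 1$ (where $1-e^{-s}\leq 1\leq s^{\rho/2}$). Applying this with $s=4\pi^2|k|^2 t$ gives $(1-e^{-4\pi^2|k|^2 t})^2\leq (4\pi^2|k|^2 t)^\rho$, and combining with the prefactor $|k|^{2(a-\rho)}$ in the Fourier expansion makes $|k|^{-2\rho}\cdot|k|^{2\rho}$ cancel, leaving $(4\pi^2 t)^\rho\sum_k |k|^{2a}|\hat u_k|^2=(4\pi^2 t)^\rho\|u\|_{H^a}^2$. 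Taking square roots yields (ii).

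The argument is entirely elementary; the only subtle point is the sharp endpoint $\rho=2$ in the inequality $1-e^{-s}\leq s^{\rho/2}$, which is precisely what forces the upper restriction $\rho\in[0,2]$ in the statement. No compactness or semigroup machinery beyond the explicit Fourier diagonalization is needed.
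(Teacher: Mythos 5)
Your proof is correct. The paper does not actually prove this lemma --- it is stated in the appendix as a ``well known heat kernel property'' with no argument given --- and your Fourier-multiplier computation, reducing (i) to $\sup_{x\ge0}x^\rho e^{-cx}=(\rho/(ce))^\rho$ and (ii) to $1-e^{-s}\le s^{\rho/2}$ for $\rho\in[0,2]$, is exactly the standard derivation the authors implicitly invoke. Two cosmetic remarks: in (i) the zero Fourier mode (if one does not restrict to mean-zero functions) is fixed by $e^{t\Delta}$ and only satisfies the bound because $t$ ranges over the bounded interval $[0,T]$; and the constant $(\rho/(ce))^\rho$ is not in fact monotone in $\rho$ (it initially decreases from $1$), but this is immaterial since one may always replace $C_\rho$ by the increasing, locally finite majorant $\sup_{\sigma\le\rho}C_\sigma$.
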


Next we recall some simple estimates on the transport term which are frequently used in the paper.

\begin{lemma}\label{lem-estim-transp-term}
Let $V\in L^2(\T^2,\R^2)$ be a divergence free vector field.
\begin{itemize}
\item[\rm(i)] Let $0<b< a\leq 1,\, V\in H^{1+ a}$ and $f\in H^{-b}$, then
\begin{equation*}
\|V\cdot\nabla f\|_{H^{-1-b}} \lesssim_{a, b} \|V \|_{H^{1+a}} \|f \|_{H^{-b}}.
\end{equation*}
\item[\rm(ii)] Let $b\in (0,1),\, V\in H^{1-b}$ and $f\in L^2$, then
\begin{equation*}
\|V\cdot\nabla f\|_{H^{-1-b}} \lesssim_{b} \|V \|_{H^{1-b}} \|f \|_{L^2}.
\end{equation*}
\item[\rm(iii)] Let $0<b\leq a< 1,\,\eps>0, V\in H^a,\,f\in H^{-b}$, then
\begin{equation*}
\|V\cdot\nabla f\|_{H^{-2-b -\eps}} \lesssim_{a, b,\eps} \|V \|_{H^a} \|f \|_{H^{-b}}.
\end{equation*}
\end{itemize}
\end{lemma}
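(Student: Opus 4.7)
The plan is to exploit the divergence-free condition on $V$ to move a derivative onto the product: since $\nabla\cdot V=0$, one has $V\cdot\nabla f=\nabla\cdot(Vf)$, so that
\[
\|V\cdot\nabla f\|_{H^{s-1}}\lesssim \|Vf\|_{H^{s}}\qquad\text{for any } s\in\R.
\]
Applying this with $s=-b$ turns (i) and (ii) into estimates on $\|Vf\|_{H^{-b}}$, and with $s=-1-b-\eps$ turns (iii) into an estimate on $\|Vf\|_{H^{-1-b-\eps}}$. Each of the three claims thus reduces to a Sobolev product estimate on $\T^2$.

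Estimate (ii) is the most elementary: by duality $\|Vf\|_{H^{-b}}=\sup_{\|g\|_{H^b}=1}|\langle V,fg\rangle|$, and Hölder's inequality with $p_1=2/b,\ p_2=2/(1-b)$ satisfying $1/p_1+1/p_2=1/2$ gives $|\langle V,fg\rangle|\leq \|V\|_{L^{p_1}}\|f\|_{L^2}\|g\|_{L^{p_2}}$. The two critical two-dimensional Sobolev embeddings $H^{1-b}\hookrightarrow L^{2/b}$ and $H^b\hookrightarrow L^{2/(1-b)}$, valid because $b,1-b\in(0,1)$, close the bound.

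For (i) and (iii) the key tool is the Sobolev multiplier estimate on $\T^d$,
\[
\|uv\|_{H^\sigma}\lesssim \|u\|_{H^\sigma}\|v\|_{H^\tau}\qquad\text{whenever }\tau>|\sigma|+d/2,
\]
which I would prove by a direct Fourier computation: writing $\widehat{uv}(k)=\sum_j\hat u(j)\hat v(k-j)$ and applying Cauchy--Schwarz by splitting the weight $1$ into $(1+|j|)^{\sigma}(1+|k-j|)^{\tau}$ and its reciprocal, the claim reduces to the uniform bound
\[
(1+|k|)^{2\sigma}\sum_{j\in\Z^d}(1+|j|)^{-2\sigma}(1+|k-j|)^{-2\tau}\lesssim 1,
\]
which follows by splitting the inner sum into $\{|j|\leq |k|/2\}$ and its complement and using the convergence of $\sum_j(1+|j|)^{-2(\tau-|\sigma|)}$ exactly when $\tau>|\sigma|+d/2$. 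With this lemma in hand, (i) follows by applying it to $(u,v,\sigma,\tau)=(f,V,-b,1+a)$: in $d=2$ the condition $\tau>|\sigma|+1$ becomes precisely the hypothesis $a>b$. For (iii) I would first dualise in order to spend the extra regularity on a test function,
\[
\|Vf\|_{H^{-1-b-\eps}}=\sup_{\|g\|_{H^{1+b+\eps}}=1}|\langle f,Vg\rangle|\leq \|f\|_{H^{-b}}\|Vg\|_{H^b},
\]
and then apply the multiplier estimate to $(u,v,\sigma,\tau)=(V,g,b,1+b+\eps)$: the condition $\tau>|\sigma|+1$ becomes simply $\eps>0$, and $\|V\|_{H^b}\leq \|V\|_{H^a}$ since $b\leq a$.

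I do not expect any serious obstacle; the only place where a little care is needed is the borderline situation $b=a$ in (iii), where a direct product estimate in $H^a\cdot H^{-b}$ would saturate the Sobolev exponent. The precise role of the arbitrary $\eps>0$ in the target norm $H^{-2-b-\eps}$ is to leave room for the dualisation step above, and this is the one spot where the hypothesis $\eps>0$ is genuinely used rather than being decorative. Everything else is routine Fourier bookkeeping on the torus.
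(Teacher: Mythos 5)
Your proof is correct. For part (iii) --- the only part the paper proves directly, since (i) and (ii) are quoted from \cite[Lemma 2.1]{FGL21c} --- your argument is essentially the paper's: the divergence-free reduction $V\cdot\nabla f=\nabla\cdot(Vf)$, then duality against $\varphi\in H^{1+b+\eps}$ giving $|\langle Vf,\varphi\rangle|\leq\|f\|_{H^{-b}}\|V\varphi\|_{H^b}$, then a product estimate; the paper obtains $\|V\varphi\|_{H^b}\lesssim\|V\|_{H^a}\|\varphi\|_{H^{1+b+\eps}}$ via the embedding $H^{1+b+\eps}(\T^2)\hookrightarrow C^{b+\eps}$ and the fact that $C^{b+\eps}$ functions multiply $H^b$ (using $a\geq b$), whereas you prove the Fourier-side multiplier bound directly, with the condition $\tau>|\sigma|+d/2$ reducing precisely to $\eps>0$. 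Your self-contained proofs of (i) (same multiplier lemma, the condition becoming exactly $a>b$) and of (ii) (H\"older with exponents $2/b,\,2,\,2/(1-b)$ plus the subcritical embeddings $H^{1-b},H^{b}\hookrightarrow L^{2/b},L^{2/(1-b)}$ in $d=2$) are also correct and supply what the paper delegates to the reference.
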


\begin{proof}
Estimates \rm(i), \rm(ii) come from \cite[Lemma 2.1]{FGL21c}, so we only need to show \rm{(iii)}.
First observe that, since $V$ is divergence free, $\| V\cdot\nabla f\|_{H^{-2- b-\eps}}\leq \| V  f\|_{H^{-1-b -\eps}}$. Next, since $a\geq b$, for any $\varphi\in H^{1+b+\eps}\hookrightarrow C^{b+\eps}$, the product $\varphi\, V$ belongs to $H^b$ and we have the inequalities
\begin{align*}
|\langle V f,\varphi\rangle|
& = |\langle f, V\varphi\rangle|
\leq \| f\|_{H^{-b}} \| V \varphi\|_{H^b}\\
& \lesssim \| f\|_{H^{-b}} \| V\|_{H^a} \| \varphi\|_{C^{b+\eps}}
\lesssim \| f\|_{H^{-b}} \| V\|_{H^a} \| \varphi\|_{H^{1+ b+\eps}};
\end{align*}
by duality, this readily implies assertion \rm{(iii)}.
\end{proof}

We provide here the proof of Lemma \ref{lem:stoch-convol}; as already mentioned, it is very similar to those from \cite{FGL21c}, the main difference being that the exact expression $\theta_k=|k|^{-\alpha}$ allows to derive slightly better regularity estimates.

\begin{proof}[Proof of Lemma \ref{lem:stoch-convol}]

Fix $\eps\in (0, \alpha- \beta)$; by Burkholder-Davis-Gundy's inequality, we have
\begin{align*}
\E\big[\|Z_t \|_{H^{\beta-1}}^{2p} \big]
&= \E\bigg[ \Big\| \sum_{k\in \Z^2_0} |k|^{-\alpha} \int_0^t P_{t-s} (\sigma_k \cdot\nabla \xi_s)\,\d B^k_s \Big\|_{H^{\beta-1}}^{2p} \bigg] \\
&\lesssim \E\bigg[ \Big( \sum_{k} |k|^{-2\alpha} \int_0^t \| P_{t-s} (\sigma_k \cdot\nabla \xi_s) \|_{H^{\beta-1}}^2 \,\d s \Big)^{p} \bigg] \\
&\lesssim \E\bigg[ \Big( \sum_{k} |k|^{-2\alpha} \int_0^t (t-s)^{\eps-1} \| \sigma_k \cdot\nabla \xi_s \|_{H^{\beta-2+\eps}}^2 \,\d s \Big)^{p} \bigg],
\end{align*}
where in the last step we used Lemma \ref{lem:heat-kernel-estim}-(i).
Now we follow the treatment of $J^n_1$ in the proof of Theorem \ref{thm:CLT-SEE-1}. Similarly to \eqref{eq:basic-estim-CLT-euler-proof}, $\| \sigma_k \cdot\nabla \xi_s \|_{H^{\beta-2+\eps}} \lesssim \|e_k \, \xi_s\|_{H^{\beta-1+\eps}}$ and thus
\[ \sum_{k} |k|^{-2\alpha} \| \sigma_k \cdot\nabla \xi_s \|_{H^{\beta-2+\eps}}^2
\lesssim \sum_{k,l} |k|^{-2\alpha} |l|^{-2(1-\beta-\eps)} |\<\xi_s, e_{k-l}\>|^2.\]
Note that the right-hand side can be written as $\<a,b\ast c\>_{\ell^2}$,
where $a_k = |k|^{-2\alpha},\, b_k = |k|^{-2(1-\beta-\eps)}$
and $c_k = |\<\xi_s, e_{k}\>|^2,\, k\in \Z^2_0$;
$c\in \ell^1(\Z^2_0)$ with $\|c \|_{\ell^1}= \|\xi_s \|_{L^2}^2$ and for any $p\in (1/\alpha, 1/(\beta+\eps))$,
one has $a\in \ell^p(\Z^2_0),\, b\in \ell^{p'}(\Z^2_0)$, where $p'$ is the conjugate number of $p$.
By H\"older's inequality and Young's inequality for convolution,
\[ \sum_{k} |k|^{-2\alpha} \| \sigma_k \cdot\nabla \xi_s \|_{H^{\beta-2+\eps}}^2
\lesssim \|a \|_{\ell^p} \|b\ast c\|_{\ell^{p'}} \leq \|a \|_{\ell^p} \|b\|_{\ell^{p'}} \|c \|_{\ell^1}
\lesssim_{\alpha,\beta, p, \eps} \|\xi_s \|_{L^2}^2. \]
Substituting this estimate into the the above inequality yields
\[ \E\big[\|Z_t \|_{H^{\beta-1}}^{2p} \big]^{1/2p}
\lesssim \E\bigg[ \Big( \int_0^t (t-s)^{\eps-1} \|\xi_s \|_{L^2}^2 \,\d s \Big)^{p} \bigg]^{1/2p} \lesssim_{\eps} t^{\eps/2} \|\xi_0 \|_{L^2}. \]
A similar computation gives us
\[ \E\bigg[ \Big\| \int_s^t P_{t-r} (\d W^\alpha_r \cdot\nabla \xi_r) \Big\|_{H^{\beta-1}}^{2p} \bigg]^{1/2p}
\lesssim |t-s|^{\eps/2} \|\xi_0 \|_{L^2}. \]
Next, observing that by construction $Z$ satisfies
$Z_t = P_{t-s} Z_s + \int_s^t P_{t-r}(\d W^{\alpha}_r\cdot \nabla \xi_r)$,
by Lemma \ref{lem:heat-kernel-estim}-(ii) we obtain
\begin{align*}
\|Z_t-Z_s\|_{H^{\beta-1-\eps}}
& \leq \| (I- P_{t-s}) Z_s\|_{H^{\beta-1-\eps}} + \Big\| \int_s^t P_{t-r}(\d W^{\alpha}_r\cdot \nabla \xi_r) \Big\|_{H^{\beta-1-\eps}}\\
& \lesssim |t-s|^{\eps/2} \| Z_s\|_{H^{\beta-1}} + \Big\| \int_s^t P_{t-r}(\d W^{\alpha}_r\cdot \nabla \xi_r) \Big\|_{H^{\beta-1}}.
\end{align*}
Taking expectation and applying the previous estimates we arrive at
\begin{align*}
\E\big[ \| Z_t-Z_s\|_{H^{\beta-1-\eps}}^{2p}\big]^{1/2p}
& \lesssim_{p,T} |t-s|^{\eps/2}  \|\xi_0 \|_{L^2} .
\end{align*}
Choosing $p > 1/\eps$ and applying Kolmogorov's continuity criterion, we obtain the desired result, up to renaming $\beta -\eps$ as $\beta$.
\end{proof}

The following lemma was used to derive \eqref{eq:CLT-euler-proof.4}.

\begin{lemma}\label{lem-estimate}
Let $a,b>0$ such that $a+b >d$, $\delta\in (0, a\wedge b\wedge (a+b-d))$. Then
\begin{equation*}
\sum_{l\neq 0, j} |l|^{-a} |j-l|^{-b} \lesssim_{a,b,\delta} |j|^{-\delta} \quad \forall\, j\in \Z^d_0.
\end{equation*}
\end{lemma}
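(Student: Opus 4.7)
The plan is a dyadic decomposition of the lattice $\Z^d$ into three regions based on the relative sizes of $|l|$, $|j-l|$, and $|j|$, and then to bound the contribution from each region separately. Specifically, I would split the sum as $\Sigma_1 + \Sigma_2 + \Sigma_3$, where $\Sigma_1$ collects the terms with $|l| \leq |j|/2$ (so that $|j-l| \geq |j|/2$), $\Sigma_2$ collects those with $|j-l| \leq |j|/2$ (so that $|l| \geq |j|/2$), and $\Sigma_3$ collects those with both $|l| > |j|/2$ and $|j-l| > |j|/2$.

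In $\Sigma_1$, using $|j-l|^{-b} \lesssim |j|^{-b}$ reduces the sum to $|j|^{-b} \sum_{0 < |l| \leq |j|/2} |l|^{-a}$. The remaining sum obeys the elementary lattice estimate
\[
\sum_{0 < |l| \leq N} |l|^{-a} \lesssim \begin{cases} N^{d-a}, & a < d, \\ \log(1+N), & a = d, \\ 1, & a > d, \end{cases}
\]
which gives $\Sigma_1 \lesssim |j|^{-(b \wedge (a+b-d)) + \eta}$ for arbitrarily small $\eta > 0$. Region $\Sigma_2$ is handled symmetrically via the change of variables $m = j-l$, producing $\Sigma_2 \lesssim |j|^{-(a \wedge (a+b-d)) + \eta}$.

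For $\Sigma_3$, I would subdivide once more according to whether $|l| \geq 2|j|$ or $|j|/2 < |l| < 2|j|$. In the outer region $|l| \geq 2|j|$ one has $|j-l| \sim |l|$, so the summand is bounded by $|l|^{-a-b}$, and the condition $a+b > d$ produces a contribution $\lesssim |j|^{d-a-b}$. In the annular region $|j|/2 < |l| < 2|j|$ one has $|l|^{-a} \sim |j|^{-a}$, while the remaining factor is controlled by $\sum_{|j|/2 < |m| \lesssim |j|} |m|^{-b}$, producing at worst $|j|^{-(a \wedge (a+b-d)) + \eta}$. Combining the three regions yields
\[
\sum_{l \neq 0, j} |l|^{-a}\, |j-l|^{-b} \lesssim |j|^{-\min(a,\,b,\,a+b-d) + \eta}
\]
for any $\eta > 0$, and the claim follows by choosing $\eta$ so small that $\min(a, b, a+b-d) - \eta > \delta$.

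The only subtlety, and the reason the inequality $\delta < a \wedge b \wedge (a+b-d)$ must be strict, is the logarithmic correction arising at the borderline exponents $a = d$ or $b = d$, which is absorbed into the extra factor $|j|^\eta$. There is no genuine obstacle here: the proof relies only on elementary lattice summation bounds, and the main care required is the systematic bookkeeping of subcases inside $\Sigma_3$ to check that every contribution is consistent with the claimed bound $|j|^{-\delta}$.
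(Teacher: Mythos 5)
Your proof is correct. The paper's own argument also restricts to large $|j|$ and splits at the scale $|l|\lessgtr |j|/2$, but it uses only two regions and a different workhorse: it first observes, via H\"older's inequality (essentially Young's convolution inequality on $\ell^p(\Z^d_0)$), that $\sum_{l\neq 0,j}|l|^{-\tilde a}|j-l|^{-\tilde b}$ is bounded uniformly in $j$ whenever $\tilde a+\tilde b>d$, and then extracts the decay by writing $|j-l|^{-b}\lesssim |j|^{-\delta}|j-l|^{-b+\delta}$ on $\{|l|\le |j|/2\}$ (resp. $|l|^{-a}\lesssim |j|^{-\delta}|l|^{-a+\delta}$ on $\{|l|>|j|/2\}$) and applying the uniform bound with the shifted exponents $(a,b-\delta)$ and $(a-\delta,b)$ --- this is exactly where the three constraints $\delta<b$, $\delta<a$ and $\delta<a+b-d$ enter. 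Your version replaces the H\"older step by direct region-by-region lattice summation with a finer three-way (then four-way) decomposition; it is slightly longer but more self-contained, it makes the optimal exponent $\min(a,b,a+b-d)$ and the origin of each constraint explicit, and it correctly identifies and absorbs the logarithmic corrections at the borderline cases $a=d$ or $b=d$ into the $|j|^{\eta}$ slack. Either route is perfectly adequate here.
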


\begin{proof}
It is sufficient to deal with $j\in \Z^d_0$ with $|j|$ big, say $|j|\geq 10$. First observe that, for any positive pair $(\tilde{a}, \tilde{b})$ with $\tilde{a}+\tilde b> d$, by H\"older's inequality it holds
\begin{align*}
\sum_{l\neq 0,j} |l|^{-\tilde a} |j-l|^{-\tilde b} \lesssim \bigg( \sum_{l\neq 0} |l|^{-\tilde{a} p}\bigg)^{1/p} \bigg( \sum_{l\neq 0} |l|^{-\tilde{b} p'}\bigg)^{1/p'} \leq C
\end{align*}
uniformly over $j\in \Z^d_0$, as soon as we choose $p\in (1,\infty)$ such that
\[\frac{d}{\tilde a} <p< \frac{d}{d-\tilde b},\quad p'=\frac{p}{p-1}\]
which is always possible since $\tilde{a}+\tilde b> d$.
We have
\begin{align*}
\sum_{l\neq 0,j} |l|^{-a} |j-l|^{-b}
= \bigg(\sum_{1\leq |l|\leq |j|/2} + \sum_{|l|>|j|/2, l\neq j} \bigg) |l|^{-a} |j-l|^{-b}.
\end{align*}
We denote the two terms by $A_1, A_2$. For the first one, we have $|j-l| \geq |j|-|l|\geq |j|/2$, thus
\begin{align*}
A_1
& \lesssim |j|^{- \delta} \sum_{1\leq |l| \leq |j|/2} |l|^{-a} |j-l|^{-b+\delta}
\leq |j|^{-\delta} \sum_{l\neq 0,j} |l|^{-a} |j-l|^{-b+\delta} \lesssim |j|^{-\delta}
\end{align*}
where we used the observation at the beginning of the proof and the fact that $a+b -\delta>d$. The second term is similar:
\begin{align*}
A_2
& \lesssim |j|^{- \delta} \sum_{|l|> |j|/2, l\neq j} |l|^{-a+\delta} |j-l|^{-b}
\leq |j|^{-\delta} \sum_{l\neq 0,j} |l|^{-a+\delta} |j-l|^{-b} \lesssim |j|^{-\delta}.
\end{align*}
Summing up these estimates we obtain the desired result.
\end{proof}

Before presenting the next results, let us define the following seminorm: for $\gamma \in (0,1)$ and $v:[0,T]\times \T^d \to \R$, set
\begin{equation*}
\llbracket v\rrbracket_{\gamma; T}:=\sup_{t\in [0,T]} t^{\frac{\gamma}{2}} \| v_t\|_{C^1}.
\end{equation*}

\begin{theorem}\label{thm:parabolic-pde}
Consider the PDE
\begin{equation}\label{eq:parabolic-pde}
\partial_t h = \Delta h + b\cdot\nabla h + c\, h, \quad h\vert_{t=0}=h_0,
\end{equation}
where $b,\,c\in L^q_t L^p_x$ for parameters $(p,q)\in (2,\infty]$ satisfying
\begin{equation}\label{eq:KR-condition}
\gamma:= \frac{2}{q}+\frac{d}{p}<1.
\end{equation}
Then for any $h_0\in L^\infty$ there exists a weak solution to \eqref{eq:parabolic-pde}, which is of the form $h_t = P_t h_0 + v_t$ with $v\in C^0_t C^0_x$ satisfying $v_0=0$ and
\begin{equation}\label{eq:structure-solution-pde-1}
\llbracket v\rrbracket_{\gamma; T} \lesssim_{b,c} \| h_0\|_{L^\infty}.
\end{equation}
Moreover, $h$ is unique in the class of such solutions and for any $\delta\in [0,2-\gamma)$ it holds
\begin{equation}\label{eq:structure-solution-pde-2}
\sup_{t\in [0,T]} t^{\frac{\gamma+\delta-1}{2}} \| v_t\|_{C^\delta} \lesssim_{b,c} \| h_0\|_{L^\infty};
\end{equation}
in particular, $v\in L^\infty_t C^\delta_x$ for any $\delta\in [0,1-\gamma]$ and $v\in L^2_t C^\delta_x$ for any $\delta<2-\gamma$.
\end{theorem}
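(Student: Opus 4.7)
The plan is to solve \eqref{eq:parabolic-pde} by a Banach fixed point in mild form. Setting $v_t := h_t - P_t h_0$, the equation is equivalent to $v = \Gamma(v)$ with
\[
\Gamma(v)_t := \int_0^t P_{t-s}\bigl( b_s\cdot\nabla(P_s h_0 + v_s) + c_s(P_s h_0 + v_s) \bigr)\,\d s.
\]
I would work in the Banach space $X_{T_0}$ of measurable $v:(0,T_0]\to C^1_x$ with $\llbracket v\rrbracket_{\gamma;T_0}<\infty$, postponing the verification that the fixed point actually sits in $C^0_tC^0_x$ with $v_0=0$ until the end (this will follow from the sharper $t^{(1-\gamma)/2}$-decay of $\|v_t\|_{L^\infty}$ produced by the bootstrap).

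The core estimate combines $\|P_{t-s}\psi\|_{C^1}\lesssim (t-s)^{-1/2-d/(2p)}\|\psi\|_{L^p}$ with the bounds $\|P_s h_0\|_{C^1}\lesssim s^{-1/2}\|h_0\|_{L^\infty}$ and $\|v_s\|_{C^1}\leq s^{-\gamma/2}\llbracket v\rrbracket_{\gamma;T_0}$ to yield
\[
\|\Gamma(v)_t\|_{C^1}\lesssim \int_0^t (t-s)^{-1/2-d/(2p)}\bigl(\|b_s\|_{L^p}+\|c_s\|_{L^p}\bigr)\bigl(s^{-1/2}\|h_0\|_{L^\infty}+s^{-\gamma/2}\llbracket v\rrbracket_{\gamma;T_0}\bigr)\d s.
\]
Hölder's inequality in $s$ with conjugate exponent $q'$, followed by the Beta-function identity, reduces each summand to a finite integral precisely under $q'/2<1$ (i.e. $q>2$) and $q'(1/2+d/(2p))<1$ (i.e. $\gamma<1$), both of which are guaranteed by \eqref{eq:KR-condition}. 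A direct computation yields
\[
t^{\gamma/2}\|\Gamma(v)_t\|_{C^1}\leq C\bigl(\|b\|_{L^qL^p}+\|c\|_{L^qL^p}\bigr)\bigl(\|h_0\|_{L^\infty}+T_0^{(1-\gamma)/2}\llbracket v\rrbracket_{\gamma;T_0}\bigr),
\]
uniformly in $t\in[0,T_0]$. Choosing $T_0$ small enough so that $C(\|b\|_{L^qL^p}+\|c\|_{L^qL^p})T_0^{(1-\gamma)/2}<1/2$ (independently of $h_0$) makes $\Gamma$ a strict contraction, whose unique fixed point satisfies \eqref{eq:structure-solution-pde-1} on $[0,T_0]$. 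Since the chosen $T_0$ depends only on $b,c$, I would iterate the argument on a finite partition of $[0,T]$, using $\|h_{T_0}\|_{L^\infty}\leq\|h_0\|_{L^\infty}+\|v_{T_0}\|_{L^\infty}<\infty$ to restart, and so obtain the global solution, unique in the stated class.

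The higher-order bound \eqref{eq:structure-solution-pde-2} would then follow by a bootstrap on the already constructed $v$: combining the mild formula
\[
v_t=\int_0^t P_{t-s}(b_s\cdot\nabla h_s+c_s h_s)\,\d s
\]
with $\|P_{t-s}\psi\|_{C^\delta}\lesssim (t-s)^{-\delta/2-d/(2p)}\|\psi\|_{L^p}$ and the same Hölder/Beta calculation, one finds $\|v_t\|_{C^\delta}\lesssim t^{(1-\gamma-\delta)/2}\|h_0\|_{L^\infty}$ exactly when $q'(\delta/2+d/(2p))<1$, i.e.\ $\delta<2-\gamma$. The asserted inclusions $v\in L^\infty_tC^\delta_x$ for $\delta\in[0,1-\gamma]$ and $v\in L^2_tC^\delta_x$ for $\delta<2-\gamma$ are immediate, and continuity $v\in C^0_tC^0_x$ at $t=0$ follows from the $\delta=0$ case $\|v_t\|_{L^\infty}\lesssim t^{(1-\gamma)/2}\|h_0\|_{L^\infty}\to 0$.

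The main obstacle will be the careful bookkeeping of the time-singularity exponents in the two-parameter family of integrals $\int_0^t(t-s)^{-a}s^{-b}\,\d s$ arising from the heat-kernel decay and the self-similar scaling of $\|v_s\|_{C^1}$ and $\|P_sh_0\|_{C^1}$. In each application of Hölder one must check that the critical exponent lines up to give exactly the condition $\gamma<1$, and that all resulting prefactors of $t$ combine to produce the sharp powers $t^{(1-\gamma)/2}$ and $t^{(1-\gamma-\delta)/2}$ matching the scaling dictated by \eqref{eq:KR-condition}.
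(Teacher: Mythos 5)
Your proposal is correct and rests on exactly the same core estimates as the paper's proof: the mild formulation for $v_t=h_t-P_th_0$, the weighted norm $\llbracket\,\cdot\,\rrbracket_{\gamma;T}$, the heat-kernel smoothing bound $\|P_{t-s}\psi\|_{C^1}\lesssim (t-s)^{-1/2-d/(2p)}\|\psi\|_{L^p}$ combined with $\|P_sh_0\|_{C^1}\lesssim s^{-1/2}\|h_0\|_{L^\infty}$, H\"older in time with exponent $q'$ and the Beta-function scaling giving precisely $t^{-\gamma/2}$, and the identical bootstrap for \eqref{eq:structure-solution-pde-2}. The only differences are in packaging: the paper proves the a priori bound for smooth data and passes to the limit by compactness (uniqueness then following from linearity), whereas you run a Banach fixed point directly; and the paper absorbs the $v$-term by taking $\tau$ small so that $\|b\|_{L^q(0,\tau;L^p)}+\|c\|_{L^q(0,\tau;L^p)}$ is small, whereas you keep the weight $s^{-\gamma/2}$ (rather than crudely bounding it by $s^{-1/2}$) and extract the factor $T_0^{(1-\gamma)/2}$. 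Your mechanism is in fact marginally more robust, since for $q=\infty$ the norm $\|b\|_{L^\infty(0,\tau;L^p)}$ need not shrink as $\tau\to0$ while $T_0^{(1-\gamma)/2}$ always does. In both arguments the concatenation over subintervals should be justified via the global Duhamel formula (the locally restarted remainder $\tilde v$ has its own $(t-T_0)^{-\gamma/2}$ singularity in $C^1$, so one cannot just glue the local weighted bounds); the paper is equally terse on this point, so this is not a gap specific to your write-up.
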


\begin{remark}
By time reversal, similar results hold if we considered the backward equation $(\partial_t + \Delta) h = b\cdot\nabla h + c\,h$ with given terminal condition $h_T\in L^\infty$. Choosing $c=\nabla\cdot b$, the statements apply to forward and backward Fokker-Planck equations $(\partial_t\pm \Delta h)=\nabla\cdot (bh)$.
\end{remark}

\begin{proof}
Regarding the first part of the statement, it suffices to prove property \eqref{eq:structure-solution-pde-1} in the case of smooth coefficients $b,\,c,\,h_0$; existence will then follow from the uniform estimates and standard compactness arguments, while uniqueness by the linearity of the equation.
In the smooth setting, the unique solution $h$ satisfies the mild formulation associated to \eqref{eq:parabolic-pde}, and so $h_t=P_t h_0 + v_t$ with $v$ satisfying
\begin{equation*}
v_t = \int_0^t P_{t-s} (b_s\cdot\nabla v_s + c_s v_s)\, \d s + \int_0^t P_{t-s}(b_s\cdot\nabla P_s h_0 + c_s \, P_s h_0)\, \d s.
\end{equation*}
Denote the two terms by $I^1_t$ and $I^2_t$; by standard heat kernel estimates we obtain
\begin{align*}
\|I^1_t\|_{C^1}
&\lesssim \int_0^t (t-s)^{-\frac{1}{2} -\frac{d}{2p}} (\|b_s\|_{L^p} + \| c_s\|_{L^p}) \|v_s\|_{C^1}\, \d s \\
&\leq \llbracket v\rrbracket_{\gamma; t} \int_0^t (\|b_s\|_{L^p} + \| c_s\|_{L^p}) (t-s)^{-\frac{1}{2}-\frac{d}{2p}} s^{-\frac\gamma2} \, \d s
\end{align*}
as well as
\begin{align*}
\|I^2_t\|_{C^1}
&\lesssim \int_0^t (t-s)^{-\frac{1}{2}-\frac{d}{2p}} (\|b_s\|_{L^p} + \| c_s\|_{L^p}) \|P_s h_0 \|_{C^1}\, \d s \\
&\leq \|h_0 \|_\infty \int_0^t (\|b_s\|_{L^p} + \| c_s\|_{L^p}) (t-s)^{-\frac{1}{2}-\frac{d}{2p}} s^{-\frac12} \, \d s.
\end{align*}
Noting that $\gamma <1$, combining these estimates and H\"older's inequality gives us
\begin{align*}
\| v_t\|_{C^1}
&\lesssim (\llbracket v\rrbracket_{\gamma; t}+ \|h_0 \|_\infty ) \int_0^t (\|b_s\|_{L^p} + \| c_s\|_{L^p}) (t-s)^{-\frac{1}{2}-\frac{d}{2p}} s^{-\frac12} \, \d s \\
&\leq (\llbracket v\rrbracket_{\gamma; t}+ \|h_0 \|_\infty ) \big(\| b\|_{L^q(0,t; L^p)}+\| c\|_{L^q(0,t; L^p)} \big) \bigg( \int_0^t (t-s)^{- \frac{q'}{2} \big(1+\frac{d}{p}\big)} s^{-\frac{q'}{2}}\, \d s \bigg)^{\frac{1}{q'}},
\end{align*}
where $q'$ is the conjugate number of $q$. The integral appearing in the last term is finite due to $\gamma<1$; we can compute it by the change of variable $\tilde s = s/t$:
\begin{align*}
\bigg( \int_0^t (t-s)^{- \frac{q'}{2} \big(1+\frac{d}{p}\big)} s^{-\frac{q'}{2}}\, \d s \bigg)^{\frac{1}{q'}} \sim_{p,q} t^{- \frac{1}{2} \big(1+\frac{d}{p}\big) -\frac{1}{2} +\frac1{q'}}
= t^{-\frac\gamma2}.
\end{align*}
Hence, multiplying both sides of the estimate above by $t^{\frac\gamma2}$ and taking supremum on $t\in [0,\tau]$, we arrive at
\begin{align*}
\llbracket v\rrbracket_{\gamma;\tau} \leq C (\llbracket v\rrbracket_{\gamma;\tau} + \| h_0\|_{L^\infty}) \big(\| b\|_{L^q(0,\tau; L^p)}+\| c\|_{L^q(0,\tau; L^p)} \big)
\end{align*}
for some constant $C$.
Choosing $\tau$ sufficiently small so that $C(\| b\|_{L^q(0,\tau; L^p)}+\| c\|_{L^q(0,\tau; L^p)})<1/2$, we can conclude that $\llbracket v\rrbracket_{\gamma,\tau} \leq \| h_0\|_{L^\infty}$;
the estimate can then be extended to the whole interval $[0,T]$ by a standard iteration procedure, which proves \eqref{eq:structure-solution-pde-1}.

The proof of \eqref{eq:structure-solution-pde-2} is similar, based again on applying heat kernel estimates on the terms $I^1,\, I^2$, this time knowing that \eqref{eq:structure-solution-pde-1} holds.
For notational simplicity, we show the estimate only for the term $I^2_t$ and in the case $c\equiv 0$. We have
\begin{align*}
\| I^2_t \|_{C^\delta}
& \lesssim \int_0^t (t-s)^{-\frac{d}{2p}-\frac{\delta}{2}} \| b_s\|_{L^p} \| P_s h_0\|_{C^1}\, \d s\\
& \lesssim \| h_0\|_{L^\infty} \| b\|_{L^q L^p} \bigg( \int_0^t (t-s)^{- \frac{q'}{2} \big(\delta+\frac{d}{p}\big)} s^{-\frac{q'}{2}}\, \d s \bigg)^{\frac{1}{q'}}\\
& \sim_{p,q} \| h_0\|_{L^\infty} \| b\|_{L^q L^p}\, t^{-\frac{1}{2} \big(\delta + \frac{d}{p} \big) -\frac{1}{2}+\frac{1}{q'}}
\sim_b \| h_0\|_{L^\infty} t^{\frac{1-\gamma-\delta}{2}}
\end{align*}
where the integral in the second line is finite for $q'(\delta+d/p)<2$, namely $\delta\in (0,2-\gamma)$.
\end{proof}

\end{document}